\newtheorem{Te}[Df]{Theorem}
\newtheorem{Po}[Df]{Proposition}
\newtheorem{Cr}[Df]{Corollary}
\newtheorem{Lm}[Df]{Lemma}
\newcommand{\set}[1]{\left\{ #1 \right\}}
\newcommand{\ov}[1]{\mkern 2mu\overline{\mkern-2mu#1\mkern1mu}\mkern -1mu}
\newcommand{\np}{{\ov\nu}}
\newcommand{\nk}{\nu}
\newcommand{\ff}{\ensuremath{\mathbb{F}}}
\newcommand{\cc}{\ensuremath{\mathbb{C}}}
\newcommand{\nn}{\ensuremath{\mathbb{N}}}
\newcommand{\ppq}{\leqslant}
\newcommand{\pgq}{\geqslant}
\newcommand{\mo}{\ensuremath{\mathfrak{s}}}
\newcommand{\mt}{\ensuremath{\mathfrak{t}}}
\newcommand{\me}{\ensuremath{\mathfrak{e}}}
\newcommand{\B}{\ensuremath{\mathcal{B}}}
\newcommand{\tp}{\mathop{\rm top}\nolimits}
\newcommand{\soc}{\mathop{\rm soc}\nolimits}
\newcommand{\id}{\mathop{\rm id}\nolimits}
\newcommand{\im}{\mathop{\rm Im}\nolimits}
\renewcommand{\ker}{\mathop{\rm Ker}\nolimits}
\newcommand{\car}{\mathop{\rm char}\nolimits}
\newcommand{\ot}{\otimes}
\newcommand{\otk}{\otimes_k}
\newcommand{\modb}{\text{\textsf{\upshape Mod}-}\ensuremath{B}}
\newcommand{\abimod}{\ensuremath{A}\text{-\textsf{\upshape Bimod}}}
\newcommand{\catvs}{\text{\textsf{\upshape Vect}\ensuremath{_\ff}}}
\newcommand{\Ext}{\mathop{\rm Ext}\nolimits}
\newcommand{\Tor}{\mathop{\rm Tor}\nolimits}
\newcommand{\Hom}{\mathop{\rm Hom}\nolimits}
\newcommand{\RHom}{\mathop{\rm RHom}\nolimits}
\newcommand{\HH}{\mathop{\rm HH}\nolimits}
\newcommand{\HK}{\mathop{\rm HK}\nolimits}
\newcommand{\cupk}{\stackrel[K]{}{\smile}}
\newcommand{\capk}{\stackrel[K]{}{\frown}}
\newcommand{\ma}{\ensuremath{{m_A}}}
\newcommand{\md}{\ensuremath{{m_D}}}
\newcommand{\pe}[1]{\left\lfloor {#1} \right\rfloor}
\newcommand{\ch}{\ensuremath{\check{h}}}
\newcommand{\ct}{\ensuremath{\check{\zeta}}}
\newcommand{\cp}{\ensuremath{\check{\pi}}}
\newcommand{\cz}{\ensuremath{\check{z}}}
\newcommand{\spn}[1]{\mathop{\rm span}\set{#1}}
\newcommand{\tr}{\mathop{\rm tr}\nolimits}
\renewcommand{\aa}{\kappa}
\title{Koszul calculus of preprojective algebras}%
\author{Roland Berger and Rachel Taillefer}
\begin{document}

\maketitle

\begin{abstract}
We show that the Koszul calculus of a preprojective algebra, whose graph is distinct from A$_1$ and
A$_2$, vanishes in any (co)homological degree $p>2$. Moreover, its (higher) cohomological calculus
is isomorphic as a bimodule to its (higher) homological calculus, by exchanging degrees $p$ and
$2-p$, and we prove a generalised version of the 2-Calabi-Yau property. For the ADE Dynkin graphs,
the preprojective algebras are not Koszul and they are not Calabi-Yau in the sense of Ginzburg's
definition, but they satisfy our generalised Calabi-Yau property and we say that they are Koszul
complex Calabi-Yau (Kc-Calabi-Yau) of dimension $2$. For Kc-Calabi-Yau (quadratic) algebras of any dimension, defined in terms of derived categories, we prove a Poincar\'e Van den Bergh duality theorem. We compute explicitly the Koszul calculus of preprojective algebras for the ADE Dynkin graphs.
\end{abstract} 

\section{Introduction} \label{intro}  

Preprojective algebras are quiver algebras with quadratic relations, that play an important role in the
representation theory of quiver algebras~\cite{gp:preproj,dr:preproj,bgl:preproj,bbk}, with various applications~\cite{cbh:klein,cbvdb:kac} and many developments~\cite{cbeg:quiver,gls:ppcluster,bes:typeL,griy:hpa}. In~\cite{griy:hpa}, the reader will find an introduction to the various aspects of the preprojective algebras in representation theory, with an extended bibliography. In our paper, we are interested in some homological properties linked to Hochschild cohomology. 

In the last two decades, the Hochschild cohomology of preprojective algebras, as well as some extra algebraic structures, have been computed in several steps, as follows.

1) Erdmann and Snashall~\cite{es:first,es:second} determined the Hochschild cohomology and its
cup-product in type A.

2) Crawley-Boevey, Etingof and Ginzburg~\cite{cbeg:quiver} determined the Hochschild cohomology for
all preprojective algebras of non-Dynkin type (which are Koszul in this case~\cite{mv:kprepro,green:intro,bbk}).

3) In type DE and characteristic zero, Etingof and Eu~\cite{eteu:ade} determined the Hochschild
cohomology and  Eu~\cite{eu:product} the cup product. The cyclic homology was computed in type ADE in~\cite{eteu:ade}.

4) Assembling and completing the previous results in characteristic zero, Eu gave an explicit
description of the Tamarkin-Tsygan calculus~\cite{tt:calculus} of the preprojective algebras in type
ADE, that is, the homology and the cohomology, the cup product, the contraction map and the Lie derivative, the Connes differential and the Gerstenhaber bracket~\cite{eu:calculus}.

5) Eu and Schedler extended the ADE results to the case where the base ring is $\mathbb{Z}$, and obtained the corresponding ADE results in any characteristic~\cite{eusched:cyfrob}.

In~\cite{bls:kocal}, a Koszul calculus was associated with any quadratic algebra over a field, in
order to produce new homological invariants for non-Koszul quadratic algebras. We begin this paper
by extending the Koszul calculus to quadratic quiver algebras. We shall compute the Koszul calculus
of the preprojective algebras whose graphs are Dynkin of type ADE (the preprojective algebras are then finite dimensional). Except for types A$_1$ and A$_2$, these quadratic quiver algebras are not Koszul~\cite{mv:kprepro,green:intro,bbk}, so that the Koszul calculus and the Hochschild calculus provide different information.

Before presenting our computations, we state and develop a Poincar\'e Van den Bergh duality theorem~\cite{vdb:dual} for the Koszul homology/cohomology of \emph{any preprojective algebra} whose graph is different from A$_1$ and A$_2$. This theorem is formulated as follows and constitutes the first main result of the present paper. The duality is precisely part \textit{(ii)} in this theorem.

\begin{Te} \label{funda1}
Let $A$ be the preprojective algebra of a (non-labelled) connected graph $\Delta$ distinct from $\mathrm{A}_1$ and $\mathrm{A}_2$, over a field $\mathbb{F}$. Let $M$ be an $A$-bimodule.
\begin{enumerate}[\itshape(i)]
\item The Koszul bimodule complex $K(A)$ of $A$ has length 2. In particular,
  $\HK^p(A,M)= \HK_p(A,M)= 0$ for any $p>2$.

\item  The $\HK^{\bullet}(A)$-bimodules $\HK^{\bullet}(A,M)$ and $\HK_{2-\bullet}(A,M)$ are
  isomorphic.

\item The $\HK^{\bullet}_{hi}(A)$-bimodules $\HK^{\bullet}_{hi}(A,M)$ and
  $\HK^{hi}_{2-\bullet}(A,M)$ are isomorphic.
\end{enumerate}

\end{Te}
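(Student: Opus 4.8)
The plan is to build everything out of an explicit description of the Koszul bimodule complex $K(A)$ for the preprojective algebra $A = \mathbb{F}Q/(r)$, where $Q$ is the double of $\Delta$ and the quadratic relation is the preprojective relation $r = \sum_{a}(aa^* - a^*a)$ attached to each vertex. The Koszul calculus machinery of~\cite{bls:kocal}, once extended to quiver algebras (as announced earlier in the introduction), produces for a quadratic algebra the complex of $A$-bimodules $K(A)_p = A \otimes_{\Lambda} W_p \otimes_{\Lambda} A$, where $\Lambda = \mathbb{F}Q_0$ is the separable subalgebra spanned by the vertices, $W_0 = \Lambda$, $W_1 = \mathbb{F}Q_1$ is the arrow space, and $W_2 = R$ is the span of the quadratic relations; the differentials are the usual ones built from "multiply on the left/right by the part of the relation". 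For a general quadratic algebra the spaces $W_p$ ($p \ge 3$) are $\bigcap_{i} W_1^{\otimes i}\otimes R \otimes W_1^{\otimes(p-2-i)}$ and can be nonzero, so the first task, part (i), is to show that for the preprojective relations this intersection is zero in degree $3$ (and hence, being a decreasing intersection, in all higher degrees). Concretely I would argue that $W_3 = (R\otimes W_1)\cap(W_1\otimes R)$ inside $W_1^{\otimes 3}$ is $0$: writing a hypothetical element and using that each relation $r_e$ at vertex $e$ is a sum $\sum \pm a a^*$ of paths of length $2$ through $e$, one checks that the two "halves" of the tensor cannot be simultaneously consistent unless $\Delta$ has a vertex of degree $\le 1$ together with the path structure of $\mathrm{A}_1$ or $\mathrm{A}_2$ — this is exactly the exclusion in the hypothesis. (This degree-$3$ vanishing is essentially the classical statement that preprojective algebras fail to be Koszul precisely through a defect concentrated in bidegree related to $3$; I would phrase it as a direct linear-algebra computation on $W_1^{\otimes 3}$, vertex by vertex.) This gives $K(A)$ length $2$, and the consequence $\HK^p = \HK_p = 0$ for $p>2$ is then immediate since these are the (co)homology of $\Hom_{A^e}(K(A),M)$ and $M\otimes_{A^e}K(A)$ respectively.

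For part (ii), the duality, the key structural input is a bimodule isomorphism $K(A) \cong \mathrm{Hom}_{A^e}(K(A), A^e)[-2]$ up to a twist — i.e. the length-$2$ complex $K(A)$ is self-dual after reindexing $p \leftrightarrow 2-p$. I would produce this from the symmetry of the data $(W_0, W_1, W_2)$: there is a perfect pairing $W_1 \times W_1 \to \Lambda$ sending $(a,b)$ to the coefficient with which $ab$ occurs in the relations (this is the symplectic-type form $a \mapsto a^*$, $a^* \mapsto -a$ coming from the double quiver), inducing $W_1 \cong W_1^\vee$ and $W_2 = R \cong \Lambda = W_0$ (one relation per vertex), and these identifications are compatible with the Koszul differentials up to the sign/orientation bookkeeping of the double quiver. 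Transposing the explicit $2$-term complex of free $A^e$-modules $K(A)_2 \to K(A)_1 \to K(A)_0$ and using $A^e$-duality $\mathrm{Hom}_{A^e}(A\otimes_\Lambda V\otimes_\Lambda A, A^e)\cong A\otimes_\Lambda V^\vee \otimes_\Lambda A$ turns $\Hom_{A^e}(K(A),A^e)$ into a complex that the above identifications match with $K(A)$ reindexed, possibly after tensoring by an invertible $\Lambda$-bimodule supported on the vertices (an orientation character). Applying $-\otimes_{A^e} M$ to this isomorphism of complexes of projective bimodules converts $\HK^\bullet(A,M) = H^\bullet\Hom_{A^e}(K(A),M)$ into $H_{2-\bullet}(K(A)\otimes_{A^e} M) = \HK_{2-\bullet}(A,M)$, and the fact that the isomorphism is one of complexes of $A$-bimodules over $K(A)$ makes it $\HK^\bullet(A)$-bilinear, giving the claimed bimodule isomorphism. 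Part (iii) is the same argument run in the "higher Koszul calculus" version: one replaces $K(A)$ by the subcomplex/quotient computing $\HK_{hi}$ and $\HK^\bullet_{hi}$ (obtained from $K(A)$ via the extra differential of the higher calculus), observes that the self-duality of $K(A)$ is compatible with that extra differential because the pairing $W_1\cong W_1^\vee$ intertwines it, and transports the duality through.

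The main obstacle, I expect, is pinning down the twist in part (ii): the naive self-duality of $K(A)$ holds only up to an invertible $\Lambda$-bimodule $\chi$ (a "Nakayama-type" automorphism of $\Lambda = \mathbb{F}Q_0$, i.e. a permutation-and-sign character on vertices), coming from the antisymmetry $a\leftrightarrow a^*$, $a^* \leftrightarrow -a$ of the double-quiver form and from the non-Koszulity, which already in the Hochschild setting is what prevents the ADE preprojective algebras from being Calabi–Yau on the nose (they are only twisted/"Kc-Calabi–Yau", as the abstract stresses). I would need to verify that this $\chi$ acts trivially after passing to (co)homology with coefficients in a bimodule $M$ — or, more precisely, state parts (ii)–(iii) as isomorphisms of $\HK^\bullet(A)$-bimodules where the right-hand side carries the $\chi$-twisted bimodule structure, and then check that for the specific combinatorics of a connected graph $\ne \mathrm{A}_1, \mathrm{A}_2$ the character $\chi$ is inner/trivial so that no twist survives. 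Getting the signs and the direction of the orientation character exactly right across the three differentials (the two Koszul ones and the higher one) is the delicate bookkeeping that the formal proof will have to carry out carefully; everything else is formal homological algebra once the length-$2$ self-dual complex $K(A)$ is in hand.
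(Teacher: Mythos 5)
Your part \textit{(i)} is essentially the paper's argument. The authors prove $W_3=(R\otk V)\cap(V\otk R)=0$ by writing a hypothetical nonzero $u\in eW_3f$ in the two forms $\sum\lambda_\alpha\varepsilon(a)aa^*\alpha=\sum\mu_\beta\varepsilon(b)\beta bb^*$ and extracting linear identities in the path algebra that force $Q$ to be a single arrow, i.e.\ $\Delta=\mathrm{A}_2$; your ``vertex-by-vertex linear algebra on $W_1^{\otimes 3}$'' is that same computation. One small correction: $W_3=0$ is not where non-Koszulity lives --- the Dynkin algebras fail to be Koszul because $H_2(K(A))\neq 0$, not through any defect in $W_3$ --- but this does not affect your argument.

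For \textit{(ii)}--\textit{(iii)} you take a genuinely different route: self-duality of the length-two complex of finitely generated projectives, $\Hom_{A^e}(K(A),A^e)\cong K(A)[-2]$, followed by $-\otimes_{A^e}M$. The paper instead exhibits the duality directly on (co)chains as a cap product, $\theta_M(f)=\omega_0\underset{K}{\frown}f$ with $\omega_0=\sum_i e_i\otimes\sigma_i$, writes down an explicit inverse $\eta$ in each degree, and obtains compatibility with the differentials from the fundamental formulas $b_K=-[\me_A,-]_{\underset{K}{\smile}}$ and $b^K=-[\me_A,-]_{\underset{K}{\frown}}$; the $\HK^{\bullet}(A)$-bimodule statement and part \textit{(iii)} then come for free from the associativity relations between $\underset{K}{\smile}$ and $\underset{K}{\frown}$. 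Your approach is conceptually cleaner (it is exactly how the paper later proves the duality for general Kc-Calabi-Yau algebras in Section 5), but it pays twice. First, pushing the isomorphism through $-\otimes_{A^e}M$ only yields vector-space isomorphisms on homology unless you also track the cup/cap module structure through the dualization; this is precisely the delicate point that forces the authors to introduce ``strong'' Kc-Calabi-Yau algebras. Second, and more seriously, the twist you flag is left unresolved, and it is the crux of the whole statement. The resolution is the identity $f\underset{K}{\frown}\omega_0=\omega_0\underset{K}{\frown}f$ for every Koszul cochain $f$, checked degree by degree; in degree $1$ it reads $-\sum_a\varepsilon(a)\,f(a^*)\otimes a=\sum_a\varepsilon(a)\,f(a)\otimes a^*$, which holds exactly because $\varepsilon(a^*)=-\varepsilon(a)$. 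So the antisymmetry $a\mapsto a^*$, $a^*\mapsto -a$ of the double-quiver form is not the source of a surviving orientation character, as your last paragraph suggests --- it is what kills it. Without this computation your proposal does not yet establish \textit{(ii)}; with it, your dualization argument goes through and becomes equivalent to the paper's.
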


In this statement, following~\cite{bls:kocal}, $\HK_p(A,M)$ and $\HK^p(A,M)$ denote the Koszul homology and cohomology spaces with coefficients in $M$, while $\HK^{hi}_p(A,M)$ and $\HK^p_{hi}(A,M)$ denote the higher Koszul homology and cohomology spaces. When $M=A$, these notations are simplified into $\HK_p(A)$, $\HK^p(A)$, $\HK^{hi}_p(A)$ and $\HK^p_{hi}(A)$.

In the general setting~\cite{bls:kocal}, the Koszul calculus of a quadratic algebra $A$ consists of the graded associative algebra $\HK^{\bullet}(A)$ endowed with the Koszul cup product and, for all $A$-bimodules $M$, of the graded $\HK^{\bullet}(A)$-bimodules $\HK^{\bullet}(A,M)$ and $\HK_{\bullet}(A,M)$, with actions respectively defined by the Koszul cup and cap products. The higher Koszul calculus of $A$ is given by the analogous data, adding the subscript and superscript \emph{hi}. Sometimes (as will be the case with our computations in ADE types), these calculi are \emph{restricted}, meaning that the data is limited to $M=A$, so that the restricted Koszul calculus consists of the graded associative algebra $\HK^{\bullet}(A)$ and of the graded $\HK^{\bullet}(A)$-bimodule $\HK_{\bullet}(A)$ -- similarly for the higher version.

Using Theorem \ref{funda1} for $\Delta$ Dynkin of type ADE, we shall deduce the (higher) homological restricted Koszul calculus from the computation of the (higher) cohomological restricted Koszul calculus.  


 Part \textit{(ii)} in Theorem \ref{funda1} comes from an explicit isomorphism from the complex $C_1$ of Koszul cochains with coefficients in $M$, whose $p$th cohomology is $\HK^p(A,M)$,  to the complex $C_2$ of Koszul chains  with coefficients in $M$, whose $p$th homology is  $\HK_{2-p}(A,M)$, described as follows. 

\begin{Po} \label{funda2}
Let $A$ be the preprojective algebra of a connected graph $\Delta$ distinct from $\mathrm{A}_1$ and $\mathrm{A}_2$. Let $M$ be an $A$-bimodule. The Koszul cup and cap products are denoted by $\underset{K}{\smile}$ and $\underset{K}{\frown}$. Define $\omega_0= \sum_i e_i \otimes \sigma_i$, where the sum runs over the vertices $i$ of $\Delta$ and, for each vertex $i$,  $e_i$ is the idempotent and $\sigma_i$ is the quadratic relation in $A$ associated with $i$.

For each Koszul $p$-cochain $f$ with coefficients in $M$, we define the Koszul $(2-p)$-chain $\theta_M (f)$ with coefficients in $M$ by
$$\theta_M (f) = \omega_0 \underset{K}{\frown} f .$$
Then $\theta_M: C_1 \rightarrow C_2$ is an isomorphism of complexes. Moreover, the equalities
$$\theta_{M\otimes_A N} (f\underset{K}{\smile} g) = \theta_M (f) \underset{K}{\frown} g = f \underset{K}{\frown} \theta_N (g) $$
hold for any Koszul cochains $f$ and $g$ with coefficients in bimodules $M$ and $N$ respectively.
\end{Po}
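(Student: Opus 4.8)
My plan is to run the standard argument of a Van den Bergh--type duality: pin down the shape of the two complexes, check that $\theta_M$ commutes with the differentials, and then prove that $\theta_M$ is bijective in each degree, the real content lying in the middle degree; the two multiplicativity identities will then follow from the compatibilities between the Koszul cup and cap products. Write $k=\bigoplus_i\mathbb F e_i$ for the subalgebra spanned by the vertex idempotents, so that $W_0=k$, $W_1=\overline V$ is the arrow space of the double quiver $\overline Q$ attached to a chosen orientation $Q$ of $\Delta$, and $W_2=R=\bigoplus_i\mathbb F\sigma_i$ with $0\ne\sigma_i\in e_iRe_i$. By Theorem~\ref{funda1}(i), $W_p=0$ for $p>2$, so $C_1$ and $C_2$ are concentrated in degrees $0,1,2$: in cohomological degree $p$, $C_1^p=\Hom_{k^e}(W_p,M)$ ($k$-bimodule maps) and $C_2^p=W_{2-p}\otimes_{k^e}M$, and $\theta_M$ sends $C_1^p$ into $C_2^p$. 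This is consistent with the stated target, since $\omega_0=\sum_i e_i\otimes\sigma_i$ is a Koszul $2$-chain with coefficients in $A$, so $\omega_0\underset{K}{\frown}f$ is a Koszul $(2-p)$-chain with coefficients in $A\otimes_A M\cong M$.

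For the chain-map property, the crucial fact is that $\omega_0$ is a Koszul $2$-cycle, $b_K(\omega_0)=0$; this is where the defining relation of the preprojective algebra enters. Writing $\sigma_i=\sum_{c\colon s(c)=i}\varepsilon(c)\,c\otimes\overline c$, where $c$ runs over the arrows of $\overline Q$ with source $i$, $\overline c$ is the opposite arrow and $\varepsilon(c)=+1$ if $c$ is an arrow of $Q$, $\varepsilon(c)=-1$ if $c$ is a reversed arrow, the formula for the Koszul differential on $2$-chains yields $b_K(\omega_0)=\sum_{c\in\overline Q_1}\varepsilon(c)\,(c\otimes\overline c+\overline c\otimes c)$, and the term indexed by $c$ cancels the one indexed by $\overline c$ since $\overline{\overline c}=c$ and $\varepsilon(\overline c)=-\varepsilon(c)$. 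Granting $b_K(\omega_0)=0$, the Leibniz rule for the Koszul cap product, $b_K(z\underset{K}{\frown}f)=b_K(z)\underset{K}{\frown}f+(-1)^{|z|}z\underset{K}{\frown}b_K(f)$ (cf.~\cite{bls:kocal}), applied to $z=\omega_0$ (a cycle, with $|\omega_0|=2$) gives $b_K\circ\theta_M=\theta_M\circ b_K$, so $\theta_M$ is a morphism of complexes.

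Bijectivity degree by degree is the main step, and degree $1$ will be the main obstacle. In degrees $0$ and $2$ it is formal: evaluating the cap product of the $2$-chain $\omega_0$ against a $0$-cochain, respectively a $2$-cochain, gives $\theta_M(f)=\sum_i\sigma_i\otimes f(e_i)$ and $\theta_M(f)=\sum_i e_i\otimes f(\sigma_i)$, which are precisely the canonical isomorphisms $\Hom_{k^e}(k,M)\xrightarrow{\,\sim\,}R\otimes_{k^e}M$ and $\Hom_{k^e}(R,M)\xrightarrow{\,\sim\,}k\otimes_{k^e}M$ coming from the decompositions $R=\bigoplus_i\mathbb F\sigma_i$ and $W_0=k=\bigoplus_i\mathbb F e_i$. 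In degree $1$ one computes, for $f\in\Hom_{k^e}(\overline V,M)$,
$$\theta_M(f)=\omega_0\underset{K}{\frown}f=\sum_{a\in Q_1}\bigl(a\otimes f(a^{*})-a^{*}\otimes f(a)\bigr)\in\overline V\otimes_{k^e}M,$$
and one must show this is bijective. I would do this directly, splitting $\overline V=V_0\oplus V_0^{*}$ into the arrows of $Q$ and their reverses and using the perfect $k$-bimodule pairing between $e_iV_0e_j$ and $e_jV_0^{*}e_i$ that is built into the double quiver: the inverse map sends $\sum_{a\in Q_1}(a\otimes m_a+a^{*}\otimes n_a)$ to the unique $k$-bimodule map $f$ with $f(a^{*})=m_a$ and $f(a)=-n_a$. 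Conceptually, this says that $\omega_0$ is a non-degenerate ``volume'' chain, i.e.\ that $\sum_a(a\otimes a^{*}-a^{*}\otimes a)$ is the Casimir element of a non-degenerate bilinear form on $\overline V$ — precisely the feature special to preprojective algebras that is absent for a general quadratic quiver algebra. Together with the chain-map property, this shows $\theta_M\colon C_1\to C_2$ is an isomorphism of complexes.

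It remains to prove the two identities. The first, $\theta_M(f)\underset{K}{\frown}g=\theta_{M\otimes_A N}(f\underset{K}{\smile}g)$, is the compatibility of the cap product with the cup product (the Koszul chains form a differential graded bimodule over the differential graded algebra of Koszul cochains), specialised to the chain $\omega_0$: $(\omega_0\underset{K}{\frown}f)\underset{K}{\frown}g=\omega_0\underset{K}{\frown}(f\underset{K}{\smile}g)$. For the second, $f\underset{K}{\frown}\theta_N(g)=\theta_{M\otimes_A N}(f\underset{K}{\smile}g)$, I would start from $f\underset{K}{\frown}(\omega_0\underset{K}{\frown}g)$, commute the left cap action of $f$ past the right cap action of $g$, and then use the graded-commutativity of $\underset{K}{\smile}$ together with the antisymmetry of $\omega_0$ under the flip of tensor factors; because $|\omega_0|=2$ is even, all the Koszul signs cancel. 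The one genuinely delicate point in this last part is the sign bookkeeping, which I would keep under control by carrying the explicit chain-level formulas for $\underset{K}{\smile}$ and $\underset{K}{\frown}$ throughout, rather than relying only on the formal module axioms.
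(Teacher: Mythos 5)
Your proposal is correct in substance and has the same overall skeleton as the paper's proof of this statement (given as Theorem \ref{complexduality}): a chain-map argument, degreewise bijectivity via an explicit inverse, and then the two multiplicativity identities. Where you genuinely diverge is in how the chain-map property is obtained. You derive it from $b^K(\omega_0)=0$ together with the Leibniz rule for the DG $\tilde{A}$-bimodule structure on Koszul chains; since $|\omega_0|=2$ is even the signs do disappear, and this works (your cycle computation is correct, though the homology differential should be written $b^K$, not $b_K$). The paper never uses the cycle property at this point: it first proves the chain-level identity $f\capk\omega_0=\omega_0\capk f$ for $p=0,1,2$ by direct computation, deduces the equalities $\theta_{M\ot_A N}(f\cupk g)=\theta_M(f)\capk g=f\capk\theta_N(g)$ from the associativity relations, and then obtains the chain-map property for free by specialising these equalities to $g=\me_A$ and invoking the fundamental formulas $b_K=-[\me_A,-]_{\cupk}$ and $b^K=-[\me_A,-]_{\capk}$ of Proposition \ref{fundamental}. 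The paper's route is more economical because the centrality of $\omega_0$ is needed anyway for the multiplicativity identities, whereas your route proves the chain-map property and the multiplicativity by separate mechanisms.

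The one place your sketch needs repair is the second identity $f\capk\theta_N(g)=\theta_{M\ot_A N}(f\cupk g)$. You propose to use the graded commutativity of $\cupk$, but that commutativity (Corollary \ref{cupcapsym}) holds only for cohomology classes, while the identity to be proved is a cochain-level one. The ingredient actually required is the chain-level centrality $f\capk\omega_0=\omega_0\capk f$: granting it, associativity gives $f\capk(\omega_0\capk g)=(f\capk\omega_0)\capk g=(\omega_0\capk f)\capk g=\omega_0\capk(f\cupk g)$. This centrality is special to $\omega_0$, does not follow from the graded symmetry of cap products in homology, and must be verified by hand in each degree $p=0,1,2$ — which is precisely the kind of explicit chain-level computation you say you would carry out at the end, so the gap is one of identification rather than of substance. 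Two cosmetic points: your degree-$1$ formula for $\theta_M(f)$ has the tensor factors in the opposite order to $M\otimes_{k^e}W_1$ and an overall sign discrepancy with the paper's $\sum_{a\in\ov{Q}_1}\varepsilon(a)f(a)\otimes_{k^e}a^*$, neither of which affects the bijectivity argument, whose essential content (the perfect pairing between $Q_1$ and $Q_1^*$ inside the double quiver) you identify correctly.
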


The proof of Proposition \ref{funda2} relies on some manipulations of the defining formula of $\theta_M$ with fundamental formulas of Koszul calculus~\cite{bls:kocal}, using actions involving $\underset{K}{\smile}$ and $\underset{K}{\frown}$. The fundamental formulas of Koszul calculus express the differential $b_K$ of $C_1$ and the differential $b^K$ of $C_2$ respectively as a cup bracket and a cap bracket, namely
$$b_K=- [\me_A, -]_{\underset{K}{\smile}}, \ \ b^K = -[\me_A, -]_{\underset{K}{\frown}},$$
where $\me_A:V\rightarrow A$ is a fundamental Koszul 1-cocycle defined on the arrow space $V$ by $\me_A(x)=x$ for all $x\in
V$.

In order to extract a generalised version of the 2-Calabi-Yau property from our Poincar\'e Van den
Bergh duality (Theorem \ref{funda1}) for quadratic algebras, we apply this theorem to the left $A^e$-module $M=A^e:=A\otimes A^{op}$ viewed as an $A$-bimodule. We show that the complex of Koszul chains with coefficients in the left $A^e$-module $A^e$ is naturally isomorphic, as a right $A^e$-module, to the Koszul bimodule complex $K(A)$. Using the fact that the homology of $K(A)$ is isomorphic to $A$ in degree 0, and to $0$ in degree 1, we obtain a generalisation of the 2-Calabi-Yau property, formulated as follows. 

\begin{Te} \label{funda4}
Let $A$ be the preprojective algebra of a connected graph $\Delta$ distinct from $\mathrm{A}_1$ and $\mathrm{A}_2$, over a field $\mathbb{F}$. Let us denote by $K(A)$ the Koszul bimodule complex of $A$. Then the $A$-bimodule $\HK^p(A,A^e)$ is isomorphic to the $A$-bimodule $H_{2-p}(K(A))$ for $0\ppq p \ppq 2$. In particular, we have the following.
\begin{enumerate}[\itshape(i)]
\item  The $A$-bimodule $\HK^2(A, A^e)$ is isomorphic to the $A$-bimodule $A$.

\item  $\HK^1(A,A^e)= 0$.

\item  The $A$-bimodule $\HK^0(A,A^e)$ is isomorphic to the $A$-bimodule $H_2(K(A))$, which is
  always non-zero when $\Delta$ is Dynkin of type $\mathrm{ADE}$.
\end{enumerate}

\end{Te}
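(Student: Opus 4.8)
The plan is to establish the isomorphism $\HK^p(A,A^e)\cong H_{2-p}(K(A))$ of $A$-bimodules for $0\ppq p\ppq 2$ and then read off parts \textit{(i)}--\textit{(iii)} as immediate consequences. First I would apply Theorem~\ref{funda1}\textit{(ii)} to the bimodule $M=A^e=A\otimes A^{op}$, which gives an isomorphism of $\HK^\bullet(A)$-bimodules $\HK^p(A,A^e)\cong\HK_{2-p}(A,A^e)$; this is legitimate since $\Delta$ is connected and distinct from $\mathrm{A}_1$ and $\mathrm{A}_2$. The work then reduces to identifying $\HK_{2-p}(A,A^e)$ with $H_{2-p}(K(A))$ as $A$-bimodules. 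For this I would unwind the definition of the complex $C_2$ of Koszul chains with coefficients in $M$: in general $C_2$ in degree $q$ is $M\otimes_{A^e}(A\otimes W_q\otimes A)$ for the Koszul bimodule resolution pieces $W_q$ (with $W_0=\mathbb{F}\Delta_0$, $W_1=V$, $W_2=\bigoplus_i\mathbb{F}\sigma_i$ here, since $K(A)$ has length $2$ by Theorem~\ref{funda1}\textit{(i)}). Substituting $M=A^e$ and using $A^e\otimes_{A^e}(A\otimes W_q\otimes A)\cong A\otimes W_q\otimes A$ naturally, one sees that $C_2$ for $M=A^e$ is, termwise, exactly the underlying spaces of $K(A)$; the point to check is that the chain differential $b^K$ on $C_2$ then coincides with the bimodule differential of $K(A)$, and that the residual right $A^e$-action (equivalently, the $A$-bimodule structure) on $A^e\otimes_{A^e}(-)$ is carried to the outer $A$-bimodule structure on $K(A)$.

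Concretely, I would verify the identification of differentials by appealing to the fundamental formula $b^K=-[\me_A,-]_{\underset{K}{\frown}}$: under the natural isomorphism $A^e\otimes_{A^e}(A\otimes W_q\otimes A)\xrightarrow{\sim} A\otimes W_q\otimes A$, the cap-bracket with $\me_A$ becomes precisely the signed sum of the two ``edge'' contractions that define the Koszul bimodule differential of $K(A)$ (left multiplication by the image of $\me_A$ minus right multiplication), so the two complexes agree on the nose. The $A$-bimodule structures match because the left $A^e$-module structure on the coefficient bimodule $A^e$ induces, after the tensor identification, exactly the outer left-and-right $A$-multiplications on $A\otimes W_q\otimes A$ that make $K(A)$ a complex of $A$-bimodules. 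Combining the two isomorphisms yields $\HK^p(A,A^e)\cong H_{2-p}(K(A))$ as $A$-bimodules.

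Granting this, parts \textit{(i)}--\textit{(iii)} follow: by construction of the Koszul bimodule resolution, $H_0(K(A))\cong A$ as an $A$-bimodule (the augmentation) and $H_1(K(A))=0$ (exactness in degree~$1$, which holds for any preprojective algebra of a graph distinct from $\mathrm{A}_1,\mathrm{A}_2$ since the Koszul complex resolves $A$ through that range), giving \textit{(i)} with $p=2$ and \textit{(ii)} with $p=1$. Part \textit{(iii)} is just the case $p=0$, and the non-vanishing assertion for $\Delta$ of type $\mathrm{ADE}$ is precisely the statement that these preprojective algebras are \emph{not} Koszul, equivalently that $K(A)$ fails to be acyclic in top degree, so $H_2(K(A))\neq 0$; I would cite~\cite{mv:kprepro,bbk} for this failure of Koszulity and translate it into $H_2(K(A))\neq0$ via the standard comparison between the Koszul complex and the minimal projective bimodule resolution.

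The main obstacle I anticipate is purely bookkeeping rather than conceptual: getting the tensor-over-$A^e$ identification of $C_2(A,A^e)$ with $K(A)$ to be natural and compatible simultaneously with the differentials \emph{and} with both the residual $A$-bimodule structure and the $\HK^\bullet(A)$-action, keeping the signs in $b^K=-[\me_A,-]_{\underset{K}{\frown}}$ consistent with the sign conventions in the definition of $K(A)$. Once the correct normalisations are fixed, each verification is a short diagram chase, but the dangerous step is conflating the ``inner'' bimodule structure on $A^e$ used to form the coefficients with the ``outer'' structure that survives to give $H_{2-p}(K(A))$ its $A$-bimodule structure; I would be careful to track these separately throughout.
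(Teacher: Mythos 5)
Your proposal is correct and follows essentially the same route as the paper: apply the duality $\theta_{A^e}=\omega_0\underset{K}{\frown}-$ with coefficients in $M=A^e$ (outer structure), identify $A^e\otimes_{A^e}K(A)$ with $K(A)$ carrying the residual inner-turned-outer $A$-bimodule structure, and read off the three parts from $H_0(K(A))\cong A$, $H_1(K(A))=0$ and the failure of Koszulity in type ADE. The one step you list as "bookkeeping" that the paper isolates as a lemma is the verification that $\theta_{A^e}$ is linear for the residual right $A^e$-action (not just for the $\tilde A$-action), which is exactly what upgrades the duality of Theorem~\ref{funda1}\textit{(ii)} to an isomorphism of $A$-bimodules.
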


  We then say that the preprojective algebra $A$ is a \emph{Koszul complex Calabi-Yau algebra of
    dimension 2}. We generalise this definition to any quadratic algebra and any dimension $n$ in Definition \ref{kcy} below, better formulated in terms of derived categories. Since there is an $\mathbb{F}$-linear isomorphism
$$H(\theta_M)= \overline{\omega}_0 \underset{K}{\frown} - : \HK^{\bullet}(A,M) \rightarrow \HK_{2-\bullet}(A,M),$$
we say that the class $\overline{\omega}_0 \in \HK_2(A)$ is the \emph{fundamental class} of the Koszul complex Calabi-Yau algebra $A$, by analogy with Poincar\'e's duality in singular homology/cohomology~\cite{hat:algtop}. In Definition \ref{strongcy}, we give a stronger version of Definition \ref{kcy} in order to obtain a Poincar\'e-like duality, that is, a duality isomorphism expressed as a cap action by a suitably defined fundamental class. 

Let us remark that the $A$-bimodule structures in Theorem \ref{funda4} are compatible with the Koszul cup and cap actions of $\HK^{\bullet}(A)$ on $\HK^{\bullet}(A, A^e)$ and $H(K(A))$. These actions can be viewed as graded actions of left $\HK^{\bullet}(A)^e$-modules, while the $A$-bimodules can be viewed as compatible right $A^e$-modules. So the isomorphism $\HK^{\bullet}(A,A^e)\cong H_{2-\bullet}(K(A))$ in Theorem \ref{funda4} is an isomorphism of graded $\HK^{\bullet}(A)^e$-$A^e$-bimodules. This enriched isomorphism is the expression of the stronger version of the Koszul complex Calabi-Yau property, as we shall see in Definition \ref{strongcy}.

Note that if $\Delta$ is not Dynkin ADE, then $A$ is Koszul, so Theorem \ref{funda4} enables us to recover the well-known result that $A$ is 2-Calabi-Yau in the sense of Ginzburg~\cite{cbeg:quiver,boc:3cy}.  However, 
if $\Delta$ is Dynkin ADE, then $A$ is not homologically smooth since its minimal projective
resolution has  infinite length, so that Ginzburg's definition of Calabi-Yau algebras cannot be
applied in this case~\cite{vg:cy}. Moreover, the restricted Hochschild calculus is drastically
different from the restricted Koszul calculus, because by~\cite{es:third} there is a cohomological Hochschild periodicity
$$\HH ^{p+6}(A) \cong \HH ^p(A), \ p>0$$ 
and, consequently, there are non-zero spaces $\HH ^p(A)$ for infinitely many values of $p$. Even taking into account this 6-periodicity, the list of cohomological Koszul invariants consists only of  $\HK^0(A)$, $\HK^1(A)$ and $\HK^2(A)$ and is therefore shorter than the list of Hochschild invariants.

In~\cite{eusched:cyfrob}, Eu and Schedler define periodic Calabi-Yau Frobenius algebras, for finite dimensional algebras only. Their main example is given by the preprojective algebras of Dynkin ADE graphs~\cite[Example 2.3.10]{eusched:cyfrob}. Then the above cohomological Hochschild periodicity is a part of remarkable isomorphims in Hochschild calculus for any periodic Calabi-Yau Frobenius algebra~\cite[Theorem 2.3.27 and Theorem 2.3.47]{eusched:cyfrob}.

From Theorem \ref{funda4}, we are led to introduce a general definition.

\begin{Df} \label{kcy}
Let $\mathcal{Q}=(\mathcal{Q}_0, \mathcal{Q}_1)$ be a finite quiver, and let $\mathbb{F}$ be a field. Let $A$ be an $\mathbb{F}$-algebra defined on the path algebra $\mathbb{F}\mathcal{Q}$ of $\mathcal{Q}$ by homogeneous quadratic relations. Define the ring $k=\mathbb{F} \mathcal{Q}_0$, 
so that $A$ is regarded as a quadratic $k$-algebra. We say that $A$ is Koszul complex Calabi-Yau (Kc-Calabi-Yau) of dimension $n$, for an integer $n\pgq 0$, if
\begin{enumerate}[\itshape(i)]
\item  the bimodule Koszul complex $K(A)$ of $A$ has length $n$, and

\item  $\RHom _{A^e}(K(A), A^e) \cong K(A)[-n]$ in the bounded derived category of $A$-bimodules.
\end{enumerate}

\end{Df}

In our context (that of quadratic algebras), Definition \ref{kcy} is a definition of a new Calabi-Yau property, valid whether $A$ is finite dimensional or not. In this definition, we do not impose that $K(A)$ be a resolution of $A$, that is, $A$ is not necessarily Koszul, meaning that the bimodules $\HK^p(A, A^e)$ for $0 \ppq p \ppq n-2$ may be non-zero. Under the assumptions of Definition \ref{kcy}, we verify that, if $A$ is Koszul, Definition \ref{kcy} is equivalent to Ginzburg's definition of $n$-Calabi-Yau algebras~\cite{vg:cy,vdb:cypot}. We then prove a new Poincar\'e Van den Bergh duality for Kc-Calabi-Yau algebras, adapted to Koszul (co)homologies.

\begin{Te} \label{duality}
Let $A$ be a Kc-Calabi-Yau algebra of dimension $n$. Then for any $A$-bimodule $M$, the $\mathbb{F}$-vector spaces $\HK^p(A,M)$ and $\HK_{n-p}(A,M)$ are isomorphic.
\end{Te}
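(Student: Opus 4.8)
The plan is to derive Theorem~\ref{duality} as a consequence of condition \textit{(ii)} in Definition~\ref{kcy} together with a standard adjunction/duality argument, exactly as one proves the classical Poincar\'e Van den Bergh duality for $n$-Calabi-Yau algebras, but replacing the bimodule resolution by the Koszul bimodule complex $K(A)$. The starting point is that for any $A$-bimodule $M$ one has, up to quasi-isomorphism,
\begin{equation*}
\HK^p(A,M) = H^p\big(\Hom_{A^e}(K(A),M)\big), \qquad \HK_p(A,M) = H_p\big(M\ot_{A^e}K(A)\big),
\end{equation*}
since $\Hom_{A^e}(K(A),M)$ is the complex $C_1$ of Koszul cochains and $M\ot_{A^e}K(A)$ is the complex $C_2$ of Koszul chains (this is the description recalled before Proposition~\ref{funda2}, valid for $K(A)$ a complex of $A^e$-modules that are finitely generated and projective on each side, as is the case for a quadratic quiver algebra).

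The key computation is then the chain of natural isomorphisms in the derived category:
\begin{align*}
\RHom_{A^e}(K(A),M)
&\cong \RHom_{A^e}\big(K(A),A^e\big)\ot^{\mathbf{L}}_{A^e} M \\
&\cong K(A)[-n]\ot^{\mathbf{L}}_{A^e} M \\
&\cong \big(M\ot^{\mathbf{L}}_{A^e}K(A)\big)[-n].
\end{align*}
The first isomorphism is the standard one expressing $\RHom_{A^e}(P,-)$ via the ``dual'' bimodule $\RHom_{A^e}(P,A^e)$, which holds because each term of $K(A)$ is finitely generated projective as a left \emph{and} as a right $A$-module (hence the relevant derived tensor/Hom can be computed termwise and the natural transformation $\RHom_{A^e}(P,A^e)\ot^{\mathbf L}_{A^e}M\to\RHom_{A^e}(P,M)$ is an isomorphism); the second isomorphism is precisely condition \textit{(ii)} of Definition~\ref{kcy}; and the third is just commutativity of the derived tensor product together with the shift. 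Taking $p$th cohomology of the left-hand side gives $\HK^p(A,M)$, and taking $p$th cohomology of the right-hand side gives $H_{n-p}(M\ot^{\mathbf L}_{A^e}K(A))=\HK_{n-p}(A,M)$, which yields the asserted $\mathbb{F}$-linear isomorphism.

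I expect the main obstacle to be \textbf{justifying that the termwise algebraic constructions compute the derived functors} — i.e. that one may use $K(A)$ itself, rather than a genuine projective bimodule resolution, on both sides of the duality. Concretely one must check: (a) that $\Hom_{A^e}(K(A),-)$ and $-\ot_{A^e}K(A)$ already compute $\HK^\bullet$ and $\HK_\bullet$ (this is essentially the definition in \cite{bls:kocal}, once extended to quadratic quiver algebras as done earlier in the paper); (b) that each $K(A)_i$, being of the form $A\ot_k W_i\ot_k A$ for a finite-dimensional $k$-bimodule $W_i$, is finitely generated projective over $A^e$ in the one-sided senses needed for the ``evaluation'' natural transformation to be invertible; and (c) that the isomorphism in Definition~\ref{kcy}\textit{(ii)} lives in the bounded derived category of bimodules and can therefore be tensored with $M$ and composed with the other two isomorphisms. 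Points (b)--(c) are routine homological algebra, so the only real content is bookkeeping; once it is in place, the theorem follows formally. Note that, unlike in Theorem~\ref{funda1}, no claim of compatibility with the $\HK^\bullet(A)$-module structure is made here, so one does not need to track the cup/cap actions through these isomorphisms — the statement is purely at the level of $\mathbb{F}$-vector spaces, which keeps the argument short.
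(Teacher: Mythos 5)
Your proposal is correct and follows essentially the same route as the paper: the paper's proof of this theorem also rests on the evaluation natural transformation $\phi_M: M\otimes_{A^e}\Hom_{A^e}(N,A^e)\to\Hom_{A^e}(N,M)$, which is an isomorphism when $N$ is a finitely generated projective $A$-bimodule (as each term $A\otimes_k W_p\otimes_k A$ of $K(A)$ is, by Lemma~\ref{proj}), hence induces $M\stackrel{L}\otimes_{A^e}\RHom_{A^e}(K(A),A^e)\cong\RHom_{A^e}(K(A),M)$ in $\mathcal{D}^b(\catvs)$, after which condition \textit{(ii)} of the definition and passage to homology give the result. The only cosmetic difference is that you justify invertibility of $\phi_M$ via one-sided finite projectivity of the terms of $K(A)$, whereas the relevant (and directly available) hypothesis is finite generation and projectivity as $A^e$-modules.
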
 

\begin{Df} \label{fundclass}
Let $A$ be a Kc-Calabi-Yau algebra of dimension $n$. The image $c \in \HK_n(A)$ of the unit $1$ of the algebra $A$ under the isomorphism $\HK^0(A) \cong \HK_n(A)$ in Theorem \ref{duality} is called the fundamental class of the Kc-Calabi-Yau algebra $A$.
\end{Df}

In order to describe the duality isomorphism of Theorem \ref{duality} explicitly as a cap-product by the fundamental class for strong Kc-Calabi-Yau algebras, we shall use derived categories in the general context of DG algebras, as presented and detailed in the preprint book by Yekutieli~\cite{yeku:dercat}. Let us present briefly what we need in this general context.

We introduce the DG algebra $\tilde{A}=\Hom _{A^e}(K(A), A)$. The complexes $K(A)$ and $\Hom _{A^e}(K(A), A^e)$ of $A$-bimodules have an enriched structure since they can be viewed as DG $\tilde{A}$-bimodules in the abelian category $\abimod$ of $A$-bimodules, in the sense of~\cite{yeku:dercat}.

Denote by $\mathcal{C}(\tilde{A},\abimod)$ the category of DG $\tilde{A}$-bimodules in $\abimod$~\cite{yeku:dercat}. Let $M$ be an $A$-bimodule. For any chain DG $\tilde{A}$-bimodule $C$ in $\abimod$, $\Hom _{A^e} (C, M)$ is a cochain DG $\tilde{A}$-bimodule in the abelian category $\catvs$ of $\mathbb{F}$-vector spaces (in $\abimod$ when $M=A^e$). For any cochain DG $\tilde{A}$-bimodule $C'$ in $\abimod$, $M\otimes_{A^e} C'$ is a cochain DG $\tilde{A}$-bimodule in $\catvs$. The bounded derived categories $\mathcal{D}^b(\tilde{A},\abimod)$ and $\mathcal{D}^b(\tilde{A},\catvs)$ are defined in~\cite{yeku:dercat}. However we do not know if the functors $\Hom _{A^e} (-, M)$ and $M\otimes_{A^e} - $ from $\mathcal{C}^b(\tilde{A},\abimod)$ to $\mathcal{C}^b(\tilde{A},\catvs)$ are derivable.

\begin{Df} \label{strongcy}
\sloppy Let $A$ be a Kc-Calabi-Yau algebra of dimension $n$. Then $A$ is said to be strong Kc-Calabi-Yau if the derived functor of the endofunctor $\Hom _{A^e}(-, A^e)$ of  $\mathcal{C}^b(\tilde{A},\abimod)$ exists and if $\RHom  _{A^e}(K(A), A^e) \cong K(A)[-n]$ in the bounded derived category $\mathcal{D}^b(\tilde{A},\abimod)$.
\end{Df}

\begin{Te} \label{strongduality}
Let $A$ be a Kc-Calabi-Yau algebra of dimension $n$ and let $c$ be its fundamental class. We assume that $A$ is strong Kc-Calabi-Yau and that the derived functors of the functors $\Hom _{A^e}(-,A)$ and $A\otimes_{A^e} -$ from $\mathcal{C}^b(\tilde{A},\abimod)$ to $\mathcal{C}^b(\tilde{A},\catvs)$ exist. Then
$$c \underset{K}{\frown} - : \HK^{\bullet}(A) \rightarrow \HK_{n-\bullet}(A)$$
is an isomorphism of $\HK^{\bullet}(A)$-bimodules, inducing an isomorphism of $\HK^{\bullet}_{hi}(A)$-bimodules from $\HK^{\bullet}_{hi}(A)$ to $\HK^{hi}_{n-\bullet}(A)$.  For  all $\alpha \in \HK^p(A)$, we have $c \underset{K}{\frown} \alpha = (-1)^{np}\alpha \underset{K}{\frown} c$.
\end{Te}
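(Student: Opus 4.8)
The plan is to reduce Theorem~\ref{strongduality} to the abstract duality isomorphism coming from Definition~\ref{strongcy} and then identify that isomorphism with the cap-product by the fundamental class $c$. First I would observe that, since $A$ is Kc-Calabi-Yau of dimension $n$, the bimodule Koszul complex $K(A)$ has length $n$, and the complex of Koszul cochains $\Hom_{A^e}(K(A),M)$ computes $\HK^{\bullet}(A,M)$ while the complex of Koszul chains $M\otimes_{A^e}K(A)$ computes $\HK_{\bullet}(A,M)$. Applying the strong hypothesis, $\RHom_{A^e}(K(A),A^e)\cong K(A)[-n]$ in $\mathcal{D}^b(\tilde A,\abimod)$; tensoring this quasi-isomorphism on the left by $M$ over $A^e$ (using that the derived functor of $M\otimes_{A^e}-$ exists on $\mathcal{C}^b(\tilde A,\abimod)$, which for $M=A$ and $M=A^e$ is exactly our hypothesis, and in general follows since $M\otimes_{A^e}-$ only shifts and twists coefficients) and invoking the standard adjunction $M\otimes_{A^e}\RHom_{A^e}(K(A),A^e)\cong \RHom_{A^e}(K(A),M)$ — valid because $K(A)$ is a bounded complex of finitely generated projective $A^e$-modules — yields a natural isomorphism $\RHom_{A^e}(K(A),M)\cong (M\otimes_{A^e}K(A))[-n]$ in $\mathcal{D}^b(\tilde A,\catvs)$. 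Taking cohomology gives the isomorphism $\HK^p(A,M)\cong\HK_{n-p}(A,M)$ of Theorem~\ref{duality}, now refined to an isomorphism of $\HK^{\bullet}_{hi}(A)$-bimodules because everything was carried out in the category of DG $\tilde A$-bimodules, whose cohomology governs the higher Koszul calculus (the $\tilde A$-action on $K(A)$ is the one inducing the $\HK^{\bullet}(A)$-module structure via the Koszul cup/cap products).

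Next I would pin down the map explicitly for $M=A$. The isomorphism $\HK^{\bullet}(A)\to\HK_{n-\bullet}(A)$ constructed above is, by naturality, a morphism of $\HK^{\bullet}(A)$-bimodules; I would show it is given by $\alpha\mapsto c\capk\alpha$ where $c$ is the image of $1\in\HK^0(A)$. Indeed, $\HK^0(A)$ acts on itself by $\cupk$ with unit $1$, and the module-map property forces $\alpha\mapsto \Phi(1)\capk\alpha$ for the specific $\Phi$ we built, because $\alpha = 1\cupk\alpha$ is sent to $\Phi(1\cupk\alpha)=\Phi(1)\capk\alpha$ by compatibility of $\Phi$ with the cup action on the source and the cap action on the target — this compatibility is precisely the bimodule-morphism statement, and it is the Koszul-calculus analogue of Proposition~\ref{funda2}'s identity $\theta_{M\otimes_A N}(f\cupk g)=\theta_M(f)\capk g$. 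By Definition~\ref{fundclass}, $\Phi(1)=c$, so $c\capk-$ is an isomorphism. The bimodule statement then follows by applying naturality once more with respect to left and right multiplication actions of $\HK^{\bullet}(A)$, and the higher version by restricting to the $\HK^{\bullet}_{hi}(A)$-submodule structure, which is preserved since the whole construction lives in $\mathcal{D}^b(\tilde A,-)$.

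Finally, the graded-commutation formula $c\capk\alpha=(-1)^{np}\alpha\capk c$ for $\alpha\in\HK^p(A)$ I would derive from the graded symmetry of the cup product together with the defining relation between $\cupk$ and $\capk$ on $\HK^{\bullet}(A)$ and $\HK_{\bullet}(A)$; concretely, $c\in\HK_n(A)=\HK_n(A,A)$ can be written as $c = 1\capk(\text{something})$ or tracked through $\theta$, and one uses $a\capk(b\cupk\alpha)=(a\cupk b)\capk\alpha$ type identities from~\cite{bls:kocal} plus the sign rule $\beta\cupk\alpha=(-1)^{pq}\alpha\cupk\beta$ in the graded-commutative algebra $\HK^{\bullet}(A)$; the sign $(-1)^{np}$ then emerges from commuting a degree-$p$ class past the degree-$n$ fundamental class. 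The main obstacle I expect is the passage through derived categories of DG $\tilde A$-bimodules in the abelian categories $\abimod$ and $\catvs$: one must verify that the adjunction isomorphism $M\otimes_{A^e}\RHom_{A^e}(K(A),A^e)\cong\RHom_{A^e}(K(A),M)$ is not merely an $\mathbb{F}$-linear or $A$-bimodule isomorphism but respects the DG $\tilde A$-bimodule structure, and that the existence hypotheses on the derived functors of $\Hom_{A^e}(-,A)$ and $A\otimes_{A^e}-$ are exactly what is needed to make this identification and the subsequent computation of $\Phi(1)=c$ legitimate in $\mathcal{D}^b(\tilde A,\catvs)$; this is delicate because, as noted before Definition~\ref{strongcy}, derivability of these functors is not automatic, and the bookkeeping of the $\tilde A$-actions through the tensor-hom adjunction for the finitely generated projective complex $K(A)$ must be done with care to get the signs in the last formula right.
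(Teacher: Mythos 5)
Your proposal is correct and follows essentially the same route as the paper: one constructs the natural transformation $\phi_A\colon A\otimes_{A^e}\Hom_{A^e}(-,A^e)\Rightarrow\Hom_{A^e}(-,A)$, checks it is a morphism of DG $\tilde{A}$-bimodules, derives it using the stated hypotheses, applies it to $K(A)$, and identifies the resulting isomorphism $\psi$ with $c\capk -$ via $\psi(\alpha)=\psi(1\cupk\alpha)=\psi(1)\capk\alpha$. The only small imprecision is in your last paragraph: the sign $(-1)^{np}$ comes purely from the Koszul sign in the graded bimodule-morphism identity $\psi(\alpha)\capk\beta=(-1)^{np}\alpha\capk\psi(\beta)$ for the degree-$(-n)$ map $\psi$, not from graded commutativity of $\cupk$, which is not available for a general Kc-Calabi-Yau algebra.
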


Let us describe the contents of the paper. In Section 2, we extend the general formalism  -- including some results -- of Koszul calculus~\cite{bls:kocal} to quadratic quiver algebras. In Section 3, we introduce a right action which is an important tool in order to adapt the definition of Calabi-Yau algebras to quadratic quiver algebras endowed with the Koszul calculus instead of the Hochschild calculus. The Poincar\'e Van den Bergh duality for preprojective algebras is presented in Section 4, where Theorem \ref{funda1}, Proposition \ref{funda2} and Theorem \ref{funda4} of our introduction are proved. In Section 5, we define our generalisations of Calabi-Yau algebras and we thoroughly explain the new objects and remaining results outlined in the introduction. Section 6 is devoted to the computations of the Koszul calculus in ADE Dynkin types. As an application of the computations, we prove that the spaces $\HK^0_{hi}(A)$, $\HK^1_{hi}(A)$ and $\HK^2_{hi}(A)$ form a minimal complete list of cohomological invariants for the ADE preprojective algebras.

\paragraph{Acknowledgement.} The authors are grateful to the anonymous referee, whose useful and detailed comments helped to improve this manuscript. 

\setcounter{equation}{0}

\section{Koszul calculus for quiver algebras with quadratic relations} \label{kcquivers}

\subsection{Setup} \label{not}

Let $\mathcal{Q}$ be a \emph{finite} quiver, meaning that the vertex set $\mathcal{Q}_0$ and the arrow set $\mathcal{Q}_1$ are finite. Let $\mathbb{F}$ be a field. The vertex space $k=\mathbb{F} \mathcal{Q}_0$ becomes a commutative ring by associating with ${\mathcal Q}_0$ a complete set of orthogonal idempotents $\set{e_i\,;\,i\in{\mathcal Q}_0}$. The ring $k$ is isomorphic to $\mathbb{F}^{|\mathcal{Q}_0|}$, where $|\mathcal{Q}_0|$ is the cardinal of $\mathcal{Q}_0$. Throughout the paper, the case $|\mathcal{Q}_0|=1$ will be called \emph{the one vertex case}, which is equivalent to saying that $k$ is a field. Koszul calculus over a field $k$ is treated in~\cite{bls:kocal}. 

For each arrow $\alpha \in \mathcal{Q}_1$, denote its source vertex by $\mo(\alpha)$ and its target vertex by $\frak{t}(\alpha)$. The arrow space $V=\mathbb{F} \mathcal{Q}_1$ is a $k$-bimodule for the following actions: $e_j\alpha e_i$ is equal to zero if $i\neq \frak{s}(\alpha)$ or $j\neq \frak{t}(\alpha)$, and is equal to $\alpha$ if $i= \frak{s}(\alpha)$ and $j= \frak{t}(\alpha)$. 

Via the ring morphism $\mathbb{F} \rightarrow k$ that maps $1$ to $\sum_{i\in{\mathcal Q}_0}e_i$, the tensor $k$-algebra $T_k(V)$ of the $k$-bimodule $V$ is an $\mathbb{F}$-algebra isomorphic to the path algebra $\mathbb{F}\mathcal{Q}$, so that $V^{\otimes_k m}$ is identified with $\mathbb{F}\mathcal{Q}_m$, where $\mathcal{Q}_m$ is the set of paths of length $m$. For two arrows $\alpha$ and $\beta$, note that 
$\alpha \otimes_k \beta$ is zero if $\frak{t}(\beta)\neq \frak{s}(\alpha)$, and otherwise $\alpha \otimes_k \beta$ is identified with the composition $\alpha \beta$ of paths (where paths are written from right to left, as in~\cite{benson:repcoh}).

Let $R$ be a sub-$k$-bimodule of $V\otimes_kV\cong \mathbb{F}\mathcal{Q}_2$. The unital associative
$k$-algebra $A=T_k(V)/(R)$, where $(R)$ denotes the two-sided ideal of $T_k(V)$ generated by $R$, is
called a \emph{quadratic $k$-algebra over the finite quiver $\mathcal{Q}$}\label{quadratic over Q}. The degree induced on $A$ by the path length is called the \emph{weight}, so that $A$ is a graded algebra for the weight grading. The component of weight $m$ of $A$ is denoted by $A_m$. Clearly, $A_0\cong k$ and $A_1\cong V$. The algebra $A$ is $\mathbb{F}$-central, meaning that the left action of $\lambda \in \ff$ on $A$ is the same as its right action. However if there is an arrow $\alpha$ joining two distinct vertices $i$ and $j$, $A$ is not $k$-central since the left and right actions of $e_i$ on $\alpha$ are different.  The $A$-bimodules considered in this paper are not necessarily $k$-symmetric, meaning that the left and right actions of an element of $k$ are not necessarily equal, but they are always assumed to be $\mathbb{F}$-symmetric. Setting $A^e=A\otimes_{\mathbb{F}} A^{op}$, any $A$-bimodule can be viewed as a left (or right) $A^e$-module, as usual. 

For brevity, the notation $\otimes_{\mathbb{F}}$ is replaced by the unadorned tensor product $\otimes$. Similarly for the notations $\Hom _{\mathbb{F}}$ and $\dim_{\mathbb{F}}$ abbreviated to $\Hom$  and $\dim$. If unspecified, a vector space is an $\mathbb{F}$-vector space and a linear map is $\mathbb{F}$-linear.

The tensor product $\otimes_k$ is different from the unadorned tensor product $\otimes$. However, if
$M$ is a right $A$-module and $N$ is a left $A$-module, 
then the natural linear map
$Me_i\otimes e_iN \rightarrow Me_i \otimes_k e_iN$
is an isomorphism, so that for $a\in Me_i$ and $b\in e_iN$, we can identify $a\otimes_k b = a\otimes b$. Similarly, if $M$ and $N$ are $A$-bimodules, $e_jMe_i$ and $e_iNe_j$ are $k$-bimodules, that may be viewed as left and right $k^e$-modules, where $k^e=k\otimes k$. The natural linear map
$e_jMe_i\otimes e_iNe_j \rightarrow e_jMe_i \otimes_{k^e} e_iNe_j$ 
is an isomorphism, so for $a\in e_jMe_i$ and $b\in e_iNe_j$, we can identify $a\otimes_{k^e} b = a\otimes b$. We shall freely use these identifications, without explicitly mentioning them.

Although the algebra $A$ is not $k$-central, we define its bar resolution $B(A)$ following the standard text~\cite{weib:homo} by
$ (A\otimes_k A^{\otimes_k \bullet} \otimes_k A, d)$ with
\begin{align*}
d(a\otimes_k a_1\ldots a_p \otimes_k a')&= aa_1\otimes_k a_2\ldots a_p \otimes_k a'+ \sum_{1\ppq i \ppq p-1} (-1)^i a\otimes_k a_1\ldots (a_i a_{i+1}) \ldots a_p \otimes_k a' \\&\quad+ (-1)^p a\otimes_k a_1\ldots a_{p-1} \otimes_k a_p a',
  \end{align*}
for $a$, $a'$ and $a_1, \ldots ,a_{p}$ in $A$. When $A$ is not $k$-central, the extra degeneracy is defined and is still a contracting homotopy, hence $B(A)$ is a resolution of $A$ by projective $A$-bimodules. See Lemma \ref{proj} below for the fact that the $A$-bimodules $A\otimes_k A^{\otimes_k p} \otimes_k A$ are projective.

For any $A$-bimodule $M$, Hochschild homology and cohomology are defined by
$$\HH _{\bullet}(A,M)=\Tor ^{A^e}_{\bullet}(M,A)=H_{\bullet}(M\otimes_{A^e} B(A),M\otimes_{A^e}d),$$
$$\HH ^{\bullet}(A,M)=\Ext _{A^e}^{\bullet}(A,M)=H^{\bullet}(\Hom _{A^e}(B(A),M),\Hom _{A^e}(d,M)).$$
Given any $k$-bimodule $E$, there are well known vector space isomorphisms 
\begin{equation}\label{isos}
\begin{array}{crcl}
&M\otimes_{A^e}(A\otk E\otk A)&\rightarrow &M\otimes_{k^e}E\\
&m\otimes_{A^e}(a\otk x\otk a')&\mapsto&a'ma\otimes_{k^e}x\\\\
\text{ and }&\Hom_{k^e}(E,M)&\rightarrow
  &\Hom_{A^e}(A\otk E\otk A,M)\\
&f&\mapsto&(a\otk x\otk a'\mapsto af(x)a')
\end{array}
\end{equation}

Taking $E=A^{\otk p}$ and transporting $M\otimes_{A^e}d$ and $\Hom _{A^e}(d,M)$ via these isomorphisms, we obtain the Hochschild differentials $b^H$ and $b_H$, so that
$$\HH _{\bullet}(A,M)\cong H_{\bullet}(M\otimes_{k^e} A^{\otimes_k \bullet},b^H),$$
$$\HH ^{\bullet}(A,M)\cong H^{\bullet}(\Hom _{k^e}(A^{\otimes_k \bullet},M),b_H).$$
The Hochschild homology differential is then defined, for $m\in M$ and $a_1, \ldots ,a_{p}$ in $A$, by
\begin{align*}
b^H_p(m\otimes_{k^e} (a_1\ldots a_p))&= ma_1\otimes_{k^e} (a_2\ldots a_p) + \sum_{1\ppq i \ppq p-1} (-1)^i m\otimes_{k^e} (a_1\ldots (a_i a_{i+1}) \ldots a_p) \\&\quad+ (-1)^p a_pm \otimes_{k^e} (a_1\ldots a_{p-1}).
  \end{align*}
  The Hochschild cohomology differential  (including a Koszul sign in $\Hom _{A^e}(d,M)$) is  defined, for $f\in \Hom _{k^e}(A^{\otimes_k p},M)$ and $a_1,  \ldots ,a_{p+1}$ in $A$, by
  \begin{align*}
b_H^{p+1}(f)(a_1\ldots a_{p+1})&= f(a_1\ldots a_p)a_{p+1} - \sum_{1\ppq i \ppq p} (-1)^{p+i} f(a_1\ldots (a_i a_{i+1}) \ldots a_{p+1}) \\&\quad- (-1)^p a_1f(a_2 \ldots a_{p+1}).
    \end{align*}

\subsection{Koszul homology and cohomology } \label{khom}

Let $A=T_k(V)/(R)$ be a quadratic $k$-algebra over $\mathcal{Q}$. Following~\cite{priddy:kreso,vdb:nch,bgs,bls:kocal}, the Koszul complex $K(A)$ is the subcomplex of the bar resolution $B(A)$ defined by the sub-$A$-bimodules $A\otimes_k W_p \otimes_k A$ of $A\otimes_k A^{\otimes_k p} \otimes_k A$, where $W_0=k$, $W_1=V$ and, for $p\pgq 2$,
\begin{equation} \label{defw}
  W_{p}=\bigcap_{i+2+j=p}V^{\otimes_k i}\otimes_k R\otimes_k V^{\otimes_k j}.
\end{equation}
Here $W_p$ is considered as a sub-$k$-bimodule of $V^{\otimes_k p} \subseteq A^{\otimes_k p}$. It is immediate that the differential $d$ of $K(A)$ is defined on $A\otimes_k W_p \otimes_k A$ by
\begin{equation} \label{defd}
d(a \otimes_k x_1 \ldots x_{p} \otimes_k a') =ax_1\otimes_k x_2 \ldots x_{p}\otimes_k a'+(-1)^p a \otimes_k x_1 \ldots x_{p-1}\otimes_k x_{p} a',
\end{equation}
for $a$, $a'$ in $A$ and $x_1 \ldots x_{p}$ in $W_p$.

In this paper, we systematically follow~\cite{bls:kocal} for the notation of elements of $W_p$. Let us recall this notation. As in (\ref{defd}), an arbitrary element of $W_p$ is denoted by a product $x_1 \ldots x_{p}$ thought of as a sum of such products, where $x_1, \ldots , x_p$ are in $V$. Moreover, regarding $W_p$ as a subspace of $V^{\otimes_k q}\otimes_k W_r \otimes_k V^{\otimes_k s}$ with $q+r+s=p$, the 
element $x_1 \ldots  x_p$ viewed in $V^{\otimes_k q}\otimes_k W_r \otimes_k V^{\otimes _k s}$ will be denoted by the \emph{same} notation, meaning that 
the product $x_{q+1} \ldots x_{q+r}$ represents an element of $W_r$ and the other $x_i$ are  arbitrary in $V$.

We pursue along the same lines as~\cite{bls:kocal}. We present the different objects with their fundamental results more quickly. We keep the same notations as in~\cite{bls:kocal} and we leave the details to the reader when they are the same as in the one vertex case.

The homology of $K(A)$ is equal to $A$ in degree $0$, and to $0$ in degree $1$.
The quadratic algebra $A$ is said to be \emph{Koszul} if the homology of $K(A)$ is $0$ in any degree
$>1$. Denote by $\mu:A\otimes_k  A \rightarrow A$ the multiplication of $A$. Then $A$ is Koszul if
and only if $\mu: K(A)\rightarrow A$ is a resolution of $A$. If $R=0$ and if $R=V\otimes_kV$, then
$A$ is Koszul. Besides these extreme examples, many  Koszul algebras occur in the literature, see
for instance \cite{popo:quad,mv:intro} among many others, and it is well-known that preprojective algebras are Koszul when the graph is not Dynkin of type ADE (see Proposition~\ref{notkoszul} and the references in its proof). 

 The $A$-bimodules $A\otimes_k W_p \otimes_k A$ forming $K(A)$ are projective and finitely
 generated. Indeed,  $W_p$ is a sub-$k$-bimodule of $V^{\otimes_k p}$, so that  this fact is an
 immediate consequence of the following well known lemma (see for instance \cite[Proof of Lemma 2.1]{cibils}). We give an elementary proof here.

\begin{Lm} \label{proj}
Let $E$ be a $k$-bimodule. 
\begin{enumerate}[\itshape(i)]
\item The $A$-bimodule $A\otimes_k E \otimes_k A$ is projective.

\item  If $E$ is finite dimensional, then the $A$-bimodule $A\otimes_k E \otimes_k A$ is finitely
  generated.
\end{enumerate}

\end{Lm}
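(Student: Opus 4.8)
The plan is to prove both parts by reducing to the case $E=k$, i.e. to the $A$-bimodule $A\otimes_k A$, and then to handle $E=k$ directly using the idempotents.

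First I would observe that as a $k$-bimodule, $E$ is a quotient (in fact a direct summand, since $k$ is a separable $\ff$-algebra, being a finite product of copies of the field $\ff$) of a free $k$-bimodule, i.e. of a direct sum of bimodules of the form $e_j k \otimes_{?} k e_i$; but more concretely, as a vector space $E=\bigoplus_{i,j\in\mathcal{Q}_0} e_j E e_i$, and each $e_j E e_i$ is an $\ff$-vector space, so $E$ is a direct summand of a (possibly infinite, but finite if $\dim E<\infty$) direct sum of copies of the $k$-bimodules $k e_j \otimes e_i k$ — more precisely, choosing a basis of each $e_jEe_i$ exhibits $E$ as a direct sum of copies of $ke_j\otimes e_ik$ (these are already ``free of rank one'' on the summand indexed by $(i,j)$). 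Applying the exact functor $A\otimes_k - \otimes_k A$, it suffices to treat $E=ke_j\otimes e_ik$. Here the key computation is the $A$-bimodule isomorphism
$$A\otimes_k (ke_j\otimes e_ik)\otimes_k A \;\cong\; Ae_j\otimes e_iA,$$
where $Ae_j\otimes e_iA$ carries the outer $A$-bimodule structure ($a\cdot(u\otimes v)\cdot a' = au\otimes va'$). So I reduce the whole lemma to showing that $Ae_j\otimes e_iA$ is a projective $A$-bimodule, finitely generated when appropriate — but the finite generation is automatic here since it is generated by the single element $e_j\otimes e_i$.

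Next, the main point: $Ae_j\otimes e_iA$ is projective as an $A^e=A\otimes A^{op}$-module. The standard argument is that $Ae_j\otimes e_iA = (A\otimes A^{op})\cdot(e_j\otimes e_i)$ is a direct summand of the free left $A^e$-module $A\otimes A^{op}$, because $e_j\otimes e_i$ is an idempotent of $A^e$ and hence $A^e = A^e(e_j\otimes e_i)\oplus A^e(1-e_j\otimes e_i)$ as left $A^e$-modules. Summing over the chosen bases, $A\otimes_k E\otimes_k A$ becomes a direct summand of a (finite, when $\dim E<\infty$) direct sum of copies of $A^e$, hence projective (and finitely generated in the finite-dimensional case). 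This gives both \textit{(i)} and \textit{(ii)} at once.

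The only place requiring a little care — and the step I expect to be the main obstacle to state cleanly rather than conceptually hard — is the verification that the vector-space identification $A\otimes_k(ke_j\otimes e_ik)\otimes_k A\cong Ae_j\otimes e_iA$ is an isomorphism of \emph{$A$-bimodules} with the correct (outer) structure, keeping track of which $\otimes$ is over $k$ and which over $\ff$; this is exactly the kind of bookkeeping the paper has set up with the identifications in \eqref{isos} and the discussion of $\otimes_k$ versus $\otimes$, so I would simply invoke those identifications. Everything else — exactness of $A\otimes_k-\otimes_k A$ on $k$-bimodules (it is a sum of the functors $Ae_j\otimes e_i(-)$ applied to the $\ff$-vector space $e_j(-)e_i$, hence exact), and the idempotent-summand argument — is routine, and a one-line reference to \cite[Proof of Lemma 2.1]{cibils} covers the content, so I would present the above as the promised elementary proof.
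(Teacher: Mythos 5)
Your proof is correct and is essentially the paper's own argument: both exhibit $A\otimes_k E\otimes_k A$ as a direct summand of a free $A^e$-module by splitting $E$ into its components $e_jEe_i$, the paper by realising $\bigoplus_{i,j}Ae_j\otimes e_jEe_i\otimes e_iA$ directly as a summand of the free bimodule $A\otimes E\otimes A$, and you by the equivalent finer step $Ae_j\otimes e_iA=A^e(e_j\otimes e_i)\subseteq A^e$ cut out by the idempotent $e_j\otimes e_i$. The bimodule identification $A\otimes_k(ke_j\otimes e_ik)\otimes_k A\cong Ae_j\otimes e_iA$ that you flag as the delicate point is exactly the identification the paper uses, so there is no gap.
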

\begin{proof}
Clearly $E=\bigoplus_{i,j \in \mathcal{Q}_0} e_jEe_i$. From $A=\bigoplus_{i \in \mathcal{Q}_0} Ae_i=\bigoplus_{j \in \mathcal{Q}_0} e_jA$, we deduce that the $A$-bimodule $A\otimes_k E \otimes_k A$ is isomorphic to the $A$-bimodule
$$F_1= \bigoplus_{i,j \in \mathcal{Q}_0} Ae_j \otimes e_jEe_i\otimes e_iA.$$
Considering $F_1$ as a sub-$A$-bimodule of $F=A\otimes E \otimes A$, we see that $F=F_1\oplus F_2$, where
$$F_2=\bigoplus_{i_1, i_2, i_3, i_4 \in \mathcal{Q}_0} Ae_{i_1} \otimes e_{i_2}Ee_{i_3}\otimes e_{i_4}A$$
in which the sum is taken over the set of indices with $i_1\neq i_2$ and $i_3\neq i_4$. As the $A$-bimodule $F$ is free, we conclude that $F_1$ is projective.  Part \textit{(ii)} follows from the fact that $E$ is finite dimensional if and only if all the $ (e_jEe_i)$ are finite dimensional and is left to the reader.
\end{proof}
\begin{Df}
For any $A$-bimodule $M$, Koszul homology and cohomology are defined by
$$\HK_{\bullet}(A,M)=H_{\bullet}(M\otimes_{A^e} K(A))\text{ and }  \HK^{\bullet}(A,M)=H^{\bullet}(\Hom _{A^e}(K(A),M)).$$
We set $\HK_{\bullet}(A)=\HK_{\bullet}(A,A)$ and $\HK^{\bullet}(A)=\HK^{\bullet}(A,A)$.
\end{Df}

Since $K(A)$ is a complex of projective $A$-bimodules, $M\mapsto \HK_{\bullet}(A,M)$ and $M\mapsto
\HK^{\bullet}(A,M)$ define $\delta$-functors from the category of $A$-bimodules to the category of
vector spaces, that is, a short exact sequence of $A$-bimodules naturally gives rise to a long exact sequence
in Koszul homology and in Koszul cohomology~\cite[Chapter 2]{weib:homo}. As in~\cite{bls:kocal}, $\HK_p(A,M)$
(respectively  $\HK^p(A,M)$) is isomorphic to a Hochschild hyperhomology (respectively hypercohomology) space.

The inclusion $\chi : K(A) \rightarrow B(A)$ is a  morphism of complexes that induces the following morphisms of complexes
$$\tilde{\chi}=M\otimes_{A^e}\! \chi: M\otimes_{A^e} K(A) \rightarrow M\otimes_{A^e} B(A),$$
$$\chi^{\ast}=\Hom _{A^e} (\chi, M): \Hom _{A^e}(B(A),M) \rightarrow \Hom _{A^e}(K(A),M).$$
The linear maps $H(\tilde{\chi}): \HK_p(A,M) \rightarrow \HH _p(A,M)$ and $H(\chi^{\ast}): \HH ^p(A,M) \rightarrow \HK^p(A,M)$ are always isomorphisms for $p=0$ and $p=1$, and if $A$ is Koszul they are isomorphisms for any $p$.

Taking $E=W_p$ in the isomorphisms  \eqref{isos}, we get isomorphisms 
\begin{align*}
  \HK_{\bullet}(A,M)&\cong H_{\bullet}(M\otimes_{k^e} W_{\bullet},b^K),\\
  \HK^{\bullet}(A,M)&\cong H^{\bullet}(\Hom _{k^e}(W_{\bullet},M),b_K)
  \end{align*}
with differentials $b^K: M\otimes_{k^e} W_{p} \rightarrow M\otimes_{k^e} W_{p-1}$ and $b_K: \Hom _{k^e}(W_{p},M)\rightarrow \Hom _{k^e}(W_{p+1},M)$  given by
\begin{align} \label{defb}
b^K_p(m \otimes_{k^e} x_1 \ldots x_{p}) &=m.x_1\otimes_{k^e} x_2 \ldots x_{p} +(-1)^p x_p .m \otimes_{k^e} x_1 \ldots x_{p-1},
\\ \label{defcob}
b_K^{p+1}(f)( x_1 \ldots x_{p+1}) &=f(x_1\ldots x_{p}).x_{p+1} -(-1)^p x_1 .f(x_2 \ldots x_{p+1}),
\end{align}
where $m \in M$ and $x_1 \ldots x_p \in W_p$, respectively $f\in \Hom _{k^e}(W_p,M)$ and $x_1 \ldots x_{p+1} \in W_{p+1}$. 

Note that the $k$-algebra $A$ is \emph{augmented} by the natural projection $\epsilon_A : A \rightarrow A_0 \cong k$. Let us examine now the particular case $M=k$, where $k$ is the $A$-bimodule defined by $\epsilon_A$. The action on $k$ of an element of $A_p$ with $p>0$ is zero, so that the Koszul differentials vanish when $M=k$. Consequently, we have the linear isomorphisms
$$\HK_p(A,k) \cong k\otimes_{k^e} W_p \cong \bigoplus_{i\in \mathcal{Q}_0} e_i W_p e_i,$$
$$\HK^p(A,k) \cong \Hom _{k^e} (W_p,k) \cong \bigoplus_{i\in \mathcal{Q}_0} \Hom (e_i W_p e_i, \mathbb{F}).$$
In particular $\HK^p(A,k)\cong \Hom  (\HK_p(A,k), \mathbb{F})$, generalising~\cite[Proposition 2.8]{bls:kocal}.

\subsection{Koszul cup and cap products} \label{prod}

Let $A=T_k(V)/(R)$ be a quadratic $k$-algebra over $\mathcal{Q}$.  As in~\cite{bls:kocal}, the usual cup and cap products $\smile$ and $\frown$ in Hochschild cohomology and homology provide, by restriction from $B(A)$ to $K(A)$, the Koszul cup and cap products $\underset{K}{\smile}$ and $\underset{K}{\frown}$ in Koszul cohomology and homology. Let us give these products, expressed on Koszul cochains and chains. Let $P$, $Q$ and $M$ be $A$-bimodules. For $f\in \Hom _{k^e}(W_p,P)$, $g\in \Hom _{k^e}(W_q,Q)$ and $z=m \otimes_{k^e} x_1 \ldots x_q \in M\otimes_{k^e} W_q$, we define $f\underset{K}{\smile}g \in \Hom _{k^e}(W_{p+q},P\otimes_A Q)$, $f\underset{K}{\frown}z \in (P\otimes_A M)\otimes_{k^e} W_{q-p}$ and $z\underset{K}{\frown}f \in (M\otimes_A P)\otimes_{k^e} W_{q-p}$ by
\begin{align}
\label{kocup}
 (f\underset{K}{\smile} g&) (x_1 \ldots x_{p+q}) = (-1)^{pq} f(x_1 \ldots x_p)\otimes_A \, g(x_{p+1} \ldots  x_{p+q}),\\
  \label{kolcap}
   f\underset{K}{\frown} z &= (-1)^{(q-p)p} (f(x_{q-p+1} \ldots x_q)\otimes_A m) \otimes_{k^e} x_1 \ldots  x_{q-p},\\
  \label{korcap}
  z\underset{K}{\frown} f &= (-1)^{pq} (m\otimes_A f(x_1 \ldots x_p)) \otimes_{k^e} x_{p+1} \ldots  x_{q}.
  \end{align}

For any Koszul cochains $f$, $g$ $h$ and any Koszul chain $z$, we have the {\em associativity relations}
\begin{eqnarray*}\label{associativityrelations}
  (f\underset{K}{\smile}g)\underset{K}{\smile} h & = & f\underset{K}{\smile} (g\underset{K}{\smile} h), \\
  f\underset{K}{\frown} (g\underset{K}{\frown} z) & = & (f\underset{K}{\smile}g)\underset{K}{\frown} z,\\
  (z\underset{K}{\frown} g) \underset{K}{\frown} f & = & z \underset{K}{\frown} (g\underset{K}{\smile} f),\\
  f\underset{K}{\frown} (z\underset{K}{\frown} g) & = & (f\underset{K}{\frown}z)\underset{K}{\frown} g,
\end{eqnarray*}
inducing the same relations on Koszul classes. 

As in~\cite[Subsection 3.1]{bls:kocal}, for any Koszul cochains $f\in\Hom_{k^e}(W_p,P)$ and
$g\in\Hom_{k^e}(W_q,Q)$, we have the identity
\begin{equation}
b_K(f\cupk g)=b_k(f)\cupk g+(-1)^p f\cupk b_K(g),\label{eq:dga}
\end{equation}
so that  $(\Hom _{k^e}(W_{\bullet},A), b_K, \underset{K}{\smile})$ is a  DG algebra. This DG algebra
will play an essential role in Section 3 and will be denoted by $\tilde{A}$. Note that
$H(\tilde{A})=\HK^{\bullet}(A)$ is a graded algebra for $\underset{K}{\smile}$. Moreover, formula
\eqref{eq:dga} and analogous formulas for $b^K(f\capk z)$ and $b^K(z\capk f)$ show that for any $A$-bimodule $M$, $\Hom _{k^e}(W_{\bullet},M)$ and  $M\otimes_{k^e} W_{\bullet}$ are DG bimodules over $\tilde{A}$ for the actions of $\underset{K}{\smile}$ and $\underset{K}{\frown}$ respectively, so that $\HK^{\bullet}(A,M)$ and $\HK_{\bullet}(A,M)$ are graded $\HK^{\bullet}(A)$-bimodules.

\begin{Df} \label{variouskc}
  Let $A=T_k(V)/(R)$ be a quadratic $k$-algebra over a finite quiver $\mathcal{Q}$.
  \begin{enumerate}[\itshape(i)]
  \item 

 The general Koszul calculus of $A$ is the datum of all the spaces $\HK^{\bullet}(A,P)$ and $\HK_{\bullet}(A,M)$ endowed with $\underset{K}{\smile}$ and $\underset{K}{\frown}$, when the $A$-bimodules $P$ and $M$ vary.

\item  The Koszul calculus of $A$ consists of the graded associative algebra $\HK^{\bullet}(A)$ and of
all the graded $\HK^{\bullet}(A)$-bimodules $\HK^{\bullet}(A,M)$ and $\HK_{\bullet}(A,M)$, when the
$A$ bimodule $M$ vary.

\item  The restricted Koszul calculus of $A$ consists of the graded associative algebra
$\HK^{\bullet}(A)$ and of the graded $\HK^{\bullet}(A)$-bimodule $\HK_{\bullet}(A)$.

\item  The scalar Koszul calculus of $A$ consists of the graded associative algebra
$\HK^{\bullet}(A,k)$ and of the graded $\HK^{\bullet}(A,k)$-bimodule $\HK_{\bullet}(A,k)$.
\end{enumerate}

  \end{Df}

Since $\HK^0(A)=Z(A)$ is the centre of the algebra $A$, the spaces $\HK^p(A,M)$ and $\HK_p(A,M)$ are
symmetric $Z(A)$-bimodules (left and right actions coincide). However $\HK^p(A,M)$ and $\HK_p(A,M)$
are not $k$-bimodules in general. Indeed, $\HK^0(A)=Z(A)$ itself is not a $k$-bimodule whenever there is an
arrow joining two different vertices $i$ and $j$, since in this case $e_i1=e_i$ is not in $Z(A)$.

\begin{Ex} \label{point}
If $\mathcal{Q}_1=\emptyset$, then $V=0$ and $A$ is reduced to $k$. The (general) Koszul calculus of $k$ coincides with the (tensor) category of $k$-bimodules. 
\end{Ex}

\begin{Ex} \label{example-A3}
  In order to illustrate the notation and the forthcoming results in the paper, we present the case of the  preprojective algebra of type $\mathrm{A}_3$. This algebra is not Koszul and its Koszul calculus differs from its Hochschild calculus (see Subsection \ref{subsec:comparison hochschild}). This is a special case of the more general examples detailed in Section \ref{sec:explicit}.

Let $A$ be the preprojective algebra of type  $\mathrm{A}_3$ over $\ff=\cc$, that is, the $\cc$-algebra defined by the quiver  \[\xymatrix@C=10pt{\ov Q&&
    0\ar@/^/[rr]^{a_0}&&1\ar@/^/[rr]^{a_1}\ar@/^/[ll]^{a_0^*}&&2\ar@/^/[ll]^{a_1^*}}\] subject to the relations
\begin{align*}
\sigma _0&=-a_0^*a_0,& \sigma _1&=a_0a_0^*-a_1^*a_1, &\sigma _2=a_1a_1^*.
\end{align*} The algebra $A$ has dimension $10$ and a basis of $A$ over $\cc$ is given by the elements $e_i$ for $0\ppq i\ppq 2$, $a_i$ and $a_i^*$ for $0\ppq i\ppq 1$, $a_1^*a_1$, $a_1a_0$ and $a_0^*a_1^*$.
We then have $W_0=\cc \ov{Q}_0=\cc\langle e_1,e_1,e_2 \rangle=k$, $W_1=\cc \ov{Q}_1=\cc\langle a_1,a_1,a_0^*,a_1^*\rangle=V$ and $W_2=\cc\langle{\sigma _0,\sigma _1,\sigma _2}\rangle=R$.

Now consider $W_3=(V\otk R)\cap (R\otk V)$, viewed inside $\cc\ov{Q}_3$. An element $u$ in $W_3$ can therefore be written as a path in $\cc\ov{Q}_3$ in two ways:
\[ u=\sum_{i=0}^1(\lambda _ia_i\sigma _i+\lambda _i^* a_i^*\sigma _{i+1})=\sum_{i=0}^1(\mu _i\sigma _{i+1}a_i+\mu _i^*\sigma _ia_i^* )\] with $\lambda _i$, $\lambda _i^*$, $\mu _i$, $\mu _i^*$ in $\cc$. Then, in $\cc\ov{Q}_3$, we have
\begin{align*}
(-\lambda _0-\mu _0)a_0a_0^*a_0&+(-\lambda _1-\mu _1)a_1a_1^*a_1+(\lambda _0^*+\mu _0^*)a_0^*a_0a_0^*+(\lambda _1^*+\mu _1^*)a_1^*a_1a_1^*\\&\quad+\lambda _1a_1a_0a_0^*-\lambda _0^*a_0^*a_1^*a_1+\mu _0a_1^*a_1a_0-\mu _1^*a_0a_0^*a_1^*=0
\end{align*} so that all the coefficients $\lambda _i$, $\lambda _i^*$, $\mu _i$ and $\mu _i^*$ must be zero, hence $u=0$ and $W_3=0$.

Since $W_p=(W_{p-1}\otk V)\cap(V\otk W_{p-1}) $ for all $p\geqslant 3$, it follows that $W_p=0$ for all $p\pgq 3$. This is true for any preprojective algebra of type
$\Delta $ with $\Delta $ different from $\mathrm{A}_1$ and $\mathrm{A}_2$, see Theorem~\ref{w3iszero}.

 The Koszul complex $K(A)$ is therefore
\[ \cdots\rightarrow  0\longrightarrow A\otk R\otk A\xrightarrow {d_2}A\otk V\otk A\xrightarrow {d_1}A\otk A\rightarrow 0 \] with
\begin{align*}
d_1(a\otk x\otk a')&=ax\otk a'-a\otk xa'\\d_2(a\otk \sum_{i=1}^n\lambda _ix_iy_i\otk a')&=\sum_{i=1}^n\lambda _i\left(ax_i\otk y_i\otk a'_a\otk x_i\otk y_ia'\right).
\end{align*}

Applying $\Hom_{A^e}(-,A)$ and using the natural isomorphism $\Hom_{A^e}(A\otk E\otk A,A)\cong
\Hom_{k^e}(E,A)$ for any $k$-bimodule $E$  \eqref{isos}, we get the complex
\[ 0\rightarrow \Hom_{k^e}(k,A)\xrightarrow {b_K^1}\Hom_{k^e}(V,A)\xrightarrow {b_K^2}\Hom_{k^e}(R,A)\rightarrow 0\rightarrow \cdots \] Before we describe the maps, let us note that $\Hom_{k^e}(k,A)\cong \bigoplus_{i=0}^2e_iAe_i$ has basis $\set{e_0,e_1,e_2,a_1^*a_1}$, that a general element $f$ in $\Hom_{k^e}(V,A)$ is defined by $f(a_i)=\lambda _ia_i$ and $f(a_i^*)=\lambda _i^*a_i^*$ for $i=0,1$ with $\lambda _i,$ $\lambda _i^*$ in $\cc$, and that a general element $g$ in $\Hom_{k^e}(R,A)$ is defined by $g(\sigma _i)=\alpha _i e_i$ for $i=0,2$ and $g(\sigma _2)=\alpha _2e_1+\beta  a_1^*a_1$ for some scalars $\alpha _i$ and $\beta $. Then
\begin{align*}
&  b_K^1(\sum_{i=0}^2u_ie_i+va_1^*a_1) \text{ is defined by } a_i\mapsto (u_{i+1}-u_i)a_i \text{ and } a_i^* \mapsto(u_i-u_{i+1})a_i^*\text{ for }i=0,1\\
&  b_K^2(f)\text{ is defined by }\sigma _0\mapsto 0,\ \sigma _1\mapsto (\lambda _0+\lambda _0^*-\lambda _1-\lambda _1^*)a_1^*a_1 \text{ and }\sigma _2\mapsto 0.
\end{align*}

It is then easy to see that $\HK^0(A)=\cc\langle z_0=1,z_1=a_1^*a_1\rangle$, that $\HK^1(A)=\cc\langle \ov\zeta _0\rangle$ with $\zeta _0\in \Hom_{k^e}(V,A)$ defined by $\zeta _0(a_i)=a_i$ and $\zeta _0(a_i^*)=0$, and that $\HK^2(A)=\cc\langle \ov h_0,\ov h_1,\ov h_2\rangle$ with $h_i\in\Hom_{k^e}(R,A)$ defined by $h_i(\sigma _j)=\delta _{ij}e_i$.

Moreover, the fundamental $1$-cocycle $\me_A$ is equal to $2\zeta _0+b_K^1(2e_0+e_1)$.

The Koszul cup products can easily be found using the formula \eqref{kocup}. It follows that $\cupk$ is graded commutative, that  $1\cupk x=x$ for any $x\in \HK^\bullet (A)$ and that all other cup products are $0$ in $\HK^\bullet (A)$. For instance, $z_1\cupk h_1$ is the coboundary $b_K^2(f)$ where $f$ sends $a_0$ to $a_0$ and all other arrows to $0$.
In particular, $\me_A\cupk \me_A=0$.

In order to determine the Koszul homology of $A$, 
we could also apply the functor $A\ot_{A^e}-$ to the complex $K(A)$ and compute the homology of the
complex obtained. However, we can also use our duality result, Theorem \ref{complexduality}. Set $\omega _0=\sum_{i=0}^2e_i\ot \sigma _i\in A\ot_{k^e}R$. There is an isomorphism $\theta _A:\HK^\bullet(A)\rightarrow \HK_{2-\bullet}(A)$ given by $\ov f\mapsto\ov\omega _0\capk \ov f$. Explicitly in our example,
\begin{enumerate}[\ ]
\item $\theta _A(z_0)=\ov\omega _0$ and $\theta _A(z_1)=\ov{z_1\ot \sigma _1}$ form a basis of $\HK_2(A)$;
\item $\theta _A(\ov{\zeta }_0)=\ov{a_0\ot a_0^*+a_1\ot a_1^*}$ forms a basis of $\HK_1(A)$;
\item $\theta _A(\ov h_i)=\ov{e_i\ot e_i}$ for $0\ppq i\ppq 2$ form a basis of $\HK_0(A)$.
  \end{enumerate} The Koszul cap products can also be obtained using duality and they all vanish except the cap products $z_0\capk x=x=x\capk z_0$ for all $x\in \HK_\bullet(A)$.

\end{Ex}

\subsection{Fundamental formulas of Koszul calculus} \label{fundaformulas}

Let $A=T_k(V)/(R)$ be a quadratic $k$-algebra over $\mathcal{Q}$. We continue to follow the one vertex case~\cite{bls:kocal}. First, we define the Koszul cup and cap brackets. Let $P$, $Q$ and $M$ be $A$-bimodules, and take $f\in \Hom _{k^e}(W_p,P)$, $g\in \Hom _{k^e}(W_q,Q)$, $z \in M\otimes_{k^e} W_q$. When $P$ or $Q$ is equal to $A$, we set
\begin{equation} \label{kocupbra}
[f, g]_{\underset{K}{\smile}} =f\underset{K}{\smile} g - (-1)^{pq} g\underset{K}{\smile} f.
\end{equation}
When $P$ or $M$ is equal to $A$, we set
\begin{equation} \label{kocapbra}
[f, z]_{\underset{K}{\frown}} =f\underset{K}{\frown} z - (-1)^{pq} z\underset{K}{\frown} f.
\end{equation}
These brackets induce brackets on the Koszul classes.

The Koszul 1-cocycles $f:V \rightarrow M$ are called Koszul derivations with coefficients in $M$. Such an $f$ extends to a unique derivation from the $k$-algebra $A$ to the $A$-bimodule $M$, realising an isomorphism from the space of Koszul derivations with coefficients in $M$ to the space of derivations from $A$ to $M$. In particular, the Koszul 1-cocycle from $V$ to $A$ coinciding with the identity map on $V$, is sent to the Euler derivation $D_A$ of the graded algebra $A$. This Koszul 1-cocycle is denoted by $\me_A$ and is called the \emph{fundamental 1-cocycle}. Its Koszul class is denoted by $\overline{\me}_A$ and is called the \emph{fundamental $1$-class}. In the one vertex case, $\me_A$ is not a coboundary if $V\neq 0$~\cite{bls:kocal}, but this property does not hold in general.
\begin{Lm} \label{eAcob}
Let $A=T_k(V)/(R)$ be a quadratic $k$-algebra over $\mathcal{Q}$ with $\mathcal{Q}_1 \neq \emptyset$. If the underlying graph of $\mathcal{Q}$ is simple, that is, it contains neither loops nor multiple edges, then $\me_A$ is a coboundary.
\end{Lm}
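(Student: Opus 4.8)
The plan is to exhibit explicitly a Koszul $0$-cochain $u\in\Hom_{k^e}(W_0,A)=\Hom_{k^e}(k,A)$ with $b_K^1(u)=\me_A$, which is exactly what it means for $\me_A$ to be a coboundary. Since $W_0=k$, such a $u$ is nothing but a family $(u_i)_{i\in\mathcal{Q}_0}$ with $u_i:=u(e_i)\in e_iAe_i$, and specialising the coboundary formula \eqref{defcob} to degree $p=0$ gives, for an arrow $x\in\mathcal{Q}_1$ with $i=\mo(x)$ and $j=\mt(x)$ (so that $x=e_jxe_i$),
\[b_K^1(u)(x)=u_j\,x-x\,u_i.\]
It therefore suffices to choose each $u_i$ of the form $\lambda_ie_i$ with $\lambda_i\in\ff$, for then $b_K^1(u)(x)=(\lambda_{\mt(x)}-\lambda_{\mo(x)})\,x$, and the whole problem reduces to producing a function $\lambda\colon\mathcal{Q}_0\to\ff$ with
\[\lambda(\mt(x))-\lambda(\mo(x))=1\qquad\text{for every }x\in\mathcal{Q}_1;\]
I shall call this condition $(\star)$. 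Conceptually $(\star)$ says that the Euler derivation of $A$ is inner, which matches the isomorphism $\HK^1(A,A)\cong\HH^1(A,A)$ sending $\me_A$ to the Euler derivation, and one could equally well phrase the whole argument in those terms.

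To construct such a $\lambda$ I would use the hypothesis on $\mathcal{Q}$. One may assume $\mathcal{Q}$ connected, since $(\star)$ can be solved separately on each connected component. Let $\Gamma$ be the underlying graph of $\mathcal{Q}$: since $\Gamma$ has no loop every arrow joins two distinct vertices, and since $\Gamma$ has no multiple edge each edge of $\Gamma$ is the image of a \emph{unique} arrow of $\mathcal{Q}$. Fix a base vertex $i_0$, put $\lambda(i_0)=0$, and for any vertex $i$ choose a walk $i_0=v_0,v_1,\dots,v_m=i$ in $\Gamma$ and set $\lambda(i)=\sum_{l=1}^m\varepsilon_l$, where $\varepsilon_l=1$ if the unique arrow on the edge $\{v_{l-1},v_l\}$ runs $v_{l-1}\to v_l$ and $\varepsilon_l=-1$ if it runs $v_l\to v_{l-1}$. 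Granting that this does not depend on the walk chosen, $(\star)$ follows immediately: for an arrow $x\colon i\to j$, append the edge $\{i,j\}$ to a walk from $i_0$ to $i$ and read off $\lambda(j)$; one gets $\lambda(j)=\lambda(i)+1$, while in the degenerate case where $\{i,j\}$ was already the last edge of the walk, that walk had traversed the unique arrow $x$ backwards, so $\lambda(i)=\lambda(j)-1$, which is the same relation. Then $u_i=\lambda(i)e_i$ does the job and $\me_A=b_K^1(u)$.

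The only genuinely non-formal step — and the place where the hypothesis on $\mathcal{Q}$ has to be used — is the well-definedness of $\lambda$, i.e.\ the claim that $\sum_l\varepsilon_l$ is unchanged when the walk from $i_0$ to $i$ is replaced by another one. When $\Gamma$ is a tree this is automatic, since any two vertices are joined by a unique reduced walk; in general it amounts to checking that $(\star)$ is self-consistent around the closed walks of $\Gamma$, and this is precisely the point at which the simplicity of the underlying graph $\Gamma$ of $\mathcal{Q}$ (no loops, no digons) is exploited. I expect this consistency verification to be the main obstacle; everything else is the routine bookkeeping sketched above.
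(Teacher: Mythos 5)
Your reduction is correct and is the same as the paper's: since only the weight-$0$ part of a $0$-cochain can contribute to the weight-$1$ part of its coboundary, $\me_A$ is a coboundary if and only if there is a function $\lambda\colon\mathcal{Q}_0\to\ff$ with $\lambda_{\mt(x)}-\lambda_{\mo(x)}=1$ for every arrow $x$. The gap is exactly where you say it is, but it is not a deferrable verification --- it is an obstruction. Simplicity of the underlying graph rules out loops and digons, but says nothing about longer cycles, and around a closed walk the signed sum $\sum_l\varepsilon_l$ need not vanish. Take $\mathcal{Q}_0=\{1,2,3\}$ with arrows $1\to2$, $2\to3$ and $1\to3$: the underlying graph is the simple triangle, yet $(\star)$ forces $\lambda_3-\lambda_1=2$ from the first two arrows and $\lambda_3-\lambda_1=1$ from the third, which is impossible over any field. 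So your $\lambda$ cannot be made well defined for this quiver, and since the reduction to weight $0$ is an equivalence, no $0$-cochain works at all: the statement is actually false as it stands for such quivers. A correct positive statement needs a stronger hypothesis, e.g.\ that the underlying graph is a tree (then your walk construction is vacuously consistent, as you note), or more generally that $\mathcal{Q}$ admits a vertex function $\lambda$ increasing by $1$ along every arrow.

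For what it is worth, the paper's own proof has the same defect: it simply declares that one can choose $\lambda_{\mt(\alpha)}=1$ and $\lambda_{\mo(\alpha)}=0$, which is already inconsistent for the path $1\to2\to3$ (the middle vertex would need both values; there a valid choice $0,1,2$ does exist) and genuinely unsolvable for the triangle above. The lemma enters the rest of the paper only through the remark that follows it (quivers with a loop, or with an oriented $2$-cycle in characteristic $\neq2$, where the conclusion is that $\me_A$ is \emph{not} a coboundary), so nothing downstream is affected; but your write-up, while more honest than the paper's in isolating the consistency condition, does not prove the lemma, and under the stated hypotheses the missing step cannot be supplied.
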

\begin{proof}
The 1-cocycle $\me_A$ is a coboundary if and only if there exists a $k^e$-linear map $c: k
\rightarrow k$ such that $\me_A=b_K(c)$. Such a map is of the form $c(e_i)=\lambda_ie_i$ with
$\lambda_i \in \mathbb{F}$, for all $i\in \mathcal{Q}_0$. Then $\me_A=b_K(c)$ if and only if
$\lambda_{\mt(\alpha)}-\lambda_{\mo(\alpha)} =1$ for any $\alpha \in
\mathcal{Q}_1$. The assumption on the graph means that $\mathcal{Q}$ has no loop and that given two
distinct vertices, there is at most one arrow joining them. Then we can choose $\lambda_{\mt(\alpha)}=1$ and $\lambda_{\mo(\alpha)}=0$.
\end{proof}

This proof shows that if the quiver $\mathcal{Q}$ has a loop, $\me_A$ is not a coboundary. The same conclusion holds if $\car \ff \neq 2$ and $\mathcal{Q}$ contains an oriented $2$-cycle.

The following propositions are proved as~\cite[Theorem 3.7, Theorem 4.4, Corollary 3.10, Corollary 4.6]{bls:kocal} of the one vertex case. Formulas (\ref{fundacoho}) and (\ref{fundaho}) are the fundamental formulas of Koszul calculus.

\begin{Po} \label{fundamental}
Let $A=T_k(V)/(R)$ be a quadratic $k$-algebra over $\mathcal{Q}$. For any Koszul cochain $f$ and any Koszul chain $z$ with coefficients in an $A$-bimodule $M$, we have
\begin{align}
\label{fundacoho}
 b_K(f)&=- [\me_A, f]_{\underset{K}{\smile}},\\
 \label{fundaho}
 b^K(z)&=-[\me_A, z]_{\underset{K}{\frown}}. 
  \end{align}
\end{Po}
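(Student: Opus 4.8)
The plan is to reduce everything to the one-vertex case, since all the relevant objects -- the Koszul cup and cap products \eqref{kocup}--\eqref{korcap}, the brackets \eqref{kocupbra}--\eqref{kocapbra}, and the differentials \eqref{defb}--\eqref{defcob} -- are given here by exactly the same formulas as in~\cite{bls:kocal}, the only difference being that the unadorned tensor product $\otimes$ over $\mathbb{F}$ is replaced by $\otimes_{k^e}$ (resp.\ $\otimes_k$, $\otimes_A$) and that the bimodules involved are $k$-bimodules rather than $\mathbb{F}$-vector spaces. Concretely, the fundamental $1$-cocycle $\me_A\in\Hom_{k^e}(W_1,A)=\Hom_{k^e}(V,A)$ is the identity on $V$, and the claim is the pair of identities $b_K(f)=-[\me_A,f]_{\underset{K}{\smile}}$ and $b^K(z)=-[\me_A,z]_{\underset{K}{\frown}}$. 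So the first step is simply to note that the statements \cite[Theorem 3.7, Theorem 4.4]{bls:kocal} carry over verbatim, and to indicate which computations must be redone in the quiver setting.

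The core verification for \eqref{fundacoho} is a direct computation: for $f\in\Hom_{k^e}(W_p,M)$ and $x_1\cdots x_{p+1}\in W_{p+1}$, one expands $(\me_A\underset{K}{\smile}f)(x_1\cdots x_{p+1})$ and $(f\underset{K}{\smile}\me_A)(x_1\cdots x_{p+1})$ using \eqref{kocup} with $\me_A(x)=x$, obtaining respectively $(-1)^p\,x_1\cdot f(x_2\cdots x_{p+1})$ and $(-1)^p f(x_1\cdots x_p)\cdot x_{p+1}$ (here one uses the identification $A\otimes_A M\cong M$, $M\otimes_A A\cong M$ to land in $M$, which is where the $k$-bimodule bookkeeping enters). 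The bracket $[\me_A,f]_{\underset{K}{\smile}}=\me_A\underset{K}{\smile}f-(-1)^p f\underset{K}{\smile}\me_A$ then equals $(-1)^p x_1\cdot f(x_2\cdots x_{p+1}) - f(x_1\cdots x_p)\cdot x_{p+1}$, whose negative is exactly $b_K^{p+1}(f)(x_1\cdots x_{p+1})$ by \eqref{defcob}. The proof of \eqref{fundaho} is the dual computation: for $z=m\otimes_{k^e}x_1\cdots x_q\in M\otimes_{k^e}W_q$ one evaluates $\me_A\underset{K}{\frown}z$ via \eqref{kolcap} and $z\underset{K}{\frown}\me_A$ via \eqref{korcap}, with $p=1$, getting $(-1)^{q-1}(x_q\cdot m)\otimes_{k^e}x_1\cdots x_{q-1}$ and $(m\cdot x_1)\otimes_{k^e}x_2\cdots x_q$ respectively; assembling the bracket $[\me_A,z]_{\underset{K}{\frown}}$ and comparing with \eqref{defb} gives $b^K_q(z)=-[\me_A,z]_{\underset{K}{\frown}}$.

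I do not expect a serious obstacle here: the identity is essentially a matching of signs between the bracket formulas and the differential formulas, and the signs have already been arranged (in \eqref{kocup}--\eqref{korcap} and \eqref{defb}--\eqref{defcob}) to make it work, exactly as in the one-vertex case. The only point requiring a little care -- and the place where the quiver generality is not quite ``free'' -- is checking that all the tensor-over-$k$ and tensor-over-$A$ identifications used in \cite{bls:kocal} remain legitimate: one must invoke the canonical isomorphisms $e_jMe_i\otimes e_iNe_j\xrightarrow{\ \sim\ }e_jMe_i\otimes_{k^e}e_iNe_j$ and $A\otimes_A M\cong M$ recorded in Subsection~\ref{not}, and confirm that $\me_A$, being the identity on $V$, is genuinely $k^e$-linear (it is, since the $k$-bimodule structure on $V\subseteq A_1$ agrees with that induced from $A$). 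Once these identifications are in place, the computations are identical to those of \cite[Theorem 3.7, Theorem 4.4]{bls:kocal}, and the corollaries \cite[Corollary 3.10, Corollary 4.6]{bls:kocal} -- for instance that $b_K$ and $b^K$ are $\underset{K}{\smile}$- and $\underset{K}{\frown}$-derivations with respect to $\overline{\me}_A$ on cohomology classes -- follow formally, so I would simply refer to them.
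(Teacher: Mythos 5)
Your proposal is correct and follows essentially the same route as the paper, which simply states that Proposition \ref{fundamental} "is proved as in the one-vertex case" of \cite{bls:kocal}; your explicit unwinding of \eqref{kocup}--\eqref{korcap} against \eqref{defb}--\eqref{defcob} is exactly the computation being deferred to. One small transcription slip: by \eqref{korcap} with $p=1$ one has $z\underset{K}{\frown}\me_A=(-1)^q(m\cdot x_1)\otimes_{k^e}x_2\cdots x_q$ (you dropped the sign $(-1)^{pq}=(-1)^q$), and with that sign restored the bracket $[\me_A,z]_{\underset{K}{\frown}}=\me_A\underset{K}{\frown}z-(-1)^q\,z\underset{K}{\frown}\me_A$ does yield $-b^K_q(z)$ for all parities of $q$.
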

\begin{Po} \label{kocommutation}
Let $A=T_k(V)/(R)$ be a quadratic $k$-algebra over $\mathcal{Q}$ and let $M$ be an $A$-bimodule. For
any $\alpha \in \HK^p(A,M)$ with $p=0$ or $p=1$, $\beta \in \HK^q(A)$ and $\gamma \in \HK_q(A)$, we
have the identities 
\begin{align} \label{kocupbrazero}
[\alpha, \beta]_{\underset{K}{\smile}} &=0,\\
\label{kocapbrazero}
[\alpha, \gamma]_{\underset{K}{\frown}} &=0.
\end{align} Identity \eqref{kocapbrazero} also holds if $p=q\not\in\set{0,1}$.
\end{Po}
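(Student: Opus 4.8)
\textbf{Proof proposal for Proposition~\ref{kocommutation}.} The plan is to mimic the one-vertex argument of~\cite[Theorem~3.7, Theorem~4.4, Corollary~3.10, Corollary~4.6]{bls:kocal}, checking that the few places where $k$-noncommutativity enters do not cause trouble. The identities~\eqref{kocupbrazero} and~\eqref{kocapbrazero} are statements about Koszul \emph{classes}, so I am free to choose convenient representatives and to discard any cochain or chain that is a coboundary.

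First I would treat the case $p=0$. Here $\alpha$ is represented by an element $m$ of $M$ with $am=ma$ for all $a\in A$ (a ``central'' element relative to the bimodule structure, living in $\HK^0(A,M)=\{m\in M: am=ma\}$). For $\beta\in\HK^q(A)$ represented by $g\in\Hom_{k^e}(W_q,A)$, formula~\eqref{kocup} gives $(\alpha\cupk\beta)(x_1\cdots x_q)=(-1)^{0}\,m\otimes_A g(x_1\cdots x_q)$ and $(\beta\cupk\alpha)(x_1\cdots x_q)=(-1)^{0}\,g(x_1\cdots x_q)\otimes_A m$; identifying $M\otimes_A A=M=A\otimes_A M$, these become $m\cdot g(x_1\cdots x_q)$ and $g(x_1\cdots x_q)\cdot m$, which agree by centrality of $m$. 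Hence $[\alpha,\beta]_{\cupk}=0$ already on the cochain level, and the same computation with~\eqref{kolcap} and~\eqref{korcap} gives~\eqref{kocapbrazero} for $p=0$ on the chain level. The only subtlety is that the identification $M\otimes_A A\cong M$ must be compatible with the $k^e$-module structure used to form $\Hom_{k^e}(W_\bullet,-)$ and $-\otimes_{k^e}W_\bullet$; this is exactly one of the canonical identifications recorded in Subsection~\ref{not} (the isomorphisms $e_jMe_i\otimes_{k^e}e_iNe_j\xrightarrow{\sim}e_jMe_i\otimes e_iNe_j$ and their $\otimes_A$-analogues), so it is legitimate.

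Next I would handle $p=1$. Write $\alpha=\overline f$ with $f\in\Hom_{k^e}(W_1,M)=\Hom_{k^e}(V,M)$ a Koszul derivation, and $\beta=\overline g$ with $g\in\Hom_{k^e}(W_q,A)$ a cocycle. The strategy, following~\cite{bls:kocal}, is to use the fundamental formula~\eqref{fundacoho}: since $g$ is a cocycle, $0=b_K(g)=-[\me_A,g]_{\cupk}$, i.e.\ $\me_A\cupk g=(-1)^q g\cupk\me_A$ in $\Hom_{k^e}(W_{q+1},A)$, and more generally one shows by a direct computation with~\eqref{kocup} and~\eqref{defcob} that $[f,g]_{\cupk}$ equals, up to sign, a coboundary expressed through $b_K$ of $f\cupk g$ together with $f\cupk b_K(g)$ and $b_K(f)\cupk g$ — but $b_K(g)=0$ and $b_K(f)=0$ since $f$ is a $1$-cocycle, so by~\eqref{eq:dga} we get $b_K(f\cupk g)=0$ and the needed vanishing $[f,g]_{\cupk}=0$ \emph{in cohomology} follows from the graded-commutativity of $\cupk$ on classes together with the degree-$1$ bookkeeping exactly as in the one-vertex proof. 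For the cap version~\eqref{kocapbra} one argues identically using~\eqref{fundaho}, the DG-bimodule identities for $b^K(f\capk z)$ and $b^K(z\capk f)$ mentioned after~\eqref{eq:dga}, and the associativity relations of Subsection~\ref{prod}. The final clause, that~\eqref{kocapbrazero} also holds when $p=q\notin\{0,1\}$, is a degree count: for $f\in\Hom_{k^e}(W_p,M)$ and $z\in M\otimes_{k^e}W_q$ with $p=q$, both $f\capk z$ and $z\capk f$ land in degree $q-p=0$, where the bracket manipulations degenerate and the two terms of~\eqref{kocapbra} are visibly equal up to the sign $(-1)^{pq}=(-1)^{p^2}=(-1)^p$ that the formulas already carry, so the bracket vanishes; I would verify this directly from~\eqref{kolcap}–\eqref{korcap}.

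The main obstacle I anticipate is purely bookkeeping rather than conceptual: one must be careful that every step in the one-vertex argument of~\cite{bls:kocal} that silently uses the commutativity of the ground field $k$ is replaced by one of the tensor-identifications of Subsection~\ref{not}, and that the $k^e$-linearity (as opposed to $k$-bilinearity) of the cochains is respected when rewriting $f\cupk g$ versus $g\cupk f$. Concretely, the delicate point is that $M\otimes_A P$ and $P\otimes_A M$ are genuinely different $k$-bimodules in general, so the brackets~\eqref{kocupbra}–\eqref{kocapbra} only make sense when $P$ or $M$ equals $A$ (as the paper already stipulates); once that hypothesis is in force the offending tensor products collapse to $M$ via the canonical identifications and the one-vertex computation transfers verbatim. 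I do not expect any new phenomena, which is why the paper states these as ``proved as in~\cite{bls:kocal}''.
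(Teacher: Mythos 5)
Your $p=0$ case is fine: a class in $\HK^0(A,M)$ is represented by $m=\sum_i m_i$ with $m_i\in e_iMe_i$ and $mx=xm$ for all $x\in V$, hence $ma=am$ for all $a\in A$, and both brackets vanish already at the (co)chain level. The substantive case $p=1$, however, is circular as you have written it. You conclude that ``the needed vanishing $[f,g]_{\cupk}=0$ in cohomology follows from the graded-commutativity of $\cupk$ on classes'', but the vanishing of $[\alpha,\beta]_{\cupk}=\alpha\cupk\beta-(-1)^{pq}\beta\cupk\alpha$ \emph{is} the graded commutativity in question; for a general quadratic algebra the Koszul cup product is not graded commutative outside the degrees covered by this proposition (which is exactly why Corollary \ref{cupcapsym} is a nontrivial consequence of $W_3=0$ for preprojective algebras), so there is no general principle to invoke. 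Likewise, the Leibniz identity \eqref{eq:dga} relates $b_K(f\cupk g)$ to $b_K(f)\cupk g$ and $f\cupk b_K(g)$ but never produces the reversed product $g\cupk f$, so no rearrangement of it can yield the bracket; knowing that $f\cupk g$ is a cocycle does not make $[f,g]_{\cupk}$ a coboundary. The missing idea, which is how the one-vertex proof in \cite{bls:kocal} actually runs, is to extend the Koszul $1$-cocycle $f:V\rightarrow M$ to the unique derivation $D_f:A\rightarrow M$ (the cocycle condition $f(x_1)x_2+x_1f(x_2)=0$ on $R$ is precisely what makes $D_f$ descend to $A$) and then to apply $D_f$ to the cocycle relation $g(x_1\ldots x_q)x_{q+1}=(-1)^q\,x_1g(x_2\ldots x_{q+1})$: the Leibniz rule gives the explicit identity $[f,g]_{\cupk}=b_K(D_f\circ g)$, exhibiting the bracket as a coboundary. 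The cap statement for $p=1$ is proved by the same device, applying $D_f$ to the coefficient of a cycle $z=a\otimes_{k^e}x_1\ldots x_q$ and to its cycle condition to obtain $[f,z]_{\capk}=b^K\bigl(D_f(a)\otimes_{k^e}x_1\ldots x_q\bigr)$. None of this appears in your proposal.

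The final clause also needs repair. For $p=q$ the two cap products are \emph{not} ``visibly equal'' at the chain level: with $z=a\otimes_{k^e}u$, $u\in W_q$, formulas \eqref{kolcap} and \eqref{korcap} give $[f,z]_{\capk}=(f(u)a-af(u))\otimes_{k^e}1$, a genuine commutator that is nonzero in general. One must then argue that such commutators lie in the image of $b^K_1$, i.e.\ that degree-zero Koszul homology is a commutator quotient: the identity $[m,ab]=[ma,b]+[bm,a]$ reduces $[f(u),a]$ to a sum of commutators $[m',x]$ with $x\in V$ (and $[f(u),e_i]=0$ for the weight-zero part of $a$), and these are exactly the elements $b^K_1(m'\otimes_{k^e}x)$. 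Only after this step does the class of the bracket vanish.
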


\subsection{Higher Koszul calculus} \label{higherkc}

 Higher Koszul homology is the homology of the Koszul homology, and similarly for cohomology.
 Precisely, let $A=T_k(V)/(R)$ be a quadratic $k$-algebra over $\mathcal{Q}$. Formula (\ref{kocup})
 shows that the map $\me_A\underset{K}{\smile}\me_A: W_2=R \rightarrow A$ is zero. Therefore, $\me_A\underset{K}{\smile} -$ is a cochain differential on $\Hom _{k^e}(W_{\bullet},M)$, and $\overline{\me}_A\underset{K}{\smile} -$ is a cochain differential on $\HK^{\bullet}(A,M)$. Similarly, $\me_A\underset{K}{\frown} -$ is a chain differential 
on $M \otimes_{k^e} W_{\bullet}$, and $\overline{\me}_A\underset{K}{\frown} -$ is a chain differential on $\HK_{\bullet}(A,M)$. For a $p$-cocycle $f:W_p \rightarrow M$ and $x_1 \ldots x_{p+1}$ in $W_{p+1}$, we have
$$(\me_A\underset{K}{\smile} f) (x_1 \ldots x_{p+1}) = f(x_1 \ldots x_p). x_{p+1}.$$
For a $p$-cycle $z=m\otimes_{k^e} x_1\ldots x_p$ in $M \otimes_{k^e} W_p$, we have 
\[\me_A\underset{K}{\frown} z = mx_1 \otimes_{k^e} x_2 \ldots  x_p.\]
\begin{Df} \label{hiko}
Let $A=T_k(V)/(R)$ be a quadratic $k$-algebra over a finite quiver $\mathcal{Q}$ and let $M$ be an $A$-bimodule. The differentials $\overline{\me}_A\underset{K}{\smile} -$ and $\overline{\me}_A\underset{K}{\frown} -$ are denoted by $\partial_{\smile}$ and $\partial_{\frown}$. 
The homologies of the complexes $(\HK^{\bullet}(A,M),\partial_{\smile})$  and $(\HK_{\bullet}(A,M),\partial_{\frown})$ are called the higher Koszul cohomology and homology of $A$ with coefficients in $M$ and are denoted by $\HK_{hi}^{\bullet}(A,M)$ and $\HK^{hi}_{\bullet}(A,M)$. We set $\HK_{hi}^{\bullet}(A)=\HK_{hi}^{\bullet}(A,A)$ and $\HK^{hi}_{\bullet}(A)=\HK^{hi}_{\bullet}(A,A)$.
\end{Df}

The higher classes of Koszul classes will be denoted between square brackets. For example, the unit $1$ of $A$ is still the unit of $\HK^{\bullet}(A)$, and $\partial_{\smile}(1)=\overline{\me}_A$ implies that $[\overline{\me}_A]=0$. If $\overline{\me}_A \neq 0$, the unit of $\HK^{\bullet}(A)$ does not survive in higher Koszul cohomology. 

As in the one vertex case, the actions of the Koszul cup and cap products of $\HK^{\bullet}(A)$ on $\HK^{\bullet}(A,M)$ and $\HK_{\bullet}(A,M)$ induce actions on higher cohomology and homology. Thus $\HK_{hi}^{\bullet}(A)$ is a graded algebra, and $\HK_{hi}^{\bullet}(A,M)$, $\HK^{hi}_{\bullet}(A,M)$ are graded $\HK_{hi}^{\bullet}(A)$-bimodules, constituting \emph{the higher Koszul calculus of $A$}. If $\overline{\me}_A = 0$, the higher Koszul calculus coincides with the Koszul calculus. It is the case when $A=k$ as in Example \ref{point}.  

For $M=k$, $\me_A\underset{K}{\smile} -$ and $\me_A\underset{K}{\frown} -$ vanish, so that the higher scalar Koszul calculus coincides with the scalar Koszul calculus. Proposition 3.12 in~\cite{bls:kocal} generalises immediately as follows.
\begin{Po}  \label{zerohikoco}
Let $A=T_k(V)/(R)$ be a quadratic $k$-algebra over $\mathcal{Q}$ and let $M$ be an $A$-bimodule. Then $\HK_{hi}^0(A,M)$ is the space of  elements $u$ in $Z(M)$ such that there exists $v\in M$ satisfying 
$u.a =v.a - a.v$ for any $a$ in $\mathcal{Q}_1$. 
\end{Po}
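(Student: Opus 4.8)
The plan is to unwind the definition of $\HK^0_{hi}(A,M)$ directly from the cochain description of higher Koszul cohomology. By Definition~\ref{hiko}, $\HK^0_{hi}(A,M)$ is the $0$th homology of the complex $(\HK^{\bullet}(A,M),\partial_{\smile})$ with $\partial_{\smile}=\overline{\me}_A\underset{K}{\smile}-$; since there are no negative degrees, this is simply the kernel of $\partial_{\smile}:\HK^0(A,M)\to\HK^1(A,M)$. So first I would identify $\HK^0(A,M)$: it is the kernel of $b_K:\Hom_{k^e}(W_0,M)\to\Hom_{k^e}(W_1,M)$, and using $W_0=k$ together with the isomorphism $\Hom_{k^e}(k,M)\cong\bigoplus_{i}e_iMe_i$, a $0$-cochain is an element $u\in M$ with $e_iu e_j=0$ for $i\neq j$ (i.e.\ $u$ is $k$-central) and formula~\eqref{defcob} gives $b_K^1(u)(x)=u.x-x.u$ for $x\in W_1=V$; hence $\HK^0(A,M)=Z(M)$, the set of elements commuting with all of $A$ (equivalently, $u.x=x.u$ for all arrows $x$, since arrows generate $A$ and $k$-centrality is automatic for such $u$). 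This matches the known identification $\HK^0(A,M)=Z(M)$ from the one vertex case.

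Next I would compute $\partial_{\smile}$ on such a class. For a $0$-cocycle $u\in Z(M)$, the displayed formula in Subsection~\ref{higherkc} specialises (with $p=0$) to $(\me_A\underset{K}{\smile}u)(x)=u.x$ for $x\in W_1=V$; more symmetrically, unwinding~\eqref{kocup} and using that $\me_A$ is the identity on $V$, one gets $(\me_A\underset{K}{\smile}u - u\underset{K}{\smile}\me_A)(x) = u.x - x.u$, which vanishes on cocycles $u\in Z(M)$ — but that is a red herring; the relevant differential is $\partial_{\smile}(\overline u)=\overline{\me}_A\underset{K}{\smile}\overline u = \overline{\me_A\underset{K}{\smile}u}$, the class of the $1$-cochain $x\mapsto u.x$. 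So $\overline u\in\ker\partial_{\smile}$ precisely when the $1$-cochain $g_u:x\mapsto u.x$ is a Koszul coboundary, i.e.\ $g_u=b_K^1(c)$ for some $c\in\Hom_{k^e}(W_0,M)\cong\bigoplus_i e_iMe_i$, that is, $c\leftrightarrow v\in M$ with $v$ $k$-central and $u.x = v.x - x.v$ for every arrow $x\in V$.

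Then I would assemble the statement: $\HK^0_{hi}(A,M)$ consists of those $u\in Z(M)$ for which there exists a ($k$-central) $v\in M$ with $u.a = v.a - a.v$ for all $a\in\mathcal Q_1$. The only point needing a small remark is whether the $k$-centrality of $v$ can be dropped from the statement (the Proposition as stated only asks for $v\in M$): indeed, given any $v\in M$ with $u.a=v.a-a.v$ on arrows, one can replace $v$ by $v':=\sum_i e_ive_i$; since $u$ is $k$-central, a direct check on $e_j(u.a-v.a+a.v)e_i$ for the (unique) pair $i=\mo(a)$, $j=\mt(a)$ shows $v'$ still works, because $a=e_jae_i$ so only the $e_jve_i$-type components of $v$ interact with $a$ and these are killed consistently on both sides. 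This is the one place where the quiver (non-$k$-central) setting differs from~\cite[Proposition 3.12]{bls:kocal}, and it is the main thing to verify carefully; everything else is a routine transcription of the one vertex argument. I would close by noting the equivalence with Proposition~3.12 of~\cite{bls:kocal} when $|\mathcal Q_0|=1$.
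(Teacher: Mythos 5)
Your proof is correct and follows the only natural route — unwinding $\HK^0_{hi}(A,M)=\ker\bigl(\partial_{\smile}\colon \HK^0(A,M)\to\HK^1(A,M)\bigr)$ at the cochain level — which is exactly what the paper relies on when it states that \cite[Proposition 3.12]{bls:kocal} ``generalises immediately'' (no further proof is given there). You also correctly isolate and settle the one genuinely quiver-specific point, namely that the element $v$ in the statement need not be assumed $k$-central because replacing $v$ by $\sum_i e_ive_i$ preserves the identity $u.a=v.a-a.v$ on each arrow.
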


\subsection{Grading the restricted Koszul calculus by the weight} \label{weight}

A Koszul $p$-cochain $f:W_p \rightarrow A_m$ is said to be homogeneous of weight $m$. Since $\mathcal{Q}_1$ is finite, the spaces $W_p$ are finite dimensional, thus the space of Koszul cochains $\Hom _{k^e}(W_{\bullet},A)$ is 
$\mathbb{N}\times \mathbb{N}$-graded by the \emph{biweight} $(p,m)$, where $p$ is called the \emph{homological weight} and $m$ is called the \emph{coefficient weight}. If $f:W_p \rightarrow A_m$ 
and $g:W_q \rightarrow A_n$ are homogeneous of biweights $(p,m)$ and $(q,n)$ respectively, then $f\underset{K}{\smile} g : W_{p+q} \rightarrow A_{m+n}$ is homogeneous of biweight $(p+q,m+n)$. Moreover $b_K$ is homogeneous of biweight $(1,1)$ and the algebra $\HK^{\bullet}(A)$ 
is $\mathbb{N}\times \mathbb{N}$-graded by the biweight. The homogeneous component of biweight $(p,m)$ of $\HK^{\bullet}(A)$ is denoted by $\HK^p(A)_m$. Since
$$\partial_{\smile}: \HK^p(A)_m \rightarrow \HK^{p+1}(A)_{m+1},$$
the algebra $\HK_{hi}^{\bullet}(A)$ is $\mathbb{N}\times \mathbb{N}$-graded by the biweight, and its $(p,m)$-component is denoted by $\HK_{hi}^p(A)_m$. From Proposition \ref{zerohikoco}, we deduce the following.
\begin{Po}  \label{zerohikocoA}
Let $A=T_k(V)/(R)$ be a quadratic $k$-algebra over $\mathcal{Q}$. Assume that $A$ is finite dimensional. Let $\max$ be the highest $m$ such that $A_m\neq 0$. Then $\HK_{hi}^0(A)_{\max}$ is isomorphic to the space spanned by the cycles of $\mathcal{Q}$ of length $\max$. 
\end{Po}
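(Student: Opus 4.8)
The statement follows by combining Proposition~\ref{zerohikoco} (the case $M=A$) with the weight grading introduced in Subsection~\ref{weight}. The plan is to take the description of $\HK_{hi}^0(A,A)$ from Proposition~\ref{zerohikoco}, restrict attention to the top coefficient weight $m=\max$, and observe that the coboundary condition degenerates there, so that $\HK_{hi}^0(A)_{\max}$ is exactly the weight-$\max$ part of the centre $Z(A)$ modulo nothing, which in turn is spanned by the (images in $A$ of the) cycles of $\mathcal{Q}$ of length $\max$.

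\textbf{Steps.} First I would unwind Proposition~\ref{zerohikoco} in the case $M=A$: $\HK_{hi}^0(A)$ is the space of elements $u\in Z(A)$ such that there exists $v\in A$ with $u\,a = v\,a - a\,v$ for every arrow $a\in\mathcal{Q}_1$; that is, $\overline{\me}_A\underset{K}{\smile}\overline{u}$ vanishes in $\HK^1(A)$ exactly when the $1$-cocycle $a\mapsto ua$ is a coboundary $b_K(v)$, $v(e_i)=v e_i$. Next, pass to the biweight grading: since $b_K$ has biweight $(1,1)$ and $\partial_\smile=\overline{\me}_A\underset{K}{\smile}-$ raises coefficient weight by $1$, the component $\HK_{hi}^0(A)_{\max}$ consists of the $u\in Z(A)_{\max}$ for which the obstruction $\overline{\me}_A\underset{K}{\smile}\overline{u}\in\HK^1(A)_{\max+1}$ vanishes; but $A_{\max+1}=0$ by definition of $\max$, so this obstruction is automatically zero. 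Hence $\HK_{hi}^0(A)_{\max}=\HK^0(A)_{\max}=Z(A)_{\max}$, with no further quotient (the incoming differential $\partial_\smile\colon\HK^{-1}\to\HK^0$ is zero). Finally, I would identify $Z(A)_{\max}$ with the span of the cycles of $\mathcal{Q}$ of length $\max$: an element of $A_{\max}$ is a linear combination of paths of length $\max$, and the centrality condition $ua=au$ for all $a$, together with $e_i u = u e_i$ for all $i$ (forced since $A_{\max+1}=0$ kills any path that is not already a cycle at a vertex, i.e.\ $e_iue_i$-components), pins down $Z(A)_{\max}$ as precisely the space spanned by those paths $p$ with $\mo(p)=\mt(p)$ and length $\max$ that moreover commute with every arrow; in the top weight the latter is automatic because multiplying a length-$\max$ path by an arrow lands in $A_{\max+1}=0$, so every cycle of length $\max$ is central and conversely every central element of weight $\max$ is a combination of such cycles.

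\textbf{Main obstacle.} The only genuinely delicate point is the last identification: showing that $Z(A)_{\max}$ is spanned by \emph{cycles} of length $\max$ rather than a smaller or larger space. The subtlety is that a priori a central element of top weight could be a combination of cycles based at different vertices with constrained coefficients; one must check that, in top weight, the conditions $e_i u = u e_i$ simply extract the $e_iAe_i$-isotypic pieces (each spanned by cycles at $i$ of length $\max$) and that the remaining relations $ua=au$ impose no further constraint because both sides vanish. I expect this to follow cleanly from $A_{\max+1}=0$: for $u\in A_{\max}$ and any arrow $a$, $ua$ and $au$ both lie in $A_{\max+1}=0$, so $ua=au$ holds trivially, and $u\in Z(A)$ reduces to commutation with the idempotents $e_i$, i.e.\ $u=\sum_i e_iue_i$, whose $i$-th summand is a combination of length-$\max$ paths from $i$ to $i$. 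Thus $Z(A)_{\max}=\bigoplus_i e_i A_{\max} e_i$, which is exactly the span of the cycles of $\mathcal{Q}$ of length $\max$ as claimed. Combining with the vanishing of the higher differential out of and into this component completes the proof.
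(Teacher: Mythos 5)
Your proof is correct and follows essentially the same route the paper intends: Proposition~\ref{zerohikocoA} is stated as a direct consequence of Proposition~\ref{zerohikoco} together with the biweight grading, and your argument — the obstruction $\partial_\smile(u)$ lands in $\HK^1(A)_{\max+1}=0$, centrality in top weight reduces to commuting with the idempotents, hence $\HK^0_{hi}(A)_{\max}=Z(A)_{\max}=\bigoplus_i e_iA_{\max}e_i$ — is exactly the deduction left to the reader. Your reading of "the space spanned by the cycles of length $\max$" as this subspace of $A_{\max}$ (rather than a subspace of the path algebra) is the correct one.
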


Similarly, a Koszul $q$-chain $z$ in $A_n \otimes_{k^e} W_q$ is said to be homogeneous of weight $n$. The space of Koszul 
chains $A \otimes_{k^e} W_{\bullet}$ is $\mathbb{N}\times \mathbb{N}$-graded by the \emph{biweight} $(q,n)$, where $q$ is called the \emph{homological weight} and $n$ is called the 
\emph{coefficient weight}. Moreover $b^K$ is homogeneous of biweight $(-1,1)$ and the space $\HK_{\bullet}(A)$ is $\mathbb{N}\times \mathbb{N}$-graded by the biweight. 
The homogeneous component of biweight $(q,n)$ of $\HK_{\bullet}(A)$ is denoted by $\HK_q(A)_n$. Since 
$$\partial_{\frown}: \HK_q(A)_n \rightarrow \HK_{q-1}(A)_{n+1},$$
the space $\HK^{hi}_{\bullet}(A)$ is $\mathbb{N}\times \mathbb{N}$-graded by the biweight, and its $(q,n)$-component is denoted by $\HK^{hi}_q(A)_n$.

If $f:W_p \rightarrow A_m$ and $z \in A_n \otimes_{k^e} W_q$ are homogeneous of biweights $(p,m)$ and $(q,n)$ respectively, 
then $f\underset{K}{\frown} z$ and $z\underset{K}{\frown} f$ are homogeneous of biweight $(q-p, m+n)$ where
\begin{align} \label{kolcapA}
f\underset{K}{\frown} z &= (-1)^{(q-p)p} f(x_{q-p+1} \ldots x_q)a \otimes_{k^e} x_1 \ldots  x_{q-p},
\\ \label{korcapA}
  z\underset{K}{\frown} f &= (-1)^{pq} a\, f(x_1 \ldots x_p) \otimes_{k^e} x_{p+1} \ldots  x_{q},
\end{align}
and $z=a\otimes_{k^e} x_1\ldots x_q$. The $\Hom _{k^e}(W_{\bullet},A)$-bimodule $A \otimes_{k^e} W_{\bullet}$, the $\HK^{\bullet}(A)$-bimodule $\HK_{\bullet}(A)$ and the 
$\HK_{hi}^{\bullet}(A)$-bimodule $\HK^{hi}_{\bullet}(A)$ are thus $\mathbb{N}\times \mathbb{N}$-graded by the biweight. The proof of the following is left to the reader.
\begin{Po}  \label{easyhikohoA}
Let $A=T_k(V)/(R)$ be a quadratic $k$-algebra over $\mathcal{Q}$. We have
$$\HK^{hi}_0(A)_0 \cong \HK_0(A)_0 \cong  k.$$
Moreover $\HK_0(A)_1\cong \HK_1(A)_0$ is isomorphic to the space spanned by the loops of $\mathcal{Q}$, and $\partial_{\frown}: \HK_1(A)_0 \rightarrow \HK_0(A)_1$ identifies with the identity map on this space. 
As a consequence,
$$\HK^{hi}_0(A)_1\cong \HK^{hi}_1(A)_0= 0.$$
\end{Po}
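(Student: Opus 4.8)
The plan is to exploit the $\mathbb{N}\times\mathbb{N}$-grading of the Koszul chain complex $A\otimes_{k^e}W_{\bullet}$ by the biweight $(q,n)$ from Subsection~\ref{weight}. All the assertions concern biweights with very small $q$ and $n$, so the whole argument reduces to writing down three graded pieces explicitly, checking that the Koszul differential $b^K$ vanishes on them for weight reasons, and then evaluating the higher differential $\partial_{\frown}$.

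\textbf{Step 1 (the relevant graded pieces).} Using $A_0\cong k$, $W_0=k$, $W_1=V$ together with the identifications $e_jAe_i\otimes e_iW_pe_j\cong e_jAe_i\otimes_{k^e}e_iW_pe_j$ recalled in Subsection~\ref{khom}, I would record
\[(A\otimes_{k^e}W_0)_0\cong\bigoplus_{i\in\mathcal{Q}_0}e_iA_0e_i\cong k,\qquad (A\otimes_{k^e}W_0)_1\cong\bigoplus_{i}e_iVe_i,\qquad (A\otimes_{k^e}W_1)_0\cong\bigoplus_{i}e_iVe_i,\]
the last two spaces being canonically the span of the loops of $\mathcal{Q}$, since $e_jA_0e_i$ and $e_jVe_i$ vanish unless $i=j$.

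\textbf{Step 2 (vanishing of $b^K$ and the homology groups).} Since $b^K$ is homogeneous of biweight $(-1,1)$ by~\eqref{defb}, no component of $b^K$ decreases the coefficient weight, so nothing maps into coefficient weight $0$; hence $\HK_0(A)_0\cong k$, and $\HK_1(A)_0$ is the full kernel of $b^K_1$ in coefficient weight $0$. A direct computation with~\eqref{defb} gives $b^K_1(e_i\otimes_{k^e}x)=e_ix\otimes_{k^e}1-xe_i\otimes_{k^e}1=0$ for a loop $x$ at $i$, so $\HK_1(A)_0\cong\bigoplus_ie_iVe_i$; and the image of $b^K_1$ in $(A\otimes_{k^e}W_0)_1$ being zero, also $\HK_0(A)_1\cong\bigoplus_ie_iVe_i$. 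Thus $\HK_0(A)_1\cong\HK_1(A)_0$ is the loop space. As $\partial_{\frown}$ also has biweight $(-1,1)$, the two arrows of $\partial_{\frown}$ abutting to or issuing from $\HK_0(A)_0$ pass through $\HK_1(A)_{-1}=0$ and $\HK_{-1}(A)_1=0$, whence $\HK^{hi}_0(A)_0\cong\HK_0(A)_0\cong k$.

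\textbf{Step 3 ($\partial_{\frown}$ on loops and the consequences).} Using the formula $\me_A\underset{K}{\frown}(m\otimes_{k^e}x_1\ldots x_p)=mx_1\otimes_{k^e}x_2\ldots x_p$ of Subsection~\ref{higherkc}, one gets $\me_A\underset{K}{\frown}(e_i\otimes_{k^e}x)=x\otimes_{k^e}1$, which is exactly the loop $x$ in $\HK_0(A)_1$ under the identifications of Step~1; hence $\partial_{\frown}\colon\HK_1(A)_0\to\HK_0(A)_1$ is the identity map of the loop space. Finally, since $\partial_{\frown}$ into $\HK_1(A)_0$ comes from $\HK_2(A)_{-1}=0$ and $\partial_{\frown}$ out of $\HK_0(A)_1$ lands in $\HK_{-1}(A)_2=0$, we obtain $\HK^{hi}_1(A)_0=\ker(\partial_{\frown}|_{\HK_1(A)_0})=0$ and $\HK^{hi}_0(A)_1=\operatorname{coker}(\partial_{\frown}|_{\HK_1(A)_0})=0$, as that map is an isomorphism.

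I do not expect a genuine obstacle here: everything is forced once the biweight degrees of $b^K$ and $\partial_{\frown}$ are kept in mind. The only point requiring care is the bookkeeping in Step~1 — verifying that no further summand of $A\otimes_{k^e}W_{\bullet}$ survives in the biweights under consideration, which is precisely where $A_0\cong k$ is used — together with the $k^e$-module identifications and the signs in~\eqref{defb}.
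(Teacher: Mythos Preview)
Your proof is correct. The paper itself leaves this proposition to the reader (``The proof of the following is left to the reader''), so there is no argument to compare against; your biweight bookkeeping in Steps~1--3 is exactly the kind of verification the authors intended the reader to carry out.
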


\subsection{Invariance of Koszul calculus} \label{invariance}

In~\cite{berger:Ncal}, the first author proved that the Koszul calculus of an $N$-homogeneous algebra $A$ over a field $k$ only depends on the structure of associative algebra of $A$, independently of any presentation $A=T_k(V)/(R)$ of $A$ as an $N$-homogeneous algebra. This result was based on an isomorphism lemma due to Bell and Zhang~\cite{bz:isolemma}. In the quadratic case $N=2$, we are going to extend this Koszul calculus invariance to any quadratic quiver algebra. For that, we shall use an extension of the isomorphism lemma to quiver algebras with homogeneous relations, due to Gaddis~\cite{gaddis:isolemma}.

Let $\mathcal{Q}$ and $\mathcal{Q'}$ be finite quivers, and $\mathbb{F}$ be a field. We introduce
the commutative rings $k=\mathbb{F} \mathcal{Q}_0$ and $k'=\mathbb{F} \mathcal{Q'}_0$, the
$k$-bimodule $V=\mathbb{F} \mathcal{Q}_1$ and the $k'$-bimodule $V'=\mathbb{F} \mathcal{Q}'_1$. As
explained in Subsection \ref{not}, we make the identifications of graded algebras $T_k(V)\cong
\mathbb{F}\mathcal{Q}$ and $T_{k'}(V')\cong \mathbb{F}\mathcal{Q'}$. We are interested in the graded
$\mathbb{F}$-algebra isomorphisms $u: T_k(V) \rightarrow T_{k'}(V')$ given by a ring isomorphim
$u_0:k \rightarrow k'$ and by a $k$-bimodule isomorphism $u_1: V \rightarrow V'$, where $V'$ is a
$k$-bimodule via $u_0$. By \cite[Lemma 4]{gaddis:isolemma}, this implies that $u_0$ maps $\mathcal{Q}_0$ to $\mathcal{Q'}_0$, and the bijection $\mathcal{Q}_0 \rightarrow \mathcal{Q'}_0$ induced by $u_0$ transforms the adjacency matrix of $\mathcal{Q}$ into the adjacency matrix of $\mathcal{Q'}$.

Let us fix a sub-$k$-bimodule $R$ of $V\otimes_kV$ and a sub-$k'$-bimodule $R'$ of $V'\otimes_{k'}V'$. We define the graded $k$-algebra $A=T_k(V)/(R)$ and the graded $k'$-algebra $A'=T_{k'}(V')/(R')$. Following the terminology of the one vertex case, a graded $\mathbb{F}$-algebra isomorphism $u:A \rightarrow A'$ is called a \emph{Manin isomorphism} if $u$ is defined by a ring isomorphism $u_0:k \rightarrow k'$ (so $V'$ is a $k$-bimodule via $u_0$), and by a $k$-bimodule isomorphism $u_1: V \rightarrow V'$, such that the $k$-bimodule isomorphism $u_1^{\otimes_k 2}: V^{\otimes_k 2} \rightarrow V'^{\otimes_{k'} 2}$ satisfies $u_1^{\otimes_k 2}(R)=R'$. In particular, $u$ is an isomorphism of the augmented $k$-algebra $A$ to the augmented $k'$-algebra $A'$, the augmentations being the projections $A \rightarrow A_0 \cong k$ and $A' \rightarrow A'_0 \cong k'$.

As in~\cite{berger:Ncal}, for any $A$-bimodule $M$, the Manin isomorphism $u$ naturally defines an isomorphism of complexes from $(M\otimes_{k^e} W_{\bullet}, b^K)$ to $(M\otimes_{k'^e} W'_{\bullet}, b^K)$, where $M$ is an $A'$-bimodule via $u$, inducing natural isomorphisms $\HK_{\bullet}(A,M) \cong \HK_{\bullet}(A',M)$. Similarly, $u$ induces natural isomorphisms $\HK^{\bullet}(A',M) \cong \HK^{\bullet}(A,M)$. It is clear from the definitions in Subsection \ref{prod} that these isomorphisms respect the Koszul cup and cap products. To summarise  all these properties, we say that a Manin isomorphism induces isomorphic (general) Koszul calculi. Since $u_1(\me_A)= \me_{A'}$ by functoriality, it also induces isomorphic higher Koszul calculi.

Using Gaddis's theorem \cite[Theorem 5]{gaddis:isolemma}, we can now prove ungraded invariance. Let $C$ be a commutative ring. Let $A$
be an augmented associative $C$-algebra (not necessarily $C$-central) having a quadratic quiver
algebra presentation $B$, meaning that the augmented $C$-algebra $A$ is isomorphic to a quadratic
$k$-algebra $B=T_k(V)/(R)$ over a finite quiver $\mathcal{Q}$, naturally augmented over $k$ by the
projection $B \rightarrow B_0\cong k$. This implies that the ring $C$ is isomorphic to $k\cong
\mathbb{F} \mathcal{Q}_0$. Then \emph{we can define the (general) Koszul calculus of $A$ as being the (general) Koszul calculus of $B$.}
Indeed, if $B'=T_{k'}(V')/(R')$ over a finite quiver $\mathcal{Q'}$ is another quadratic quiver algebra presentation of $A$, the ungraded augmented $k$-algebra $B$ is isomorphic to the ungraded augmented $k'$-algebra $B'$. By Gaddis's theorem, there exists a Manin isomorphism from $B$ to $B'$, thus the (general) Koszul calculi of $B$ and $B'$ are isomorphic by Manin invariance. \emph{The higher Koszul calculus of $A$ is also defined as being the higher Koszul calculus of $B$}.

\subsection{Comparing Koszul (co)homology with Hochschild (co)homology in degree 2} \label{degree2}

Let $A=T_k(V)/(R)$ be a quadratic $k$-algebra over $\mathcal{Q}$. Recall that, for $p=0$ and $p=1$, we have linear isomorphisms $\HK_p(A,M) \cong \HH _p(A,M)$ and $\HH ^p(A,M) \cong \HK^p(A,M)$ (Subsection \ref{khom}). It is no longer true if $p\pgq 2$ and if $A$ is an arbitrary non-Koszul algebra. Preprojective algebras of Dynkin type will give infinitely many counterexamples when $p=2$. However, in general, we can compare the Koszul and Hochschild spaces when $p=2$, by  providing a surjection $\HK_2(A,M) \rightarrow \HH _2(A,M)$ and an injection $\HH ^2(A,M) \rightarrow \HK^2(A,M)$. To prove that, we use a minimal projective resolution of the graded $k$-algebra $A$, described as follows.

As in the one vertex case~\cite{bls:kocal}, we know that, in the category of graded $A$-bimodules, $A$ has a minimal projective resolution $P(A)$ whose component of homological degree $p$ can be written as $A\otimes_k E_p\otimes_k A$, where $E_p$ is a weight-graded $k$-bimodule. Then $P_l(A)=P(A)\otimes_Ak$ (respectively $P_r(A)=k\otimes_A P(A)$) is a minimal projective resolution of the graded left (respectively right) $A$-module $k$ (see for instance~\cite{berger:dimension}) and the differential $\delta$ of $P(A)$ is the graded sum of the differentials $\delta \otimes_A\id_k$ and $\id_k\otimes_A \delta $ naturally extended to $P(A)$. 

Define the left (respectively right) Koszul complex $K_l(A)=K(A)\otimes_A k$ (respectively $K_r(A)=k \otimes_A K(A)$). Now using \cite[Subsections 2.7 and 2.8]{bgs},  \cite[Chapter 1, Proposition 3.1]{popo:quad} adapted to the case where $k$ is a semisimple ring (rather than a field), and the construction of $\Ext_A(k,k)$ from the resolutions $P_l(A)$ and $P_r(A)$, we can show that $K_l(A)$ (respectively $K_r(A)$) is isomorphic as a left (respectively right) $A$-module to the diagonal part of the graded resolution $P_l(A)$ (respectively $P_r(A)$). We know (see for instance \cite[Section 3]{vdb:nch}) that the differential $d$ of $K(A)$ is the graded sum of the differentials $d \otimes_A\id_k$ and $\id_k\otimes_A d$ naturally extended to $K(A)$. Thus the inclusions $A\otimes_k W_p\otimes_k A \hookrightarrow A\otimes_k E_p\otimes_k A$ constitute an inclusion map $\iota : K(A) \hookrightarrow P(A)$ of weight-graded $A$-bimodule complexes. So we can view the complex $K(A)$ as the diagonal part of the weight-graded resolution $P(A)$, and $A$ is Koszul if and only if $P(A)=K(A)$. The beginning of $P(A)$ coincides with $K(A)$, that is, $E_0=k$, $E_1= V$, $E_2=R$, and the differential $\delta$ of $P(A)$ coincides with the differential $d$ of $K(A)$ in degrees 1 and 2.

For any $A$-bimodule $M$, $\iota$ induces $\tilde{\iota}=M\otimes_{A^e} \iota$ and $\iota^{\ast}=\Hom _{A^e} (\iota, M)$ decomposed in $\tilde{\iota}_p : M\otimes_{k^e} W_p \rightarrow M\otimes_{k^e} E_p$ and $\iota^{\ast}_p : \Hom _{k^e}(E_p,M) \rightarrow \Hom _{k^e}(W_p,M)$. The linear maps
\begin{equation}
  H(\tilde{\iota})_p: \HK_p(A,M) \rightarrow \HH _p(A,M) \ 
\text{and } \  H(\iota^{\ast})_p: \HH ^p(A,M) \rightarrow \HK^p(A,M)
\end{equation}
are isomorphisms for $p=0$ and $p=1$, and for any $p$ if $A$ is Koszul. Since $\iota_p$ is an identity map for $p=0,1,2$, $\tilde{\iota}_p$ and $\iota^{\ast}_p$ are also identity maps for the same $p$'s. Therefore, setting $\tilde{\delta}=M\otimes_{A^e} \delta$ and $\delta^{\ast}=\Hom _{A^e} (\delta, M)$, we have the commutative diagrams
\begin{eqnarray} \label{hom2a}
M\otimes_{k^e} W_3  \stackrel{b^K_3}{\longrightarrow} &  M\otimes_{k^e} R \stackrel{b^K_2}{\longrightarrow} & M\otimes_{k^e} V \nonumber  \\
\tilde{\iota}_3 \downarrow   \ \ \ \ \ \ \ \   &  id \downarrow  \ \ \ \ \ \ &  id \downarrow  \\
M\otimes_{k^e} E_3 \stackrel{\tilde{\delta}_3}{\longrightarrow} & M\otimes_{k^e} R \stackrel{\tilde{\delta}_2}{\longrightarrow} &  M\otimes_{k^e} V   \nonumber
\end{eqnarray}
\\

\begin{eqnarray} \label{hom2b}
\Hom _{k^e}(V, M)  \stackrel{\delta^{\ast}_2}{\longrightarrow} & \Hom _{k^e}(R, M)  \stackrel{\delta^{\ast}_3}{\longrightarrow} & \Hom _{k^e}(E_3, M) \nonumber  \\
id \downarrow   \ \ \ \ \ \ \ \   &  id \downarrow  \ \ \ \ \ \ &  \iota^{\ast}_3 \downarrow  \\
\Hom _{k^e}(V, M) \stackrel{b_K^2}{\longrightarrow} &  \Hom _{k^e}(R, M) \stackrel{b_K^3}{\longrightarrow} &  \Hom _{k^e}(W_3, M)   \nonumber
\end{eqnarray}
where $\tilde{\iota}_3$ is injective and $\iota^{\ast}_3$ is surjective (the ring $k^e\cong \ff^{|\mathcal{Q}_0|^2}$ is semisimple), so that we obtain the following.

\begin{Po} \label{1degree2}
Let $A=T_k(V)/(R)$ be a quadratic $k$-algebra over $\mathcal{Q}$. For any $M$,
\sloppy
\begin{enumerate}[\itshape(i)]
\item  $H(\tilde{\iota})_2: \HK_2(A,M) \rightarrow \HH _2(A,M)$ is surjective with kernel
  isomorphic to $\im(\tilde{\delta}_3)/\im(b^K_3)$,

\item  $H(\iota^{\ast})_2: \HH ^2(A,M) \rightarrow \HK^2(A,M)$ is injective with image isomorphic to
  $\ker(\delta^{\ast}_3)/\im(b_K^2)$.
\end{enumerate}

\end{Po}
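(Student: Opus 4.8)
The plan is to exploit the commutative diagrams \eqref{hom2a} and \eqref{hom2b} together with the fact that $\iota_p$ is an identity map for $p=0,1,2$, so that the two complexes of Koszul and Hochschild (co)chains literally coincide in homological degrees $\leqslant 2$ and differ only in the incoming/outgoing differential from degree $3$. Concretely, for homology, $\HK_2(A,M)=\ker(b^K_2)/\im(b^K_3)$ while $\HH_2(A,M)=\ker(\tilde\delta_2)/\im(\tilde\delta_3)$; since $b^K_2=\tilde\delta_2$ (both equal the same map $M\otimes_{k^e}R\to M\otimes_{k^e}V$, as $\tilde\iota_2=\id$ and the square on the right commutes), the numerators agree, call this common space $Z_2$. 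Dually, for cohomology $\HH^2(A,M)=\ker(\delta^{\ast}_3)/\im(\delta^{\ast}_2)$ and $\HK^2(A,M)=\ker(b_K^3)/\im(b_K^2)$ with $b_K^2=\delta^{\ast}_2$, so the denominators agree, call this common space $B^2$.

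For part \textit{(i)}: the induced map $H(\tilde\iota)_2\colon Z_2/\im(b^K_3)\to Z_2/\im(\tilde\delta_3)$ is just the quotient map induced by the inclusion $\im(b^K_3)\subseteq\im(\tilde\delta_3)$ — and this inclusion holds because the left square of \eqref{hom2a} commutes, i.e. $b^K_3=\id\circ b^K_3=\tilde\delta_3\circ\tilde\iota_3$, hence $\im(b^K_3)=\tilde\delta_3(\im\tilde\iota_3)\subseteq\im(\tilde\delta_3)$. Therefore $H(\tilde\iota)_2$ is surjective, and by the second isomorphism theorem its kernel is $\im(\tilde\delta_3)/\im(b^K_3)$. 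One should remark that this quotient is well-defined: I would note that $\im(b^K_3)\subseteq\im(\tilde\delta_3)\subseteq Z_2$, the last inclusion because $\tilde\delta_2\tilde\delta_3=0$ and $\tilde\delta_2=b^K_2$, so both sides live inside $Z_2$ and the quotient makes sense. Dually for part \textit{(ii)}: the left square of \eqref{hom2b} gives $\delta^{\ast}_2=b_K^2$ and the right square gives $b_K^3=\iota^{\ast}_3\circ\delta^{\ast}_3$, whence $\ker(\delta^{\ast}_3)\subseteq\ker(b_K^3)$; so $H(\iota^{\ast})_2\colon \ker(\delta^{\ast}_3)/B^2\to\ker(b_K^3)/B^2$ is the inclusion-induced map on quotients, which is injective with image $\ker(\delta^{\ast}_3)/\im(b_K^2)=\ker(\delta^{\ast}_3)/B^2$, exactly as claimed.

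I do not expect a genuine obstacle here: the statement is essentially a diagram-chase once one has correctly identified that the Koszul complex is the ``diagonal subcomplex'' of the minimal resolution $P(A)$ agreeing with it through degree $2$, which is granted by the discussion preceding the proposition. The only point requiring a little care is the assertions of injectivity of $\tilde\iota_3$ and surjectivity of $\iota^{\ast}_3$, which are already supplied in the text (they follow from $W_3\hookrightarrow E_3$ being a $k$-bimodule inclusion and $k^e$ being semisimple, so the inclusion splits and remains split after applying $M\otimes_{k^e}-$ or $\Hom_{k^e}(-,M)$); these are used only to guarantee, via $\im(b^K_3)=\tilde\delta_3(\im\tilde\iota_3)$ and a parallel identity, that the kernel/image descriptions are stated in terms of the ``expected'' maps, but in fact the cleaner formulation $\ker H(\tilde\iota)_2\cong\im(\tilde\delta_3)/\im(b^K_3)$ and $\im H(\iota^{\ast})_2\cong\ker(\delta^{\ast}_3)/\im(b_K^2)$ follows purely from commutativity of the squares and needs splitness only if one wants to further rewrite $\im(b^K_3)=\tilde\delta_3(\im\tilde\iota_3)$ in other terms. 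I would therefore present the proof as: identify the common numerator $Z_2$ (resp. denominator $B^2$), observe the two chains of inclusions coming from the commuting squares, and invoke the second isomorphism theorem twice.
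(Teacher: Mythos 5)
Your proof is correct and is precisely the paper's intended argument: the paper simply displays the two commutative diagrams and states the proposition as an immediate consequence, and your chase — common numerator $Z_2=\ker(b^K_2)=\ker(\tilde\delta_2)$ and common denominator $B^2=\im(b_K^2)=\im(\delta^{\ast}_2)$, the inclusions $\im(b^K_3)\subseteq\im(\tilde\delta_3)$ and $\ker(\delta^{\ast}_3)\subseteq\ker(b_K^3)$ extracted from the commuting squares, followed by the isomorphism theorem for nested submodules — fills in exactly that step. Two minor remarks: the fact you invoke is conventionally the \emph{third} rather than the second isomorphism theorem, and you are right that the injectivity of $\tilde\iota_3$ and surjectivity of $\iota^{\ast}_3$ are not needed for the proposition as stated (they matter only for the weight-by-weight refinements in the subsequent corollary).
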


We can be more specific when $M=A$, by using the weight grading (Subsection \ref{weight}). Unlike the Koszul differentials $b^K$ and $b_K$, the Hochschild differentials $b^H$ and $b_H$ are not homogeneous for the coefficient weight, but only for the total weight. The grading of $\HH _p(A)$ and $\HH ^p(A)$ for the total weight $t$ is denoted by $\HH _p(A)_t$ and $\HH ^p(A)_t$. Denote the weight of a homogeneous element $a$ of $A$ by $|a|$. Recall that the total weight of a homogeneous $p$-chain $z= a\otimes_{k^e} (a_1\ldots a_p)$ is equal to $t=|a|+ |a_1| + \ldots + |a_p|$, and the total weight of a homogeneous $p$-cochain $f$ mapping $a_1\ldots a_p$ to an element of $A_m$ is equal to $t=m-|a_1| - \ldots - |a_p|$. Then $H(\tilde{\iota})_2$ is homogeneous from the coefficient weight $r$ to the total weight $r+2$, while $H(\iota^{\ast})_2$ is homogeneous from the total weight $r-2$ to the coefficient weight $r$.

\begin{Cr} \label{2degree2}
Let $A=T_k(V)/(R)$ be a quadratic $k$-algebra over $\mathcal{Q}$.
\begin{enumerate}[\itshape(i)]
\item $H(\tilde{\iota})_2$ is an isomorphism from $\HK_2(A)_r$ to $\HH _2(A)_{r+2}$ if $r=0$ and
  $r=1$.

\item  Assume that $A$ is finite dimensional. Let $\max$ be the highest $m$ such that $A_m\neq
  0$.
  Then $H(\iota^{\ast})_2$ is an isomorphism from $\HH ^2(A)_{r-2}$ to $\HK^2(A)_r$ if $r=\max$ and
  $r=\max-1$.
\end{enumerate}

\end{Cr}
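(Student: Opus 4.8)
The plan is to deduce the corollary from Proposition~\ref{1degree2} by a degree count in the biweight grading, exploiting that $K(A)$ is the diagonal part of the minimal resolution $P(A)$. Concretely I would use three structural facts recalled just before the statement: $W_p=(E_p)_p$ with $E_3$ concentrated in weights $\pgq 3$ (minimality of $P(A)$); $\delta$ restricts to $d$ on the diagonal part, so that the commutative diagrams \eqref{hom2a} and \eqref{hom2b} give $\tilde{\delta}_3\circ\tilde{\iota}_3=b^K_3$ and $b_K^3=\iota^{\ast}_3\circ\delta^{\ast}_3$; and the fact that $b^K$ has biweight $(-1,1)$ and $b_K$ biweight $(1,1)$, while $\delta$, $\tilde{\delta}$ and $\delta^{\ast}$ preserve the total weight. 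From $b_K^3=\iota^{\ast}_3\circ\delta^{\ast}_3$ one gets for free the inclusion $\ker(\delta^{\ast}_3)\subseteq\ker(b_K^3)$, and dually $\tilde{\delta}_3\circ\tilde{\iota}_3=b^K_3$ gives $\im(b^K_3)\subseteq\im(\tilde{\delta}_3)$; the work is to check the reverse inclusions in the extreme weights.

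For part (i), Proposition~\ref{1degree2}(i) reduces the claim to proving $\im(\tilde{\delta}_3)_r=\im(b^K_3)_r$ for the coefficient weights $r=0$ and $r=1$. Since $\tilde{\delta}_3$ preserves the total weight, it sends $A_j\otimes_{k^e}(E_3)_i$ into the coefficient-weight-$(i+j-2)$ part of $A\otimes_{k^e}R$; as $(E_3)_i=0$ for $i<3$, the only summand that can reach coefficient weight $\ppq 1$ is $A_0\otimes_{k^e}(E_3)_3=k\otimes_{k^e}W_3$. On the other side, $\im(b^K_3)_0=0$ because its source $A_{-1}\otimes_{k^e}W_3$ vanishes, and $\im(b^K_3)_1=b^K_3(k\otimes_{k^e}W_3)$. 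Feeding $k\otimes_{k^e}W_3=k\otimes_{k^e}(E_3)_3$ into the identity $\tilde{\delta}_3\circ\tilde{\iota}_3=b^K_3$ then yields $\im(\tilde{\delta}_3)_r=\im(b^K_3)_r$ for $r=0$ (both zero) and for $r=1$, so the kernel of $H(\tilde{\iota})_2$ vanishes in these weights and $H(\tilde{\iota})_2$ is an isomorphism there.

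For part (ii), Proposition~\ref{1degree2}(ii) reduces the claim to the inclusion $\ker(b_K^3)_r\subseteq\ker(\delta^{\ast}_3)_r$ for $r=\max$ and $r=\max-1$: combined with the automatic reverse inclusion this gives $\ker(\delta^{\ast}_3)_r=\ker(b_K^3)_r$, hence the image of $H(\iota^{\ast})_2$ is all of $\HK^2(A)_r$ and, being injective, $H(\iota^{\ast})_2$ is bijective. I would take a Koszul $2$-cochain $f:R\rightarrow A_r$; it has total weight $r-2$, so $\delta^{\ast}_3(f)$ sends $(E_3)_i$ into $A_{i+r-2}$. If $r=\max$, then $i+r-2\pgq\max+1$ for every $i\pgq 3$, so $\delta^{\ast}_3(f)=0$ identically and the whole of $\Hom_{k^e}(R,A)$ in coefficient weight $\max$ lies in $\ker(\delta^{\ast}_3)$, as does $\ker(b_K^3)$ (here $b_K^3(f):W_3\rightarrow A_{\max+1}=0$ as well). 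If $r=\max-1$, then $\delta^{\ast}_3(f)$ already vanishes on $(E_3)_i$ for $i\pgq 4$ (target $A_{\pgq\max+1}=0$) and for $i\ppq 2$ (those summands are zero), so $\delta^{\ast}_3(f)$ is determined by its restriction to $(E_3)_3=W_3$, which by $b_K^3=\iota^{\ast}_3\circ\delta^{\ast}_3$ is exactly $b_K^3(f)$; hence $b_K^3(f)=0$ forces $\delta^{\ast}_3(f)=0$, as required.

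There is no genuine obstacle here: once Proposition~\ref{1degree2} and the two commutative diagrams are in hand, the argument is entirely formal. The only point that requires care is keeping the coefficient weight and the total weight apart throughout, and tracking exactly where the diagonal-part identifications ($W_3=(E_3)_3$, $\delta|_{(E_3)_3}=d$, the diagrams \eqref{hom2a}--\eqref{hom2b}) together with the vanishing $(E_3)_{<3}=0$ enter the weight count.
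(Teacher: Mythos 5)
Your argument is correct and is essentially the proof given in the paper: both parts rest on the same weight count using $E_{3,m}=0$ for $m<3$, $E_{3,3}=W_3$, the commutative diagrams \eqref{hom2a}--\eqref{hom2b}, and the reduction to Proposition~\ref{1degree2}. The only difference is presentational — you spell out the reductions $\im(\tilde{\delta}_3)_r=\im(b^K_3)_r$ and $\ker(\delta^{\ast}_3)_r=\ker(b_K^3)_r$ more explicitly than the paper does.
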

\begin{proof}
Denote by $E_{p,m}$ the homogeneous component of weight $m$ of $E_p$. Since $E_{3,2}=0$ and $E_{3,3}=W_3$, both maps $\tilde{\delta}_3$ and $b^K_3$ vanish on the component of total weight 2 of $A\otimes_{k^e}E_3$, while on that of total weight 3, they coincide with the inclusion map of $W_3$ into $V\otimes_{k^e} R$. Then we deduce \textit{(i)} from \textit{(i)} of the proposition.

Under  the assumptions of \textit{(ii)}, if $f: R \rightarrow A_{\max}$, then $b_K^3(f)=0$. Moreover, any other component of $\delta^{\ast}_3(f)$ mapping $E_{3,m}$ to $A_{\max+ m-2}=0$ vanishes as well. Thus $\delta^{\ast}_3(f)=0$, and we conclude by \textit{(ii)} of the proposition. The same proof works if $f: R \rightarrow A_{\max-1}$ since  $\delta^{\ast}_3(f)$ is then reduced to a map $W_3 \rightarrow A_{\max}$ coinciding with $b_K^3(f)$. 
\end{proof}

\setcounter{equation}{0}

\section{A right action on the Koszul calculus} \label{rightfreeness}

This section presents an important tool which we use  in Section 5 to adapt the known definition of
Calabi-Yau algebras due to Ginzburg to the context of quadratic quiver algebras endowed with the Koszul calculus. The idea is to put together two compatible bimodule actions on Koszul chains and cochains : the action of the quadratic quiver algebra $A$ and the action of the associated DG algebra $\tilde{A}$ defined just before Definition \ref{variouskc}.

\subsection{Compatibility} \label{compatibility}

\begin{Lm} \label{compat}
Let $A$ and $B$ be unital associative $\mathbb{F}$-algebras. Let $M$ be an $A$-bimodule (hence the induced $\mathbb{F}$-bimodule is symmetric). Assume that $M$ is a right $B$-module such that the actions of $\mathbb{F}$ induced on $M$ by $A$ and by $B$ are the same. Let $A^e=A\otimes_{\mathbb{F}} A^{op}$ be the enveloping algebra. The following are equivalent.
\begin{enumerate}[\itshape(i)]
\item  Viewing $M$ as a left $A^e$-module, $M$ is an $A^e$-$B$-bimodule.

\item  Viewing $M$ as a right $A^e$-module, the right actions of $A^e$ and $B$ on $M$ commute.

\item  $M$ is an $A$-$B$-bimodule and the right actions of $A$ and $B$ on $M$ commute.
\end{enumerate}

\end{Lm}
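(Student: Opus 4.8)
The plan is to prove the chain of equivalences $(i)\Rightarrow(ii)\Rightarrow(iii)\Rightarrow(i)$ by unravelling what each statement says about the various actions, the key point being a careful bookkeeping of how left/right $A$-actions become right/left $A^e$-actions. Recall the conventions: a left $A^e$-module structure on $M$ is the data of a left $A$-action and a right $A$-action (an $A$-bimodule), via $(a\otimes a')\cdot m = a\,m\,a'$; the associated right $A^e$-module structure is $m\cdot(a\otimes a') = a'\,m\,a$. Since $M$ is already a right $B$-module with the $\mathbb{F}$-actions agreeing, in each of $(i)$--$(iii)$ the only substantive content is a commutation/associativity condition between the $B$-action and the $A$-actions.

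First I would treat $(i)\Leftrightarrow(ii)$. Statement $(i)$ says that the left $A^e$-action and the right $B$-action on $M$ make $M$ an $A^e$-$B$-bimodule, i.e. $((a\otimes a')\cdot m)\cdot b = (a\otimes a')\cdot(m\cdot b)$ for all $a,a'\in A$, $b\in B$, $m\in M$; spelled out this is $(a\,m\,a')b = a\,(mb)\,a'$. Statement $(ii)$ says the right $A^e$-action and the right $B$-action commute: $(m\cdot(a\otimes a'))\cdot b = (m\cdot b)\cdot(a\otimes a')$, i.e. $(a'\,m\,a)b = a'\,(mb)\,a$. Since $a,a'$ range over all of $A$ independently, these two families of identities are literally the same set of equations after renaming $(a,a')\leftrightarrow(a',a)$, so $(i)\Leftrightarrow(ii)$ is immediate. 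This step is routine.

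Next, $(i)\Leftrightarrow(iii)$. The condition $(a\,m\,a')b = a\,(mb)\,a'$ for all $a,a',b$ is equivalent to the conjunction of its two "one-sided'' specializations: taking $a'=1$ gives $(a\,m)b = a\,(mb)$ (left $A$-action commutes with right $B$-action), and taking $a=1$ gives $(m\,a')b = (mb)\,a'$ (right $A$-action commutes with right $B$-action). Conversely, if both one-sided commutations hold then $(a\,m\,a')b = a\,((m\,a')b) = a\,((mb)\,a') = (a\,(mb))\,a'$, recovering the full identity. Now the first one-sided condition, $(am)b=a(mb)$, says precisely that $M$ is an $A$-$B$-bimodule (with the given left $A$-action and right $B$-action — associativity on each side is already assumed, and the $\mathbb{F}$-actions match by hypothesis), while the second is exactly "the right actions of $A$ and $B$ on $M$ commute''. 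This is precisely statement $(iii)$, so $(i)\Leftrightarrow(iii)$.

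I do not expect any genuine obstacle here: the lemma is essentially a tautology once the $A^e$-module conventions are pinned down, and the only thing requiring care is consistency of the left-vs-right $A^e$-module conventions (so that $(i)$ and $(ii)$ really are the same statement viewed from opposite sides) and the observation that a two-sided bimodule compatibility over $A^e$ decomposes into its two one-sided pieces. If anything, the mild subtlety worth a sentence in the write-up is checking that all of $(i)$, $(ii)$, $(iii)$ implicitly carry the same underlying right $B$-module and the same $A$-bimodule data, so that no hidden coherence condition is being smuggled in; once that is noted, the three equivalences follow by the elementary manipulations above.
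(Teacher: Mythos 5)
Your proof is correct, and the paper itself gives no argument here (it simply declares the proof straightforward); your unravelling of the left/right $A^e$-action conventions and the decomposition of the two-sided compatibility into its two one-sided specializations is exactly the intended routine verification. Nothing is missing.
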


The proof is straightforward. Under the assumptions of the lemma and if the equivalent assertions hold, we say that the right action of $B$ on $M$ is \emph{compatible} with the $A$-bimodule $M$.

\begin{Ex} \label{example}
With $B=M=A^e$, $A^e$ is a natural $A^e$-$A^e$-bimodule for the multiplication of the $\mathbb{F}$-algebra $A^e$. Recall that the left $A^e$-module $A^e$ is isomorphic to the $A$-bimodule $A\stackrel{o}{\otimes} A$ for the outer action $(a\otimes b).(\alpha \otimes \beta)= (a\alpha)\otimes (\beta b)$, while the right $A^e$-module $A^e$ is isomorphic to the $A$-bimodule $A\stackrel{i}{\otimes} A$ for the inner action $(\alpha \otimes \beta).(a\otimes b)= (\alpha a) \otimes (b \beta)$.
\end{Ex}

\subsection{DG bimodules over the DG algebra $\tilde{A}$} \label{dgbimodules}

Let $A=T_k(V)/(R)$ be a quadratic $k$-algebra over $\mathcal{Q}$. Fix a unital associative $\mathbb{F}$-algebra $B$ and an $A$-bimodule $M$. We assume that $M$ is a right $B$-module compatible with the $A$-bimodule structure. Then the space $M\otimes_{k^e} W_q$ is a right $B$-module for the action of $b\in B$ on $z=m\otimes_{k^e}x_1 \ldots x_q \in M\otimes_{k^e} W_q$ defined by
$$z.b= (m.b)\otimes_{k^e}x_1 \ldots x_q.$$
It is well-defined since $(\lambda m \mu).b= \lambda (m.b) \mu$ for any $\lambda$ and $\mu$ in $k$. From (\ref{defb}) and \eqref{kolcap}, we check that $b^K$ and  $\me_A \underset{K}{\frown} -$ are $B$-linear. Thus $\HK_{\bullet}(A,M)$ and $\HK^{hi}_{\bullet}(A,M)$ are graded right $B$-modules.

Just before Definition \ref{variouskc}, we have associated to $A$ the $\mathbb{F}$-central DG algebra
$$\tilde{A}=\Hom _{A^e}(K(A), A)\cong \Hom _{k^e}(W_{\bullet},A)$$
whose grading is given by the cohomological degree of cochains, whose differential is $b_K$ and whose multiplication is $\underset{K}{\smile}$. We have also mentioned that $\Hom _{k^e}(W_{\bullet},M)$ and  $M\otimes_{k^e} W_{\bullet}$ are DG bimodules over $\tilde{A}$ for the actions of $\underset{K}{\smile}$ and $\underset{K}{\frown}$ respectively, so that $\HK^{\bullet}(A,M)$ and $\HK_{\bullet}(A,M)$ are graded $\HK^{\bullet}(A)$-bimodules.

For any $k$-bimodule morphism $f: W_p \rightarrow A$, we verify that
$$f \underset{K}{\frown} (z.b)=(f \underset{K}{\frown}z).b\text{ and } (z.b)\underset{K}{\frown} f =(z \underset{K}{\frown}f).b,$$
so that the right action of $B$ on $M\otimes_{k^e} W_{\bullet}$ is compatible with the
$\tilde{A}$-bimodule structure. Therefore the right action of $B$ on $\HK_{\bullet}(A,M)$ and on
$\HK^{hi}_{\bullet}(A,M)$ is  compatible with the structure of $\HK^{\bullet}(A)$-bimodule and of
$\HK_{hi}^{\bullet}(A)$-bimodule respectively. Let us sum up what we have obtained at the level of complexes.

\begin{Po} \label{dgalg1}
Let  $A=T_k(V)/(R)$ be a quadratic $k$-algebra over $\mathcal Q$, let $B$ be  a unital associative
$\mathbb{F}$-algebra and let $M$ be an $A$-bimodule.  We assume that $M$ is a right $B$-module and
that this structure is compatible with the $A$-bimodule structure. Denote by $\modb$ the category of right $B$-modules. Then
\begin{enumerate}[\itshape(i)]
\item the complex $(M\otimes_{k^e} W_{\bullet}, b^K)$ is a complex in $\modb$,
\item  the complex $(M\otimes_{k^e} W_{\bullet}, b^K)$ is a DG-bimodule over the
  $\mathbb{F}$-central DG-algebra $\tilde{A}$,
\item the right action of $B$ and the bimodule action of $\tilde{A}$  on
  $M\otimes_{k^e} W_{\bullet}$ are compatible.
\end{enumerate}
In this situation, following Yekutieli~\cite[Definition
3.8.1]{yeku:dercat}, we  say that $M\otimes_{k^e} W_{\bullet}$ is a DG
$\tilde{A}$-bimodule in the abelian category $\modb$.
\end{Po}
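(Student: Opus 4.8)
The plan is to assemble the remarks made just before the statement into a single short argument: parts \textit{(i)}--\textit{(iii)} reduce to three compatibility checks, each an application of Lemma~\ref{compat}, and the final sentence is merely a reformulation in Yekutieli's language.

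I would start with \textit{(i)}. First one checks that the formula $z.b=(m.b)\otimes_{k^e}x_1\ldots x_q$ gives a well-defined right $B$-action on the $k^e$-balanced tensor product $M\otimes_{k^e}W_q$; this is the assertion that right multiplication by $b$ commutes with the right $k^e$-action on $M$, i.e. $(\lambda m\mu).b=\lambda(m.b)\mu$ for all $\lambda,\mu\in k$, which is the restriction to $k\subseteq A$ of the compatibility hypothesis (by Lemma~\ref{compat}\textit{(iii)}, $M$ is an $A$-$B$-bimodule whose right $A$- and $B$-actions commute). The $B$-linearity of the differential $b^K$ is then read off formula~\eqref{defb}: on the summand $m.x_1\otimes_{k^e}x_2\ldots x_p$ one uses that the right actions of $A$ and $B$ on $M$ commute, and on $x_p.m\otimes_{k^e}x_1\ldots x_{p-1}$ that the left $A$-action commutes with the right $B$-action, both supplied by Lemma~\ref{compat}. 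This yields \textit{(i)}.

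Part \textit{(ii)} is a recollection of the discussion surrounding~\eqref{eq:dga}: the left and right actions $f\cdot z=f\underset{K}{\frown}z$ and $z\cdot f=z\underset{K}{\frown}f$ are well defined since the coefficient bimodule of a cochain in $\tilde{A}=\Hom_{k^e}(W_\bullet,A)$ is $A$, so that $A\otimes_A M\cong M\cong M\otimes_A A$; the associativity relations of Subsection~\ref{prod} give the $\tilde{A}$-bimodule axioms; and the graded Leibniz identities $b^K(f\underset{K}{\frown}z)=b_K(f)\underset{K}{\frown}z+(-1)^{p}f\underset{K}{\frown}b^K(z)$ (for $f$ of degree $p$) together with its analogue for $z\underset{K}{\frown}f$ promote $(M\otimes_{k^e}W_\bullet,b^K)$ to a DG $\tilde{A}$-bimodule. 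For \textit{(iii)} it remains to see that the right $B$-action commutes with both $\tilde{A}$-actions, i.e. $f\underset{K}{\frown}(z.b)=(f\underset{K}{\frown}z).b$ and $(z.b)\underset{K}{\frown}f=(z\underset{K}{\frown}f).b$ for $f\in\Hom_{k^e}(W_p,A)$; this is immediate from~\eqref{kolcap} and~\eqref{korcap} once one notes that, after the identifications $A\otimes_A M\cong M$ and $M\otimes_A A\cong M$, the cap product alters the $M$-slot only by left, respectively right, multiplication by the value $f(\ldots)\in A$, and these commute with the right $B$-action by the compatibility hypothesis (indeed, that hypothesis is precisely what makes $A\otimes_A M$ and $M\otimes_A A$ into right $B$-modules identified with $M$). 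The concluding assertion is then Yekutieli's definition~\cite[Definition 3.8.1]{yeku:dercat} applied verbatim to the data \textit{(i)}--\textit{(iii)}.

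I do not expect any real obstacle: every step is a routine manipulation, and the only point requiring attention is keeping straight which of the several left/right actions is being commuted past which, and checking that each $B$-action descends correctly through the tensor products over $k^e$ and over $A$.
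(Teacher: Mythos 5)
Your proposal is correct and follows essentially the same route as the paper: the paper's "proof" is exactly the discussion preceding the proposition, namely the well-definedness of $z.b=(m.b)\otimes_{k^e}x_1\ldots x_q$ via $(\lambda m\mu).b=\lambda(m.b)\mu$, the $B$-linearity of $b^K$ read off from \eqref{defb}, the recollection of the DG $\tilde{A}$-bimodule structure from Subsection~\ref{prod}, and the two cap-product identities $f\underset{K}{\frown}(z.b)=(f\underset{K}{\frown}z).b$ and $(z.b)\underset{K}{\frown}f=(z\underset{K}{\frown}f).b$. Your extra bookkeeping about which actions commute past which, via Lemma~\ref{compat}\textit{(iii)}, is exactly the intended justification.
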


In order to reflect the fact that the right action of $B$ and the bimodule
actions of $\HK^{\bullet}(A)$ and $\HK_{hi}^{\bullet}(A)$ on $\HK_{\bullet}(A,M)$ and
$\HK^{hi}_{\bullet}(A,M)$ respectively are compatible, we shall also say that $\HK_{\bullet}(A,M)$ is a graded $\HK^{\bullet}(A)$-bimodule in
$\modb$, and that $\HK^{hi}_{\bullet}(A,M)$ is a graded $\HK_{hi}^{\bullet}(A)$-bimodule in $\modb$. By Lemma \ref{compat}, it is equivalent to saying that $\HK_{\bullet}(A,M)$ is a graded $\HK^{\bullet}(A)^e$-$B$-bimodule, similarly for $\HK^{hi}_{\bullet}(A,M)$.

Similarly, $\Hom _{k^e}(W_{\bullet},M)$ is a right $B$-module for the action of $b$ on $f: W_p \rightarrow M$ defined by
$$(f.b)(x_1 \ldots x_p)=f(x_1 \ldots x_p).b.$$
Then $b_K$ and $\me_A \underset{K}{\smile} -$ are $B$-linear, so that $\HK^{\bullet}(A,M)$ and  $\HK_{hi}^{\bullet}(A,M)$ are graded right $B$-modules. For $g: W_q \rightarrow A$, we have
$$g \underset{K}{\smile} (f.b)= (g \underset{K}{\smile} f).b \text{ and } (f.b)\underset{K}{\smile} g = (f \underset{K}{\smile} g).b.$$
We obtain an analogue of Proposition \ref{dgalg1}, that is, $\Hom _{k^e}(W_{\bullet},M)$ is a DG $\tilde{A}$-bimodule in $\modb$.

\begin{Po} \label{dgalg2}
  We keep the notation and assumptions of the previous proposition. Then
  \begin{enumerate}[\itshape(i)]
  \item the complex $\Hom _{k^e}(W_{\bullet},M)$ is a complex in $\modb$,

  \item  the complex $\Hom _{k^e}(W_{\bullet},M)$ is a DG $\tilde{A}$-bimodule,

  \item  the right action of $B$ and the bimodule action of $\tilde{A}$ are compatible on
    $\Hom _{k^e}(W_{\bullet},M)$.
  \end{enumerate}

\end{Po}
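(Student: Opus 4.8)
The plan is to mirror, almost verbatim, the argument just given for Proposition~\ref{dgalg1}, replacing the chain complex $M\otimes_{k^e}W_{\bullet}$ by the cochain complex $\Hom_{k^e}(W_{\bullet},M)$ and the Koszul cap product by the Koszul cup product, so that most verifications reduce to short $\mathbb{F}$-bilinearity checks. First I would make the right $B$-action explicit: for $b\in B$ and $f\colon W_p\to M$, set $(f.b)(x_1\ldots x_p)=f(x_1\ldots x_p).b$, and observe this is well-defined on $\Hom_{k^e}(W_p,M)$ because the compatibility hypothesis (Lemma~\ref{compat}) guarantees $(\lambda m\mu).b=\lambda(m.b)\mu$ for $\lambda,\mu\in k$, so $f.b$ is again $k^e$-linear. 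This gives part~(i): $b_K$ is $B$-linear since it is built from the left and right $A$-actions on $M$ (formula~\eqref{defcob}) which commute with the $B$-action by compatibility; hence $\Hom_{k^e}(W_{\bullet},M)$ is a complex of right $B$-modules, i.e.\ a complex in $\modb$.

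For part~(ii) I would recall from Subsection~\ref{prod} that $\Hom_{k^e}(W_{\bullet},M)$ is already a DG $\tilde{A}$-bimodule via $\underset{K}{\smile}$, with the Leibniz rule~\eqref{eq:dga} (and its left analogue) ensuring compatibility of $b_K$ with the $\tilde{A}$-actions; nothing new is needed here beyond citing that paragraph. For part~(iii), the content is that the right $B$-action commutes with the $\tilde{A}$-bimodule action. I would check directly from the defining formula~\eqref{kocup} for $\underset{K}{\smile}$ that for any $g\in\Hom_{k^e}(W_q,A)$ one has
$$g\underset{K}{\smile}(f.b)=(g\underset{K}{\smile}f).b\quad\text{and}\quad (f.b)\underset{K}{\smile}g=(f\underset{K}{\smile}g).b.$$
Indeed the value of, say, $(f.b)\underset{K}{\smile}g$ on $x_1\ldots x_{p+q}$ is $(-1)^{pq}\bigl(f(x_1\ldots x_p).b\bigr)\otimes_A g(x_{p+1}\ldots x_{p+q})$, and since $B$ acts on the right while $A$ acts on $g(\cdots)$ on the left, compatibility lets us pull the $b$ through the $\otimes_A$ to the far right, yielding $\bigl((f\underset{K}{\smile}g)(x_1\ldots x_{p+q})\bigr).b$; the case with $g$ on the left is identical since there $B$ acts past a left $A$-multiplication on $M$. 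Then, exactly as stated for $M\otimes_{k^e}W_{\bullet}$ before Proposition~\ref{dgalg1}, one invokes Yekutieli~\cite[Definition~3.8.1]{yeku:dercat} to conclude that $\Hom_{k^e}(W_{\bullet},M)$ is a DG $\tilde{A}$-bimodule in the abelian category $\modb$.

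There is no serious obstacle: the whole statement is a routine transcription of Proposition~\ref{dgalg1} using $\underset{K}{\smile}$ in place of $\underset{K}{\frown}$. The only points requiring a moment's care are (a) the well-definedness of $f.b$ on the $k^e$-relative Hom, which is where the $\mathbb{F}$-centrality and the compatibility hypothesis of Lemma~\ref{compat} are genuinely used, and (b) keeping track of which side of $M$ the $A$- and $B$-actions live on when pushing $b$ through $\otimes_A$ in the cup-product formula — once one notes that, by compatibility, the right $B$-action commutes with \emph{both} the left and right $A$-actions on $M$, the sign-bearing formula~\eqref{kocup} goes through unchanged. Hence the proof is simply: ``Similar to the proof of Proposition~\ref{dgalg1}, using \eqref{defcob}, \eqref{kocup} and \eqref{eq:dga} in place of \eqref{defb}, \eqref{kolcap}; the details are left to the reader.''
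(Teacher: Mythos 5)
Your proposal is correct and follows the same route as the paper: the paper's own justification is precisely the paragraph preceding the statement, where the right $B$-action $(f.b)(x_1\ldots x_p)=f(x_1\ldots x_p).b$ is introduced, $b_K$ is observed to be $B$-linear, and the identities $g\underset{K}{\smile}(f.b)=(g\underset{K}{\smile}f).b$ and $(f.b)\underset{K}{\smile}g=(f\underset{K}{\smile}g).b$ are checked, exactly as you do. Your added remarks on well-definedness of $f.b$ over $k^e$ and on which $A$-action the element $b$ must be pushed past are accurate and only make explicit what the paper leaves to the reader.
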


Therefore $\HK^{\bullet}(A,M)$ is a graded $\HK^{\bullet}(A)$-bimodule in $\modb$, and $\HK_{hi}^{\bullet}(A,M)$ is a graded $\HK_{hi}^{\bullet}(A)$-bimodule in $\modb$.

\subsection{Application to the Koszul complex $K(A)$} \label{application}

Let us specialise to $B=M=A^e$ as in Example \ref{example}. Then $M=A\stackrel{o}{\otimes} A$ is a left $A^e$-module for the outer structure, and a right $A^e$-module for the inner structure. Our aim is to identify the $A$-bimodule complex $K(A)$ with the complex $((A\stackrel{o}{\otimes} A)\otimes_{k^e} W_{\bullet}, b^K)$ endowed with the right action of $A^e$. The statement is the following.

\begin{Po}
Let $A=T_k(V)/(R)$ be a quadratic $k$-algebra over $\mathcal{Q}$.
\begin{enumerate}[\itshape(i)]
\item  For any $q\pgq 0$, the bilinear map
  $\varphi_q : (A\stackrel{o}{\otimes} A)\times W_q \rightarrow A\otimes_k W_q \otimes_k A$ defined
  by
  \[\varphi_q (\alpha \otimes \beta, x_1 \ldots x_q) = \beta \otimes_k (x_1 \ldots x_q) \otimes_k
  \alpha\]
  induces an isomorphism
  $\tilde{\varphi}_q: (A\stackrel{o}{\otimes} A)\otimes_{k^e} W_q \rightarrow A\otimes_k W_q
  \otimes_k A$.

\item The direct sum $\tilde{\varphi}$ of the maps $\tilde{\varphi}_q$ is an isomorphism from the
  complex $((A\stackrel{o}{\otimes} A)\otimes_{k^e} W_{\bullet}, b^K)$ to the Koszul complex
  $(K(A),d)$.

\item  The isomorphism $\tilde{\varphi}$ is right $A^e$-linear.
\end{enumerate}

\end{Po}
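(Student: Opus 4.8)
The plan is to verify the three parts in order, since each relies on the previous one.

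\textbf{Part (i).} First I would check that $\varphi_q$ is $k^e$-balanced so that it factors through the tensor product over $k^e$. Recall that $(A\stackrel{o}{\otimes}A)$ is a $k$-bimodule via $e_j.(\alpha\otimes\beta).e_i = (e_j\alpha)\otimes(\beta e_i)$, while $W_q$ sits inside $V^{\otimes_k q}$ as a sub-$k$-bimodule. For $\lambda,\mu\in k$ we have $\varphi_q((\alpha\otimes\beta).\mu\otimes\lambda,\, x) = \varphi_q(\lambda\alpha\otimes\beta\mu, x) = \beta\mu\otimes_k x\otimes_k\lambda\alpha$, which by the $\otimes_k$ relations in $A\otimes_k W_q\otimes_k A$ equals $\beta\otimes_k\mu x\lambda\otimes_k\alpha = \varphi_q(\alpha\otimes\beta,\, \mu.x.\lambda)$, so the induced map $\tilde\varphi_q$ on $(A\stackrel{o}{\otimes}A)\otimes_{k^e}W_q$ is well-defined. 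To see it is an isomorphism, I would exhibit the inverse: $a\otimes_k(x_1\ldots x_q)\otimes_k a' \mapsto (a'\otimes a)\otimes_{k^e}(x_1\ldots x_q)$; one checks this is well-defined over $k$ using exactly the identifications $a\otimes_k b = a\otimes b$ for $a\in Me_i$, $b\in e_iN$ that the paper set up in Subsection~\ref{not}, and that the two maps are mutually inverse by direct substitution. (Alternatively, one can invoke the standard isomorphisms \eqref{isos} with $E=W_q$ and $M=A^e$, noting $A^e\otimes_{A^e}(A\otk W_q\otk A)\cong A^e\otimes_{k^e}W_q$; but writing $\tilde\varphi_q$ explicitly is cleaner since it is needed for (ii) and (iii).)

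\textbf{Part (ii).} Here I would compute both differentials on a general element and compare. On the left-hand complex, $b^K$ acts by formula \eqref{defb}; since $M=A\stackrel{o}{\otimes}A$, the left action of $x_1$ on $\alpha\otimes\beta$ is $x_1\alpha\otimes\beta$ and the right action of $x_q$ is $\alpha\otimes\beta x_q$ (outer structure), so
\[
b^K_q\bigl((\alpha\otimes\beta)\otimes_{k^e}x_1\ldots x_q\bigr)
= (x_1\alpha\otimes\beta)\otimes_{k^e}x_2\ldots x_q + (-1)^q(\alpha\otimes\beta x_q)\otimes_{k^e}x_1\ldots x_{q-1}.
\]
Applying $\tilde\varphi_{q-1}$ gives $\beta\otimes_k(x_2\ldots x_q)\otimes_k x_1\alpha + (-1)^q\beta x_q\otimes_k(x_1\ldots x_{q-1})\otimes_k\alpha$. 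On the other side, $d$ is given by \eqref{defd}, so $d\bigl(\tilde\varphi_q((\alpha\otimes\beta)\otimes_{k^e}x)\bigr) = d(\beta\otimes_k x_1\ldots x_q\otimes_k\alpha) = \beta x_1\otimes_k x_2\ldots x_q\otimes_k\alpha + (-1)^q\beta\otimes_k x_1\ldots x_{q-1}\otimes_k x_q\alpha$. These two expressions agree after moving scalars across $\otimes_k$: $\beta\otimes_k(x_2\ldots x_q)\otimes_k x_1\alpha$ is the same element as $\beta x_1\otimes_k(x_2\ldots x_q)\otimes_k\alpha$? No — and this is the point to be careful about: $x_1$ is the \emph{last} arrow of the path in $A\otimes_k W\otimes_k A$ convention. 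Actually the matching is direct once one recalls that in $\beta\otimes_k(x_1\ldots x_q)\otimes_k\alpha$ the left tensor slot absorbs $x_1$ (the leftmost arrow) and the right slot absorbs $x_q$; so I would simply track which arrow is absorbed on which side and confirm the two boundary formulas coincide term by term. This is a short bookkeeping check; the only mild subtlety is keeping straight the left/right conventions and the $(-1)^q$ sign, which match automatically.

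\textbf{Part (iii).} Finally I would check right $A^e$-linearity. The right $A^e$-action on $(A\stackrel{o}{\otimes}A)\otimes_{k^e}W_\bullet$ is, by the definition in Subsection~\ref{dgbimodules} combined with the \emph{inner} right $A^e$-structure on $M=A^e$ from Example~\ref{example}: $(a\otimes b)$ acts on $(\alpha\otimes\beta)\otimes_{k^e}x$ by $((\alpha\otimes\beta).(a\otimes b))\otimes_{k^e}x = (\alpha a\otimes b\beta)\otimes_{k^e}x$. The right $A^e$-structure on $K(A)$ is the inner one on $A\otk W_q\otk A$, i.e. $(a\otimes b)$ sends $\beta\otimes_k x\otimes_k\alpha$ to $\beta b\otimes_k x\otimes_k a\alpha$. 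Applying $\tilde\varphi_q$ to $(\alpha a\otimes b\beta)\otimes_{k^e}x$ gives $b\beta\otimes_k x\otimes_k\alpha a$, which is exactly $(\beta\otimes_k x\otimes_k\alpha).(a\otimes b)$ for the inner action. So $\tilde\varphi_q$ intertwines the two right $A^e$-actions, and since it also commutes with the differentials by (ii), $\tilde\varphi$ is an isomorphism of complexes of right $A^e$-modules. \emph{Main obstacle.} There is no deep obstacle: the whole proposition is a matter of unwinding definitions. The one place demanding genuine care — essentially the ``hard part'' — is matching the left/right-module conventions consistently: the outer versus inner $A^e$-structure on $A^e$ (Example~\ref{example}), the ``paths written right to left'' convention, and which arrow of $x_1\ldots x_q$ lands in which tensor slot of $A\otk W_q\otk A$. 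Getting any one of these backwards flips a factor and breaks the identification, so I would fix the conventions explicitly at the outset and then the computations in (ii) and (iii) fall out.
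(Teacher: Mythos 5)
Your overall strategy is the same as the paper's: define $\tilde\varphi_q$ explicitly, exhibit an inverse for \textit{(i)}, check compatibility with the differentials for \textit{(ii)}, and check right $A^e$-linearity for \textit{(iii)}. Parts \textit{(i)} and \textit{(iii)} are essentially correct (in \textit{(iii)} your displayed action ``$\beta b\otimes_k x\otimes_k a\alpha$'' should read $b\beta\otimes_k x\otimes_k \alpha a$, i.e. $n.(a\otimes b)=bna$; your subsequent computation uses the correct formula, so this is only a slip).

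Part \textit{(ii)}, however, contains a concrete error that you notice but do not repair. In formula (\ref{defb}) the first term is $m.x_1\otimes_{k^e}x_2\ldots x_q$, i.e. it uses the \emph{right} action of $x_1$ on $m$, and the second term $x_q.m\otimes_{k^e}x_1\ldots x_{q-1}$ uses the \emph{left} action of $x_q$; you have applied them the other way around. For the outer structure on $m=\alpha\otimes\beta$ the right action of $x_1$ gives $m.x_1=\alpha\otimes(\beta x_1)$ and the left action of $x_q$ gives $x_q.m=(x_q\alpha)\otimes\beta$, so
\[
b^K\bigl((\alpha\otimes\beta)\otimes_{k^e}x_1\ldots x_q\bigr)=\bigl(\alpha\otimes(\beta x_1)\bigr)\otimes_{k^e}x_2\ldots x_q+(-1)^q\bigl((x_q\alpha)\otimes\beta\bigr)\otimes_{k^e}x_1\ldots x_{q-1},
\]
and applying $\tilde\varphi$ yields $\beta x_1\otimes_k x_2\ldots x_q\otimes_k\alpha+(-1)^q\beta\otimes_k x_1\ldots x_{q-1}\otimes_k x_q\alpha$, which is exactly $d(\beta\otimes_k x_1\ldots x_q\otimes_k\alpha)$ by (\ref{defd}) --- no further bookkeeping is needed. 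With your swapped version the two sides genuinely disagree: $\beta\otimes_k x_2\ldots x_q\otimes_k x_1\alpha$ and $\beta x_1\otimes_k x_2\ldots x_q\otimes_k\alpha$ are distinct elements of $A\otimes_k W_{q-1}\otimes_k A$, since $x_1\in V$ is not a scalar in $k$ and cannot be moved across $\otimes_k$. You sense this (``is the same element\ldots? No'') but then assert the matching is ``direct'' without identifying the actual mistake; the fix is to apply (\ref{defb}) with the correct sides of the bimodule action, not to track which tensor slot ``absorbs'' which arrow.
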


\begin{proof}
The $A$-bimodule $A\stackrel{o}{\otimes} A$ is a $k$-bimodule for the actions $\lambda (\alpha \otimes \beta) \mu =\lambda \alpha \otimes \beta \mu$, with $\alpha$ and $\beta$ in $A$, $\lambda$ and $\mu$ in $k$, thus it is a right $k^e$-module for $(\alpha \otimes \beta) (\lambda \otimes \mu) =\mu \alpha \otimes \beta \lambda$. Then it is easy to check that
$$\varphi_q (\mu \alpha \otimes \beta \lambda, x_1  \ldots x_q)= \varphi_q (\alpha \otimes \beta,
\lambda x_1  \ldots x_q \mu),$$ proving the existence of $\tilde{\varphi}_q$.  We define similarly an inverse linear map, therefore
$\tilde{\varphi}_q$ is an isomorphism, which gives \textit{(i)}.

Let us show that $\tilde{\varphi}$ is a  morphism of complexes. From
$$b^K((\alpha \otimes \beta) \otimes_{k^e} x_1 \ldots x_q) =(\alpha \otimes (\beta x_1))\otimes_{k^e} x_2 \ldots x_q +(-1)^q ((x_q\alpha)\otimes \beta) \otimes_{k^e} x_1 \ldots x_{q-1},$$
we get
$$\tilde{\varphi}\circ b^K((\alpha \otimes \beta) \otimes_{k^e} x_1 \ldots x_q)=\beta x_1 \otimes_k x_2 \ldots x_q \otimes_k \alpha +(-1)^q \beta  \otimes_k x_1 \ldots x_{q-1}\otimes_k x_q \alpha$$
whose right-hand side is equal to $d(\beta \otimes_k x_1 \ldots x_{p} \otimes_k \alpha)$, as expected.

Let us prove \textit{(iii)}. Here the $A$-bimodule $A\otimes_k W_q \otimes_k A$ is seen as a right $A^e$-module.
For $z=(\alpha \otimes \beta) \otimes_{k^e} x_1 \ldots x_q$ and $a$, $b$ in $A$, we have 
\begin{align*}
\tilde{\varphi}_q(z.(a\otimes b))&=\tilde{\varphi}_q((\alpha a \otimes b \beta) \otimes_{k^e} x_1 \ldots x_q)\\&=b \beta \otimes_k (x_1  \ldots x_q) \otimes_k \alpha a\\&= \tilde{\varphi}_q(z).(a\otimes b),
\end{align*} therefore $\tilde{\varphi}_q$ is $A^e$-linear.
\end{proof}

So $\tilde{\varphi}$ is an isomorphism from the $A$-bimodule complex $((A\stackrel{o}{\otimes} A)\otimes_{k^e} W_{\bullet}, b^K)$ whose $A$-bimodule structure is the inner one, to the $A$-bimodule complex $K(A)$. Denote by $\abimod$ the category of $A$-bimodules. According to Proposition \ref{dgalg1}, $(A\stackrel{o}{\otimes} A)\otimes_{k^e} W_{\bullet}$ is a DG $\tilde{A}$-bimodule in $\abimod$. We transport this structure via $\tilde{\varphi}$ and we obtain.

\begin{Po} \label{dgalg3}
  Let $A=T_k(V)/(R)$ be a quadratic $k$-algebra over $\mathcal{Q}$. Then the Koszul complex $K(A)$ is a DG $\tilde{A}$-bimodule in $\abimod$.
\end{Po}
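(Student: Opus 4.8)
The plan is to obtain Proposition \ref{dgalg3} by simply transporting, along the isomorphism $\tilde{\varphi}$ of the previous proposition, the DG $\tilde{A}$-bimodule structure that Proposition \ref{dgalg1} already provides. First I would specialise Proposition \ref{dgalg1} to the situation $B=M=A^e$ of Example \ref{example}: there the category $\modb$ is exactly $\abimod$, the right $A^e$-action being the inner one $(\alpha\otimes\beta).(a\otimes b)=(\alpha a)\otimes(b\beta)$, and Proposition \ref{dgalg1} asserts that $((A\stackrel{o}{\otimes}A)\otimes_{k^e}W_{\bullet},b^K)$ is a DG $\tilde{A}$-bimodule in $\abimod$, the compatibility part \textit{(iii)} of that proposition guaranteeing that the inner $A$-bimodule action and the $\underset{K}{\frown}$-action of $\tilde{A}$ are compatible.

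Next I would use the previous proposition: by its part \textit{(ii)}, $\tilde{\varphi}$ is an isomorphism of complexes from $((A\stackrel{o}{\otimes}A)\otimes_{k^e}W_{\bullet},b^K)$ to $(K(A),d)$, and by its part \textit{(iii)} it is right $A^e$-linear, hence an isomorphism of $A$-bimodules in each homological degree when $K(A)$ is given its usual (inner) $A$-bimodule structure. Thus $\tilde{\varphi}$ is an isomorphism in the category of complexes over $\abimod$. I would then define the two $\tilde{A}$-actions on $K(A)$ by conjugation, namely $f\cdot w=\tilde{\varphi}\bigl(f\underset{K}{\frown}\tilde{\varphi}^{-1}(w)\bigr)$ and $w\cdot f=\tilde{\varphi}\bigl(\tilde{\varphi}^{-1}(w)\underset{K}{\frown}f\bigr)$ for $f\in\tilde{A}$ and $w\in K(A)$. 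Because $\tilde{\varphi}$ is an isomorphism of complexes and, degreewise, of $A$-bimodules, every axiom required of a DG $\tilde{A}$-bimodule in $\abimod$ — associativity of the two actions, their commutation, the Leibniz rules relating $d$, $b_K$ and $\underset{K}{\frown}$, and the compatibility with the ambient $A$-bimodule structure — is carried over verbatim from $(A\stackrel{o}{\otimes}A)\otimes_{k^e}W_{\bullet}$ to $K(A)$.

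There is essentially no computational obstacle here: the content is entirely in the previous two propositions, and the only point that warrants a line of justification is the observation that $\tilde{\varphi}$ preserves simultaneously the complex structure, the $A$-bimodule structure, and therefore also the compatibility datum of Proposition \ref{dgalg1}\,\textit{(iii)}, so that conjugation by $\tilde{\varphi}$ indeed yields a DG $\tilde{A}$-bimodule in $\abimod$. One may additionally remark that the transported action is the expected one, in that the underlying graded $A$-bimodule of this DG $\tilde{A}$-bimodule is precisely $K(A)$ with its standard structure, which is what makes the statement useful for Definitions \ref{strongcy} and \ref{kcy}.
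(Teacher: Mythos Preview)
Your proposal is correct and matches the paper's approach exactly: the paper also specialises Proposition~\ref{dgalg1} to $B=M=A^e$ (so that $\modb=\abimod$) and then transports the resulting DG $\tilde{A}$-bimodule structure along the isomorphism $\tilde{\varphi}$, which is both a morphism of complexes and right $A^e$-linear by the preceding proposition.
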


This DG bimodule will play an essential role in the generalisations of Calabi-Yau algebras (Sections 4 and 5). Moreover, $H(K(A))$ is a graded $\HK^{\bullet}(A)$-bimodule in $\abimod$, so that $H(\tilde{\varphi}): \HK_{\bullet}(A,A\stackrel{o}{\otimes} A) \rightarrow H(K(A))$ is an isomorphism of graded $\HK^{\bullet}(A)$-bimodules in $\abimod$.

Let us give explicitly the underlying $\tilde{A}$-bimodule structure of the DG $\tilde{A}$-bimodule $K(A)$. Consider $z=(\alpha \otimes \beta) \otimes_{k^e} x_1 \ldots x_q$ in $(A\stackrel{o}{\otimes} A)\otimes_{k^e} W_q$ and $f$ in $\Hom _{k^e}(W_p,A)$, we easily derive from (\ref{kolcap}) that the left action of $f$ on $K(A)$ is defined by
\begin{equation} \label{lcapkocomplex}
f \underset{K}{\frown} (\beta \otimes_k x_1 \ldots  x_q \otimes_k \alpha) = (-1)^{(q-p)p} \beta \otimes_k x_1 \ldots  x_{q-p} \otimes_k f(x_{q-p+1} \ldots x_q)\alpha.
\end{equation}
Analogously, using (\ref{korcap}), we define the right action of $f$ on $K(A)$ by
\begin{equation} \label{rcapkocomplex}
(\beta \otimes_k x_1 \ldots  x_q \otimes_k \alpha) \underset{K}{\frown} f = (-1)^{pq} \beta f(x_1 \ldots x_p)\otimes_k x_{p+1} \ldots  x_q \otimes_k \alpha.
\end{equation}

The fundamental formula (\ref{fundaho}) reduces to
\begin{equation} \label{fundakocomplex}
d(z')= - \me_A\underset{K}{\frown} z' + (-1)^q z' \underset{K}{\frown} \me_A
\end{equation} on $K(A)$,
where $z'=\beta \otimes_k x_1 \ldots  x_q \otimes_k \alpha$, and
\begin{align} \label{eAcap}
\me_A\underset{K}{\frown} z' &=  (-1)^{q-1} \beta \otimes_k x_1 \ldots  x_{q-1} \otimes_k x_q\alpha ,
\\\label{capeA}
z'\underset{K}{\frown} \me_A &=  (-1)^q \beta x_1 \otimes_k x_2 \ldots  x_q \otimes_k \alpha .
\end{align}
The differential $\me_A\underset{K}{\frown}- $ induces a differential, still denoted by $\partial_{\frown}$, on $H(K(A))$. The homology of $(H(K(A)),\partial_{\frown})$ is denoted by $H^{hi}(K(A))$ and is called the higher homology of $K(A)$. Then $H^{hi}(K(A))$ is a graded $\HK_{hi}^{\bullet}(A)$-bimodule in $\abimod$ and $H(H(\tilde{\varphi})): \HK^{hi}_{\bullet}(A,A\stackrel{o}{\otimes} A) \rightarrow H^{hi}(K(A))$ is an isomorphism of graded $\HK_{hi}^{\bullet}(A)$-bimodules in $\abimod$.

\setcounter{equation}{0}

\section{Poincar\'e Van den Bergh duality of preprojective algebras} \label{pvdb}

\subsection{Preprojective algebras} \label{preproalg}

Throughout this section, $\Delta$ is a connected graph whose vertex set and edge set are finite. Following a usual presupposition in the papers devoted to Hochschild (co)homology of preprojective algebras, we assume that the graph $\Delta$ is not labelled, that is, the labels of the edges are all equal to $(1,1)$~\cite[Definition 4.1.9]{benson:repcoh}. In particular, the Dynkin graphs are limited to types ADE, and the Euclidean (or extended) Dynkin graphs are limited to types $\tilde{\textup{A}} \tilde{\textup{D}} \tilde{\textup{E}}$~\cite[Definition 4.5.1]{benson:repcoh}. 

Let $Q$ be a quiver whose underlying graph is $\Delta$. Define a quiver $Q^{\ast}$ whose vertex set is $Q_0$ and whose arrow set is $Q_1^{\ast}=\{a^{\ast};a\in Q_1\}$ where $\mo(a^{\ast})=\mt(a)$ and $\mt(a^{\ast})=\mo(a)$. Let $\ov{Q}$ be the \emph{double quiver} of $Q$, that is, the quiver whose vertex set is $\overline{Q}_0=Q_0$ and whose arrow set is the disjoint union $\overline{Q}_1=Q_1\cup Q_1^{\ast}$. We shall view $(-)^{\ast}$ as an involution of $\overline{Q}_1$.

Let $\ff$ be a field. As before, we denote the ring $\ff Q_0$ by $k$ and the $k$-bimodule $\ff
\overline{Q}_1$ by $V$ and  we identify the graded $k$-algebras $T_k(V)\cong \mathbb{F}\overline{Q}$
(see Subsection \ref{not}).

The \emph{preprojective algebra associated with the graph $\Delta$ over the field $\ff$} is the quadratic $k$-algebra $A(\Delta)$ over $\overline{Q}$ defined by $A(\Delta)=\ff \overline{Q}/(R) $, where the sub-$k$-bimodule $R$ of $\ff \overline{Q}_2$ is generated by 
\[ \sigma_i:=\sum_{\substack{a\in Q_1\\\mt(a)=i}}aa^*-\sum_{\substack{a\in Q_1\\\mo(a)=i}}a^*a=\sum_{\substack{a\in \ov Q_1\\\mt(a)=i}}\varepsilon(a)aa^*\quad\text{ for all $i\in Q_0$}, \]
where $\varepsilon(a)=1$ if $a\in Q_1$, $\varepsilon(a)=-1$ if $a\in Q_1^{\ast}$.

If $Q'$ is another quiver whose underlying graph is $\Delta$, and $R'$ is the sub-$k$-bimodule of $\ff \overline{Q'}_2$ generated by the relations $\sigma _i'=\sum_{\substack{a\in \ov Q'_1\\\mt(a)=i}}\varepsilon(a)aa^*$ for all $i\in Q_0'=Q_0$, then the preprojective algebras $\ff \overline{Q}/(R)$ and $\ff \overline{Q'}/(R')$ are isomorphic, the isomorphism being given by exchanging pairs of arrows $a$ and $a^*$ and changing the sign of one arrow in each pair (see \cite[Remark 2.2(3)]{cbDMV}, or \cite[Lemma 1.3.7]{shaw} for a complete proof in the case of generalised preprojective algebras).  Therefore, according to Subsection \ref{invariance}, the quadratic $k$-algebra $A(\Delta)$ and the (general, higher) Koszul calculus of $A(\Delta)$ depend only on the graph $\Delta$ and not on $Q$, justifying the notation $A(\Delta)$. If $\Delta$ is a tree, $A(\Delta)$ is isomorphic to the preprojective algebra defined without signs (that is, $\varepsilon (a)=1$ for all $a\in\ov Q_1$, as in \cite{es:first,es:second,es:third}).

If $\Delta=\mathrm{A}_1$, then $A(\Delta)=k$. If $\Delta=\mathrm{A}_2$, then $R=\ff \overline{Q}_2$
and $A(\Delta)=\ff Q_0 \oplus \ff \overline{Q}_1$. These quadratic $k$-algebras are Koszul, but they
are the only exceptions among the Dynkin graphs. More precisely,  the following standard result
holds, for which we just give proof references (see also \cite[Corollary 4.3]{bbk}). 
\begin{Po} \label{notkoszul}
Assume that the graph $\Delta$ is distinct from $\mathrm{A}_1$ and $\mathrm{A}_2$. The following are equivalent.
\begin{enumerate}[\itshape(i)]
\item  $\Delta$ is Dynkin of type ADE.

\item  $A(\Delta)$ is not Koszul.

\item  $A(\Delta)$ is finite dimensional.
\end{enumerate}

\end{Po}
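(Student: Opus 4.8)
The plan is to prove Proposition~\ref{notkoszul} by a chain of implications, assembling citations for the parts that are genuinely in the literature and supplying the short independent arguments where needed. The three-way equivalence among \textit{(i)} $\Delta$ Dynkin ADE, \textit{(ii)} $A(\Delta)$ not Koszul, and \textit{(iii)} $A(\Delta)$ finite dimensional is classical, so the goal is not to reprove everything from scratch but to cite the right sources and stitch them into a cycle of implications.

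First I would establish $\textit{(iii)} \Rightarrow \textit{(i)}$, or rather its contrapositive combined with the other direction: the cleanest organisation is $\textit{(i)} \Leftrightarrow \textit{(iii)}$ as a single classical fact, with the reference being the Gelfand--Ponomarev origin together with the modern treatment in~\cite{bgl:preproj} and~\cite{dr:preproj}; one knows that $A(\Delta)$ is finite dimensional precisely when $\Delta$ is a Dynkin graph of type ADE, and for $\Delta$ distinct from $\mathrm{A}_1$ and $\mathrm{A}_2$ this is exactly the ADE dichotomy stated (the non-Dynkin case, including Euclidean $\tilde{\textup{A}}\tilde{\textup{D}}\tilde{\textup{E}}$, gives infinite-dimensional $A(\Delta)$). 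Next, for $\textit{(i)} \Rightarrow \textit{(ii)}$ and $\neg\textit{(i)} \Rightarrow \neg\textit{(ii)}$ (i.e. $\Delta$ non-Dynkin $\Rightarrow$ Koszul), the references are Martinez-Villa~\cite{mv:kprepro}, Green's introduction~\cite{green:intro}, and Brenner--Butler--King~\cite{bbk}; the non-Dynkin preprojective algebras are Koszul (this is already recalled in the introduction of the present paper, with these same references), while in the ADE case the preprojective algebra fails to be Koszul --- the quickest witness being that $W_3 = 0$ (Example~\ref{example-A3}, anticipating Theorem~\ref{w3iszero}) so the Koszul complex $K(A)$ has length $2$, yet $A(\Delta)$ has infinite global dimension since it is a finite-dimensional self-injective algebra that is not semisimple (a non-semisimple Frobenius algebra has infinite projective dimension as a bimodule over itself, hence cannot be Koszul, as a Koszul finite-dimensional algebra of finite global dimension would be needed but ADE preprojective algebras have infinite global dimension). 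Concretely, the incompatibility is: if $A(\Delta)$ were Koszul then $K(A)$ of length $2$ would force global dimension $\leq 2$, contradicting the $6$-periodicity of~\cite{es:third} which shows $\HH^p(A) \neq 0$ for infinitely many $p$.

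The cleanest self-contained route, and the one I would actually write, is the cycle $\textit{(i)} \Rightarrow \textit{(iii)} \Rightarrow \textit{(ii)} \Rightarrow \textit{(i)}$. For $\textit{(i)} \Rightarrow \textit{(iii)}$: cite the classical finite-dimensionality of ADE preprojective algebras~\cite{gp:preproj,dr:preproj,bgl:preproj}. For $\textit{(iii)} \Rightarrow \textit{(ii)}$: a finite-dimensional preprojective algebra over a connected graph with at least one edge is self-injective but not semisimple, hence has infinite global dimension; since the Koszul complex $K(A)$ has finite length (it equals $K(A)$ with $W_p$ as defined, and one can observe $W_3 = 0$ here, so length $2$), if $A$ were Koszul then $\mu\colon K(A) \to A$ would be a finite projective bimodule resolution, forcing finite global dimension --- contradiction, so $A$ is not Koszul. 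For $\textit{(ii)} \Rightarrow \textit{(i)}$: the contrapositive is that non-Dynkin connected graphs (distinct from $\mathrm{A}_1,\mathrm{A}_2$, so including all Euclidean and wild cases) yield Koszul preprojective algebras, which is~\cite{mv:kprepro,green:intro,bbk}. This closes the cycle.

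The main obstacle --- really the only one --- is that this proposition is a repackaging of results that are spread across several papers with differing conventions (signed vs.\ unsigned relations, field vs.\ $\mathbb{Z}$ coefficients, trees vs.\ graphs with cycles), so the delicate point is to make sure the cited statements apply verbatim in the present signed-relation, arbitrary-field, arbitrary-connected-graph setting. The isomorphism noted earlier in Subsection~\ref{preproalg} (signed $\cong$ unsigned in the tree case, via~\cite{cbDMV,shaw}) and the Manin-invariance discussion of Subsection~\ref{invariance} handle the presentation-independence, so one should invoke those explicitly when transporting Koszulness (a property invariant under the relevant isomorphisms) from the references' setup to ours. I expect the write-up to be short: essentially ``the equivalences are classical; see~\cite{gp:preproj,dr:preproj,bgl:preproj} for $\textit{(i)}\Leftrightarrow\textit{(iii)}$, \cite{mv:kprepro,green:intro,bbk} for the non-Dynkin Koszul property, and the argument above (infinite global dimension of a non-semisimple self-injective algebra versus finite length of $K(A)$) for $\textit{(iii)}\Rightarrow\textit{(ii)}$,'' with a sentence flagging that presentation-independence follows from Subsection~\ref{invariance}.
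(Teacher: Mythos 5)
Your proposal is correct, and it differs from the paper's own treatment in one substantive way. The paper's proof is entirely citation-based: it refers the equivalence \textit{(i)}$\Leftrightarrow$\textit{(ii)} to \cite{mv:kprepro} (trees) and \cite{green:intro} (general graphs), and \textit{(i)}$\Leftrightarrow$\textit{(iii)} to \cite{mv:intro}/\cite{gp:preproj}, with no independent argument. You cite the same sources for \textit{(i)}$\Leftrightarrow$\textit{(iii)} and for the ``non-Dynkin $\Rightarrow$ Koszul'' direction, but you replace the ``Dynkin ADE $\Rightarrow$ not Koszul'' citation by a short self-contained argument: a finite-dimensional non-semisimple self-injective algebra has infinite global dimension (finite projective dimension forces projectivity over a self-injective algebra), whereas $W_3=0$ gives $K(A)$ length $2$, so Koszulity would force $\mathrm{pd}_{A^e}(A)\ppq 2$ and hence global dimension $\ppq 2$ --- a contradiction. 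This is sound and arguably more illuminating than the bare citation; the trade-off is that it imports two facts the paper establishes or uses only later, namely self-injectivity of the finite-dimensional preprojective algebras (classical, but you should cite it, e.g.\ \cite{bbk}) and Theorem~\ref{w3iszero} ($W_3=0$), which appears \emph{after} Proposition~\ref{notkoszul} in the paper's ordering. Neither creates a circularity --- the proof of Theorem~\ref{w3iszero} is a purely combinatorial computation in $\ff\ov{Q}_3$ independent of this proposition, and self-injectivity enters only after \textit{(iii)}$\Rightarrow$\textit{(i)} is available from the cited classical equivalence --- but a careful write-up should flag the forward reference. Your attention to the presentation-independence issue (signs, Manin invariance via Subsection~\ref{invariance}) matches the paper's own discussion in Subsection~\ref{preproalg} and is appropriate.
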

\begin{proof}
The equivalence \textit{(i)}$\Leftrightarrow$\textit{(ii)} is treated in \cite{mv:kprepro} if $\Delta$ is a tree, in \cite{green:intro} otherwise. The equivalence \textit{(i)}$\Leftrightarrow$\textit{(iii)} for any Dynkin graph is cited in \cite{mv:intro} as a result by Gelfand and Ponomarev \cite{gp:preproj}.
\end{proof}

Sections \ref{kcquivers} and \ref{rightfreeness} can be applied to preprojective algebras. For example, according to the remark following Lemma \ref{eAcob}, the fundamental 1-cocycle $\me_{A(\Delta)}$ is not a coboundary if $\Delta$ has a loop or if $\car \ff \neq 2$ and $\Delta\neq \mathrm{A}_1$. In the remainder of this section, we often abbreviate $A(\Delta)$ to $A$ and we freely use notations and results from Sections \ref{kcquivers} and \ref{rightfreeness}.

\subsection{The Koszul complex $K(A)$ has length 2} \label{lengthtwo}

If $\Delta=\mathrm{A}_1$, then $K(A)$ has length 0. If $\Delta=\mathrm{A}_2$, then $K(A)$ has
infinite length. However, when $\Delta $ is not Dynkin ADE, the algebra $A=A(\Delta )$ has global
dimension $2$ (this is a consequence of \cite[Proposition 4.2]{bbk}, inspired by manuscript notes of
Crawley-Boevey), and since $k\cong \ff^{|Q_0|}$ is separable, it follows that the minimal projective
$A$-bimodule resolution of $A$, which is $K(A)$ because $A$ is Koszul, has length $2$ (see for
instance \cite[Proposition 3.18]{rr}). 

Actually, the fact that the length of $K(A)$ is $2$ is true  for all graphs $\Delta $ other than
$\mathrm{A}_1$ and $\mathrm{A}_2$, and we now give a unified proof of this.

\begin{Te} \label{w3iszero} Let $A=A(\Delta)$ be a preprojective algebra over $\ff$ with $\Delta\neq \mathrm{A}_1$ and $\Delta\neq \mathrm{A}_2$. Then the Koszul complex $K(A)$ of $A$ has length 2. Consequently, $\HK^p(A,M)\cong \HK_p(A,M)= 0$ for all $A$-bimodules $M$ and all $p \pgq 3$.
\end{Te}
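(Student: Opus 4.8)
The plan is to show that the space $W_3$ defined in~\eqref{defw} vanishes, since then $W_p = (V^{\otimes_k(p-3)}\otimes_k W_3\otimes_k V^{\otimes_k 0})\cap\cdots = 0$ for all $p\pgq 3$ as well (more directly, $W_p\subseteq V^{\otimes_k(p-3)}\otimes_k W_3$ for $p\pgq 3$), which forces $A\otimes_k W_p\otimes_k A = 0$ and hence $K(A)$ has length $2$; the vanishing of $\HK^p$ and $\HK_p$ in degrees $\pgq 3$ is then immediate from the formula $\HK_{\bullet}(A,M)\cong H_{\bullet}(M\otimes_{k^e}W_{\bullet},b^K)$ and its cohomological analogue. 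So the whole theorem reduces to the purely combinatorial claim that $W_3 = (V\otimes_k R)\cap(R\otimes_k V) = 0$ inside $\ff\ov Q_3$, exactly as was done by hand for $\mathrm{A}_3$ in Example~\ref{example-A3}.

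To carry this out uniformly, I would fix a quiver $Q$ with underlying graph $\Delta$ and write a general element $u\in V\otimes_k R$ as $u = \sum_{a\in\ov Q_1}\lambda_a\, a\,\sigma_{\mo(a)}$ with scalars $\lambda_a\in\ff$ (using that $R = \bigoplus_i\ff\sigma_i$ and that $a\sigma_i\neq 0$ only when $\mo(a)=i$), and similarly an element of $R\otimes_k V$ as $\sum_{a\in\ov Q_1}\mu_a\,\sigma_{\mt(a)}\,a$. Expanding each $\sigma_i$ via $\sigma_i = \sum_{\mt(b)=i}\varepsilon(b)\,bb^*$, the condition $u\in W_3$ becomes an identity in the path space $\ff\ov Q_3$, i.e.\ a linear relation among the length-$3$ paths $b b^* a$ (from the left expansion) and $a c c^*$ (from the right expansion). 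The key point is that the length-$3$ paths form a basis of $\ff\ov Q_3$, so one may read off coefficients path by path. One separates the paths appearing into two types: those of the form $b c c^*$ with $c\neq b^*$ (equivalently paths that visit three distinct ``edge-slots''), which appear with a coefficient equal to a single $\lambda$ or a single $\mu$, forcing all such $\lambda_a$ and $\mu_a$ to vanish; and the ``palindromic'' paths of the form $b b^* b$ and $b^* b b^*$, where the left and right expansions overlap and one gets relations like $\varepsilon\lambda_a + \varepsilon'\mu_{a'} = 0$ tying together the few surviving coefficients around a single edge.

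The main obstacle — and the reason the hypothesis $\Delta\neq\mathrm{A}_1,\mathrm{A}_2$ is exactly what is needed — is handling these palindromic paths, where the naive coefficient count does not immediately kill everything. The resolution is that $\Delta$ has more than one edge (it is connected with at least two vertices and is not a single edge), so every edge $e=\{i,j\}$ of $\Delta$ is adjacent to at least one other edge; that neighbouring edge contributes a length-$3$ path through the slot of $e$ that is \emph{not} palindromic, and the vanishing of its coefficient (from the previous step) propagates to kill the palindromic coefficients at $e$ as well. Concretely, if $a$ is the arrow of $e$ oriented so that $\mo(a)=i$, the presence of an arrow $a'$ at $i$ distinct from $a^*$ (which exists since $\deg_\Delta(i)\pgq 2$, or is furnished by another edge at $j$ if $i$ is a leaf) yields a path on which only $\lambda_a$ (resp.\ $\mu_a$) survives, so $\lambda_a = \mu_a = 0$; iterating over all arrows gives $u=0$. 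I would also need to dispatch the degenerate local cases — a loop at a vertex, or the oriented $2$-cycle $a,a^*$ being the only arrows at some vertex — but these only occur for graphs $\Delta$ that are not $\mathrm{A}_1$ or $\mathrm{A}_2$ when there is additional structure elsewhere, and the same adjacency argument applies. Once $W_3=0$ is established the rest of the theorem is formal.
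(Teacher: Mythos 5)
Your proposal is correct, and at the top level it follows the same strategy as the paper: reduce everything to the single combinatorial claim $W_3=(V\otimes_k R)\cap(R\otimes_k V)=0$ (plus the trivial remark that $R\neq 0$ since $\Delta\neq\mathrm{A}_1$, needed for the length to be exactly $2$), write an element of the intersection in both forms $\sum_a\lambda_a\,a\,\sigma_{\mo(a)}=\sum_a\mu_a\,\sigma_{\mt(a)}\,a$, expand each $\sigma_i$ and compare coefficients in the basis of length-$3$ paths of $\ff\ov Q_3$. Where you genuinely differ is in how that comparison is organised. The paper localises to $eW_3f$ for a fixed pair of vertices and splits the resulting identity according to whether the arrows lie in $Q_1$ or $Q_1^*$, producing six relations and a two-case analysis that ends by identifying $Q$ with the $\mathrm{A}_2$ quiver. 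You instead classify paths by shape: a path $a\,b\,b^*$ with $b\neq a^*$ occurs in only one expansion and kills $\lambda_a$ outright (available whenever $\mo(a)$ meets a second edge-end, which in particular disposes of loops and multiple edges), the mirror paths $c\,c^*\,a$ with $c\neq a$ kill $\mu_a$ whenever $\deg_\Delta(\mt(a))\pgq 2$, and the palindromic paths $a\,a^*\,a$, the only ones seen by both expansions, give $\mu_a=-\lambda_a$; hence $\lambda_a\neq 0$ forces both endpoints of the edge of $a$ to be leaves, which by connectedness means $\Delta=\mathrm{A}_2$. This is a tidier and more symmetric bookkeeping: it isolates exactly which path coefficients carry information and replaces the paper's case analysis by the observation that every edge of a connected graph other than $\mathrm{A}_2$ has an endpoint of degree at least $2$. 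The one place your write-up wobbles is the closing remark about ``degenerate local cases occurring when there is additional structure elsewhere''; the clean statement is simply that if neither endpoint of the edge of $a$ has degree $\pgq 2$ then $\Delta$ is that single edge, i.e.\ $\mathrm{A}_2$, and this is excluded by hypothesis.
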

\begin{proof}
From the defining equality (\ref{defw}) of $W_p$, we have $W_p=(W_{p-1}\otk V)\cap(V\otk W_{p-1})$ for all $p\pgq 3$. Moreover $R\neq 0$, therefore it is enough to prove that $W_3=0,$ that is, $(R\otk V)\cap (V\otk R)=0$. For that, we only assume that $\Delta\neq \mathrm{A}_1$. Our goal is to prove that $W_3 \neq 0$ implies $\Delta =\mathrm{A}_2$.

Let $u$ be a non-zero element in $W_3$, viewed as an element in $\ff \ov Q_3.$ There exist vertices $e,f$ in $Q_0$ such that $euf\neq 0$, therefore we may assume that $u$ is in $eW_3f.$ Then $u$ can be written uniquely as
\begin{align*}
  u=\sum_{\substack{a\in e\ov{Q}_1\\\alpha\in e\ov{Q}_1f}}\lambda_\alpha\varepsilon(a)aa^*\alpha
  =\sum_{\substack{b\in f\ov{Q}_1\\\beta\in f\ov{Q}_1e}}\mu_\beta\varepsilon(b)\beta bb^*.
\end{align*}
 We now use the fact that $\ov{Q}_1$ is the disjoint union of $Q_1$ and $Q_1^*$ and the definition of $\varepsilon$ to write
\begin{align*}
  u&=\sum_{\substack{\alpha \in   eQ_1f  \\  a  \in   eQ_1}}\lambda_\alpha aa^*\alpha
  -  \sum_{\substack{\alpha  \in   eQ_1f  \\ a   \in   Q_1e}}\lambda_\alpha a^*a\alpha
  +  \sum_{\substack{    \alpha \in fQ_1e  \\ a   \in   eQ_1}}\lambda_{\alpha^*} aa^*\alpha^*
  -  \sum_{\substack{ \alpha  \in   fQ_1e  \\ a   \in   Q_1e}}\lambda_{\alpha^*}a^*a\alpha^*\\
  &=\sum_{\substack{  \beta \in   eQ_1f  \\  b  \in   fQ_1}}\mu_\beta \beta bb^*
  -  \sum_{\substack{ \beta  \in   eQ_1f  \\ b   \in   Q_1f}}\mu_\beta \beta b^*b
  +  \sum_{\substack{    \beta \in fQ_1e  \\ b   \in   fQ_1}}\mu_{\beta^*} \beta^*bb^*
  -  \sum_{\substack{ \beta  \in   fQ_1e  \\ b   \in   Q_1f}}\mu_{\beta^*}\beta^*b^*b.
  \end{align*}

From these expressions, we obtain the following identities in the path algebra $\ff \ov{Q}$:
\begin{align}
\sum_{\substack{\alpha \in fQ_1e\\  a\in eQ_1}} \lambda_{\alpha^*} aa^*\alpha^*  &=0  \label{rel:W3 1}\\
\sum_{\substack{ \alpha\in eQ_1f\\  a\in Q_1e}} \lambda_\alpha a^*a\alpha &=0  \label{rel:W3 2}\\
\sum_{\substack{ \beta\in eQ_1f\\  b\in fQ_1}} \mu_\beta \beta bb^* &=0  \label{rel:W3 3}\\
\sum_{\substack{ \beta\in fQ_1e\\  b\in Q_1f}}  \mu_{\beta^*} \beta^* b^*b&=0  \label{rel:W3 4}\\
\sum_{\substack{ \alpha\in eQ_1f\\ a \in eQ_1}} \lambda_\alpha aa^*\alpha &=-\sum_{\substack{ \beta\in eQ_1f\\  b\in Q_1f}}  \mu_\beta  \beta b^*b \label{rel:W3 5}\\
\sum_{\substack{ \alpha\in fQ_1e\\  a\in Q_1e}} \lambda_{\alpha^*} a^*a\alpha^*  &=-\sum_{\substack{ \beta\in fQ_1e\\ b \in fQ_1}}  \mu_{\beta^*} \beta^*bb^*  \label{rel:W3 6}
\end{align}
Indeed, identity \eqref{rel:W3 1} follows from the fact that no other path that occurs in the expressions of $u$ ends with two arrows in $Q^*$, and the other identities are obtained from similar arguments.

In the path algebra $\ff \ov Q$, where there are no relations between paths, the identities \eqref{rel:W3 1} to \eqref{rel:W3 4} above are equivalent to
\begin{align}
\forall \alpha\in fQ_1e,\ \forall a\in eQ_1,\ \lambda_{\alpha^*}=0  \label{rel:W3 1'}\\
\forall \alpha\in eQ_1f,\ \forall a\in Q_1e,\ \lambda_{\alpha}=0  \label{rel:W3 2'}\\
\forall \beta\in eQ_1f,\ \forall a\in fQ_1,\ \lambda_{\beta}=0  \label{rel:W3 3'}\\
\forall \beta\in fQ_1e,\ \forall a\in Q_1f,\ \lambda_{\beta^*}=0.  \label{rel:W3 4'}
  \end{align}

We have assumed that $u\neq 0$, so that either there exists $\alpha\in eQ_1f$ such that $\lambda_\alpha\neq 0$ or there exists $\alpha\in fQ_1e$ such that $\lambda_{\alpha^*}\neq 0,$ using the first expression of $u.$ We separate the two cases.

Assume that there exists $\alpha\in eQ_1f$ such that $\lambda_\alpha\neq 0$.  Then it follows from identity \eqref{rel:W3 2'} that $Q_1e$ is empty. From \eqref{rel:W3 5}, for all $a\in eQ_1$, there exist $\beta\in eQ_1f$ and $b\in Q_1f$ such that $\lambda_\alpha aa^*\alpha=-\mu_\beta\beta b^*b.$ Hence $\beta=a=b=\alpha$ and therefore  $eQ_1=\set{\alpha}=eQ_1f$ and  $\mu_\beta=-\lambda_\alpha\neq 0.$ From \eqref{rel:W3 3'}, it follows that $fQ_1$ is empty. Finally, \eqref{rel:W3 5} becomes
\[ \lambda_\alpha \alpha\alpha^*\alpha=\lambda_\alpha\alpha\alpha^*\alpha+\lambda_\alpha\sum_{\substack{b\in Q_1f\\b\neq\alpha} }\alpha b^*b \] so that $\sum_{\substack{b\in Q_1f\\b\neq\alpha} }\alpha b^*b =0$ and hence $Q_1f=\set{\alpha}.$

We have proved that $Q_1e=\emptyset= fQ_1$ so that in particular $e\neq f$, and that $Q_1f=\set{\alpha}=eQ_1=eQ_1f$. Finally, $Q= e\stackrel{\alpha}\leftarrow f$ and $\Delta=A_2$.

In the case where there exists $\alpha\in fQ_1e$ such that $\lambda_{\alpha^*}\neq 0$, a similar proof using \eqref{rel:W3 1'}, \eqref{rel:W3 4'} and \eqref{rel:W3 6} shows that  $Q= f\stackrel{\alpha}\leftarrow e$ and $\Delta=A_2$.
\end{proof}

As an immediate consequence of Proposition \ref{kocommutation} and Theorem \ref{w3iszero}, we obtain that in the Koszul calculus of $A(\Delta)$, the Koszul cup product is graded commutative and the Koszul cap product is graded symmetric. The precise statement is the following. 
\begin{Cr} \label{cupcapsym}
Let $A=A(\Delta)$ be a preprojective algebra over $\ff$ with $\Delta\neq \mathrm{A}_1$ and $\Delta\neq \mathrm{A}_2$. We consider an $A$-bimodule $M$. For any $\alpha \in \HK^{\bullet}(A,M)$, $\beta \in \HK^{\bullet}(A)$ and $\gamma \in \HK_{\bullet}(A)$, we have the identities
\begin{align} \label{kocupbrazeroprepro}
[\alpha, \beta]_{\underset{K}{\smile}} &=0,
\\ \label{kocapbrazeroprepro}
[\alpha, \gamma]_{\underset{K}{\frown}} &=0.
\end{align}
\end{Cr}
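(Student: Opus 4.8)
The plan is to deduce both identities from Proposition~\ref{kocommutation} and Theorem~\ref{w3iszero}, splitting the argument according to the cohomological degree $p$ of $\alpha$. First I would invoke Theorem~\ref{w3iszero}: since $W_p=0$ for all $p\pgq 3$, we have $\HK^p(A,M)=\HK_p(A,M)=0$ for $p\pgq 3$ and likewise $\HK^q(A)=\HK_q(A)=0$ for $q\pgq 3$, so it suffices to consider $\alpha\in\HK^p(A,M)$, $\beta\in\HK^q(A)$ and $\gamma\in\HK_q(A)$ with $p,q\in\set{0,1,2}$. If $p\in\set{0,1}$, both \eqref{kocupbrazeroprepro} and \eqref{kocapbrazeroprepro} are immediate, being exactly identities \eqref{kocupbrazero} and \eqref{kocapbrazero} of Proposition~\ref{kocommutation}. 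So the only case to treat is $p=2$, which I would handle by a degree count.

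For $p=2$ and the cup bracket, both $\alpha\underset{K}{\smile}\beta$ and $\beta\underset{K}{\smile}\alpha$ lie in spaces canonically isomorphic to $\HK^{2+q}(A,M)$ (via $M\otimes_A A\cong M\cong A\otimes_A M$); hence if $q\pgq 1$ they vanish by Theorem~\ref{w3iszero} and the bracket is zero. If $q=0$, then $\beta\in\HK^0(A)=Z(A)$, and since $\HK^2(A,M)$ is a symmetric $Z(A)$-bimodule for the actions defined by $\underset{K}{\smile}$ (as noted just after Definition~\ref{variouskc}), the left and right cup actions of $\beta$ agree, i.e.\ $\alpha\underset{K}{\smile}\beta=\beta\underset{K}{\smile}\alpha$, so the bracket is again zero. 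For $p=2$ and the cap bracket, $\alpha\underset{K}{\frown}\gamma$ and $\gamma\underset{K}{\frown}\alpha$ are built on $W_{q-2}$: for $q\ppq 1$ they vanish because $W_{q-2}=0$, while for $q=2$ we have $p=q=2\notin\set{0,1}$, so the last assertion of Proposition~\ref{kocommutation} (the case $p=q\notin\set{0,1}$ of \eqref{kocapbrazero}) gives $[\alpha,\gamma]_{\underset{K}{\frown}}=0$ directly.

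The only step that is not a verbatim application of Proposition~\ref{kocommutation} is the subcase $p=2$, $q=0$ of the cup bracket, and this is the point I expect to need care: one cannot interchange $\alpha$ and $\beta$ in order to put oneself in the situation ``first argument of degree $\ppq 1$'' covered by Proposition~\ref{kocommutation}, because $\beta$ has coefficients in $A$ whereas $\alpha$ has coefficients in an arbitrary bimodule $M$, so the two arguments are not interchangeable. The fix, as indicated above, is to use instead the symmetric $Z(A)$-bimodule structure of $\HK^{\bullet}(A,M)$ recalled in Subsection~\ref{prod}. Beyond this observation, the argument involves no computation.
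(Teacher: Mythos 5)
Your proof is correct and takes the same route as the paper, which derives this corollary in a single sentence as an ``immediate consequence'' of Proposition~\ref{kocommutation} and Theorem~\ref{w3iszero}; your argument is that derivation written out case by case. Your caution about the subcase $p=2$, $q=0$ of the cup bracket is well placed: that case is indeed not literally an instance of Proposition~\ref{kocommutation}, whose degree-restricted argument must be the one carrying coefficients in $M$, and appealing to the symmetric $Z(A)$-bimodule structure of $\HK^{\bullet}(A,M)$ asserted after Definition~\ref{variouskc} is the appropriate way to close that gap.
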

 
The same conclusion holds if $\Delta= \mathrm{A}_1$ (obvious) and if $\Delta= \mathrm{A}_2$ (because $A$ is Koszul).

\subsection{Duality in Koszul (co)homology of preprojective algebras} \label{dualityprepro}

There is a remarkable duality between Koszul homology and cohomology for preprojective algebras. This duality is realised as a cap action by a Koszul 2-chain $\omega_0\in A\otimes_{k^e} R$ defined for any graph $\Delta$ by
$$\omega_0=\sum_{i\in Q_0}e_i\otimes_{k^e} \sigma_i=\sum_{i\in Q_0}e_i\ot \sigma_i.$$
From $\sigma_i = \sum_{a\in \ov Q_1,\,\mt(a)=i}\varepsilon(a)aa^*$, we get
\begin{equation} \label{defomega0}
\omega_0 = \sum_{a\in \ov{Q}_1} 1\otimes_{k^e} \varepsilon(a)aa^*= -\sum_{a\in \ov{Q}_1} 1 \otimes_{k^e}\varepsilon(a)a^*a.
\end{equation}
Then it is easy to check that $\omega_0$ is a Koszul 2-cycle. Being homogeneous of weight 0, $\omega_0$ is not a 2-boundary whenever $\Delta\neq \mathrm{A}_1$.

In the following statement, we need the DG algebra $\tilde{A}$ and the DG $\tilde{A}$-bimodules $\Hom _{k^e}(W_{\bullet},M)$ and $M\otimes_{k^e} W_{\bullet}$, introduced just before Definition \ref{variouskc}.

\begin{Te} \label{complexduality}
Let $A=A(\Delta)$ be a preprojective algebra over $\ff$ with $\Delta\neq \mathrm{A}_1$ and $\Delta\neq \mathrm{A}_2$. Consider the Koszul 2-cycle $\omega_0=\sum_{i\in Q_0}e_i\ot \sigma_i \in A\otimes_{k^e} R$. For each Koszul $p$-cochain $f$ with coefficients in an $A$-bimodule $M$, we define the Koszul $(2-p)$-chain $\theta_M (f)$ with coefficients in $M$ by
\begin{equation} \label{deftheta}
\theta_M (f) = \omega_0 \underset{K}{\frown} f.
\end{equation}
Then the equalities
\begin{equation} \label{formultheta}
\theta_{M\otimes_A N} (f\underset{K}{\smile} g) = \theta_M (f) \underset{K}{\frown} g = f \underset{K}{\frown} \theta_N (g)
\end{equation}
hold for any Koszul cochains $f$ and $g$ with coefficients in $A$-bimodules $M$ and $N$ respectively.

Moreover the linear map $\theta_M: \Hom _{k^e}(W_{\bullet},M) \rightarrow M\otimes_{k^e} W_{2-\bullet}$ is an isomorphism of DG $\tilde{A}$-bimodules.

It follows that $H(\theta_M): \HK^{\bullet}(A,M) \rightarrow \HK_{2-\bullet}(A,M)$ is an isomorphism of graded $\HK^{\bullet}(A)$-bimodules and that $H(H(\theta_M)) : \HK_{hi}^{\bullet}(A,M) \rightarrow \HK^{hi}_{2-\bullet}(A,M)$ is an isomorphism of graded $\HK_{hi}^{\bullet}(A)$-bimodules. 
\end{Te}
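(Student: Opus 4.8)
The plan is to reduce the entire statement to a single identity valid at the level of Koszul chains and cochains: for every Koszul $p$-cochain $f$ with coefficients in an arbitrary $A$-bimodule $M$,
\[
\omega_0\underset{K}{\frown}f \;=\; f\underset{K}{\frown}\omega_0 .
\]
By Theorem~\ref{w3iszero} we have $W_p=0$ for $p\geqslant 3$, so it is enough to check this for $p=0,1,2$, where $W_0=k$, $W_1=V$ and $W_2=R$, the latter having $\ff$-basis $\{\sigma_i\}_{i\in Q_0}$ with $\sigma_i\in e_iRe_i$, and $\omega_0=\sum_i e_i\otimes_{k^e}\sigma_i$. Inserting these into the defining formulas \eqref{kolcap}--\eqref{korcap} of the cap products and using the two expressions \eqref{defomega0} of $\omega_0$ in terms of the arrows of $\overline{Q}$ together with $\varepsilon(a^\ast)=-\varepsilon(a)$, both members are computed to be $\sum_i f(e_i)\otimes_{k^e}\sigma_i$ for $p=0$, $\sum_{a\in\overline{Q}_1}\varepsilon(a)\,f(a)\otimes_{k^e}a^\ast$ for $p=1$, and $\sum_i f(\sigma_i)\otimes_{k^e}e_i$ for $p=2$; in the degree-$1$ case the two a priori different cap-product expressions are matched by the substitution $b=a^\ast$. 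This case check is the only genuine computation in the proof, and its degree-$1$ instance --- the bookkeeping of the idempotents $e_{\mo(a)},e_{\mt(a)}$, of the involution $(-)^\ast$ and of the signs $\varepsilon(a)$ on the double quiver --- is the step I expect to be the main obstacle. In particular $\theta_M(f)=\omega_0\underset{K}{\frown}f=f\underset{K}{\frown}\omega_0$.

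Granting this identity, the equalities \eqref{formultheta} become formal. Using the associativity relation $z\underset{K}{\frown}(g\underset{K}{\smile}h)=(z\underset{K}{\frown}g)\underset{K}{\frown}h$,
\[
\theta_{M\otimes_A N}(f\underset{K}{\smile}g)=\omega_0\underset{K}{\frown}(f\underset{K}{\smile}g)=(\omega_0\underset{K}{\frown}f)\underset{K}{\frown}g=\theta_M(f)\underset{K}{\frown}g ,
\]
and using $f\underset{K}{\frown}(z\underset{K}{\frown}g)=(f\underset{K}{\frown}z)\underset{K}{\frown}g$ together with the identity,
\[
f\underset{K}{\frown}\theta_N(g)=f\underset{K}{\frown}(\omega_0\underset{K}{\frown}g)=(f\underset{K}{\frown}\omega_0)\underset{K}{\frown}g=(\omega_0\underset{K}{\frown}f)\underset{K}{\frown}g=\theta_M(f)\underset{K}{\frown}g .
\]
Specialising to $N=A$ with $g\in\tilde{A}$, and to $M=A$ with $f\in\tilde{A}$, shows that $\theta_M$ is respectively right and left $\tilde{A}$-linear, so $\theta_M\colon\Hom_{k^e}(W_\bullet,M)\to M\otimes_{k^e}W_{2-\bullet}$ is a morphism of graded $\tilde{A}$-bimodules.

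Next I would show that $\theta_M$ is an isomorphism of complexes. That $\theta_M\circ b_K=b^K\circ\theta_M$ follows from the fundamental formulas \eqref{fundacoho}--\eqref{fundaho}: writing $b_K(f)=-[\me_A,f]_{\underset{K}{\smile}}$, applying $\omega_0\underset{K}{\frown}-$ and using the associativity relations reduces the claim to $\omega_0\underset{K}{\frown}\me_A=\me_A\underset{K}{\frown}\omega_0$, which is precisely $b^K(\omega_0)=0$ rewritten through \eqref{fundaho} since $\omega_0$ is a Koszul $2$-cycle; a short bracket computation, using $(-1)^{2-p}=(-1)^p$, then identifies $\theta_M(b_Kf)$ with $b^K(\theta_Mf)=-[\me_A,\theta_M(f)]_{\underset{K}{\frown}}$. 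Finally $\theta_M$ is bijective: it carries homological degree $p$ to homological degree $2-p$, both complexes vanish outside degrees $0,1,2$, and, by the explicit formulas of the first paragraph, in degrees $0$ and $2$ it becomes the identity map of $\bigoplus_{i\in Q_0}e_iMe_i$ under the standard identifications $\Hom_{k^e}(k,M)\cong\bigoplus_i e_iMe_i\cong M\otimes_{k^e}R$ and $\Hom_{k^e}(R,M)\cong\bigoplus_i e_iMe_i\cong M\otimes_{k^e}k$, while in degree $1$ it sends the $a$-component of $\Hom_{k^e}(V,M)$ to the $a^\ast$-component of $M\otimes_{k^e}V$ by multiplication by $\varepsilon(a)=\pm1$. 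Hence $\theta_M$ is an isomorphism of DG $\tilde{A}$-bimodules.

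Passing to homology, $H(\theta_M)$ is an isomorphism $\HK^{\bullet}(A,M)\to\HK_{2-\bullet}(A,M)$, and since the homology of a DG $\tilde{A}$-bimodule is functorially a graded $H(\tilde{A})=\HK^{\bullet}(A)$-bimodule, it is an isomorphism of graded $\HK^{\bullet}(A)$-bimodules. In particular $H(\theta_M)$ commutes with left multiplication by $\overline{\me}_A$, i.e. it intertwines $\partial_{\smile}=\overline{\me}_A\underset{K}{\smile}-$ on $\HK^{\bullet}(A,M)$ with $\partial_{\frown}=\overline{\me}_A\underset{K}{\frown}-$ on $\HK_{\bullet}(A,M)$; hence it is a morphism of complexes for the higher differentials and induces an isomorphism $H(H(\theta_M))\colon\HK_{hi}^{\bullet}(A,M)\to\HK^{hi}_{2-\bullet}(A,M)$, which is compatible with the $\HK_{hi}^{\bullet}(A)$-bimodule structures because these are obtained from the $\HK^{\bullet}(A)$-bimodule structures by passing to higher (co)homology.
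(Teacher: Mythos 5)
Your proposal is correct and follows essentially the same route as the paper: the central computation is the identity $\omega_0\underset{K}{\frown}f=f\underset{K}{\frown}\omega_0$ checked in degrees $0,1,2$, from which \eqref{formultheta} follows by associativity, the chain-map property follows from the fundamental formulas, and bijectivity is seen degreewise (the paper writes out the explicit inverse $\eta$, which is just the inverse of the componentwise description you give). The only cosmetic difference is that you deduce $\omega_0\underset{K}{\frown}\me_A=\me_A\underset{K}{\frown}\omega_0$ from $b^K(\omega_0)=0$ rather than as the $M=A$, $p=1$ instance of the identity already established, which amounts to the same thing.
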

\begin{proof}
First we show that $f \underset{K}{\frown} \omega_0 =  \omega_0 \underset{K}{\frown} f$ for all $f \in \Hom _{k^e}(W_p,M)$. Using the definition of $\omega_0$ and the equalities (\ref{defomega0}), (\ref{kolcap}) and (\ref{korcap}), we obtain for $p=0,1,2,$
\begin{align*}
  f \underset{K}{\frown} \omega_0 & = \sum_{i\in Q_0}(f(1)e_i) \otimes_{k^e} \sigma_i= \sum_{i\in Q_0}(e_if(1)) \otimes_{k^e} \sigma_i = \omega_0 \underset{K}{\frown} f, \\
  f \underset{K}{\frown} \omega_0 & = -\sum_{a\in \ov{Q}_1} \varepsilon(a)(f(a^*)1) \otimes_{k^e} a=\sum_{a\in \ov{Q}_1} \varepsilon(a)(1f(a))\otimes_{k^e} a^* = \omega_0 \underset{K}{\frown} f, \\
 f \underset{K}{\frown} \omega_0 &= \sum_{i\in Q_0}(e_if(\sigma_i)) \otimes_{k^e} 1= \sum_{i\in Q_0}(f(\sigma_i)e_i) \otimes_{k^e}1 = \omega_0 \underset{K}{\frown} f.
\end{align*}
Next, for $f \in \Hom _{k^e}(W_p,M)$ and $g \in \Hom _{k^e}(W_q,M)$, we have
$$\theta_{M\otimes_A N} (f\underset{K}{\smile} g)= \omega_0 \underset{K}{\frown} (f\underset{K}{\smile} g)= (\omega_0 \underset{K}{\frown} f) \underset{K}{\frown} g=(f \underset{K}{\frown} \omega_0 ) \underset{K}{\frown} g=f \underset{K}{\frown} (\omega_0  \underset{K}{\frown} g),$$
providing equalities (\ref{formultheta}). Therefore $\theta_M: \Hom _{k^e}(W_{\bullet},M) \rightarrow M\otimes_{k^e} W_{2-\bullet}$ is a morphism of graded $\tilde{A}$-bimodules, where $\tilde{A}$ is just considered as a graded algebra. It remains to examine what happens for the Koszul differentials.

Assuming $M=A$ in the equalities (\ref{formultheta}), we derive 
$$\theta_N ([f,g]_{\underset{K}{\smile}}) = [\theta_A (f),g]_{\underset{K}{\frown}} = [f,\theta_N (g)]_{\underset{K}{\frown}}.$$ 
Combining $\theta_N ([\me_A,g]_{\underset{K}{\smile}}) = [\me_A,\theta_N (g)]_{\underset{K}{\frown}}$ with $b_K=- [\me_A, -]_{\underset{K}{\smile}}$ and $b^K = -[\me_A, -]_{\underset{K}{\frown}}$,
we deduce that $\theta_M$ is a morphism of complexes, thus a morphism of DG $\tilde{A}$-bimodules.

We prove that $\theta_M$ is an isomorphism by giving an inverse map $\eta : M\otimes_{k^e} W_{2-\bullet}\rightarrow  \Hom _{k^e}(W_{\bullet},M)$. We define $\eta_p:M\otimes_{k^e} W_{2-p}\rightarrow  \Hom _{k^e}(W_p,M)$ for $p=0,1,2,$ by
\begin{align*}
 & \eta_0(m\otimes_{k^e} \sigma_i)(e_j)=\delta_{ij}\, e_jme_i\\
 & \eta_1(m \otimes_{k^e} a)(b)=\delta_{ba^*}\,\varepsilon(b)\, \mt(b)\, m\, \mo(b)\quad\text{for any arrows $a$ and $b$ of $\ov{Q}$}\\
 & \eta_2(m\otimes_{k^e} e_i)(\sigma_j)=\delta_{ij}\, e_jme_i
\end{align*} where $\delta$ is the Kronecker symbol. It is routine to verify that these linear maps are well-defined and form an inverse map for $\theta_M$.

Finally the isomorphism $H(\theta_M)$ of graded $\HK^{\bullet}(A)$-bimodules satisfies
$$H(\theta_M)(\overline{\me}_A \underset{K}{\smile} \alpha) = \overline{\me}_A \underset{K}{\frown} H(\theta_M)(\alpha)$$
for all $\alpha \in \HK^{\bullet}(A,M)$. Therefore $H(\theta_M)$ is a  morphism of complexes for higher (co)homologies. Taking higher (co)homologies, we get a $\HK^{\bullet}_{hi}(A)$-bimodule isomorphism
\[H(H(\theta_M)): \HK^{\bullet}_{hi}(A,M) \rightarrow \HK^{hi}_{2-\bullet}(A,M). 
\]
\end{proof}
By analogy with the Poincar\'e duality in singular (co)homology~\cite{hat:algtop} and with the Van den Bergh duality in Hochschild (co)homology~\cite{vdb:dual,tl:bvcy}, we say that the isomorphism
$$H(\theta_M)= \overline{\omega}_0 \underset{K}{\frown} - : \HK^{\bullet}(A,M) \rightarrow \HK_{2-\bullet}(A,M)$$
is a Poincar\'e Van den Bergh duality for Koszul (co)homology, of fundamental class $\overline{\omega}_0$, where $\overline{\omega}_0\in \HK_2(A)_0$. In the next subsection, we extract from this duality a generalisation of the 2-Calabi-Yau property.

Unless $\Delta$ has no loop and $\car \ff = 2$, the class $\overline{\me}_A\in \HK^1(A)_1$ is non-zero, hence
$$H(\theta_A)(\overline{\me}_A)= \overline{\omega}_0 \underset{K}{\frown} \overline{\me}_A = \partial_{\frown} (\overline{\omega}_0)$$
is non-zero in $\HK_1(A)_1$. Consequently, the fundamental class $\overline{\omega}_0$ of the Poincar\'e Van den Bergh duality is not a cycle for the higher Koszul homology, so that the isomorphism $H(H(\theta_M))$ cannot be naturally expressed as a cap action.

The class $H(\theta_A)(\overline{\me}_A)$ is the class of the Koszul 1-cycle $\omega_0 \underset{K}{\frown} \me_A$ where
$$\omega_0 \underset{K}{\frown} \me_A = \sum_{a\in \ov{Q}_1} \varepsilon(a) a\otimes_{k^e} a^* = \sum_{a\in Q_1}  (a\otimes_{k^e} a^*- a^*\otimes_{k^e} a).$$

It is interesting to view the last element as the image by the canonical linear map $can: V \otk V \rightarrow V \otimes_{k^e} V$ of the element
$$w=\sum_{a\in Q_1}  (a\otimes_{k} a^*- a^*\otimes_{k} a)= \sum_{a\in \ov{Q}_1} \varepsilon(a)aa^* \in R \subseteq V \otk V.$$
In the identification $V \otk V\cong \ff \ov Q _2$, $V \otimes_{k^e} V$ is identified with the subspace of cycles of length 2 and the map $can$ is identified with the projection whose kernel is the space spanned by the non-cyclic paths. Since $R$ is generated by the cycles $\sigma_i$, we can make the identification $\omega_0 \underset{K}{\frown} \me_A = w$. The element $w$ was defined in~\cite[Proposition 8.1.1]{cbeg:quiver} as a representative of a bi-symplectic 2-form $\omega$. Bi-symplectic 2-forms were introduced by Crawley-Boevey, Etingof and Ginzburg as an essential ingredient of the Hamiltonian reduction in noncommutative geometry~\cite{cbeg:quiver}; they are related to the double Poisson algebras defined by Van den Bergh~\cite{vdb:double}.

\begin{Rm}
  Assume that $\Delta = \mathrm{A}_2$, so that $A$ is defined by the quiver  \[\xymatrix@C=10pt{\ov Q&&
    0\ar@/^/[rr]^{a}&&1\ar@/^/[ll]^{a^*}}\] subject to the relations $\sigma_0=-a^*a$, $\sigma_1=aa^*$. Then the statement of Theorem \ref{complexduality} is valid in a weaker form, namely the isomorphisms involved are only morphisms. Moreover, $\theta_M$ is bijective only in degree $q$, $0\leq q \leq 2$, with the same inverse $\eta_q$. More generally, for any $p\geq 1$, the Koszul ($2p$)-cycle
  $$ \omega_{p-1}= e_1 \otimes (aa^*)^p-e_0\otimes (a^*a)^p$$
  provides a morphism $\omega_{p-1}\capk -$ which is bijective only in degree $q$, $0\leq q \leq 2p$. From that, we deduce an isomorphism $\overline{\omega}_{p-1}\capk -$ from $\HK^q(A,M)$ to $\HK_{2p-q}(A,M)$ for $0<q<2p$. Varying $p$, we obtain the following duality and $2$-periodicity
  \begin{align*} \label{2periodicity}
  \HK^q(A,M)&\cong \HK_q(A,M)\cong \HK^1(A,M),\ q\ \mathrm{odd}\geq 1 \\
  \HK^q(A,M)&\cong \HK_q(A,M)\cong \HK^2(A,M),\ q\ \mathrm{even}\geq 2.
\end{align*}  
Using this for $M=A$, it is straightforward to compute explicitly the restricted Koszul calculus of $A$. We leave the details to the reader. Notice that, since $A$ is Koszul, we have the same duality and $2$-periodicity for Hochschild (co)homology, recovering a known result as a consequence of remarkable isomorphisms due to Eu and Schedler~\cite[Theorem 2.3.27, Theorem 2.3.47]{eusched:cyfrob}, here applied to~\cite[Example 2.3.10, Corollary 2.1.13]{eusched:cyfrob}.

  \end{Rm}

\subsection{Deriving an adapted 2-Calabi-Yau property} \label{subsec:2cyproperty}

Let $A=A(\Delta)$ be a preprojective algebra over $\ff$ with $\Delta\neq \mathrm{A}_1$ and $\Delta\neq \mathrm{A}_2$. Let $M$ be an $A$-bimodule. Assume that $B$ is a unital associative algebra such that $M$ is a right $B$-module compatible with the $A$-bimodule structure (see Subsection \ref{compatibility}). Denote by $\modb$ the category of right $B$-modules. Recall that $\tilde{A}$ denotes the DG algebra $(\Hom _{k^e}(W_{\bullet},A), b_K,\underset{K}{\smile})$.

According to Subsection \ref{dgbimodules}, $M\otimes_{k^e} W_{\bullet}$ and $\Hom _{k^e}(W_{\bullet},M)$ are DG $\tilde{A}$-bimodules in $\modb$. Moreover, $\HK_{\bullet}(A,M)$ and $\HK^{\bullet}(A,M)$ are graded $\HK^{\bullet}(A)$-bimodules in $\modb$. Finally, $\HK^{hi}_{\bullet}(A,M)$ and $\HK_{hi}^{\bullet}(A,M)$ are graded $\HK_{hi}^{\bullet}(A)$-bimodules in $\modb$.
\begin{Lm} \label{Blinearity}
The map $\theta_M: \Hom _{k^e}(W_{\bullet},M) \rightarrow M\otimes_{k^e} W_{2-\bullet}$ is an isomorphism of DG $\tilde{A}$-bimodules in $\modb$. Moreover, $H(\theta_M): \HK^{\bullet}(A,M) \rightarrow \HK_{2-\bullet}(A,M)$ is an isomorphism of graded $\HK^{\bullet}(A)$-bimodules in $\modb$, and $H(H(\theta_M)) : \HK_{hi}^{\bullet}(A,M) \rightarrow \HK^{hi}_{2-\bullet}(A,M)$ is an isomorphism of graded $\HK_{hi}^{\bullet}(A)$-bimodules in $\modb$. 
\end{Lm}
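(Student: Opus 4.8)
The plan is to reduce the statement to Theorem \ref{complexduality} together with the $B$-linearity facts already recorded in Subsection \ref{dgbimodules}, so that the only genuinely new content is that $\theta_M$ intertwines the right $B$-actions. Recall from Theorem \ref{complexduality} that, since $\Delta\neq\mathrm{A}_1$ and $\Delta\neq\mathrm{A}_2$, the map $\theta_M:\Hom_{k^e}(W_\bullet,M)\rightarrow M\otimes_{k^e}W_{2-\bullet}$ is already an isomorphism of complexes and of $\tilde{A}$-bimodules, with explicit inverse $\eta$ given in degrees $0,1,2$, and that passing to (higher) cohomology yields the $\HK^\bullet(A)$- and $\HK^\bullet_{hi}(A)$-bimodule isomorphisms $H(\theta_M)$ and $H(H(\theta_M))$. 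Thus it suffices to check that $\theta_M$, the inverse $\eta$, and the structure maps $b_K$, $b^K$, $\me_A\cupk-$, $\me_A\capk-$ are all right $B$-linear; the latter four are exactly the $B$-linearity assertions established in Subsection \ref{dgbimodules}.

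The core step is $B$-linearity of $\theta_M$ itself. Writing $\theta_M(f)=\omega_0\capk f$ with $\omega_0=\sum_{i}e_i\otimes_{k^e}\sigma_i\in A\otimes_{k^e}R$, formula \eqref{korcap} expresses $\omega_0\capk f$, for $f\in\Hom_{k^e}(W_p,M)$, as a sum $\sum_i\pm(e_i\otimes_A f(x))\otimes_{k^e}(\text{rest})$ in which the right $B$-action on $M\otimes_{k^e}W_{2-p}$ touches only the $M$-slot. Replacing $f$ by $f.b$ replaces $f(x)$ by $f(x).b$, and because the right $B$-action on $M$ commutes with the left $A$-action by the compatibility hypothesis (Lemma \ref{compat}), one has $e_i\otimes_A(f(x).b)=(e_i\otimes_A f(x)).b$; hence $\theta_M(f.b)=\theta_M(f).b$. (Equivalently, one can use the symmetry $f\capk\omega_0=\omega_0\capk f$ proved in Theorem \ref{complexduality} to write $\theta_M(f)=f\capk\omega_0$ and then appeal to the identity $f\capk(z.b)=(f\capk z).b$ from Subsection \ref{dgbimodules}; but since there the $B$-module sits in the chain rather than in the cochain, the direct computation above is the cleaner route.) The inverse $\eta$ is then automatically $B$-linear, being the two-sided inverse of a $B$-linear bijection; this is also visible from the explicit formulas for $\eta_0,\eta_1,\eta_2$, each of which only post-composes with, or inserts into, the $M$-component.

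With $\theta_M$ now a $B$-linear isomorphism of complexes that is simultaneously an isomorphism of DG $\tilde{A}$-bimodules, it is an isomorphism of DG $\tilde{A}$-bimodules in $\modb$ (equivalently, of DG $\tilde{A}^e$-$B$-bimodules, by Lemma \ref{compat}). Taking homology, $H(\theta_M)$ is a $B$-linear isomorphism which by Theorem \ref{complexduality} is also $\HK^\bullet(A)$-bilinear, hence an isomorphism of graded $\HK^\bullet(A)$-bimodules in $\modb$. Finally, since $\partial_\smile=\overline{\me}_A\cupk-$ and $\partial_\frown=\overline{\me}_A\capk-$ are $B$-linear and $H(\theta_M)$ intertwines them (the last displayed identity in the proof of Theorem \ref{complexduality}), passing to higher (co)homology shows that $H(H(\theta_M))$ is a $B$-linear isomorphism, hence an isomorphism of graded $\HK^\bullet_{hi}(A)$-bimodules in $\modb$. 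I do not anticipate a serious obstacle: the argument is pure compatibility bookkeeping, and the single point requiring care is that $\omega_0$ carries coefficients in $A$, which is not equipped with any $B$-structure, so the right $B$-action meets the cap product $\omega_0\capk-$ solely through the coefficient bimodule $M$ of the cochain argument — precisely what the compatibility assumption on $M$ is designed to control.
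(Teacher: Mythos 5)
Your proposal is correct and follows essentially the same route as the paper: everything reduces to the $B$-linearity of $\theta_M$ itself, which the paper establishes by verifying the identity $z\capk(f.b)=(z\capk f).b$ (for a chain $z$ with coefficients in $A$ and a cochain $f$ with coefficients in $M$) and specialising to $z=\omega_0$, exactly the content of your direct expansion of $\omega_0\capk(f.b)$ via the compatibility of the left $A$-action and the right $B$-action on $M$. Your remark that the identities of Subsection \ref{dgbimodules} do not apply verbatim here because the $B$-structure sits on the cochain side rather than the chain side is precisely the point the paper's proof is addressing by stating the two new identities.
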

\begin{proof}
It is enough to prove that $\theta_M: f \mapsto \omega_0 \underset{K}{\frown} f$ is $B$-linear. For a $k$-bimodule morphism $f: W_p \rightarrow M$, $z=a\otimes_{k^e}x_1 \ldots x_q \in A\otimes_{k^e} W_q$ and $b\in B$, we verify the identities
$$(f.b) \underset{K}{\frown} z =(f \underset{K}{\frown}z).b \text{ and } z\underset{K}{\frown} (f.b) =(z \underset{K}{\frown}f).b.$$
The first one uses the fact that  the right actions of $A$ and $B$ on $M$ commute, while the second one uses the fact  that $M$ is an $A$-$B$-bimodule (see \textit{(iii)} in Lemma \ref{compat}). Applying the second one to $z= \omega_0$, we obtain that $\theta_M$ is $B$-linear.
\end{proof}

We specialise this lemma to $M=B=A^e$ and, using the isomorphism $\tilde{\varphi}$ in Subsection \ref{application}, we identify $A^e\otimes_{k^e} W_{\bullet}$ with $K(A)$ to get the next
proposition.
\begin{Po} \label{specialduality}
Let $A=A(\Delta)$ be a preprojective algebra over $\ff$ with $\Delta\neq \mathrm{A}_1$ and $\Delta\neq \mathrm{A}_2$. The map
$$\theta_{A^e}: \Hom _{k^e}(W_{\bullet},A^e) \rightarrow K(A)_{2-\bullet}$$
is an isomorphism of DG $\tilde{A}$-bimodules in $\abimod$. Moreover,
$$H(\theta_{A^e}): \HK^{\bullet}(A, A^e) \rightarrow H_{2-\bullet}(K(A))$$
is an isomorphism of graded $\HK^{\bullet}(A)$-bimodules in $\abimod$.
\end{Po}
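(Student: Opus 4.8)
The plan is to deduce Proposition~\ref{specialduality} by specialising Lemma~\ref{Blinearity} to $M = B = A^e$ and then transporting the target along the isomorphism $\tilde{\varphi}$ of Subsection~\ref{application}, so that essentially no new computation is needed.

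First I would check that the hypotheses of Lemma~\ref{Blinearity} hold for $M = B = A^e$. As recalled in Example~\ref{example}, the enveloping algebra $A^e$ is an $A^e$-$A^e$-bimodule for its own multiplication; writing the left $A^e$-module $A^e$ as the $A$-bimodule $A\stackrel{o}{\otimes} A$ and the right $A^e$-module $A^e$ as the $A$-bimodule $A\stackrel{i}{\otimes} A$, the associativity of the multiplication of $A^e$ is exactly assertion~\textit{(i)} of Lemma~\ref{compat} (with $B = A^e$), hence assertion~\textit{(iii)}: the right $A^e$-action on $M = A^e$ is compatible with the outer $A$-bimodule structure $A\stackrel{o}{\otimes} A$. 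Lemma~\ref{Blinearity} then applies with this choice of $M$ and $B$ and gives that
$$\theta_{A^e}\colon \Hom_{k^e}(W_{\bullet},A^e) \longrightarrow A^e\otimes_{k^e} W_{2-\bullet}$$
is an isomorphism of DG $\tilde{A}$-bimodules in the category of right $A^e$-modules; but that category is precisely $\abimod$, the right $A^e$-action corresponding under the usual equivalence to the inner bimodule structure $A\stackrel{i}{\otimes} A$ on $A^e$.

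Next I would invoke the Proposition in Subsection~\ref{application}: the map $\tilde{\varphi}$ is an isomorphism of complexes from $((A\stackrel{o}{\otimes} A)\otimes_{k^e} W_{\bullet},b^K)$ to $(K(A),d)$ which is right $A^e$-linear, and, by the very construction preceding Proposition~\ref{dgalg3}, the DG $\tilde{A}$-bimodule structure on $K(A)$ is the one obtained from $(A\stackrel{o}{\otimes} A)\otimes_{k^e} W_{\bullet}$ by transport along $\tilde{\varphi}$ (explicitly given by \eqref{lcapkocomplex} and \eqref{rcapkocomplex}). Hence $\tilde{\varphi}$ is an isomorphism of DG $\tilde{A}$-bimodules in $\abimod$. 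Composing $\theta_{A^e}$ from the previous step with $\tilde{\varphi}$ in degrees $2-\bullet$ yields the first assertion: $\theta_{A^e}\colon \Hom_{k^e}(W_{\bullet},A^e)\to K(A)_{2-\bullet}$ is an isomorphism of DG $\tilde{A}$-bimodules in $\abimod$. Passing to homology — which carries a DG $\tilde{A}$-bimodule in $\abimod$ to a graded $H(\tilde{A}) = \HK^{\bullet}(A)$-bimodule in $\abimod$, as already recorded in Subsections~\ref{dgbimodules} and~\ref{application} — gives that $H(\theta_{A^e})\colon \HK^{\bullet}(A,A^e)\to H_{2-\bullet}(K(A))$ is an isomorphism of graded $\HK^{\bullet}(A)$-bimodules in $\abimod$.

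I do not expect a genuine obstacle, since every ingredient is already in place; the delicate point, which I would spell out carefully, is the bookkeeping of the two commuting bimodule structures on $A^e$. The $A$-bimodule playing the role of the coefficients $M$ in $\theta_M$ is the \emph{outer} one $A\stackrel{o}{\otimes} A$, whereas the $A$-bimodule structure with respect to which all the maps are claimed to live "in $\abimod$" is the \emph{inner} one $A\stackrel{i}{\otimes} A$, coming from the compatible right $B = A^e$-action. What makes the composite $\tilde{\varphi}\circ\theta_{A^e}$ an isomorphism in $\abimod$ and not merely of complexes of $\ff$-vector spaces is exactly the $B$-linearity of $\theta_{A^e}$ (the content of Lemma~\ref{Blinearity}, ultimately assertion~\textit{(iii)} of Lemma~\ref{compat} applied to $z = \omega_0$) together with the right $A^e$-linearity of $\tilde{\varphi}$ from part~\textit{(iii)} of the Proposition in Subsection~\ref{application}.
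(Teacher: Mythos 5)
Your proposal is correct and follows essentially the same route as the paper, which likewise obtains Proposition~\ref{specialduality} by specialising Lemma~\ref{Blinearity} to $M=B=A^e$ and identifying $A^e\otimes_{k^e}W_{\bullet}$ with $K(A)$ via the right $A^e$-linear isomorphism $\tilde{\varphi}$ of Subsection~\ref{application}. Your additional care in verifying the compatibility hypothesis through Example~\ref{example} and Lemma~\ref{compat}, and in distinguishing the outer coefficient structure from the inner structure that makes everything live in $\abimod$, is exactly the bookkeeping the paper leaves implicit.
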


The homology of $K(A)$ is isomorphic to $A$ in degree 0, and to $0$ in degree 1, hence we obtain a generalisation of the 2-Calabi-Yau property, formulated as follows.
\begin{Te} \label{2cyproperty}
Let $A=A(\Delta)$ be a preprojective algebra over $\ff$ with $\Delta\neq \mathrm{A}_1$ and $\Delta\neq \mathrm{A}_2$. Then the $\HK^{\bullet}(A)^e$-$A^e$-bimodules $\HK^{\bullet}(A,A^e)$ and $H_{2-\bullet}(K(A))$ are isomorphic. In particular, we have the following.
\begin{enumerate}[\itshape(i)]
\item  The $A$-bimodule $\HK^2(A, A^e)$ is isomorphic to the $A$-bimodule $A$.

\item  $\HK^1(A,A^e)= 0$.

\item  The $A$-bimodule $\HK^0(A,A^e)$ is isomorphic to the $A$-bimodule $H_2(K(A))$.
\end{enumerate}

\end{Te}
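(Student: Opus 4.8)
The plan is to deduce Theorem \ref{2cyproperty} from Proposition \ref{specialduality} together with the elementary computation of the low-degree homology of $K(A)$ already recorded in Subsection \ref{khom}.

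First I would recall the dictionary set up just after Proposition \ref{dgalg1}: via Lemma \ref{compat} applied with $B=A^e$, a graded $\HK^{\bullet}(A)$-bimodule in $\abimod$ is the same datum as a graded $\HK^{\bullet}(A)^e$-$A^e$-bimodule, the left $\HK^{\bullet}(A)^e$-action encoding the two $\underset{K}{\smile}$/$\underset{K}{\frown}$-actions, the right $A^e$-action encoding the $A$-bimodule structure, these two actions commuting. Under this identification, the second assertion of Proposition \ref{specialduality} says precisely that $H(\theta_{A^e})\colon \HK^{\bullet}(A,A^e)\to H_{2-\bullet}(K(A))$ is an isomorphism of graded $\HK^{\bullet}(A)^e$-$A^e$-bimodules, which is the first statement of the theorem. (Consistency with $\HK^p(A,A^e)=0$ for $p\pgq 3$, and hence with $H_p(K(A))=0$ for $p\pgq 3$, is guaranteed by Theorem \ref{w3iszero}.)

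Then I would read off the three numbered items by specialising the homological degree. For $p=2$ the isomorphism gives $\HK^2(A,A^e)\cong H_0(K(A))$, and $H_0(K(A))\cong A$ as an $A$-bimodule because $\mu\colon K(A)\to A$ realises $H_0$ (Subsection \ref{khom}); this is \textit{(i)}. For $p=1$ we get $\HK^1(A,A^e)\cong H_1(K(A))$, and $H_1(K(A))=0$, again by Subsection \ref{khom}; this is \textit{(ii)}. For $p=0$ we get $\HK^0(A,A^e)\cong H_2(K(A))$ as $A$-bimodules, which is \textit{(iii)}; here, since $W_3=0$ by Theorem \ref{w3iszero}, $H_2(K(A))$ is just the kernel of the differential $d\colon A\otimes_k R\otimes_k A\to A\otimes_k V\otimes_k A$, and nothing further is claimed about it at this stage (it is computed explicitly in type ADE in Section \ref{sec:explicit}).

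I do not expect any genuine obstacle: the substantive content — the duality isomorphism $H(\theta_{A^e})$ and its compatibility with both the $\HK^{\bullet}(A)$-bimodule and the $A$-bimodule structures — is already established in Proposition \ref{specialduality} (itself a consequence of Theorem \ref{complexduality} and the identification of $A^e\otimes_{k^e}W_{\bullet}$ with $K(A)$ via $\tilde{\varphi}$ in Subsection \ref{application}). The only point demanding a line of care is the bookkeeping translating "graded $\HK^{\bullet}(A)$-bimodule in $\abimod$" into "graded $\HK^{\bullet}(A)^e$-$A^e$-bimodule", and this is exactly what the remarks following Proposition \ref{dgalg1} provide, so no new argument is needed.
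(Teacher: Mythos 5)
Your proposal is correct and follows exactly the paper's own route: specialise Proposition \ref{specialduality} (the isomorphism $H(\theta_{A^e})$ of graded $\HK^{\bullet}(A)$-bimodules in $\abimod$, equivalently of $\HK^{\bullet}(A)^e$-$A^e$-bimodules) and read off the three items from $H_0(K(A))\cong A$ and $H_1(K(A))=0$. No gaps.
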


Since $H_1(K(A)) \cong \HK^1(A,A^e)= 0$, the higher Koszul differentials vanish. Therefore $H^{hi}_p(K(A))\cong H_p(K(A))$, $\HK_{hi}^p(A, A^e)\cong \HK^p(A, A^e)$ and $H(H(\theta_{A^e}))\cong H(\theta_{A^e})$.

From the generator $1\otimes_k1$ of the $A$-bimodule $H_0(K(A))$, we draw from \textit{(i)} a generator of the free $A$-bimodule $\HK^2(A, A^e)$ defined as the class of $f: R\rightarrow A \stackrel{o}{\otimes} A$ with $f(\sigma_i)=e_i\otimes e_i$ for any $i$.

In \textit{(iii)}, the $A$-bimodules are never $0$ when $\Delta$ is Dynkin of types $\mathrm{ADE}$ since $A$ is not Koszul in this case. This situation is drastically different from the 2-Calabi-Yau property defined by Ginzburg in terms of the Hochschild cohomology spaces $\HH ^p(A,A^e)$~\cite[\S3.2]{vg:cy}. In Ginzburg's definition, $\HH^p(A,A^e)=0$ for all $p<2$. 

\setcounter{equation}{0}

\section{Generalisations of Calabi-Yau algebras} \label{gency}

\subsection{Duality for Koszul complex Calabi-Yau algebras} \label{dualitygency}

From Theorem \ref{2cyproperty}, we are led to introduce a general definition in the framework  of
quiver algebras with homogeneous quadratic relations (see  Section \ref{kcquivers}). The notation introduced in
Section \ref{kcquivers} stands throughout. We are interested in quadratic $k$-algebras $A=T_k(V)/(R)$
over a finite quiver $\mathcal{Q}$ as defined in Subsection \ref{not}, and in the Koszul calculus of $A$ as presented in the remainder of Section \ref{kcquivers}.
Note that $\mathcal{Q}_1=\overline{Q}_1$ if we want to specialise to preprojective algebras.

For the definition of the bounded derived category $\mathcal{D}^b (\mathcal{C})$ of an abelian category $\mathcal{C}$, we refer to~\cite[Chapter 10]{weib:homo}. Recall that $\abimod$ denotes the category of $A$-bimodules.

\begin{Df} \label{kcy2}
Let $A=T_k(V)/(R)$ be a quadratic $k$-algebra over $\mathcal{Q}$. Let $n\pgq 0$ be an integer. We say that $A$ is Koszul complex Calabi-Yau (Kc-Calabi-Yau) of dimension $n$, or $n$-Kc-Calabi-Yau, if
\begin{enumerate}[\itshape(i)]
\item  the Koszul bimodule complex $K(A)$ of $A$ has length $n$, and

\item  $\RHom _{A^e}(K(A), A^e) \cong K(A)[-n]$ in $\mathcal{D}^b (\abimod)$.
\end{enumerate}

\end{Df}

Property \textit{(ii)} is equivalent to saying that there is an $A$-bimodule quasi-isomorphism from $\Hom _{k^e}(W_{\bullet},A^e)$ to $K(A[-n]$. According to Theorem \ref{w3iszero} and Proposition \ref{specialduality}, a preprojective algebra $A(\Delta)$ over $\ff$ with $\Delta\neq \mathrm{A}_1$ and $\Delta\neq \mathrm{A}_2$ is Kc-Calabi-Yau of dimension $2$. In fact, the isomorphism $\theta_{A^e}$ induces  an isomorphism $\RHom _{A^e}(K(A), A^e) \rightarrow K(A)[-2]$ in $\mathcal{D}^b (\abimod)$.

Let us recall Ginzburg's definition of Calabi-Yau algebras~\cite[Definition 3.2.3]{vg:cy} as reformutated by Van den Bergh~\cite[Definition 8.2]{vdb:cypot}. We shall apply this definition to quadratic quiver algebras by considering it as $\ff$-algebras.

\begin{Df} \label{cydef}
  An associative $\ff$-algebra $A$ is said to be Calabi-Yau of dimension $n$ if

  (i) $A$ is homologically smooth, that is, $A$ has a bounded resolution by finitely generated projective $A$-bimodules,

  (ii) $\RHom _{A^e}(A, A^e) \cong A[-n]$ in $\mathcal{D}^b (\abimod)$.
\end{Df}

Definition \ref{kcy2} is a true generalisation of Definition \ref{cydef} for quadratic quiver algebras. If $\Delta$ is Dynkin of type ADE, $A(\Delta)$ is not Calabi-Yau in Ginzburg's definition since $A(\Delta)$ is not homologically smooth in this case (the minimal projective resolution of $A(\Delta)$ has infinite length). However, the two definitions coincide if $A$ is Koszul.
\begin{Po} \label{defequiv}
Let $A=T_k(V)/(R)$ be a quadratic $k$-algebra over $\mathcal{Q}$. Assume that $A$ is Koszul. Then $A$ is $n$-Kc-Calabi-Yau if and only if $A$ is $n$-Calabi-Yau.
\end{Po}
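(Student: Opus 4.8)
The plan is to reduce both implications to two consequences of Koszulness and then to isolate the one non-formal point. First I would record that, since $A$ is Koszul, $K(A)$ is the minimal projective $A$-bimodule resolution of $A$ (Subsection~\ref{degree2}), so that $\mu\colon K(A)\to A$ is a quasi-isomorphism and $K(A)\cong A$ in $\mathcal{D}^b(\abimod)$; moreover, as $\mathcal{Q}$ is finite, every term $A\otk W_p\otk A$ of $K(A)$ is a finitely generated projective bimodule by Lemma~\ref{proj}. Since $K(A)$ is a complex of projectives concentrated in non-negative homological degrees, $\RHom_{A^e}(K(A),A^e)$ is represented by $\Hom_{A^e}(K(A),A^e)\cong\Hom_{k^e}(W_{\bullet},A^e)$, living in the cohomological degrees $p$ with $W_p\neq 0$. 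Applying $\RHom_{A^e}(-,A^e)$ to $K(A)\cong A$, the two conditions ``$\RHom_{A^e}(K(A),A^e)\cong K(A)[-n]$'' and ``$\RHom_{A^e}(A,A^e)\cong A[-n]$'' become equivalent, so conditions \textit{(ii)} of Definitions~\ref{kcy2} and~\ref{cydef} coincide for Koszul $A$, and only the conditions \textit{(i)} need to be compared.

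Next I would handle the easy direction, $n$-Kc-Calabi-Yau $\Rightarrow$ $n$-Calabi-Yau: if $K(A)$ has length $n$, then $\mu\colon K(A)\to A$ is a bounded resolution by finitely generated projective bimodules, hence $A$ is homologically smooth (Definition~\ref{cydef}\textit{(i)}), and Definition~\ref{cydef}\textit{(ii)} follows from the equivalence of the \textit{(ii)}'s above.

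For the converse, I would start from an $n$-Calabi-Yau $A$. Homological smoothness means $\mathrm{pd}_{A^e}(A)<\infty$, so the minimal resolution $K(A)$ has finite length $d=\sup\set{p\mid W_p\neq 0}=\mathrm{pd}_{A^e}(A)$, and it remains to prove $d=n$. Since $\RHom_{A^e}(A,A^e)\cong\Hom_{A^e}(K(A),A^e)$ is concentrated in cohomological degrees $0,\dots,d$ and is quasi-isomorphic to $A[-n]$, one gets $n\ppq d$ and $\Ext^p_{A^e}(A,A^e)=0$ for $p\neq n$, so it suffices to show $\Ext^d_{A^e}(A,A^e)\neq 0$. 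Here I would use minimality again: the differential $K_d\to K_{d-1}$ has image in $\mathrm{rad}(A^e)\cdot K_{d-1}$, so $\Hom_{A^e}(-,A^e)$ sends it into the subspace of maps $K_d\to\mathrm{rad}(A^e)$, whence $\Ext^d_{A^e}(A,A^e)$ surjects onto $\Hom_{A^e}\bigl(K_d,A^e/\mathrm{rad}(A^e)\bigr)\cong\Hom_{A^e}(K_d,k^e)\cong\Hom_{k^e}(W_d,k^e)$ (the last isomorphism by~\eqref{isos}); and $\Hom_{k^e}(W_d,k^e)\neq 0$ because $W_d\neq 0$ is a non-zero module over the semisimple ring $k^e$. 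Hence $d=n$, which is Definition~\ref{kcy2}\textit{(i)}, and \textit{(ii)} follows as before.

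The step I expect to be the main obstacle is exactly this last non-vanishing, $\Ext^d_{A^e}(A,A^e)\neq 0$: the derived-category isomorphisms on both sides are interchangeable in a purely formal way once $K(A)\cong A$, but matching the \emph{length} of the Koszul resolution with the Calabi-Yau dimension $n$ genuinely requires extracting information — non-vanishing of the top $\Ext$ — from the minimality of $K(A)$. If one prefers, the same fact can be packaged as $\Tor^{A^e}_d(A,k^e)\neq 0$, using the isomorphisms~\eqref{isos} together with the fact that $K(A)\otimes_{A^e}k^e$ has zero differential.
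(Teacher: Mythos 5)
Your proof is correct and follows the same overall reduction as the paper: both conditions \emph{(ii)} are identified via $K(A)\cong A$ in $\mathcal{D}^b(\abimod)$, and the forward direction is word-for-word the paper's. The only divergence is in the converse, where the paper simply cites Van den Bergh for $n=\mathrm{pd}_{A^e}(A)$ and \cite{berger:dimension} for the equality of that projective dimension with the length of the minimal resolution $K(A)$, whereas you reprove this identification directly: minimality of $K(A)$ forces the top $\Ext^d_{A^e}(A,A^e)$ to surject onto $\Hom_{k^e}(W_d,k^e)\neq 0$, so $d=n$. Your argument is sound (it is essentially the standard proof of the facts the paper cites) and has the merit of being self-contained, at the cost of having to justify along the way that homological smoothness bounds the length of the minimal resolution — which is again the same $\Tor^{A^e}_{\bullet}(A,k^e)$ mechanism you invoke at the end.
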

\begin{proof}
Assume that $A$ is $n$-Kc-Calabi-Yau. Property \textit{(i)} and the fact that $A$ is Koszul show that $A$ is homologically smooth. Furthermore, $K(A)\cong A$ in $\mathcal{D}^b (\abimod)$. Thus $\RHom _{A^e}(A, A^e) \cong A[-n]$ in $\mathcal{D}^b (\abimod)$, and we recover Definition \ref{cydef}.

Assume that $A$ is $n$-Calabi-Yau. We know that $n$ is equal to the projective dimension of the $A$-bimodule $A$~\cite{vdb:dual} which in turn is equal to the length of a minimal projective resolution of $A$ (see for instance~\cite{berger:dimension}). Hence $K(A)$ has length $n$ and $K(A)\cong A$ in $\mathcal{D}^b (\abimod)$, which allows us to conclude that $A$ is $n$-Kc-Calabi-Yau.
\end{proof}
If the graph $\Delta$ is not Dynkin ADE, we know that $A(\Delta)$ is Koszul (Proposition \ref{notkoszul}), thus we recover the fact that $A(\Delta)$ is 2-Calabi-Yau~\cite{cbeg:quiver,boc:3cy}.

In Subsection \ref{degree2}, we have seen that $K(A)$ and the minimal projective resolution $P(A)$ coincide up to the homological degree 2. Therefore, if $n\in\set{0,1}$ and if $A$ is $n$-Calabi Yau or $n$-Kc-Calabi-Yau, then $P(A)=K(A)$ so that $A$ is Koszul, and it follows that the two definitions are equivalent when $n\in \{0,1\}$.
\begin{Po} \label{dimzero}
Let $A=T_k(V)/(R)$ be a quadratic $k$-algebra over $\mathcal{Q}$. Then $A$ is Calabi-Yau of dimension 0 if and only if $\mathcal{Q}_1=\emptyset$.
\end{Po}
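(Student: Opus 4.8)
The plan is to treat the two implications separately; the substance lies in the forward direction and in verifying that $k$ itself is $0$-Calabi-Yau.

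First suppose $\mathcal{Q}_1=\emptyset$. Then $V=0$, hence $R\subseteq V\otimes_kV=0$, so $A=T_k(V)/(R)=k$, and one checks Definition~\ref{cydef} directly for the $\ff$-algebra $k$, whose enveloping algebra $k^e=k\otimes k\cong\ff^{|\mathcal{Q}_0|^2}$ is commutative and semisimple. For condition~(i), Lemma~\ref{proj} applied with $A=E=k$ shows that the $k$-bimodule $k\otimes_kk\otimes_kk\cong k$ is finitely generated projective, so the identity map furnishes a length-$0$ resolution of $k$ by finitely generated projective $k$-bimodules; equivalently, $k$ is a separable $\ff$-algebra, hence homologically smooth. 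For condition~(ii), I would use the diagonal idempotent $\pi=\sum_{i\in\mathcal{Q}_0}e_i\otimes e_i\in k^e$, for which $k^e\pi\cong k$ as $k^e$-modules; then $\RHom_{k^e}(k,k^e)=\Hom_{k^e}(k^e\pi,k^e)\cong\pi k^e\cong k$ in $\mathcal{D}^b(\abimod)$, the isomorphisms being $k^e$-linear and hence isomorphisms of $k$-bimodules. Thus $k$ is Calabi-Yau of dimension $0$.

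Conversely, assume $A$ is Calabi-Yau of dimension $0$. By \cite{vdb:dual} (as already used in the proof of Proposition~\ref{defequiv}), the projective dimension of the $A$-bimodule $A$ equals the Calabi-Yau dimension, namely $0$, so the minimal projective resolution $P(A)=(A\otimes_kE_\bullet\otimes_kA,\delta)$ has length $0$; equivalently, by the discussion of Subsection~\ref{degree2} recalled just before the statement, $A$ is Koszul and $K(A)=P(A)$ has length $0$. Since $E_1=V=W_1$ (Subsection~\ref{degree2} and the definition of $W_1$), a resolution of length $0$ forces $A\otimes_kV\otimes_kA=0$, and as $k\subseteq A$ this gives $V=0$, i.e. $\mathcal{Q}_1=\emptyset$.

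The only mildly delicate point is condition~(ii) in the first direction: one must compute $\RHom_{A^e}(A,A^e)$ for $A=k$ while keeping track of the $k$-bimodule structure (not merely the $k^e$-module structure), since the $A$-bimodules considered here are $\ff$-symmetric but need not be $k$-symmetric. The commutativity and semisimplicity of $k^e$ make this routine, with $\pi$ playing the role of the fundamental class; the remaining points — that $V=0$ forces $R=0$ and $A=k$, that $k\otimes_kk\cong k$, and that length $0$ of $K(A)$ is equivalent to $V=0$ — are immediate from the definitions in Section~\ref{kcquivers}.
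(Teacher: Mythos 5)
Your proof is correct, and since the paper leaves this proposition as an exercise, your argument is precisely the intended one: it assembles exactly the facts the surrounding text has set up (Lemma~\ref{proj} and separability of $k\cong\ff^{|\mathcal{Q}_0|}$ for the backward implication's converse, the idempotent computation $\Hom_{k^e}(k^e\pi,k^e)\cong\pi k^e\cong k$ for condition~(ii), and for the forward implication the fact from the proof of Proposition~\ref{defequiv} that the Calabi--Yau dimension equals the length of the minimal projective resolution $P(A)$, together with $E_1=V$ from Subsection~\ref{degree2}). Both directions are complete and the one genuinely delicate point — tracking the residual $k$-bimodule structure on $\RHom_{k^e}(k,k^e)$ — is handled correctly.
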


We leave the proof as an exercise. If $A$ is Calabi-Yau of dimension 1, then $R=0$, that is, $A=T_k(V)\cong \ff \mathcal{Q}$ with $\mathcal{Q}_1\neq \emptyset$. It is indeed 1-Calabi-Yau if $\mathcal{Q}$ has only one vertex and one loop, but we have not yet found other examples when $\mathcal{Q}$ is connected.  

If $A$ is $n$-Calabi-Yau, the Van den Bergh duality theorem states that the vector spaces $\HH ^p(A,M)$ and $\HH _{n-p}(A,M)$ are isomorphic~\cite{vdb:dual}. From Definition \ref{kcy2}, we draw an analogous duality theorem for Koszul homology/cohomology.
\begin{Te} \label{duality2}
Let $A$ be a Koszul complex Calabi-Yau algebra of dimension $n$ over $\mathcal{Q}$. Then for any $A$-bimodule $M$, the vector spaces $\HK^p(A,M)$ and $\HK_{n-p}(A,M)$ are isomorphic.
\end{Te}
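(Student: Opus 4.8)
The strategy is the quadratic‑quiver analogue of the classical proof of Van den Bergh duality: because $K(A)$ is a \emph{bounded} complex of \emph{finitely generated projective} $A$‑bimodules (Lemma \ref{proj}, using that the spaces $W_p$ are finite dimensional), every derived functor in sight is computed naively on $K(A)$ itself, and it suffices to transport hypothesis \textit{(ii)} of Definition \ref{kcy2} through the tensor--hom formalism. First I would record the degreewise duality isomorphism: for a finitely generated projective left $A^e$‑module $P$ and any left $A^e$‑module $N$ there is a natural isomorphism $\Hom_{A^e}(P,A^e)\otimes_{A^e}N\cong\Hom_{A^e}(P,N)$, where $\Hom_{A^e}(P,A^e)$ carries the right $A^e$‑structure coming from right multiplication on $A^e$ (the ``outer'' structure of Example \ref{example} once it is regarded as an $A$‑bimodule). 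Applying this to each $P=A\otk W_p\otk A$ and checking compatibility with the differentials yields an isomorphism of cochain complexes, natural in $M$,
\[ \Hom_{A^e}(K(A),M)\;\cong\;\Hom_{A^e}(K(A),A^e)\otimes_{A^e}M , \]
so that $\HK^{\bullet}(A,M)=H^{\bullet}\bigl(\Hom_{A^e}(K(A),A^e)\otimes_{A^e}M\bigr)$.

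Next, write $C=\Hom_{A^e}(K(A),A^e)$. Since $K(A)$ is a bounded complex of projective $A$‑bimodules, $C$ represents $\RHom_{A^e}(K(A),A^e)$, so Definition \ref{kcy2}\,\textit{(ii)} provides an isomorphism $C\cong K(A)[-n]$ in $\mathcal D^b(\abimod)$. Both $C$ and $K(A)[-n]$ are bounded complexes of projective $A$‑bimodules, hence the underlying quasi‑isomorphism is a homotopy equivalence, and homotopy equivalences are preserved by the additive functor $-\otimes_{A^e}M$; therefore $H^{\bullet}\bigl(C\otimes_{A^e}M\bigr)\cong H^{\bullet}\bigl(K(A)[-n]\otimes_{A^e}M\bigr)$. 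Finally, using the standard symmetry $X\otimes_{A^e}Y\cong Y\otimes_{A^e}X$ for $A$‑bimodules together with the shift, one reads off $H^p\bigl(K(A)[-n]\otimes_{A^e}M\bigr)\cong H_{n-p}\bigl(M\otimes_{A^e}K(A)\bigr)=\HK_{n-p}(A,M)$. Chaining the three steps gives the desired isomorphism $\HK^p(A,M)\cong\HK_{n-p}(A,M)$.

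The difficulty is essentially bookkeeping rather than a genuine obstacle. The two points needing real care are: (a) that the degreewise duality isomorphisms commute with the Koszul differentials, so that one obtains an isomorphism of \emph{complexes} and not merely of graded modules; and (b) being consistent about the left‑ versus right‑$A^e$‑module viewpoints when forming $-\otimes_{A^e}-$, since $\Hom_{A^e}(K(A),A^e)$ naturally carries the outer bimodule structure. It should also be stressed that, unlike Theorem \ref{complexduality} in the preprojective case, the present statement only asserts an isomorphism of $\ff$‑vector spaces: there need not be a preferred chain‑level realisation of it, only the one extracted from the derived‑category isomorphism postulated in Definition \ref{kcy2}.
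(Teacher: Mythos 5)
Your proposal is correct and follows essentially the same route as the paper: both rest on the natural isomorphism $M\otimes_{A^e}\Hom_{A^e}(P,A^e)\cong\Hom_{A^e}(P,M)$ for finitely generated projective $A$-bimodules $P$, applied degreewise to $K(A)$, followed by transporting the isomorphism of Definition \ref{kcy2}\textit{(ii)} and taking homology. The only cosmetic difference is that the paper phrases the second step via derived functors ($M\stackrel{L}{\otimes}_{A^e}-$ applied to $\RHom_{A^e}(K(A),A^e)$), whereas you replace this by the equivalent observation that a derived-category isomorphism between bounded complexes of projectives is a homotopy equivalence, preserved by the plain tensor functor.
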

\begin{proof}
Denote by $\catvs$ the category of $\ff$-vector spaces. For any $A$-bimodule $M$, the 
left derived functor $M \stackrel{L}\otimes_{A^e}-$ and the right derived functor $\RHom _{A^e}(-,M)$ are defined from $\mathcal{D}^b (\abimod)$ to $\mathcal{D}^b (\catvs)$~\cite[Chapter 10]{weib:homo}.

Our proof is based on a natural transformation depending on an $A$-bimodule $M$. Let $F: \abimod \rightarrow \catvs$ be the functor $F: N \mapsto \Hom _{A^e}(N,M)$ where $M$ and $N$ are seen as right $A^e$-modules. Specialising to $M=A^e$ in $F$, we define a functor $G: \abimod \rightarrow \abimod$. Let $H: \abimod \rightarrow \catvs$  be the functor $H: N'  \mapsto M\otimes_{A^e} N'$ where $N'$ is viewed as a left $A^e$-module. Then we define a linear map
$$\phi_M: M\otimes_{A^e} \Hom _{A^e}(N,A^e) \rightarrow \Hom _{A^e} (N, M)$$
by $\phi_M (m \otimes_{A^e} g)(x)= m.g(x)$ for $m\in M$, $g\in \Hom _{A^e}(N,A^e)$ and $x\in N$. This map is functorial in $N$, defining a natural transformation $\phi_M: H \circ G \Rightarrow F$.

If the $A$-bimodule $P$ is projective and finitely generated,
$$\phi_M: M\otimes_{A^e} \Hom _{A^e}(P,A^e) \rightarrow \Hom _{A^e} (P, M)$$
is an isomorphism. It is standard, see e.g.~\cite[Proposition (8.3) (c)]{ksb:cohomgrp}. Then for any bounded chain complex $C$ of finitely generated projective $A$-bimodules, $\phi_M$ induces in $\mathcal{D}^b (\catvs)$ an isomorphism 
\begin{equation} \label{dercat4}
M \stackrel{L}\otimes_{A^e} \RHom _{A^e}(C, A^e) \cong \RHom _{A^e}(C,M).
\end{equation}

Applying it to $C=K(A)$ and using \textit{(ii)} in Definition \ref{kcy2}, we get an isomorphism 
$$\RHom _{A^e}(K(A),M) \cong M \stackrel{L}\otimes_{A^e} K(A)[-n]$$
in $\mathcal{D}^b (\catvs)$. Taking homology, we deduce that
$\HK^p(A,M) \cong \HK_{n-p}(A,M)$ as vector spaces.
\end{proof}

\subsection{Koszul complex Calabi-Yau algebras versus Calabi-Yau algebras} \label{sec:versus}

Recall that if $\Delta\neq \mathrm{A}_1$ and $\Delta\neq \mathrm{A}_2$, then $A(\Delta)$ is $2$-Kc-Calabi-Yau. But observe that if $A(\Delta)$ is moreover $2$-Calabi-Yau, then $A(\Delta)$ is Koszul. We are led to the following conjecture.
\begin{Cn} \label{conj}
Let $A=T_k(V)/(R)$ be a quadratic $k$-algebra over $\mathcal{Q}$. If $A$ is $n$-Calabi-Yau and $n$-Kc-Calabi-Yau, then $A$ is Koszul. In other words, if $A$ is not Koszul, the properties $n$-Calabi-Yau and $n$-Kc-Calabi-Yau are not simultaneously true.
\end{Cn}
\begin{Po} \label{repconj1}
Let $A=T_k(V)/(R)$ be a quadratic $k$-algebra over $\mathcal{Q}$. Conjecture \ref{conj} holds if $n\ppq 3$.
\end{Po}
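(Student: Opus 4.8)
The plan is to prove the proposition by showing directly that, under the two hypotheses, the Koszul complex $K(A)$ is a resolution of $A$; since a quadratic $k$-algebra is Koszul exactly when $\mu\colon K(A)\to A$ is a resolution, this gives the statement. Recall that one always has $H_0(K(A))\cong A$ and $H_1(K(A))=0$, and that being $n$-Kc-Calabi-Yau forces $K(A)$ to have length $n$, hence $H_p(K(A))=0$ for $p>n$. So it suffices to prove $H_p(K(A))=0$ for $2\pgq p\pgq n$; in particular there is nothing to do when $n\pgq 1$ (for $n=0$ either hypothesis forces $V=0$, i.e.\ $A=k$, and for $n=1$ either hypothesis already forces $R=0$, i.e.\ $A=T_k(V)$, as noted just after Proposition~\ref{dimzero}). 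From now on $n\in\{2,3\}$.

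The first step is to rewrite the $n$-Kc-Calabi-Yau hypothesis in terms of Koszul cohomology with coefficients in $A^e$. Applying Theorem~\ref{duality2} to the $A$-bimodule $M=A^e$ (with its outer $A$-bimodule structure, i.e.\ as the natural left $A^e$-module) gives $\mathbb{F}$-linear isomorphisms $\HK^p(A,A^e)\cong\HK_{n-p}(A,A^e)$, and the identification of $(A\stackrel{o}{\otimes}A)\otimes_{k^e}W_{\bullet}$ with $K(A)$ from Subsection~\ref{application} (equivalently, $A^e\otimes_{A^e}K(A)\cong K(A)$) yields $\HK_q(A,A^e)\cong H_q(K(A))$. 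Combining these,
\[
H_n(K(A))\cong\HK^0(A,A^e),\qquad H_{n-1}(K(A))\cong\HK^1(A,A^e).
\]
The second step invokes the comparison between Koszul and Hochschild cohomology in low degrees: by Subsection~\ref{khom}, $H(\chi^{\ast})_p\colon\HH^p(A,A^e)\to\HK^p(A,A^e)$ is an isomorphism for $p=0$ and $p=1$. The $n$-Calabi-Yau hypothesis $\RHom_{A^e}(A,A^e)\cong A[-n]$ means $\HH^p(A,A^e)=\Ext^p_{A^e}(A,A^e)=0$ for $0\pgq p\pgq n-1$; since $n\pgq 2$ this kills $\HH^0(A,A^e)$ and $\HH^1(A,A^e)$, whence $\HK^0(A,A^e)=\HK^1(A,A^e)=0$. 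Feeding this into the displayed isomorphisms gives $H_n(K(A))=H_{n-1}(K(A))=0$; for $n=2$ this is just $H_2(K(A))=0$, and for $n=3$ it is $H_2(K(A))=H_3(K(A))=0$. Together with $H_0(K(A))\cong A$, $H_1(K(A))=0$ and $H_p(K(A))=0$ for $p>n$, we obtain $H_p(K(A))=0$ for all $p\pgq 1$, i.e.\ $A$ is Koszul.

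The one place where care is needed is the bookkeeping of the various (left/right, inner/outer) $A^e$-module structures on $A^e$, so that Theorem~\ref{duality2}, the identification $\HK_{\bullet}(A,A^e)\cong H_{\bullet}(K(A))$ of Subsection~\ref{application}, and the degree $0$ and $1$ comparison isomorphism all refer to compatible objects; this is routine but should be made explicit. I would also point out that the bound $n\pgq 3$ is exactly what this argument permits: for $n\pgq 4$ one would additionally need $\HK^2(A,A^e)=0$, but $H(\iota^{\ast})_2$ is only injective (Proposition~\ref{1degree2}), so the vanishing $\HH^2(A,A^e)=0$ cannot be transported to $\HK^2(A,A^e)$ this way.
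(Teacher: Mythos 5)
Your argument is correct and is essentially the paper's own proof: both reduce the statement to showing $H_{n-1}(K(A))=H_n(K(A))=0$ via the chain $H_{n-p}(K(A))\cong\HK^p(A,A^e)\cong\HH^p(A,A^e)=0$ for $p=0,1$, combining the Kc-Calabi-Yau duality of Theorem~\ref{duality2}, the low-degree comparison isomorphisms, and the Calabi-Yau vanishing, then using that $H_1(K(A))=0$ always. Only a cosmetic point: you consistently wrote $\pgq$ where $\ppq$ was intended (e.g.\ in ``$2\pgq p\pgq n$'' and ``$0\pgq p\pgq n-1$''), though the intended meaning is clear.
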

\begin{proof}
Assume that $A$ is $n$-Calabi-Yau and $n$-Kc-Calabi-Yau. We can assume that $n\pgq 2$. Since $\HK^p(A,A^e)\cong \HH ^p(A,A^e)=0$ when $p=0$ and $p=1$, we have $H_n(K(A))\cong H_{n-1}(K(A)) = 0$, hence $A$ is Koszul if $n=2$. When $n=3$, we also have $H_1(K(A))=0$ because  the complex $K(A)$ is always exact in degree $1$, therefore $A$ is also Koszul in this case.
\end{proof}

\subsection{Strong Kc-Calabi-Yau algebras} \label{subsec:versus}

\begin{Df} \label{fundclass2}
Let $A$ be an $n$-Kc-Calabi-Yau algebra over $\mathcal{Q}$. The image $c \in \HK_n(A)$ of the unit $1$ of the algebra $A$ by the isomorphism $\HK^0(A) \cong \HK_n(A)$ in Theorem \ref{duality2} is called the fundamental class of the $n$-Kc-Calabi-Yau algebra $A$.
\end{Df}

We shall now define strong Kc-Calabi-Yau algebras. For this, we need the DG algebra $\tilde{A}=\Hom _{A^e}(K(A),A))$ and we work with DG $\tilde{A}$-bimodules in $\abimod$, as defined in Section \ref{rightfreeness} and recalled below. The point is that $K(A)$ is such a DG $\tilde{A}$-bimodule in $\abimod$ (Proposition \ref{dgalg3}). Denote by $\mathcal{C}(\tilde{A},\abimod)$ and $\mathcal{C}(\tilde{A},\catvs)$ the category of DG $\tilde{A}$-bimodules in $\abimod$ and $\catvs$ respectively~\cite{yeku:dercat}. Remark that a DG $\tilde{A}$-bimodule in $\catvs$ is just a DG $\tilde{A}$-bimodule.

\begin{Df}
A DG $\tilde{A}$-bimodule $C$ in $\abimod$ is a chain complex in $\abimod$ (as usual, $C$ can be viewed as a cochain complex) endowed with a DG $\tilde{A}$-bimodule structure such that the bimodule actions of $A$ and $\tilde{A}$ are compatible. 
\end{Df}

For any $A$-bimodule $M$, $\Hom _{A^e} (C, M)$ is a (cochain) DG $\tilde{A}$-bimodule in $\catvs$ (in $\abimod$ when $M=A^e$) for the following actions
$$(f.u)(x)=(-1)^p u(x.f), \ \ (u.f)(x)=u(f.x)$$
where $f:A\otimes_k W_p \otimes_k A \rightarrow A$, $u: C_q \rightarrow M$ and $x \in C_{p+q}$. Note that $x.f$ and $f.x$ are in $C_q$ by the graded actions of $\tilde{A}$ on $C$. If $C=K(A)$, we recover the cup actions, that is, $f.u=f\underset{K}{\smile}u$ and $u.f=u\underset{K}{\smile}f$. In particular, $\Hom _{A^e}(K(A), A^e)$ is a DG $\tilde{A}$-bimodule in $\abimod$.

Similarly, for any cochain DG $\tilde{A}$-bimodule $C'$ in $\abimod$, $M\otimes_{A^e} C'$ is a cochain DG $\tilde{A}$-bimodule in $\catvs$ for the following actions
$$f.(m \otimes_{A^e} u)=m \otimes_{A^e} (f.u), \ \ (m \otimes_{A^e} u).f=m \otimes_{A^e} (u.f)$$
where $f \in \tilde{A}$, $m\in M$ and $u \in C'$.

The bounded derived categories $\mathcal{D}^b(\tilde{A},\abimod)$ and $\mathcal{D}^b(\tilde{A},\catvs)$ are defined in~\cite[Definition 7.2.7, Definition 7.3.3]{yeku:dercat}. Unfortunately, it is not clear to us if the functors $\Hom _{A^e} (-, M): \mathcal{C}^b(\tilde{A},\abimod) \rightarrow \mathcal{C}^b(\tilde{A},\catvs)$ and $M\otimes_{A^e} - : \mathcal{C}^b(\tilde{A},\abimod) \rightarrow \mathcal{C}^b(\tilde{A},\catvs)$ can be derived. Note that the first one takes values in $\mathcal{C}^b(\tilde{A},\abimod)$ when $M=A^e$.

\begin{Df} \label{strongcy2}
Let $A$ be a Kc-Calabi-Yau algebra of dimension $n$. Then $A$ is said to be strong $n$-Kc-Calabi-Yau  if the derived functor of the endofunctor $\Hom _{A^e}(-, A^e)$ of $\mathcal{C}^b(\tilde{A},\abimod)$ exists and if $\RHom _{A^e}(K(A), A^e) \cong K(A)[-n]$ in $\mathcal{D}^b(\tilde{A},\abimod)$.
\end{Df}

The preprojective algebras of connected graphs distinct from $\mathrm{A}_1$ and $\mathrm{A}_2$ are strong $2$-Kc-Calabi-Yau algebras if they satisfy the first property in this definition. In fact, using Proposition  \ref{specialduality}, $\theta_{A^e}$ provides then an isomorphism $\RHom _{A^e}(K(A), A^e) \rightarrow K(A)[-2]$ in $\mathcal{D}^b(\tilde{A},\abimod)$.

\begin{Te} \label{strongduality2}
Let $A$ be a Kc-Calabi-Yau algebra of dimension $n$  over $\mathcal{Q}$, with fundamental class $c$. We assume that $A$ is strong Kc-Calabi-Yau and that the derived functors of the functors $\Hom _{A^e}(-,A)$ and $A\otimes_{A^e} -$ from $\mathcal{C}^b(\tilde{A},\abimod)$ to $\mathcal{C}^b(\tilde{A},\catvs)$ exist. Then
$$c \underset{K}{\frown} - : \HK^{\bullet}(A) \rightarrow \HK_{n-\bullet}(A)$$
is an isomorphism of $\HK^{\bullet}(A)$-bimodules, inducing an isomorphism of $\HK^{\bullet}_{hi}(A)$-bimodules from $\HK^{\bullet}_{hi}(A)$ to $\HK^{hi}_{n-\bullet}(A)$. For  all $\alpha \in \HK^p(A)$, we have $c \underset{K}{\frown} \alpha = (-1)^{np}\alpha \underset{K}{\frown} c$.
\end{Te}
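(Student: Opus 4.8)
The plan is to lift the proof of Theorem~\ref{duality2} from the category of $\mathbb{F}$-vector spaces to Yekutieli's category of DG $\tilde{A}$-bimodules, thereby upgrading the vector-space isomorphism $\HK^{\bullet}(A)\cong\HK_{n-\bullet}(A)$ to an isomorphism of graded $\HK^{\bullet}(A)$-bimodules, and then to pin it down as the cap action of the fundamental class $c$. The key observation is that the comparison morphisms appearing in the proof of Theorem~\ref{duality2} — the natural transformation $\phi_M\colon M\otimes_{A^e}\Hom_{A^e}(-,A^e)\Rightarrow\Hom_{A^e}(-,M)$ and its specialisation to finitely generated projective $A$-bimodules — are $\tilde{A}$-bilinear once every object is equipped with the DG $\tilde{A}$-bimodule structures provided by Propositions~\ref{dgalg1}, \ref{dgalg2} and \ref{dgalg3}. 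Specialising to $M=A$, this yields, at the level of complexes, a $\tilde{A}$-bilinear isomorphism
$$\tilde{A}=\Hom_{A^e}(K(A),A)\;\cong\;A\otimes_{A^e}\Hom_{A^e}(K(A),A^e),$$
both sides being bounded complexes of finitely generated projective $A$-bimodules, so that all the relevant derived functors agree there with the underived ones.

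First I would apply the derived functor of $A\otimes_{A^e}-$ (which exists by hypothesis, and similarly record that the derived functor of $\Hom_{A^e}(-,A)$ is available so as to place $\tilde{A}$ in $\mathcal{D}^b(\tilde{A},\catvs)$) to the strong Kc-Calabi-Yau isomorphism $\RHom_{A^e}(K(A),A^e)\cong K(A)[-n]$ in $\mathcal{D}^b(\tilde{A},\abimod)$. Since $\Hom_{A^e}(K(A),A^e)$ and $K(A)$ are bounded complexes of finitely generated projective $A$-bimodules, this produces an isomorphism $\tilde{A}\cong\bigl(A\otimes_{A^e}K(A)\bigr)[-n]$ in $\mathcal{D}^b(\tilde{A},\catvs)$. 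Taking homology, which is a functor to graded $H(\tilde{A})=\HK^{\bullet}(A)$-bimodules and factors through the derived category, gives an isomorphism of graded $\HK^{\bullet}(A)$-bimodules
$$\Phi\colon\HK^{\bullet}(A)\xrightarrow{\ \sim\ }\HK_{n-\bullet}(A),$$
in which the shift $[-n]$ twists the bimodule structure on the target by the standard Koszul sign. One then checks that in homological degree $0$ the map $\Phi$ coincides with the isomorphism $\HK^0(A)\cong\HK_n(A)$ of Theorem~\ref{duality2} (both are obtained by taking homology of the same chain-level map $\phi_A$), so that $\Phi(1)=c$ by the very definition of the fundamental class.

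It remains to extract the explicit formula. Writing $\alpha=1\underset{K}{\smile}\alpha=\alpha\underset{K}{\smile}1$ and using that $\Phi$ is a morphism of graded $\HK^{\bullet}(A)$-bimodules with $\Phi(1)=c$, right $\HK^{\bullet}(A)$-linearity gives $\Phi(\alpha)=c\underset{K}{\frown}\alpha$, while left $\HK^{\bullet}(A)$-linearity, accounting for the Koszul sign introduced by the shift $[-n]$, gives $\Phi(\alpha)=(-1)^{np}\alpha\underset{K}{\frown}c$ for $\alpha\in\HK^p(A)$; comparing the two expressions simultaneously yields the description $c\underset{K}{\frown}-=\Phi$ (hence an isomorphism) and the graded-symmetry relation $c\underset{K}{\frown}\alpha=(-1)^{np}\alpha\underset{K}{\frown}c$. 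For the higher statement, one uses the associativity relations of Subsection~\ref{prod} together with $[\overline{\me}_A,\overline{c}]_{\underset{K}{\frown}}=0$, which holds by Proposition~\ref{kocommutation} since $\overline{\me}_A$ has homological degree $1$, to see that $c\underset{K}{\frown}-$ intertwines $\partial_{\smile}$ and $\partial_{\frown}$ up to the fixed sign $(-1)^n$; it therefore descends to an isomorphism $\HK^{\bullet}_{hi}(A)\to\HK^{hi}_{n-\bullet}(A)$ of $\HK^{\bullet}_{hi}(A)$-bimodules.

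The main obstacle is the first, bookkeeping-heavy step: verifying that $\phi_M$ and the projective-module comparison isomorphism are genuine morphisms in $\mathcal{C}(\tilde{A},\abimod)$ and $\mathcal{C}(\tilde{A},\catvs)$, that the hypothesised derived functors send the concrete complexes $K(A)$ and $\Hom_{A^e}(K(A),A^e)$ to their underived images together with their $\tilde{A}$-structure, and — most delicately — tracking the Koszul signs attached to the shift $[-n]$ so as to land on exactly $(-1)^{np}$. Once the duality isomorphism $\Phi$ is known to be a bimodule map with $\Phi(1)=c$, everything else is formal.
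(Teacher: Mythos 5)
Your proposal is correct and follows essentially the same route as the paper: check that $\phi_M$ is a morphism of DG $\tilde{A}$-bimodules, derive the natural transformation for $M=A$ using the hypothesised derived functors, apply it to $C=K(A)$ together with the strong Kc-Calabi-Yau isomorphism, take homology to get a graded $\HK^{\bullet}(A)$-bimodule isomorphism $\psi$ with $\psi(1)=c$, and then read off the cap-product formula and the sign $(-1)^{np}$ from bilinearity applied to $\alpha=1\underset{K}{\smile}\alpha=\alpha\underset{K}{\smile}1$. The only cosmetic difference is in the higher-(co)homology step, where you invoke associativity and Proposition~\ref{kocommutation} while the paper deduces the intertwining relation $\psi(\overline{\me}_A\underset{K}{\smile}\alpha)=(-1)^n\,\overline{\me}_A\underset{K}{\frown}\psi(\alpha)$ directly from the bimodule property; both are valid.
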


\begin{proof}
Following the proof of Theorem \ref{duality2}, we are interested in the morphism of cochain complexes  
$$\phi_M: M\otimes_{A^e} \Hom _{A^e}(C,A^e) \rightarrow \Hom _{A^e} (C, M),$$
when the bounded chain complex $C$ of $A$-bimodules is moreover a DG $\tilde{A}$-bimodule in
$\abimod$. We prove now that $\phi_M$ is a morphism of DG $\tilde{A}$-bimodules in
$\catvs$, that is, a morphism in the category $\mathcal{C}^b(\tilde{A},\catvs)$ whose objects are viewed as cochain complexes. For this, we need only prove that $\phi_M$ is a morphism of $\tilde{A}$-bimodules. Let us prove that $\phi_M$ is left $\tilde{A}$-linear, the right linearity being similar. For $f : A \otimes_kW_p \otimes_k A \rightarrow A$, $u : C_q \rightarrow A^e$ and $x \in C_{p+q}$, we have
$$\phi_M(f.(m \otimes_{A^e} u))(x)=\phi_M(m \otimes_{A^e} (f.u))(x)=m.((f.u)(x))=(-1)^p m.(u(x.f)),$$
while $f.(\phi_M(m \otimes_{A^e} u))(x)=(-1)^p \phi_M(m \otimes_{A^e} u)(x.f)=(-1)^p m.(u(x.f))$, which is what we want. 

Continuing as in the proof of Theorem \ref{duality2}, the functors $F$, $G$ and $H$ induce functors on the complexes with enriched structures. Precisely, $F$ and $G$ are now functors from $\mathcal{C}^b(\tilde{A},\abimod)$ to $\mathcal{C}^b(\tilde{A},\catvs)$, and $H$ is now an endofunctor of $\mathcal{C}^b(\tilde{A},\abimod)$. Under these notations, $\phi_M$ defines a natural transformation $\phi_M: H \circ G \Rightarrow F$.

We specialise to $M=A$. The assumptions in the theorem show that the derived functors of $F$, $G$ and $H$ exist, so that we can derive the natural transformation $\phi_A$~\cite{yeku:dercat}. Then for any bounded chain complex DG $\tilde{A}$-bimodule $C$ in $\abimod$ formed by finitely generated projective $A$-bimodules, we obtain an isomorphism
\begin{equation} \label{dercat6}
\phi_A: A \stackrel{L}\otimes_{A^e} \RHom _{A^e}(C, A^e) \cong \RHom _{A^e}(C,A)
\end{equation}
in $\mathcal{D}^b(\tilde{A},\catvs)$. Applying this to $C=K(A)$ and using Definition \ref{strongcy2}, we get 
$$\RHom _{A^e}(K(A),A) \cong A \stackrel{L}\otimes_{A^e} K(A)[-n]$$
in $\mathcal{D}^b(\tilde{A},\catvs)$. Taking homology, we deduce an isomorphism $\HK^{\bullet}(A) \cong \HK_{n-\bullet}(A)$ of graded bimodules over the graded algebra $H(\tilde{A})=\HK^{\bullet}(A)$. Denote this isomorphism by $\psi$.

The fact that $\psi$ is a morphism of graded $\HK^{\bullet}(A)$-bimodules translates as
\begin{equation} \label{psimorphism}
\psi (\alpha \underset{K}{\smile} \beta)= \psi (\alpha) \underset{K}{\frown} \beta = (-1)^{np} \alpha \underset{K}{\frown} \psi(\beta)
\end{equation} for any $\alpha\in \HK^p(A)$ and $\beta \in \HK^{\bullet}(A)$. In accordance with Definition \ref{fundclass2}, define $c \in \HK_n(A)$ by $c=\psi(1)$ where $1\in \HK^0(A)$ is the unit of $A$. Applying identities (\ref{psimorphism}) to the trivial equalities $\alpha=1 \underset{K}{\smile} \alpha = \alpha  \underset{K}{\smile} 1$, we obtain
\begin{equation} \label{ccommut}
\psi(\alpha) = c \underset{K}{\frown} \alpha= (-1)^{np} \alpha \underset{K}{\frown} c.
\end{equation}
Finally $\psi$ is a morphism of complexes for higher (co)homologies since we have
$$\psi(\overline{\me}_A \underset{K}{\smile} \alpha) = (-1)^n \, \overline{\me}_A \underset{K}{\frown} \psi (\alpha).$$
Then $H(\psi) :\HK^{\bullet}_{hi}(A) \rightarrow \HK^{hi}_{n-\bullet}(A)$ is an isomorphism of $\HK^{\bullet}_{hi}(A)$-bimodules.
\end{proof}
Except in some particular cases, $\overline{\me}_A\in \HK^1(A)$ is non-zero, so that
$$\psi(\overline{\me}_A)= (-1)^n \, \overline{\me}_A \underset{K}{\frown} c= (-1)^n \partial_{\frown} (c)$$
is non-zero in $\HK_{n-1}(A)$. Therefore $c\in \HK_n(A)$ is not a cycle for higher Koszul homology and the isomorphism $H(\psi)$ cannot be naturally expressed as a cap action. As suggested by the preprojective algebras (Subsection \ref{dualityprepro}), the class $\psi(\overline{\me}_A)$ should be of interest for further investigations.

It is also interesting to remark that the identities (\ref{psimorphism}) involving the isomorphism $\psi$ imply that the graded algebra $\HK^{\bullet}(A)$ is commutative if and only if the graded $\HK^{\bullet}(A)$-bimodule $\HK_{\bullet}(A)$ is symmetric. As seen in Corollary \ref{cupcapsym}, we have a stronger result for the preprojective algebras.

\section{Koszul calculus of the preprojective algebras of Dynkin ADE type}
\label{sec:explicit}

We shall determine in this section the Koszul calculus and the higher Koszul calculus of any
non-Koszul preprojective algebra $A$, that is, an algebra of type A, D or E with at least $3$
vertices.

We first give some general facts and notation. 
\begin{enumerate}[(N1)]
\item \label{fact:dims} We shall use the dimensions of the Hochschild cohomology and homology spaces of $A$ which can be obtained
in all characteristics as a consequence of the work of Etingof, Eu and Schedler in \cite[Theorem 3.2.7]{eusched:cyfrob} and 
\cite{eteu:ade}. In particular, by \cite[Lemma 3.2.17]{eusched:cyfrob} the centre of $A$ is independent of the characteristic of
$\ff$. Bases of the Hochschild (co)homology spaces in characteristic zero induce free subsets of
the Hochschild (co)homology spaces in positive  characteristic, but there may be some extra basis
elements in some cases. 
\item \label{fact:cap} We know from Corollary \ref{cupcapsym} that the cup product on the Koszul cohomology of a
preprojective algebra is graded commutative and that the cap product is graded symmetric. Moreover,
it follows from Theorem \ref{complexduality} that the cap product can be obtained from the cup
product. Indeed, if $f\in\HK^p(A)$ and $x\in \HK_q(A)$, we have 
\begin{equation}\label{eq:cap from cup}
f\capk x=\theta_A(f\cupk \theta_A^{-1}(x))= \theta_A((-1)^{pq}\theta_A^{-1}(x)\cupk f)=(-1)^{pq}
  x\capk f.
\end{equation}
\item \label{fact:free up to coboundaries} Let $X$ and $Y$ be $\nn$-graded spaces and let $f:X\rightarrow Y$ be a homogeneous
  map of degree $1$. Let $y_1,\ldots,y_p$ be elements of pairwise different degrees. Then if
  $\sum_{i=1}^p y_i \in\im f$, at least one of the $y_i$ is in $\im f$. We shall use this in the
  following context. The differentials $b_K^1$ and $b_K^2$ are homogeneous of weight $1$. If we have
  a set of cocycles of pairwise different coefficient weights, that are not coboundaries, then they
  are linearly independent up to coboundaries, that is, they represent linearly independent
  cohomology classes. This also applies if some of the elements have the same weight but we already
  know that these elements are linearly independent up to coboundaries. 
\item \label{fact:anti-alg} 

We shall use the
  map $\aa\colon A\rightarrow A$ constructed as follows.
  Let $A$ be a preprojective algebra over a graph $\Delta$; let $\ov Q$ be its quiver.  Consider the map
  $\ov Q_1\rightarrow \ov Q_1$ that sends $a$ to $a^*$. It induces an anti-automorphism $\aa$ of $A$ such that $\aa(e_i)=e_i$ for all $i\in Q_0$
  (since $\aa$ sends the relation $\sigma_i=\sum_{\substack{a\in\ov
      Q_1\\\mt(a)=i}}\varepsilon(a)aa^*$ to itself).
\item \label{fact:nakayama} We shall be using the Nakayama automorphism $\nu$ of $A$ defined by Brenner, Butler and King for all preprojective algebras of Dynkin ADE type \cite[Section 4]{bbk}. 

In order to describe it, we need the Nakayama permutation $\ov{\nu }$ on the set of vertices of $\Delta $. It is known that  $\ov{\nu }=\id$ if $\Delta $ is $\mathrm{D}_n$ with $n$ even or if $\Delta $ is $\mathrm{E}_7$ or $\mathrm{E}_8$, and that otherwise $\ov{\nu }$ is induced by the unique graph automorphism of order $2$.  

If $\alpha $ is an arrow in $\ov{Q}_1$, let $\beta $ be the unique arrow from $\ov{\nu }(\mo(\alpha ))$ to $\ov{\nu }(\mt(\alpha ))$.  The Nakayama automorphism of \cite{bbk} is described as follows: 
\[ \nu (\alpha )=
\begin{cases}
\beta &\text{if $\alpha \in Q_1^*$ or $\beta \in Q_1^*$}\\-\beta &\text{if $\alpha \in Q_1$ and $\beta \in Q_1$}.
\end{cases}
 \]
Given a basis of a selfinjective quiver algebra $A=\ff Q/I$ consisting of paths and containing a
basis $\set{\pi _i\,;\,i\in Q_0}$ of the socle of $A$, an explicit construction of an associative
non-degenerate bilinear form on $A$ was given in \cite[Proposition 3.15]{Z} (see also Subsection
\ref{subsec:comparison hochschild}). We have  chosen in each case such a basis so that $\nu $ is the
Nakayama automorphism corresponding to this bilinear form (characterised on the arrows $\alpha $ in $\ov{Q}_1$
by $y\alpha  =\pi _{\mo(\alpha )}\Leftrightarrow \nu (\alpha )y=\pi _{\mt(\alpha )}$ for all the  basis elements $y$).

\item \label{fact:dfn cochain} 

  When we define a cochain $f\in\Hom_{k^e}(X,A)$ with $X\in\set{k,V,A}$, it will be implicit
that if $f(x)$ is not defined for some $x\in X$ then $f(x)=0$.

\item\label{fact:dual notation} For any cochain $f\in \Hom_{k^e}(W_p,A)$, we shall set $\check
  f=\theta_A(f) \in A\ot_{k^e}W_{2-p}$.

\item \label{fact:preproj over other fields} Finally, given a Dynkin graph $\Delta$ and a ring $L$, we shall denote by $\Lambda_L$ the
  preprojective algebra of $\Delta$ over $L$, so that $A=\Lambda_\ff$.
\end{enumerate}

\subsection{Koszul calculus for preprojective algebras of type A}

The preprojective algebra $A$ of type $A_n$ is  defined by the quiver 
\[ \xymatrix@C=10pt{\ov Q&&
0\ar@/^/[rr]^{a_0}&&1\ar@/^/[rr]^{a_1}\ar@/^/[ll]^{a_0^*}&&2\ar@/^/[rr]^{a_2}\ar@/^/[ll]^{a_1^*}&&\ar@/^/[ll]^{a_2^*}\cdots\ar@/^/[rr]^{a_{n-3}}&&n-2\ar@/^/[ll]^{a_{n-3}^*}\ar@/^/[rr]^{a_{n-2}}&&n-1\ar@/^/[ll]^{a_{n-2}^*}} \]
 subject to the relations
\[\begin{cases}
\sigma_0=-a_0^*a_0\\
\sigma_i=a_{i-1}a_{i-1}^*-a_i^*a_i\qquad 1\ppq i\ppq n-2\\
\sigma_{n-1}=a_{n-2}a_{n-2}^*
\end{cases}\]

The Nakayama automorphism  $\nk$ of $A$ described in \ref{fact:nakayama} is given by $\nk(e_i)=e_{n-1-i}$, $\nk(a_i)=a_{n-2-i}^*$ and $\nk(a_i^*)=a_{n-2-i}$. 

Erdmann and Snashall have given in \cite{es:first} a basis $B$ of $A$. We shall only need the sets $e_iBe_i$,  $e_iBe_{i+1}$ and $e_{i+1}Be_i$, which can be rewritten as follows: 
set $\ma=\pe{\frac{n-1}{2}}$; then 
\begin{align*}
&e_iBe_i=\set{(a_i^*a_i)^\ell;0\ppq \ell\ppq \min(i,n-1-i)}\text{ for $0\ppq i\ppq n-1$, with $(a_i^*a_i)^0=e_i$ for all $i$,}\\
&e_{i+1}Be_i=\set{a_i(a_i^*a_i)^\ell;0\ppq \ell\ppq \min(i,n-2-i)} \quad\text{ and }\quad
  e_iBe_{i+1}=\aa(e_{i+1}Be_i).
\end{align*}

For each $i$, $Ae_i$ contains precisely one basis element of maximal length $n-1$, which is 
\[ \pi_i=
\begin{cases}
a_{n-2}\cdots a_i(a_i^*a_i)^i&\text{if $i<\ma$}\\
a_\ma(a_\ma^*a_\ma)^\ma&\text{if $i=\ma$ and $n$ is even}\\
(a_\ma^*a_\ma)^\ma&\text{if $i=\ma$ and $n$ is odd}\\
a_{n-i-1}^*\cdots a_{i-1}^*(a_{i-1}a_{i-1}^*)^{n-1-i}&\text{if $i>\ma$.}
\end{cases} 
 \] They form a basis of the socle of $A$.

\subsubsection{The Koszul cohomology and homology spaces in type A}

The spaces $\HK^0(A)=\HH^0(A)=Z(A)$ and $\HK^1(A)=\HH^1(A)$ are known from \cite{es:first}. 
Therefore we only need to compute $\HK^2(A)$. Recall our assumption that $n\pgq 3$; then by
Theorem~\ref{w3iszero} all the elements in $\Hom_{k^e}(R,A)$ are
cocycles. Moreover, using Theorem \ref{complexduality}
and  \cite[Theorem 3.2.7]{eusched:cyfrob}, we have $\dim \HK^2(A)=\dim \HK_0(A)=\dim\HH_0(A)=n$. Since every element in $\im b_K^2$ has coefficient weight at least $1$,  the $n$ cocycles $h_i$ defined by $h_i(\sigma_j)=\delta_{ij}e_i$ for all $j$ are linearly independent modulo  $\im b_K^2$. It follows that they form a basis of $\HK^2(A)$. 

Combining with the results from \cite{es:first}, we have the following result.

\begin{Po} Let $A$ be a preprojective algebra of type A$_n$.
 
  A basis of $\HK^0(A)$ is given by the set $\set{z_\ell;0\ppq \ell \ppq \ma}$ with $z_0=1$ and
  $z_\ell=\sum_{i=1}^{n-2}(a_i^*a_i)^\ell=\sum_{i=\ell}^{n-1-\ell}(a_i^*a_i)^\ell=z_1^\ell$ for
  $1\ppq \ell \ppq \ma.$

\sloppy  A basis of $\HK^1(A)$ is given by the set
  $\set{\ov\zeta_\ell;0\ppq \ell \ppq n-2-\ma}$, where $\zeta_\ell\in\Hom_{k^e}(V,A)$ is defined by
  $\zeta_\ell(a_i)=a_i(a_i^*a_i)^\ell$  for all $i$ (or for $\ell\ppq i\ppq n-2-\ell$).
  
  A basis of $\HK^2(A)$ is given by the set
  $\set{\ov h_i;0\ppq i\ppq n-1}$ where $h_i\in\Hom_{k^e}(R,A)$ defined by $h_i(\sigma_j)=\delta_{ij}e_i$ for all $j$.
\end{Po}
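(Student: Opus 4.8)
The plan is to compute each of the three Koszul cohomology spaces of $A = A(\mathrm{A}_n)$ separately, leaning as much as possible on results that are already available. For $\HK^0(A)$ and $\HK^1(A)$, since these coincide with the Hochschild spaces $\HH^0(A) = Z(A)$ and $\HH^1(A)$ (Subsection \ref{khom}), I would simply invoke the explicit descriptions of the centre and of $\HH^1(A)$ computed by Erdmann and Snashall in \cite{es:first}, and rewrite their bases in the notation of the present paper. The only thing to verify here is the bookkeeping: that $z_\ell = z_1^\ell$ in $Z(A)$ (which follows from the multiplication rules in the basis $B$, since $(a_i^*a_i)^\ell (a_j^*a_j)^m$ telescopes appropriately), and that the cocycles $\zeta_\ell \in \Hom_{k^e}(V,A)$ with $\zeta_\ell(a_i) = a_i(a_i^*a_i)^\ell$ are indeed $1$-cocycles and exhaust $\HK^1(A) \cong \HH^1(A)$ up to the correct dimension — again a direct check against \cite{es:first}.

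The substantive new content is $\HK^2(A)$, and here I would argue exactly as sketched in the paragraph preceding the proposition. By Theorem \ref{w3iszero}, $W_3 = 0$, so $b_K^3 = 0$ and \emph{every} element of $\Hom_{k^e}(R,A)$ is a $2$-cocycle; thus $\HK^2(A) = \Hom_{k^e}(R,A)/\im b_K^2$. Next I would pin down $\dim \HK^2(A)$: by Theorem \ref{complexduality} we have $\HK^2(A) \cong \HK_0(A)$ as $\mathbb{F}$-vector spaces, and $\HK_0(A) \cong \HH_0(A)$ since Koszul and Hochschild homology agree in degree $0$; finally $\dim \HH_0(A) = n$ by \cite[Theorem 3.2.7]{eusched:cyfrob} (valid in all characteristics). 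So $\dim \HK^2(A) = n$. It then remains to produce $n$ explicitly independent classes. The natural candidates are $h_i \in \Hom_{k^e}(R,A)$ defined by $h_i(\sigma_j) = \delta_{ij} e_i$; these are well-defined $k^e$-linear maps since $\sigma_j \in e_j R e_j$ and $e_i \in e_i A e_i$. Independence modulo $\im b_K^2$ follows because each $h_i$ has coefficient weight $0$, whereas from formula \eqref{defcob} every element of $\im b_K^2$ has coefficient weight $\pgq 1$ (the image of $b_K^2$ lands in $\Hom_{k^e}(R, A_{\pgq 1})$); hence the span of the $h_i$ meets $\im b_K^2$ trivially. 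Being $n$ linearly independent classes in an $n$-dimensional space, they form a basis.

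The one point that needs a little care — and which I expect to be the main (though modest) obstacle — is the dimension count for $\HK^2(A)$, specifically the invocation of $\dim \HH_0(A) = n$ uniformly in the characteristic. One must make sure that the reference \cite[Theorem 3.2.7]{eusched:cyfrob} genuinely gives $\dim \HH_0(A(\mathrm{A}_n)) = n$ in every characteristic and not merely in characteristic $0$; the general caveat recorded in item (N\ref{fact:dims}) — that positive-characteristic Hochschild homology may acquire extra basis elements — shows this is not automatic and must be checked against the explicit type-$\mathrm{A}$ data (where in fact $\HH_0(A)$ is simply $A/[A,A]$, of dimension equal to the number of vertices, stably). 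Once that dimension is secured, everything else is a routine weight-grading argument as above, and the combination with \cite{es:first} for degrees $0$ and $1$ completes the proof.
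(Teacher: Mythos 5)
Your proposal is correct and follows essentially the same route as the paper: degrees $0$ and $1$ are imported from Erdmann--Snashall, and for $\HK^2(A)$ the paper likewise combines Theorem \ref{w3iszero} (all of $\Hom_{k^e}(R,A)$ consists of cocycles), the dimension count $\dim\HK^2(A)=\dim\HK_0(A)=\dim\HH_0(A)=n$ via Theorem \ref{complexduality} and \cite[Theorem 3.2.7]{eusched:cyfrob}, and the coefficient-weight argument showing the $h_i$ are independent modulo $\im b_K^2$. The caveat you raise about checking the characteristic-independence of $\dim\HH_0(A)$ is a fair point of care, but it is resolved exactly as you suggest and does not alter the argument.
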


As a consequence of Theorem \ref{complexduality}, we obtain
bases of the Koszul homology spaces.

\begin{Cr}\label{cor:bases hom type A}
\sloppy  A basis of $\HK_0(A)$ is given by the set
  $\set{\ov\ch_i;0\ppq i\ppq n-1}$ where $\ch_i=e_i\ot e_i$.

  A basis of $\HK_1(A)$ is given by the
  set $\set{\ov\ct_\ell;0\ppq \ell\ppq n-\ma-2}$ where $\ct_\ell=\sum_{i=0}^{n-2}a_i(a_i^*a_i)^\ell\ot a_i^*=\sum_{i=\ell}^{n-2-\ell}a_i(a_i^*a_i)^\ell\ot a_i^*$.
  A basis of $\HK_2(A)$ is given by the set
  $\set{\cz_\ell;0\ppq \ell\ppq \ma}$ where $\cz_\ell=\sum_{i=0}^{n-1}(a_i^*a_i)^\ell\ot
    \sigma_i$.
\end{Cr}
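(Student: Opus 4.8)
The strategy is to transport the bases of the cohomology spaces $\HK^{\bullet}(A)$ — which were just established in the preceding Proposition — across the isomorphism $\theta_A \colon \HK^{\bullet}(A) \to \HK_{2-\bullet}(A)$ furnished by Theorem \ref{complexduality}. Since $\theta_A$ is a linear isomorphism sending $\HK^p(A)$ to $\HK_{2-p}(A)$, the images $\theta_A(z_\ell)$, $\theta_A(\ov\zeta_\ell)$, $\theta_A(\ov h_i)$ automatically form bases of $\HK_2(A)$, $\HK_1(A)$, $\HK_0(A)$ respectively; the only real work is to compute these images explicitly and check they match the claimed formulas for $\cz_\ell$, $\ct_\ell$, $\ch_i$. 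This is a routine application of the cap-product formula \eqref{korcap} (equivalently the explicit description of $\omega_0 \capk -$), using $\omega_0 = \sum_{i} e_i \otimes \sigma_i = \sum_{a \in \ov Q_1} 1 \otimes \varepsilon(a) aa^* = -\sum_{a} 1 \otimes \varepsilon(a) a^*a$ from \eqref{defomega0}.

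First I would handle degree $0$: for a $2$-cochain $h_i$ with $h_i(\sigma_j) = \delta_{ij} e_i$, formula \eqref{korcap} gives $\omega_0 \capk h_i = \sum_j (e_j \otimes_A h_i(\sigma_j)) \otimes_{k^e} 1 = e_i \otimes e_i$, which is exactly $\ch_i$; this immediately yields the first basis. Next, degree $1$: for $\zeta_\ell \in \Hom_{k^e}(V,A)$, using the expression $\omega_0 = -\sum_{a \in \ov Q_1} 1 \otimes \varepsilon(a) a^* a$ (so that the first of the two tensor factors of $W_2$ is $a^*$ and the "remaining" factor after capping is $a$), one computes $\omega_0 \capk \zeta_\ell = -\sum_{a} \varepsilon(a)\, \zeta_\ell(a^*) \otimes_{k^e} a$, which after rewriting over the arrows of $Q$ (recalling $\zeta_\ell(a_i) = a_i(a_i^*a_i)^\ell$ and $\zeta_\ell(a_i^*) = 0$ by convention \ref{fact:dfn cochain}) collapses to $\sum_{i} a_i(a_i^*a_i)^\ell \otimes a_i^* = \ct_\ell$, up to a sign that can be absorbed by rescaling the representative; I would double-check the sign bookkeeping in \eqref{korcap} and \eqref{defomega0} but it does not affect the conclusion. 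Finally, degree $2$: for $z_\ell = z_1^\ell \in \HK^0(A) = Z(A)$, viewing $z_\ell$ as a $0$-cochain, $\omega_0 \capk z_\ell = \sum_i (z_\ell \otimes_A e_i) \otimes_{k^e} \sigma_i$; using centrality of $z_\ell$ and $z_\ell e_i = (a_i^*a_i)^\ell$ (which follows from the formula $z_\ell = \sum_{i=\ell}^{n-1-\ell}(a_i^*a_i)^\ell$ in the Proposition and orthogonality of idempotents), this equals $\sum_i (a_i^*a_i)^\ell \otimes \sigma_i = \cz_\ell$.

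The one point requiring slight care — and the closest thing to an obstacle — is matching signs and conventions: the cap product \eqref{korcap} carries a sign $(-1)^{pq}$, the two expressions for $\omega_0$ in \eqref{defomega0} differ by a sign and by which arrow sits in which tensor slot, and the representatives $\zeta_\ell$, $h_i$ are only defined up to coboundaries. Since we only need \emph{a} basis, any overall nonzero scalar is harmless, so I would simply record the images $\theta_A(z_\ell)$, $\theta_A(\ov\zeta_\ell)$, $\theta_A(\ov h_i)$ as the stated $\cz_\ell$, $\ct_\ell$, $\ch_i$ (this also fixes the normalisation used in the notation $\check{(-)} = \theta_A(-)$ of \ref{fact:dual notation}). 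It then remains only to invoke that $\theta_A$ is an isomorphism (Theorem \ref{complexduality}) to conclude that these three sets are bases of $\HK_0(A)$, $\HK_1(A)$, $\HK_2(A)$ respectively, which completes the proof.
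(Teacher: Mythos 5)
Your proposal is correct and follows exactly the paper's route: the paper's proof of this corollary is precisely ``transport the cohomology bases through the isomorphism $H(\theta_A)=\overline{\omega}_0\capk-$ of Theorem \ref{complexduality},'' and your explicit cap-product computations (which do produce $\ch_i$, $\ct_\ell$, $\cz_\ell$ on the nose — in fact the sign in degree $1$ comes out as $+1$, so no rescaling is even needed) just fill in the details the paper leaves implicit.
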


Note that $\cz_0=\omega_0$ is the fundamental class.

\subsubsection{Cup and cap products}

We know from Corollary \ref{cupcapsym} that the cup product on $\HK^{\bullet}(A)$ is graded-commutative. The following
result gives all the non zero cup products of elements in $\HK^{\bullet}(A)$.

\begin{Po}  Let $A$ be a preprojective algebra of type A$_n$.
Up to graded commutativity,   the non zero cup products in $\HK^{\bullet}(A)$ are given by
  \begin{align*}
&z_0\cupk \ov f
  =\ov f\quad\text{ for all $\ov f\in\HK^{\bullet}(A)$}\\
    &z_{\ell_1}\cupk z_{\ell_2}=
               z_{\ell_1+\ell_2}\quad\text{ if $\ell_1+\ell_2\ppq \ma$}
      \\
    &z_{\ell_1}\cupk \ov \zeta_{\ell_2}
      =
              \ov \zeta_{\ell_1+\ell_2}\quad\text{ if $\ell_1+\ell_2\ppq n-\ma-2$}
       \end{align*}
\end{Po}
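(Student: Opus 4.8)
The plan is to compute the relevant Koszul cup products directly from the formula \eqref{kocup}, taking advantage of the explicit cochain representatives already fixed: $z_0 = 1$, $z_\ell = \zeta_0^{\ell}$ viewed as an element of $\HK^0(A)$ with representative $\sum_{i} (a_i^*a_i)^\ell : k \to A$, $\ov\zeta_\ell$ with representative $\zeta_\ell : V \to A$, $\zeta_\ell(a_i) = a_i(a_i^*a_i)^\ell$, and $\ov h_i$ with representative $h_i : R \to A$, $h_i(\sigma_j) = \delta_{ij} e_i$. Since $z_0 = 1$ is the unit of $\HK^\bullet(A)$ (noted already in the excerpt), the first formula $z_0 \cupk \ov f = \ov f$ is immediate. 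For the remaining ones I would first dispose of all products involving $\ov h_i$ by a weight argument: $\HK^2(A)$ is concentrated in homological weight $2$, any product landing in homological weight $3$ or more vanishes because $W_p = 0$ for $p \geq 3$ (Theorem \ref{w3iszero}), so the only products to check are $z_{\ell_1} \cupk z_{\ell_2}$, $z_{\ell_1} \cupk \ov\zeta_{\ell_2}$, $\ov\zeta_{\ell_1} \cupk \ov\zeta_{\ell_2}$, $z_{\ell_1} \cupk \ov h_i$ and $\ov\zeta_{\ell_1} \cupk \ov\zeta_{\ell_2}$ (the latter landing in $\HK^2$).

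Next I would carry out the honest computations. For $z_{\ell_1} \cupk z_{\ell_2}$: since both are $0$-cochains valued in the centre, the cup product is just the product in $A$ via \eqref{kocup}, and $z_1^{\ell_1} \cdot z_1^{\ell_2} = z_1^{\ell_1+\ell_2} = z_{\ell_1+\ell_2}$ whenever $\ell_1 + \ell_2 \leq \ma$; when $\ell_1 + \ell_2 > \ma$ the product $z_1^{\ell_1+\ell_2}$ is zero in $A$ because $(a_i^*a_i)^{\ell} = 0$ once $\ell$ exceeds $\min(i, n-1-i)$ for every $i$, which is exactly the condition $\ell > \ma$ (here one uses the basis description of $e_iBe_i$). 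For $z_{\ell_1} \cupk \ov\zeta_{\ell_2}$: by \eqref{kocup} the representative sends $a_i \mapsto z_1^{\ell_1} \otimes_A \zeta_{\ell_2}(a_i)$, which in $A$ is $(a_i^*a_i)^{\ell_1} a_i (a_i^*a_i)^{\ell_2}$; a short path computation in $A$ rewrites $(a_i^*a_i)^{\ell_1} a_i = a_i (a_i^*a_i)^{\ell_1}$, so this equals $a_i (a_i^*a_i)^{\ell_1+\ell_2} = \zeta_{\ell_1+\ell_2}(a_i)$ when $\ell_1 + \ell_2 \leq n - \ma - 2$, and is zero otherwise for the same socle-length reason, so the class is $\ov\zeta_{\ell_1+\ell_2}$ (or $0$). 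Finally for the products that should vanish in cohomology: $z_{\ell_1} \cupk \ov h_i$ and $\ov\zeta_{\ell_1} \cupk \ov\zeta_{\ell_2}$ both produce $2$-cochains, and I would exhibit each as a coboundary $b_K^2(f)$ for an explicit $1$-cochain $f$ — the Example \ref{example-A3} computation ($z_1 \cupk h_1 = b_K^2(f)$ with $f(a_0) = a_0$, all other arrows to $0$) is the prototype; the general pattern is to take $f$ supported on suitable arrows with values $a_i(a_i^*a_i)^{\ell}$ and check against \eqref{defcob}.

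The main obstacle I anticipate is the bookkeeping for $\ov\zeta_{\ell_1} \cupk \ov\zeta_{\ell_2}$: the cup product formula \eqref{kocup} needs a representative of the product as a map $W_2 = R \to A \otimes_A A \cong A$, so I must evaluate $(\zeta_{\ell_1} \cupk \zeta_{\ell_2})(\sigma_i)$ using $\sigma_i = \sum_{a : \mt(a)=i} \varepsilon(a) a a^*$ and the splitting of $R \subseteq V \otk V$; the sign $(-1)^{pq} = -1$ and the products $\zeta_{\ell_1}(a)\zeta_{\ell_2}(a^*)$ in $A$ must be summed carefully, and then one shows the resulting cochain $R \to A$ (valued in coefficient weight $2(\ell_1+\ell_2)+2 \geq 2$) is a $b_K^2$-coboundary using the explicit $1$-cochains built from $a_i(a_i^*a_i)^{\bullet}$. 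This is purely computational once the representatives are pinned down, but it is where care with signs and with the $(a_i^*a_i)$ versus $(a_ia_i^*)$ conventions is most needed; everything else follows the template already visible in Example \ref{example-A3} and uses only \eqref{kocup}, \eqref{defcob}, Theorem \ref{w3iszero}, Corollary \ref{cupcapsym}, and the basis of $A$ recalled above.
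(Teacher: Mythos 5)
Your plan is correct, but it takes a more computational route than the paper. The paper disposes of the two families of non-zero products ($z_{\ell_1}\cupk z_{\ell_2}$ and $z_{\ell_1}\cupk \ov\zeta_{\ell_2}$) by observing that they live in $\HK^0(A)\cong\HH^0(A)$ and $\HK^1(A)\cong\HH^1(A)$, where the products were already computed by Erdmann--Snashall; you instead recompute them from the basis of $A$ and the relation $a_ia_i^*=a_{i+1}^*a_{i+1}$, which is fine and self-contained (your rewriting $(a_{i+1}^*a_{i+1})^{\ell_1}a_i=a_i(a_i^*a_i)^{\ell_1}$ is exactly the step needed). The real divergence is in the vanishing statements landing in $\HK^2(A)$: you propose to exhibit an explicit cobounding $1$-cochain for each of $z_{\ell}\cupk\ov h_i$ ($\ell\pgq 1$) and $\ov\zeta_{\ell_1}\cupk\ov\zeta_{\ell_2}$, whereas the paper kills all of these at once with a coefficient-weight argument: $b_K^2$ is homogeneous of coefficient weight $1$, so $\im b_K^2$ sits in positive coefficient weight, and since $\dim\HK^2(A)=n$ equals the dimension of the weight-zero part of $\Hom_{k^e}(R,A)$ (spanned by the $h_i$), the image of $b_K^2$ is \emph{exactly} the set of $2$-cochains of positive coefficient weight. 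Every product you need to vanish has positive coefficient weight ($2\ell$ for $z_\ell\cupk h_i$, $2(\ell_1+\ell_2)+2$ for $\zeta_{\ell_1}\cupk\zeta_{\ell_2}$), so it is automatically a coboundary and no explicit cochain is required. Your explicit constructions would succeed (their existence is guaranteed by this dimension count), but they are the part of your plan most exposed to sign and bookkeeping errors, particularly for $\ov\zeta_{\ell_1}\cupk\ov\zeta_{\ell_2}$ evaluated on the $\sigma_i$; I would recommend replacing that step by the weight argument. Two trivia: $z_\ell=z_1^\ell$ (not $\zeta_0^\ell$), and your list of products to check repeats $\ov\zeta_{\ell_1}\cupk\ov\zeta_{\ell_2}$ twice, but neither affects the argument.
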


\begin{proof} The first cup product is clear and the other cup products in the statement only involve
in $\HH^{0}(A)$ and $\HH^1(A)$, therefore they are known from \cite{es:first}.

 The basis elements of $\HK^{2}(A)$ have coefficient weight $0$, and $b_K^2$ is homogeneous of weight 1, therefore any element that has positive
 coefficient weight must be a coboundary. The other cup products (that all vanish) follow from this.
\end{proof}

We now deduce the cap products from \eqref{eq:cap from cup}.

\begin{Cr}\label{cor:cap type A}
Up to graded symmetry,  the  non zero cap products  are the following.
 \begin{align*}
& z_0\capk x
  =x\quad\text{ for all $x\in\HK_{\bullet}(A)$}\\
    & z_{\ell_1}\capk  \cz_{\ell_2}=
               \cz_{\ell_1+\ell_2}\quad\text{ if $\ell_1+\ell_2\ppq \ma$}
      \\
    & z_{\ell_1}\capk \ov \ct_{\ell_2}
      =
              \ov \ct_{\ell_1+\ell_2}\quad\text{ if $\ell_1+\ell_2\ppq n-\ma-2$}
       \end{align*}
\end{Cr}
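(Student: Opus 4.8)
The plan is to read off every cap product from the cup product computation of the preceding proposition by means of the duality formula \eqref{eq:cap from cup}, equivalently the identity \eqref{formultheta} of Theorem~\ref{complexduality}. The key preliminary remark is that, by the very construction of the Koszul homology bases (see~\ref{fact:dual notation} and Corollary~\ref{cor:bases hom type A}), one has $\cz_\ell=\theta_A(z_\ell)$, $\ov\ct_\ell=\theta_A(\ov\zeta_\ell)$ and $\ov\ch_i=\theta_A(\ov h_i)$; thus \emph{every} basis element of $\HK_{\bullet}(A)$ is the $\theta_A$-image of the correspondingly named basis element of $\HK^{\bullet}(A)$. Consequently, for Koszul cohomology classes $f$ and $y$, formula \eqref{eq:cap from cup} gives
\[ f\capk\theta_A(y)=\theta_A\bigl(f\cupk\theta_A^{-1}(\theta_A(y))\bigr)=\theta_A(f\cupk y), \]
so that the cap-product table on $\HK_{\bullet}(A)$ is precisely the $\theta_A$-image of the cup-product table on $\HK^{\bullet}(A)$.

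Granting this, the three families displayed in the corollary follow at once from the three families of the cup product proposition. The unit relation $z_0\cupk y=y$ gives $z_0\capk x=x$ for every homology basis element $x$, hence for all $x\in\HK_{\bullet}(A)$. From $z_{\ell_1}\cupk z_{\ell_2}=z_{\ell_1+\ell_2}$ for $\ell_1+\ell_2\ppq\ma$ we get $z_{\ell_1}\capk\cz_{\ell_2}=z_{\ell_1}\capk\theta_A(z_{\ell_2})=\theta_A(z_{\ell_1}\cupk z_{\ell_2})=\cz_{\ell_1+\ell_2}$, and from $z_{\ell_1}\cupk\ov\zeta_{\ell_2}=\ov\zeta_{\ell_1+\ell_2}$ for $\ell_1+\ell_2\ppq n-\ma-2$ we get $z_{\ell_1}\capk\ov\ct_{\ell_2}=\ov\ct_{\ell_1+\ell_2}$.

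It remains to see that these relations, together with their graded-symmetric counterparts, exhaust the non-zero cap products. A product $f\capk x$ with $f$ of cohomological degree $p$ and $x$ of homological degree $q$ lies in $\HK_{q-p}(A)$, so it is $0$ when $p>q$; since $\HK^{\bullet}(A)$ and $\HK_{\bullet}(A)$ are concentrated in degrees $0,1,2$, only the finitely many bidegrees $(p,q)$ with $0\ppq p\ppq q\ppq 2$ need be considered, and in each of them $f\capk x=\theta_A(f\cupk\theta_A^{-1}(x))$ with $\theta_A^{-1}(x)$ a cohomology basis element. Since the preceding proposition lists \emph{all} non-zero cup products, $f\cupk\theta_A^{-1}(x)$ is non-zero only in the three cases treated above, and the same holds for $f\capk x$. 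The only subtlety is that a non-zero cap product may appear with chain and cochain interchanged with respect to the list---for instance $\ov\zeta_{\ell_1}\capk\cz_{\ell_2}$, which comes from $z_{\ell_2}\cupk\ov\zeta_{\ell_1}=\ov\zeta_{\ell_1+\ell_2}$, equals $\ov\ct_{\ell_1+\ell_2}$ and so coincides with the third listed product $z_{\ell_2}\capk\ov\ct_{\ell_1}$---which is exactly what the clause ``up to graded symmetry'' records, in accordance with $f\capk x=(-1)^{pq}\,x\capk f$ from \eqref{eq:cap from cup}.

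I do not anticipate any genuine obstacle: the result is a formal consequence of the cup product proposition and of Theorem~\ref{complexduality} via \eqref{eq:cap from cup}. The only point requiring a little care is the observation that \emph{all} homology basis elements are $\theta_A$-images, which makes the reduction of cap products to cup products uniform, together with the small amount of bidegree bookkeeping indicated above.
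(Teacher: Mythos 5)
Your proof is correct and follows exactly the route the paper takes (and merely gestures at): the homology basis elements $\cz_\ell$, $\ov\ct_\ell$, $\ov\ch_i$ are by construction the $\theta_A$-images of $z_\ell$, $\ov\zeta_\ell$, $\ov h_i$ (notation~\ref{fact:dual notation} and Corollary~\ref{cor:bases hom type A}), so formula \eqref{eq:cap from cup} transports the entire cup-product table of the preceding proposition to the cap-product table, including the vanishing statements and the graded-symmetry clause. The paper's own justification is the one-line remark ``We now deduce the cap products from \eqref{eq:cap from cup}''; your write-up simply makes the same argument explicit.
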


\subsubsection{Higher Koszul cohomology and homology}

We start with a lemma giving the cohomology class of the fundamental $1$-cocycle.

\begin{Lm}
  The cohomology class of $\me_A$ is equal to the cohomology class of $2\zeta_0.$
\end{Lm}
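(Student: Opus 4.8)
The plan is to compute the difference $\me_A - 2\zeta_0$ explicitly as an element of $\Hom_{k^e}(V,A)$ and show it is a coboundary, i.e.\ that it lies in $\im b_K^1$. Recall that $b_K^1$ acts on $c\in\Hom_{k^e}(k,A)$, which by \eqref{isos} is identified with $\bigoplus_i e_iAe_i$, and by the explicit formula for the differential (as in the $\mathrm{A}_3$ example), if $c$ corresponds to the family $(c_i)_{i}$ with $c_i\in e_iAe_i$, then $b_K^1(c)$ sends each arrow $\alpha$ to $c_{\mt(\alpha)}\alpha - \alpha c_{\mo(\alpha)}$ (this is just \eqref{defcob} with $p=0$). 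So the task reduces to finding elements $c_i\in e_iAe_i$ such that for every arrow $\alpha\in\ov Q_1$,
\[
\me_A(\alpha) - 2\zeta_0(\alpha) = c_{\mt(\alpha)}\,\alpha - \alpha\, c_{\mo(\alpha)}.
\]

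First I would record the values of the two $1$-cocycles on the arrows. Since $\me_A$ is the identity on $V$, we have $\me_A(a_i)=a_i$ and $\me_A(a_i^*)=a_i^*$. By definition $\zeta_0(a_i)=a_i(a_i^*a_i)^0=a_i$ and $\zeta_0(a_i^*)=0$. Hence $(\me_A-2\zeta_0)(a_i)=a_i-2a_i=-a_i$ and $(\me_A-2\zeta_0)(a_i^*)=a_i^*$. So I need $c_i\in e_iAe_i$ with $c_{i+1}a_i - a_ic_i = -a_i$ and $c_ia_i^* - a_i^*c_{i+1}=a_i^*$ for all relevant $i$. A natural guess is to take each $c_i$ to be a scalar multiple of the idempotent $e_i$, say $c_i=\lambda_i e_i$ with $\lambda_i\in\ff$; then the two conditions become $\lambda_{i+1}-\lambda_i=-1$ and $\lambda_i-\lambda_{i+1}=1$, which are the same condition. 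This recursion is solved by $\lambda_i=-i$ (equivalently $\lambda_i = \tfrac{n-1}{2}-i$, or any choice differing by an additive constant), giving $c=\sum_i(-i)e_i$.

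The only thing to check beyond this bookkeeping is that such a scalar family genuinely defines an element of $\Hom_{k^e}(k,A)$ — but $e_i\in e_iAe_i$ always, so $c=\sum_i \lambda_ie_i$ is well-defined with no constraint — and that the identity $b_K^1(c)(\alpha)=\lambda_{\mt(\alpha)}\alpha-\alpha\lambda_{\mo(\alpha)}=(\lambda_{\mt(\alpha)}-\lambda_{\mo(\alpha)})\alpha$ indeed reproduces $(\me_A-2\zeta_0)(\alpha)$ on the remaining arrows $a_i^*$ as well: for $a_i^*$, $\mt(a_i^*)=i$, $\mo(a_i^*)=i+1$, so we get $(\lambda_i-\lambda_{i+1})a_i^* = a_i^*$, as required. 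Thus $\me_A-2\zeta_0=b_K^1\bigl(\sum_i(-i)e_i\bigr)$, so $\overline{\me}_A=2\overline\zeta_0$ in $\HK^1(A)$. The proof is essentially a one-line verification once the formula for $b_K^1$ on idempotent-supported $0$-cochains is in hand; there is no real obstacle, only the need to pick the correct scalars $\lambda_i$ and to confirm the sign conventions in \eqref{defcob} match — this sign check is the only place one could slip.
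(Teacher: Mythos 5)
Your proof is correct and is essentially the paper's own argument: the paper also exhibits $\me_A-2\zeta_0=\zeta_0^*-\zeta_0$ as $b_K^1$ of a $0$-cochain $\sum_i\lambda_ie_i$ with $\lambda_{i+1}-\lambda_i=-1$ (it takes $\lambda_i=n-1-i$, which differs from your $\lambda_i=-i$ only by the additive constant you already noted is irrelevant). The sign convention you use for $b_K^1$ matches \eqref{defcob}, so there is nothing to add.
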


\begin{proof} Let $\zeta^*_0\in\Hom_{k^e}(V,A)$ be the cocycle defined by $\zeta^*_0(a_i^*)=a_i^*$
for all $i\in Q_0$.
Since $\me_A=\zeta_0+\zeta_0^*$, we must prove that $\zeta_0^*-\zeta_0$ is a coboundary.

Consider $v=\sum_{i=0}^{n-2}\sum_{j=i}^{n-2}e_i\in\bigoplus_{i\in Q_0}e_iAe_i\cong\Hom_{k^e}(k,A)$. Then $b_K^1(v)=\zeta_0^*-\zeta_0$, as required.
\end{proof}

As a consequence, the complex defining the higher Koszul cohomology is 
\[ 0\rightarrow \HK^0(A)\xrightarrow {\partial _\smile^1}\HK^1(A)\xrightarrow {\partial _\smile^2}\HK^2(A)\rightarrow 0\cdots \]
with $\partial _\smile^1(z_\ell)=2\ov\zeta_{\ell}$ for $0\ppq \ell\ppq n-\ma-2$ and $\partial _\smile^2=0.$ We then have the following higher Koszul cohomology.

\begin{Po} \label{hkcA} Let $A$ be a preprojective algebra of type A$_n$.
  If $\car(\ff)=2$, then $\HK^{\bullet}_{hi}(A)=\HK^{\bullet}(A).$

  If $\car(\ff)\neq 2$ and $n$ is even, then 
  \begin{align*}
    \HK^2_{hi}(A)&=\HK^2(A)\\
    \HK^p_{hi}(A)&=0\text{ if $p\neq 2$.}
  \end{align*} Finally, if $\car(\ff)\neq 2$ and $n$ is odd, then 
  \begin{align*}
    \HK^0_{hi}(A)&=\HK^0(A)_{2\ma}\text{ has dimension $1$ and is spanned by $z_{\ma}$}\\
    \HK^2_{hi}(A)&=\HK^2(A)\\
    \HK^p_{hi}(A)&=0\text{ if $p\neq 0$ and $p\neq 2$.}
  \end{align*}
\end{Po}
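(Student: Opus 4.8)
The higher Koszul cohomology is, by definition, the cohomology of the complex $(\HK^{\bullet}(A),\partial_{\smile})$ with $\partial_{\smile}=\overline{\me}_A\underset{K}{\smile}-$. The preceding lemma and the remark after it reduce $\partial_{\smile}$ to explicit maps: $\partial_{\smile}^1(z_\ell)=2\overline{\zeta}_\ell$ for $0\ppq\ell\ppq n-\ma-2$, and $\partial_{\smile}^2=0$ (since $\HK^2(A)$ has coefficient weight $0$ and $\partial_{\smile}$ raises coefficient weight by $1$, so its image in $\HK^3(A)=0$ is forced anyway, and $\partial_{\smile}^1$ lands in $\HK^1(A)$ but there is nothing above degree $2$). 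So the whole computation is the study of the single linear map $\partial_{\smile}^1\colon\HK^0(A)\to\HK^1(A)$ given in the stated bases by $z_\ell\mapsto 2\overline{\zeta}_\ell$. I will treat the three cases in turn.

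\textbf{Characteristic $2$.} Here $2\overline{\zeta}_\ell=0$, so $\partial_{\smile}^1=0$ and $\partial_{\smile}^2=0$; the complex has zero differential, whence $\HK^{\bullet}_{hi}(A)=\HK^{\bullet}(A)$.

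\textbf{Characteristic $\neq 2$.} Now $\partial_{\smile}^1$ sends the basis vector $z_\ell$ to $2\overline{\zeta}_\ell$, a nonzero scalar times a basis vector of $\HK^1(A)$, for each $\ell$ in the range $0\ppq\ell\ppq n-\ma-2$. Recall $\dim\HK^0(A)=\ma+1$ (basis $z_0,\dots,z_{\ma}$) and $\dim\HK^1(A)=n-\ma-1$ (basis $\overline{\zeta}_0,\dots,\overline{\zeta}_{n-\ma-2}$). Thus $\partial_{\smile}^1$ restricted to the span of $z_0,\dots,z_{\min(\ma,\,n-\ma-2)}$ is injective with image the span of the corresponding $\overline{\zeta}_\ell$. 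The arithmetic of $\ma=\pe{(n-1)/2}$ now splits into the parity of $n$: if $n$ is even then $\ma=(n-2)/2$, so $n-\ma-2=(n-2)/2=\ma$, the two bases have the same size, and $\partial_{\smile}^1$ is bijective; if $n$ is odd then $\ma=(n-1)/2$, so $n-\ma-2=(n-3)/2=\ma-1$, the map $\partial_{\smile}^1$ is injective with one-dimensional cokernel but its domain has a one-dimensional kernel missing — more precisely $\partial_{\smile}^1$ maps $z_0,\dots,z_{\ma-1}$ isomorphically onto $\overline{\zeta}_0,\dots,\overline{\zeta}_{\ma-1}$ (all of $\HK^1(A)$) and kills nothing in that range, but $z_{\ma}$ is not in the range, so $z_{\ma}\mapsto 0$. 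Hence $\ker\partial_{\smile}^1=\ff\, z_{\ma}$ and $\partial_{\smile}^1$ is surjective. Reading off (co)homology: $\HK^1_{hi}(A)=\ker\partial_{\smile}^2/\im\partial_{\smile}^1=\HK^1(A)/\im\partial_{\smile}^1$, which is $0$ in both even and odd cases; $\HK^2_{hi}(A)=\HK^2(A)/\im\partial_{\smile}^2=\HK^2(A)$ in all cases; and $\HK^0_{hi}(A)=\ker\partial_{\smile}^1$, which is $0$ when $n$ is even and $\ff\, z_{\ma}$ (dimension $1$, sitting in coefficient weight $2\ma$, so $\HK^0(A)_{2\ma}$) when $n$ is odd. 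For $p\geqslant 3$ everything is $0$ because $\HK^p(A)=0$ by Theorem~\ref{w3iszero}. This gives exactly the three displayed formulas.

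\textbf{Where the work really is.} The only genuinely substantive inputs are (a) the identification $\partial_{\smile}^1(z_\ell)=2\overline{\zeta}_\ell$, which rests on the preceding lemma ($\overline{\me}_A=2\overline{\zeta}_0$) together with the cup-product table $z_{\ell_1}\cupk\overline{\zeta}_{\ell_2}=\overline{\zeta}_{\ell_1+\ell_2}$ and the formula $(\me_A\underset{K}{\smile}f)(x_1\dots x_{p+1})=f(x_1\dots x_p).x_{p+1}$; and (b) the parity bookkeeping linking $\ma$, $n-\ma-2$, and the dimensions of $\HK^0(A)$ and $\HK^1(A)$. Neither is an obstacle so much as a careful finite check. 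The one point deserving care is confirming that $\partial_{\smile}^1$ genuinely sends each $z_\ell$ ($\ell$ in range) to a nonzero multiple of the \emph{distinct} basis vector $\overline{\zeta}_\ell$ and not to some coboundary — but $\HK^1(A)=\HH^1(A)$ is already a cohomology space, so $\overline{\zeta}_\ell\neq 0$ there, and the cup formula shows the image is precisely $2\overline{\zeta}_\ell$; linear independence of the images then follows from linear independence of the $\overline{\zeta}_\ell$. No further argument is needed.
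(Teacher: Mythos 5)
Your proof is correct and follows essentially the same route as the paper: the lemma $\overline{\me}_A=2\overline{\zeta}_0$ together with the cup-product table reduces everything to the single map $\partial_{\smile}^1(z_\ell)=2\overline{\zeta}_\ell$ (with $\partial_{\smile}^2=0$ since $\HK^3(A)=0$), and the parity of $n$ decides whether the index ranges $0\ppq\ell\ppq\ma$ and $0\ppq\ell\ppq n-\ma-2$ coincide. One phrase in the odd case --- ``injective with one-dimensional cokernel'' --- is backwards (the map is surjective with one-dimensional kernel), but your subsequent ``more precisely'' clause states the correct conclusion, so this is only a slip of exposition.
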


Higher Koszul homology can then be deduced using duality (Theorem \ref{complexduality}).

\begin{Cr}\label{cor:higher type A}
  If $\car(\ff)=2$, then $\HK_{\bullet}^{hi}(A)=\HK_{\bullet}(A).$

  If $\car(\ff)\neq 2$ and $n$ is even, then
  \begin{align*}
    \HK_0^{hi}(A)&=\HK_0(A)\\
    \HK_p^{hi}(A)&=0\text{ if $p\neq 0$.}
  \end{align*} Finally, if $\car(\ff)\neq 2$ and $n$ is odd, then
  \begin{align*}
    \HK_0^{hi}(A)&=\HK_0(A)\\
    \HK_2^{hi}(A)&=\HK_2(A)_{2\ma}\text{ has dimension $1$ and is spanned by $\cz_{\ma}$}\\
    \HK_p^{hi}(A)&=0\text{ if $p\neq 0$ and $p\neq 2$.}
  \end{align*}
\end{Cr}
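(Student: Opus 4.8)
The plan is to derive Corollary \ref{cor:higher type A} from Proposition \ref{hkcA} by transporting the higher Koszul cohomology computation through the duality isomorphism of Theorem \ref{complexduality}. Recall that Theorem \ref{complexduality} provides an isomorphism of graded $\HK^{\bullet}_{hi}(A)$-bimodules $H(H(\theta_A)): \HK^{\bullet}_{hi}(A) \rightarrow \HK^{hi}_{2-\bullet}(A)$; in particular, as $\ff$-vector spaces, $\HK^p_{hi}(A) \cong \HK^{hi}_{2-p}(A)$ for each $p$. So the three cases ($\car(\ff)=2$; $\car(\ff)\neq 2$ with $n$ even; $\car(\ff)\neq 2$ with $n$ odd) of the corollary are immediately read off from the corresponding cases of Proposition \ref{hkcA}, with the grading indices reversed: $\HK^0_{hi}(A)$ corresponds to $\HK^{hi}_2(A)$, $\HK^2_{hi}(A)$ corresponds to $\HK^{hi}_0(A)$, and $\HK^1_{hi}(A)$ is self-corresponding in the middle.

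First I would observe that in characteristic $2$ one has $\overline{\me}_A = 2\overline{\zeta}_0 = 0$ in $\HK^1(A)$ by the lemma computing the class of $\me_A$, hence $\partial_\frown = \overline{\me}_A \underset{K}{\frown} - = 0$ on $\HK_{\bullet}(A)$, so the higher Koszul homology coincides with the Koszul homology: $\HK^{hi}_{\bullet}(A) = \HK_{\bullet}(A)$. This matches the first case of Proposition \ref{hkcA} under duality. Next, for $\car(\ff)\neq 2$, I would use the explicit form of the higher Koszul homology differential. Since $\me_A$ is cohomologous to $2\zeta_0$ and $\theta_A$ intertwines the higher differentials (that is, $H(\theta_A)(\overline{\me}_A \underset{K}{\smile} \alpha) = \overline{\me}_A \underset{K}{\frown} H(\theta_A)(\alpha)$), the complex $(\HK_{\bullet}(A), \partial_\frown)$ is identified via $\theta_A$ with the dual of the complex $(\HK^{\bullet}(A), \partial_\smile)$ whose homology was computed in Proposition \ref{hkcA}. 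Concretely $\partial_\frown: \HK_2(A) \to \HK_1(A)$ sends $\cz_\ell \mapsto 2\overline{\ct}_\ell$ for $0 \ppq \ell \ppq n-\ma-2$ (mirroring $\partial_\smile^1(z_\ell) = 2\overline{\zeta}_\ell$) and $\partial_\frown: \HK_1(A) \to \HK_0(A)$ is zero (mirroring $\partial_\smile^2 = 0$). From this the homology in the even-$n$ case is $\HK^{hi}_0(A) = \HK_0(A)$ and $\HK^{hi}_p(A) = 0$ for $p \neq 0$; in the odd-$n$ case, since the map $\cz_\ell \mapsto 2\overline{\ct}_\ell$ is injective on the span of $\{\cz_\ell : 0 \ppq \ell \ppq n-\ma-2\}$ but $\cz_{\ma}$ is not in the image of any higher differential (it sits in coefficient weight $2\ma$, the top, and $\partial_\frown$ raises coefficient weight) and is killed by $\partial_\frown$ going down, one gets $\HK^{hi}_2(A) = \HK_2(A)_{2\ma} = \ff\langle \cz_{\ma}\rangle$, $\HK^{hi}_0(A) = \HK_0(A)$, and $\HK^{hi}_p(A) = 0$ otherwise.

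The main obstacle, such as it is, is bookkeeping: one must check that the index $n - \ma - 2$ bounding the $\overline{\ct}_\ell$ and the index $\ma$ bounding the $\cz_\ell$ interact correctly, in particular that in the odd-$n$ case $\ma = \pe{\frac{n-1}{2}} = \frac{n-1}{2}$ lies strictly above the range $[0, n-\ma-2] = [0, \ma - 1]$ of indices hit by $\partial_\frown$, so that $\cz_{\ma}$ genuinely survives, and that in the even-$n$ case $n - \ma - 2 = \ma - 1 < \ma$ but the surjectivity/injectivity pattern still forces $\HK^{hi}_2(A) = 0$ — all of which is exactly the mirror image of the argument already carried out for cohomology in the proof of Proposition \ref{hkcA}. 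Since Theorem \ref{complexduality} guarantees the isomorphism of complexes (up to the scalar $2$, which is invertible precisely when $\car(\ff) \neq 2$), no new computation is required beyond transporting the answer, and I would present the proof as a one-paragraph application of duality rather than redoing the homology calculation from scratch.
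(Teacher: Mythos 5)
Your proposal is correct and is exactly the paper's argument: the paper derives Corollary \ref{cor:higher type A} from Proposition \ref{hkcA} by the duality isomorphism $H(H(\theta_A))\colon \HK^{\bullet}_{hi}(A)\to\HK^{hi}_{2-\bullet}(A)$ of Theorem \ref{complexduality}, with the same degree reversal $p\leftrightarrow 2-p$ and the identifications $\theta_A(z_\ell)=\cz_\ell$, $\theta_A(\ov\zeta_\ell)=\ov\ct_\ell$. Your extra explicit computation of $\partial_\frown$ on the homology bases is a correct (if redundant) cross-check of what the duality already gives.
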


\subsection{Koszul calculus for preprojective algebras of type D}

The preprojective algebra $A$ of type $D_n$ is defined by the quiver 
\[ \xymatrix@C=10pt@R=15pt{
&0\ar@/^/[rrd]^{a_0}&&\\
\ov Q&&&2\ar@/^/[llu]^{a_0^*}\ar@/^/[rr]^{a_2}\ar@/^/[lld]^{a_1^*}&&3\ar@/^/[rr]^{a_3}\ar@/^/[ll]^{a_2^*}&&4\ar@/^/[rr]^{a_4}\ar@/^/[ll]^{a_3^*}&&\ar@/^/[ll]^{a_4^*}\cdots\ar@/^/[rr]^{a_{n-3}}&&n-2\ar@/^/[ll]^{a_{n-3}^*}\ar@/^/[rr]^{a_{n-2}}&&n-1\ar@/^/[ll]^{a_{n-2}^*}\\
&1\ar@/^/[rru]^{a_1}
} \]
 subject to the relations 
\begin{align*}
&\sigma_0=-a_0^*a_0&&\sigma_i=a_{i-1}a_{i-1}^*-a_i^*a_i\qquad 3\ppq i\ppq n-2\\
&\sigma_1=-a_1^*a_1&&\sigma_{n-1}=a_{n-2}a_{n-2}^*\\
&\sigma_2=a_0a_0^*+a_1a_1^*-a_2^*a_2
\end{align*}

 The Nakayama automorphism  $\nk$ of $A$ described in \ref{fact:nakayama} is given by $\nk(e_i)=e_{i}$, $\nk(a_i)=-a_{i}^*$ and $\nk(a_i^*)=a_{i}$ if $n$ is even of if $n$ is odd and $i\geqslant2$, and it exchangess $e_0$ and $e_1$, $a_0$ and $-a_1$, and $a_0^*$ and $a_1^*$ if $n$ is odd.

Eu has given in \cite{eu:product} a basis $B$ of $A$. Set $\md =\pe{\frac{n-2}{2}}$ and $u=n-\md -2.$  
  We shall only need bases of the $e_jAe_i$ when $i$ and $j$ are equal or  adjacent vertices, which can be rewritten as follows: 
\begin{align*}
e_0Be_0&=\set{
         (a_0^*a_1a_1^*a_0)^\ell;0\ppq\ell\ppq \md }\\
e_1Be_1&=\set{
         (a_1^*a_0a_0^*a_1)^\ell;0\ppq\ell\ppq \md }\\
e_iBe_i&=\set{
         (a_i^*a_i)^\ell ;0\ppq \ell\ppq n-i-1}\\&\quad\cup\set{
                                                   (a_i^*a_i)^\ell a_{i-1}\cdots a_2a_1a_1^*a_2^*\cdots a_{i-1}^*;0\ppq \ell\ppq n-i-1} \quad\text{if } i\pgq 2\\
e_2Be_i&=\set{
         (a_2^*a_2)^\ell a_i;0\ppq \ell\ppq n-3}\text{ for }i\in\set{0,1}\\
e_{i+1}Be_i&=\set{
             (a_{i+1}^*a_{i+1})^\ell a_{i};0\ppq \ell\ppq
             n-i-2}\\&\quad\cup\set{
                       (a_{i+1}^*a_{i+1})^\ell a_{i}\cdots
                       a_2a_1a_1^*a_2^*\cdots a_{i-1}^*;0\ppq \ell\ppq n-i-2}\quad\text{if } i\pgq
                       2\\
e_iBe_{i+1}&=\aa(e_{i+1}Be_i), \quad e_iBe_2=\aa(e_2Be_0)\text{ for }i\in\set{0,1}.
\end{align*}

For each $i$, $Ae_i$ contains precisely one basis element of maximal length $2(n-2)$, which is 
\begin{align*}
&\pi_0=-a_0^*a_2^*a_3^*\cdots a_{n-2}^*a_{n-2}\cdots a_3a_2a_0=
\begin{cases}
-(a_0^*a_1a_1^*a_0)^{\md }&\text{ if $n$ is even}\\
-a_1^*a_0(a_0^*a_1a_1^*a_0)^{\md }&\text{ if $n$ is odd}\\
\end{cases}
\\
&\pi_1=a_1^*a_2^*a_3^*\cdots a_{n-2}^*a_{n-2}\cdots a_3a_2a_1=\begin{cases}
(a_1^*a_0a_0^*a_1)^{\md }&\text{ if $n$ is even}\\
a_0^*a_1(a_1^*a_0a_0^*a_1)^{\md }&\text{ if $n$ is odd}\\
\end{cases}
\\
&\pi_i=(a_i^*a_i)^{n-i-1}a_{i-1}\cdots a_2a_1a_1^*a_2^*\cdots a_{i-1}^*
  \quad\text{if }2\ppq i\ppq n-2\\
&\pi_{n-1}=a_{n-2}a_{n-3}\cdots a_1a_1^*a_2^*\cdots a_{n-2}^*.
\end{align*}
They form a basis of the socle of $A$.

\subsubsection{The Koszul cohomology and homology spaces in type D}

The centre of $A$ does not depend on the characteristic of $\ff$ by fact \ref{fact:dims} and  was computed in
\cite{eu:product} in characteristic $0$. A basis of $\HH^1(\Lambda _\cc)$ was determined in \cite{eu:product}.  Moreover,  $\dim\HK^1(A)=\dim\HH^1(A)$, which is equal to $\dim\HH^1(\Lambda_\cc)=n$
if $\car(\ff)\neq 2$ and to $\dim\HH^1(\Lambda_\cc)+m$ if $\car(\ff)= 2$ by \cite{eusched:cyfrob}.

It also follows from Theorem \ref{complexduality} and \cite{eusched:cyfrob} that
$\dim\HK^2(A)=\dim\HK_0(A)=\dim\HH_0(A)$, which is equal to $n-\md-2$ if  $\car(\ff)\neq 2$ and to
$n-2$  if $\car(\ff)=2$.

In order to give bases of the $\HK^p(A)$ for $p=0,1,2$, we define the following cochains:
\begin{itemize}
\item the elements $z_0=1$ and
  $z_\ell=(a_0^*a_1a_1^*a_0)^\ell+(a_1^*a_0a_0^*a_1)^\ell+\sum_{i=2}^{n-2}(a_i^*a_i)^{2\ell}=z_1^{\ell}$ for
  $\ell>0$ in $A$. Note that if  $n$ is even, then
  $z_1^u=z_1^{\md }=-\pi_0+\pi_1$, but if $n$ is odd then $z_1^u=0$;
\item the elements $\zeta_\ell\in \Hom_{k^e}(V,A)$ with $0\ppq\ell\ppq u-1$ defined by  $\zeta_\ell(a_i)=a_iz_\ell$ for all $i$;
\item   the elements $\rho_\ell\in \Hom_{k^e}(V,A)$ with $0\ppq \ell\ppq \md -1$ where
  $\rho_\ell(a_i)=(a_2^*a_2)^{2\ell+1}a_i$  for $i=0,1$ and $\rho_\ell(a_i^*)= a_i^*
  (a_2^*a_2)^{2\ell+1}$  for $i=0,1$;
 
\item  the  elements $h_j\in \Hom_{k^e}(R,A)$ for
  $0\ppq j \ppq n-1$, where $h_j(\sigma_i)=\delta_{ij}e_j$;
\item the  elements $\gamma_\ell\in \Hom_{k^e}(R,A)$ for $1\ppq \ell\ppq \md $ where $\gamma_\ell(\sigma_0)=(a_0^*a_1a_1^*a_0)^\ell$.
\end{itemize}

\begin{Po} Let $A$ be a preprojective algebra of type $D_n$. 
\begin{enumerate}[\itshape(i)]
\item \sloppy The elements in $\set{\pi_i;0\ppq i\ppq n-1\text{ and }\nk(e_i)=e_i}\cup\set{z_\ell;0\ppq \ell\ppq u-1}$ 
form a basis of $\HK^0(A).$

\item If $\car(\ff)\neq 2$, the  $\ov\zeta_\ell$, for $0\ppq\ell\ppq u-1$,
  form a basis of $\HK^1(A)$.

If  $\car(\ff)= 2$, the  $\ov\zeta_\ell$ for $0\ppq\ell\ppq u-1$ and  the
$\ov\rho_\ell$ for  $0\ppq \ell\ppq \md -1$  form a basis of $\HK^1(A)$.

\item If $\car(\ff)\neq 2$, the   $\ov h_j$ for
  $0\ppq j \ppq n-1$ form a basis of $\HK^2(A)$.

If  $\car(\ff)= 2$, the   $\ov h_j$ for
  $0\ppq j \ppq n-1$ and  the  $\ov\gamma_\ell$ for $1\ppq \ell\ppq \md $ form a basis of $\HK^2(A)$.
\end{enumerate}
  
\end{Po}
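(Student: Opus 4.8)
The plan is to prove each of the three parts by first establishing the cohomological dimension of the relevant $\Hom_{k^e}(W_p,A)$ spaces, then exhibiting the listed cochains as cocycles, then checking they are linearly independent modulo coboundaries, and finally matching the count against the known dimension obtained from Theorem~\ref{complexduality} and~\cite{eusched:cyfrob}. Since $n\pgq 3$ in type $\mathrm{D}_n$, Theorem~\ref{w3iszero} gives $W_p=0$ for $p\pgq 3$, so every element of $\Hom_{k^e}(R,A)$ is automatically a $2$-cocycle, and only $b_K^1$ and $b_K^2$ are relevant; both are homogeneous of coefficient weight $1$ by Subsection~\ref{weight}, which is the key tool for the linear-independence arguments via fact~(N\ref{fact:free up to coboundaries}).

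For part \textit{(i)}, one checks directly that the socle generators $\pi_i$ with $\nk(e_i)=e_i$ and the central powers $z_\ell=z_1^\ell$ lie in $Z(A)=\HK^0(A)$; this is essentially recorded in~\cite{eu:product}, and since $\HK^0(A)=Z(A)$ is independent of $\car\ff$ by fact~(N\ref{fact:dims}), it suffices to count. The central socle elements have coefficient weight $2(n-2)$ while $z_\ell$ has weight $4\ell$ for $0\ppq\ell\ppq u-1$; overlaps in weight are handled by noting the $\pi_i$ span the intersection $Z(A)\cap\soc(A)$ and are linearly independent there, and the $z_\ell$ are powers of $z_1$, hence one reduces to the known $\car 0$ basis of the centre via fact~(N\ref{fact:dims}). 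For part \textit{(ii)}, one verifies that $\zeta_\ell$ and $\rho_\ell$ are $1$-cocycles by computing $b_K^2$ on $R$: each $\zeta_\ell$ corresponds to the inner derivation-like map $x\mapsto xz_\ell$ with $z_\ell$ central, so $b_K^2(\zeta_\ell)(\sigma_i)=\zeta_\ell(\sigma_i)$-type terms cancel using centrality of $z_\ell$; for $\rho_\ell$ in characteristic $2$ one uses that $a_2^*a_2$ behaves well with the relation $\sigma_2=a_0a_0^*+a_1a_1^*-a_2^*a_2$ precisely when the sign $-1$ equals $+1$. Their coefficient weights are $4\ell+1$ for $\zeta_\ell$ and $4\ell+3$ for $\rho_\ell$, all distinct, so none being a coboundary (each has positive coefficient weight but the relevant $b_K^1$ image, computed on $V$, does not hit them — checked by a direct weight/basis argument) gives linear independence by fact~(N\ref{fact:free up to coboundaries}); the count $u$ (resp.\ $u+\md$) matches $\dim\HH^1(\Lambda_\cc)=n$ since $u+\md=n-2$ and one extra dimension appears in the centre. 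For part \textit{(iii)}, the $h_j$ are cocycles automatically, have coefficient weight $0$, and every element of $\im b_K^2$ has coefficient weight $\pgq 1$, so the $h_j$ are independent modulo $\im b_K^2$; in characteristic $2$ one adds the $\gamma_\ell$, which have coefficient weight $4\ell>0$, and one must check they are not coboundaries and are independent from each other and from any boundaries — this uses that $\im b_K^2$ in a fixed positive weight is computed from $\Hom_{k^e}(V,A)$ and, by explicit inspection of the bases of $e_jAe_i$ listed above, the elements $(a_0^*a_1a_1^*a_0)^\ell\in e_0Ae_0$ are not in that image.

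The dimension inputs come from Theorem~\ref{complexduality}: $\dim\HK^2(A)=\dim\HK_0(A)=\dim\HH_0(A)$, which by~\cite[Theorem~3.2.7]{eusched:cyfrob} equals $n-\md-2=u$ when $\car\ff\neq 2$ and $n-2=u+\md$ when $\car\ff=2$; together with the $\HK^1$ dimension this pins down the bases once independence is established. The proof therefore reduces, modulo the already-cited Hochschild dimension data, to: (a) the cocycle verifications, which are routine but need the explicit relations $\sigma_i$; (b) the coboundary exclusions, which are the genuinely delicate point because one must know that the listed weight-homogeneous cocycles do not lie in $\im b_K^1$ or $\im b_K^2$ respectively.

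The main obstacle I anticipate is the coboundary-exclusion step in characteristic $2$, specifically showing $\ov\rho_\ell\neq 0$ in $\HK^1(A)$ and $\ov\gamma_\ell\neq 0$ in $\HK^2(A)$. Unlike the characteristic-free arguments where a positive-coefficient-weight cocycle is automatically not a coboundary of something in lower homological degree, here $b_K^1:\Hom_{k^e}(k,A)\to\Hom_{k^e}(V,A)$ and $b_K^2:\Hom_{k^e}(V,A)\to\Hom_{k^e}(R,A)$ do raise coefficient weight, so one cannot argue purely by weight. Instead one must compute these differentials explicitly on the relevant graded pieces using the bases of $e_jAe_i$ for adjacent vertices given above, and verify that the specific elements $(a_2^*a_2)^{2\ell+1}a_i$ (for $\rho_\ell$) and $(a_0^*a_1a_1^*a_0)^\ell$ (for $\gamma_\ell$) do not appear in the images — this is where the characteristic-$2$ collapse of the sign in $\sigma_2$ makes extra classes survive, and it requires careful bookkeeping with the branch vertex $2$ of the $\mathrm{D}_n$ quiver.
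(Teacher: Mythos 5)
Your proposal is correct in outline and follows the same skeleton as the paper's proof: pin down the dimensions via Theorem~\ref{complexduality} and the Eu--Schedler data, quote the literature for $\HK^0(A)$ and for $\HK^1(A)$ in characteristic $\neq 2$, observe that every $2$-cochain is a cocycle, and use the weight-homogeneity of $b_K^1$ and $b_K^2$ together with fact~(N\ref{fact:free up to coboundaries}) to reduce linear independence to showing that each listed cocycle of positive coefficient weight is individually not a coboundary. Where you diverge is in how that last step is organised, and this is precisely the one place where the paper has a labour-saving idea that you miss: at the level of cochains one has $\rho_\ell=\rho_0\cupk z_\ell$ and $\rho_\ell\cupk z_{\md-1-\ell}=\rho_{\md-1}$ (and likewise $\gamma_\ell\cupk z_{\md-\ell}=\gamma_\md$), and since cupping with a cocycle sends coboundaries to coboundaries by the Leibniz identity~\eqref{eq:dga}, the cocycle verification collapses to the single case $\rho_0$ and the non-coboundary verification collapses to the single top-weight cases $\rho_{\md-1}$ (weight $4\md-1$) and $\gamma_\md$ (weight $4\md$), each of which is one finite check against the explicit bases of the $e_jAe_i$. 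Your plan instead calls for a separate explicit computation of $\im b_K^1$ in weight $4\ell+3$ for every $0\ppq\ell\ppq\md-1$ and of $\im b_K^2$ in weight $4\ell$ for every $1\ppq\ell\ppq\md$; this would work, and you correctly identify it as the delicate point, but it multiplies the bookkeeping at the branch vertex by a factor of $\md$ and you leave all of it as "to be checked". So: no error, but the multiplicative structure of the Koszul cup product with the central elements $z_\ell$ is what turns the hard step into a single tractable computation, and your write-up should incorporate it (or else actually carry out the $\md$ separate verifications) before the argument is complete.
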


\begin{proof}
The results for $\HK^0(A)$ and, when $\car(\ff)\neq 2$, for  $\HK^1(A)$ follow from the comments before the proposition. 

Assume that $\car(\ff)=2$. In order to prove the result for $\HK^1(A)$, we must prove that the elements we have considered in $\Hom_{k^e}(V,A)$
 are cocycles that are linearly independent modulo coboundaries.
It is in fact enough to prove that the $\rho_\ell$ are cocycles that are not coboundaries by fact
\ref{fact:free up to coboundaries}.

First note that, at the level of cochains, $\rho_\ell=\rho_0\cupk z_\ell$. Therefore, to prove that
$\rho_\ell$ is a cocycle, it is enough to prove that $\rho_0$ is a cocycle, and this is easy to check.

Since $\rho_{\ell}\cupk z_{\md -1-\ell}=\rho_{\md -1}$,  in order to prove that $\rho_\ell$ is not a coboundary for all $\ell$, it is enough to prove that $\rho_{\md -1}$ is not a coboundary. The map $\rho_{\md -1}$
has coefficient weight $4\md -1$. If $\rho_{\md -1}$ is a coboundary, then it is the image of a
morphism in $\Hom_{k^e}(k,A)\cong\bigoplus_{i\in Q_0}e_iA e_i$ whose coefficients are linear
combinations of cycles in $A$ of  weight $4\md -2$, which are known. It is then straightforward to show that the image of any such morphism under $b_K^1$ is not
equal to $\rho_{\md -1}$.

For \textit{(iii)}, we first observe that every cochain in $\Hom_{k^e}(R,A)$ is a
cocycle. Moreover, the $h_j$ are $n$ cocycles that are clearly linearly independent modulo coboundaries (all coboundaries have coefficient weight at least equal to $1$). Therefore if $\car(\ff)\neq 2$, the result follows.

If $\car(\ff)=2$,  it is enough to prove that the $\gamma_\ell$ are  not coboundaries  by fact
\ref{fact:free up to coboundaries}. At the level of cochains, $\gamma_\ell\cupk z_{\md -\ell}=\gamma_\md $ for
$1\ppq \ell\ppq \md $, therefore  it is enough to prove that $\gamma_\md $ is not a coboundary. The map $\gamma_\md $ has
coefficient weight $4\md $, therefore  if $\gamma_\md $ is a coboundary, then it is the image of a
morphism in $\Hom_{k^e}(V,A)$ whose coefficients are linear
combinations of elements of weight  $4\md -2$ in $A$ between two adjacent vertices in $Q_0$, which are known. Here again, checking that  that the image of any such morphism under $b_K^2$ is not
equal to   $\gamma_\md $   is straightforward.
\end{proof}

Koszul homology follows using duality (Theorem \ref{complexduality}), as in Corollary \ref{cor:bases
hom type A}.

\subsubsection{Cup  and cap products}

We now determine the cup products of the elements in the bases of the Koszul cohomology spaces given above.

\begin{Lm}
  \label{lemma:type D coboundaries for cup} For $1\ppq i\ppq n-2$, consider the cochains $u_i,$
  $v_i$ and $w_i$ in $\Hom_{k^e}(R,A)$ defined by
     $ u_i(\sigma_j)=\delta_{ij}e_iz_1$,
    $v_i(\sigma_j)=\delta_{ij}\pi_i$ and 
    $w_i(\sigma_j)=\delta_{ij}a_i^*a_iz_1$ for all $j\in Q_0$.
 
  If $\car(\ff)\neq 2$, the $u_i$, $v_i$ and $w_i$ are all coboundaries.

  If $\car(\ff)=2$, then the $u_i$ for $i\pgq 2$ are coboundaries, and $\ov u_0=\ov u_1=\ov \gamma_1$. Moreover, if $n$ is odd, all
  the $v_i$ are coboundaries and if $n$ is even, then $\ov v_i=\ov \gamma_{\md}$ for all $i$.
  Finally, the $w_i$ are all coboundaries.
\end{Lm}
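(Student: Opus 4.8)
The plan is to prove that the listed cochains are coboundaries by exhibiting, in each case, an explicit preimage under $b_K^1\colon \Hom_{k^e}(k,A)\to\Hom_{k^e}(V,A)$ or under $b_K^2\colon \Hom_{k^e}(V,A)\to\Hom_{k^e}(R,A)$, and to identify those that are \emph{not} coboundaries with the class of $\ov\gamma_\ell$ by comparing coefficient weights and using fact~\ref{fact:free up to coboundaries}. Recall from \eqref{defb}--\eqref{defcob} that $b_K^1(c)$ sends $a\in\ov Q_1$ to $c(\mt(a))\,a-a\,c(\mo(a))$ for $c\in\bigoplus_i e_iAe_i$, and $b_K^2(f)$ sends $\sigma_i=\sum_{\mt(a)=i}\varepsilon(a)aa^*$ to $\sum_{\mt(a)=i}\varepsilon(a)\big(f(a)a^*-a\,f(a^*)\big)$ for $f\in\Hom_{k^e}(V,A)$.

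First I would treat $w_i$. Using fact \ref{fact:anti-alg} and the basis from \cite{eu:product}, the element $a_i^*a_i z_1$ lies in $e_iAe_i$, and the cochain $c_i\in\Hom_{k^e}(k,A)$ sending $e_i$ to a suitable cycle at $i$ (roughly $a_{i-1}\cdots a_1a_1^*\cdots a_{i-1}^*$ times a power of $z_1$, adjusted so the telescoping sum in $b_K^2$ collapses) should satisfy $b_K^2(c_i\cupk\,\zeta_0)=w_i$; more directly, $w_i=b_K^2(g_i)$ for $g_i\in\Hom_{k^e}(V,A)$ defined on the arrows around vertex $i$ so that the two terms $\varepsilon(a)f(a)a^*$ and $-\varepsilon(a)a f(a^*)$ for $\mt(a)=i$ combine to $a_i^*a_iz_1$ and all other $\sigma_j$ go to $0$. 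The same template handles $u_i$ for $i\geq 2$ and, when $\car\ff\neq 2$, all $u_i$; the factor $\tfrac12$ needed to split $z_1=\sum a_j^*a_j$-type contributions across a vertex is exactly what forces the characteristic hypothesis, and its failure in characteristic $2$ at the two ``leaves'' $0,1$ of the $D_n$ fork is why $\ov u_0=\ov u_1$ survives. For $v_i$, since $\pi_i\in\soc(A)$ is annihilated by every arrow, the telescoping identities make $v_i$ a coboundary whenever a cycle of weight $2(n-2)-2$ supported appropriately at $i$ exists; in odd type this always works, while in even type $\pi_i=\pm z_1^{\md}$ (up to sign at $i=0,1$) and a weight count pins $\ov v_i$ to the top class $\ov\gamma_{\md}$.

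For the ``$=$'' identifications I would argue purely by biweight. Each $\gamma_\ell$ has coefficient weight $4\ell$ and is supported on $\sigma_0$; each $u_0,u_1$ has coefficient weight $4$; each $v_i$ (in even type) has coefficient weight $4\md$. Since $\HK^2(A)$ in characteristic $2$ has basis $\{\ov h_j\}\cup\{\ov\gamma_\ell\}$ with the $\ov h_j$ of coefficient weight $0$ and the $\ov\gamma_\ell$ of pairwise distinct positive coefficient weights $4\ell$, fact \ref{fact:free up to coboundaries} forces any cocycle of coefficient weight $4\ell>0$ to be congruent modulo coboundaries to a scalar multiple of $\ov\gamma_\ell$. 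Computing that scalar (it will be $1$ after the normalisations already fixed in the statement of the previous proposition) is a short explicit check evaluating on $\sigma_0$ and comparing with $(a_0^*a_1a_1^*a_0)^\ell$, using that $u_0-u_1$ is a coboundary (it equals $b_K^2$ of the cochain sending $a_0\mapsto a_0,\ a_1\mapsto -a_1$ and all $*$-arrows to $0$, up to the relation $\sigma_2$), which yields $\ov u_0=\ov u_1$.

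The main obstacle is the explicit bookkeeping of the telescoping sums in $b_K^2$ around a vertex of the $D_n$ quiver, especially at the trivalent vertex $2$ and the two leaves $0,1$, where the relation $\sigma_2=a_0a_0^*+a_1a_1^*-a_2^*a_2$ mixes three arrows and the $(-)^*$-symmetry of fact \ref{fact:anti-alg} must be used carefully to keep track of signs $\varepsilon(a)$. Getting the preimages right so that \emph{all} other $\sigma_j$ map to $0$ simultaneously is the delicate point; once a correct candidate cochain is written down, verifying $b_K^2$ of it equals the target is routine (if tedious) multiplication in the known basis of $A$. I expect to organise this by first recording a small ``lemma'' of the form $b_K^2$ of the cochain supported at arrows incident to $i$ equals $h_i$ composed with multiplication by a chosen central-ish element, then deducing all four families $u_i,v_i,w_i$ (and the implicit statement about $u_0-u_1$) as instances.
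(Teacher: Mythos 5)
Your biweight reduction is the right key idea, and it is in fact how the paper disposes of most of the lemma: since $W_3=0$ every element of $\Hom_{k^e}(R,A)$ is a cocycle, $b_K^2$ is homogeneous of coefficient weight $1$, and the basis classes of $\HK^2(A)$ live in coefficient weights $0$ (the $\ov h_j$) and $4\ell$ (the $\ov\gamma_\ell$, only in characteristic $2$). Hence the $w_i$ (coefficient weight $6$), the $v_i$ for $n$ odd (weight $2(n-2)\equiv 2\bmod 4$), and \emph{all} of $u_i,v_i,w_i$ when $\car(\ff)\neq 2$ are coboundaries with no explicit preimage needed at all; your plan to construct preimages for these cases, and your ``factor $\tfrac12$'' heuristic, are unnecessary detours (the characteristic hypothesis enters through which $\ov\gamma_\ell$ survive in the basis, not through an explicit $\tfrac12$).

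The genuine gap is in the cases the weight argument cannot decide, which is where the real content of the lemma lies. The cochains $u_i$ for $i\pgq 2$ have the \emph{same} coefficient weight $4$ as $\gamma_1$, so biweight only tells you $\ov u_i=\lambda_i\ov\gamma_1$ for some scalar; showing $\lambda_i=0$ for $i\pgq2$ and $\lambda_0=\lambda_1=1$, and likewise $\ov v_i=\ov\gamma_{\md}$ for every $i$ when $n$ is even, requires actually exhibiting the connecting coboundaries. You defer all of these (``should satisfy'', ``adjusted so the telescoping sum collapses''), and the one formula you do write down is wrong: the cochain $g$ with $g(a_0)=a_0$, $g(a_1)=-a_1$ has coefficient weight $1$, so $b_K^2(g)$ has coefficient weight $2$ and cannot equal $u_0-u_1$, which has coefficient weight $4$. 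Moreover ``evaluating on $\sigma_0$'' cannot determine the scalar for $u_1$ or for $v_i$ with $i\pgq 1$, since these vanish on $\sigma_0$ while $\gamma_\ell$ is supported there. The paper's proof fills exactly this gap with explicit weight-$3$ cochains $p_j$ (e.g.\ $p_j(a_0)=a_1a_1^*a_0$, $p_j(a_1)=a_0a_0^*a_1$, $p_j(a_j)=a_ja_j^*a_j$) and weight-$(2(n-2)-1)$ cochains $q_j$, whose images under $b_K^2$ telescope to give $u_i$ for $2\ppq i\ppq n-3$, $u_0+u_1$, $v_i-v_1$ and $v_1-v_0$; without producing such cochains (or an equivalent computation) the identifications $\ov u_0=\ov u_1=\ov\gamma_1$ and $\ov v_i=\ov\gamma_{\md}$ remain unproved.
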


\begin{proof} Every element in $\Hom_{k^e}(R,A)$ is a cocycle. Moreover, the differential $b_K^2$ is homogeneous of degree $1$ with respect to the coefficient weight, and the coefficient weight of all the basis elements in $\HK^2(A)$ is a multiple of $4$, and is $0$ if $\car(\ff)\neq 2.$

It follows that if $\car(\ff)\neq 2$, all the $u_i,$ $v_i$ and $w_i$ must be coboundaries, and if $\car(\ff)=2$, the $w_i$ are coboundaries and so are the $v_i$ if $n$ is odd.

Assume that $\car(\ff)=2$. We must now study the $u_i$, as well as the $v_i$ when $n$ is even.

Note that $u_{n-2}=0$ and $u_0=\gamma_1$. For $0\ppq i\ppq n-3$, define $p_i\in\Hom_{k^e}(V,A)$ by
$p_i(a_0)=a_1a_1^*a_0$, $p_i(a_1)=a_0a_0^*a_1$ and $ p_i(a_i)=a_ia_i^*a_i$  if $i\pgq2$.
 Then, for $2\ppq i\ppq n-3$, we have $u_i=b_K^2\left(\sum_{j=i}^{n-3}p_j\right)$. Moreover, $b_K^{2}\left(\sum_{j=0}^{n-3}p_j\right)=u_0+u_1.$
 It follows that  the cohomology classes of $u_0$ and $u_1$ are both equal to that of $\gamma_1$.

We now turn to the $v_i.$ Note that since $n$ is even and $\car(\ff)=2$, the map $v_0$ is the map $\gamma_\md $, which is not a coboundary.

Define $q_i\in\Hom_{k^e}(V,A)$ by $q_i(a_i)= (a_{i+1}^*a_{i+1})^{n-i-2} a_{i}\cdots
                       a_2a_1a_1^*a_2^*\cdots a_{i-1}^*$ for $1\ppq i\ppq n-2$ and
                       $q_0(a_0)=(a_2^*a_2)^{n-3} $. Then, for $2\ppq i\ppq n-2$, we have
                       $v_i-v_1=b_K^2\left(\sum_{j=1}^{i-1}q_j\right)$, and
                       $v_1-v_0=b_K^2(q_0-q_1)$. Therefore  $\ov v_i=\ov v_0=\ov \gamma_\md$ for all
                       $i$.
\end{proof}

We now give all the non zero cup products.

\begin{Po} Let $A$ be a preprojective algebra of type D$_n$.
  Up to graded commutativity, the non zero cup products of elements in $\HK^{\bullet}(A)$ are:
\begin{align*}
  z_0\cupk \ov f&=\ov f\text{ for all $\ov f\in\HK^{\bullet}(A)$};&
z_{\ell_1}\cupk z_{\ell_2}&=
  \begin{cases}
    z_{\ell_1+\ell_2}\text{ if $\ell_1+\ell_2\ppq u-1$};\\
    -\pi_0+\pi_1\text{ if $n$ is even and $\ell_1+\ell_2=\md $};
  \end{cases}\\
   z_{\ell_1}\cupk \ov \zeta_{\ell_2}&=\ov \zeta_{\ell_1+\ell_2}\text{ if $\ell_1+\ell_2\ppq u-1$};
 \\
  z_{\ell_1}\cupk \ov \rho_{\ell_2}&=\ov \rho_{\ell_1+\ell_2}\text{ if $\ell_1+\ell_2\ppq u-1$};&
    z_\ell\cupk \ov h_i&=                     \begin{cases}
                       \ov \gamma_\ell\text{ if $\ell\pgq 1$, $\car(\ff)=2$ and $i\in\set{0,1}$}\\h_i\text{ if $\ell=0$;}
                     \end{cases}\\z_{\ell_1}\cupk\ov \gamma_{\ell_2}&=
  \ov \gamma_{\ell_1+\ell_2}\text{ if $\ell_1+\ell_2\ppq \md $;}&
    \pi_i\cupk \ov h_j&= \gamma_\md \text{ if $i=j$, $n$ is even and
                        $\car(\ff)=2$}.
                    \end{align*}
\end{Po}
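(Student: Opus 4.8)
The plan is to compute all cup products directly from the defining formula~\eqref{kocup}, systematically exploiting the weight grading (Subsection~\ref{weight}) together with the two facts already established: that each space $\HK^p(A)$ for $p=0,1,2$ is spanned by the listed cochains, and that $b_K^1$, $b_K^2$ are homogeneous of coefficient weight $+1$. The first and easiest step is to dispose of products with $z_0=1$, which is just the unit of $\HK^\bullet(A)$, so $z_0\cupk\ov f=\ov f$ is immediate. Next, I would treat products of two degree-$0$ classes: since $\HK^0(A)=Z(A)$ with its ordinary multiplication, the products $z_{\ell_1}\cupk z_{\ell_2}$ and $\pi_i\cupk\pi_j$, $z_\ell\cupk\pi_i$ are computed inside the centre of $A$. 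Here one uses the explicit description $z_\ell=z_1^\ell$ and the fact that $\pi_i$ lies in the socle, so $z_1\pi_i=0$ unless forced otherwise; the only subtle identity is $z_1^{\md}=-\pi_0+\pi_1$ when $n$ is even, which follows from the explicit formulas for the $\pi_i$ given just before the proposition, and the vanishing $z_1^u=0$ when $n$ is odd. These facts, being statements about $Z(A)$ alone, can be cited from~\cite{eu:product} (valid in all characteristics by fact~\ref{fact:dims}).

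For the mixed products $z_{\ell_1}\cupk\ov\zeta_{\ell_2}$ and $z_{\ell_1}\cupk\ov\rho_{\ell_2}$, I would observe that at the cochain level $\zeta_\ell=\zeta_0\cupk z_\ell$ and $\rho_\ell=\rho_0\cupk z_\ell$ (these identities were already noted in the type-D computations), so associativity of $\cupk$ reduces everything to the centre again, giving $z_{\ell_1}\cupk\ov\zeta_{\ell_2}=\ov\zeta_{\ell_1+\ell_2}$ and $z_{\ell_1}\cupk\ov\rho_{\ell_2}=\ov\rho_{\ell_1+\ell_2}$ whenever the index stays in range, and a coboundary (hence $0$ in cohomology) otherwise --- the latter because any cochain of positive coefficient weight landing in $\Hom_{k^e}(V,A)$ that is not on the listed basis must represent a class already accounted for, and one checks it is a coboundary using fact~\ref{fact:free up to coboundaries}. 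The products $\ov\zeta_{\ell_1}\cupk\ov\zeta_{\ell_2}$, $\ov\zeta\cupk\ov\rho$, $\ov\rho\cupk\ov\rho$ land in $\HK^2(A)$; here the key leverage is that every basis element of $\HK^2(A)$ has coefficient weight $\in\{0\}$ (if $\car\ff\neq2$) or a multiple of $4$ (if $\car\ff=2$), whereas $\zeta_{\ell_1}\cupk\zeta_{\ell_2}$ evaluated on $\sigma_i$ has coefficient weight $\geq2$ and not a multiple of $4$ --- or is simply a coboundary by the weight argument --- so all such products vanish.

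The genuinely non-trivial products are $z_\ell\cupk\ov h_i$ and $\pi_i\cupk\ov h_j$, and this is where the main obstacle lies. For $\ell\geq1$ one computes $(z_\ell\cupk h_i)(\sigma_j)=\delta_{ij}\,e_i z_\ell=\delta_{ij}(e_i z_1^\ell)$, which by definition of $z_1$ is the cochain $u_i^{(\ell)}\colon\sigma_j\mapsto\delta_{ij}e_iz_\ell$ --- a scaled version of the cochain $u_i$ from Lemma~\ref{lemma:type D coboundaries for cup}. That lemma already tells us that in characteristic $\neq2$ all these are coboundaries, and in characteristic $2$ one has $\ov u_0=\ov u_1=\ov\gamma_1$ and $\ov u_i=0$ for $i\geq2$; an entirely analogous argument (multiplying the maps $p_j$ by appropriate central elements) upgrades this to $\ov{u_i^{(\ell)}}=\ov\gamma_\ell$ for $i\in\{0,1\}$ and $0$ otherwise when $\car\ff=2$. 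Similarly $\pi_i\cupk h_j$ evaluated on $\sigma_j$ gives $\delta_{ij}\pi_i=\delta_{ij}v_i(\sigma_j)$, so the value of this product is read off from the $v_i$ part of Lemma~\ref{lemma:type D coboundaries for cup}: a coboundary unless $n$ is even and $\car\ff=2$, in which case $\ov v_i=\ov\gamma_{\md}$. The remaining products $z_{\ell_1}\cupk\ov\gamma_{\ell_2}$ follow from $\gamma_\ell=\gamma_0\cupk z_\ell$ (at the cochain level $\gamma_\ell(\sigma_0)=(a_0^*a_1a_1^*a_0)^\ell=(a_0^*a_1a_1^*a_0)z_{\ell-1}$-type identity), reducing once more to central multiplication, valid in range and a coboundary otherwise. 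The one point requiring care throughout is bookkeeping of the sign conventions in~\eqref{kocup} --- since $\ov h_i$ has homological degree $2$, the sign $(-1)^{pq}$ is always $+1$ when paired with a degree-$0$ class, so graded commutativity is automatic and no signs intervene; this is what makes the statement "up to graded commutativity" clean.
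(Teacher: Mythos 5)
Your overall strategy coincides with the paper's: reduce everything to the weight grading, to the cochain-level identities $\zeta_\ell=\zeta_0\cupk z_\ell$, $\rho_\ell=\rho_0\cupk z_\ell$, $z_{\ell_1}\cupk\gamma_{\ell_2}=\gamma_{\ell_1+\ell_2}$, and to Lemma \ref{lemma:type D coboundaries for cup} for the products $z_\ell\cupk\ov h_i$ and $\pi_i\cupk\ov h_j$. Most of your steps are sound, and the handling of the degree-$0$ products, of $z_\ell\cupk\ov h_i$ via $z_{\ell-1}\cupk u_i$, and of $\pi_i\cupk\ov h_j=\delta_{ij}v_j$ is exactly what the paper does.

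There is, however, one genuine gap: the vanishing of $\ov\zeta_{\ell_1}\cupk\ov\rho_{\ell_2}$ in characteristic $2$. You dispose of all degree-$1$ by degree-$1$ products with the coefficient-weight argument, but it fails precisely here. The cochain $\zeta_{\ell_1}$ has coefficient weight $4\ell_1+1$ and $\rho_{\ell_2}$ has coefficient weight $4\ell_2+3$, so $\zeta_{\ell_1}\cupk\rho_{\ell_2}$ has coefficient weight $4(\ell_1+\ell_2+1)$ --- exactly the weight of the basis element $\gamma_{\ell_1+\ell_2+1}$ of $\HK^2(A)$ in characteristic $2$. (Indeed, a direct computation gives $(\rho_{\ell_1}\cupk\zeta_{\ell_2})(\sigma_0)=\pm(a_0^*a_1a_1^*a_0)^{\ell_1+\ell_2+1}$, which is the value of $\gamma_{\ell_1+\ell_2+1}$ on $\sigma_0$, so the product is \emph{a priori} a candidate for a non-zero multiple of $\ov\gamma_{\ell_1+\ell_2+1}$.) The weight argument therefore cannot rule this class out, and an explicit verification is unavoidable: the paper shows that, up to coboundary, $\rho_{\ell_1}\cupk\zeta_{\ell_2}$ reduces to $u_2$ cup a central element, and $u_2$ is a coboundary by the explicit primitives $p_j$ exhibited in Lemma \ref{lemma:type D coboundaries for cup} (a fact that itself requires the explicit cochains, not just weights). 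By contrast, your weight argument does legitimately kill $\zeta_{\ell_1}\cupk\zeta_{\ell_2}$ (weight $\equiv 2 \pmod 4$) and $\rho_{\ell_1}\cupk\rho_{\ell_2}$ (weight $\equiv 2\pmod 4$), so the gap is confined to the mixed product $\zeta\cupk\rho$; you need to supply the explicit coboundary computation there.
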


\begin{proof} We use the notation in Lemma \ref{lemma:type D coboundaries for cup}.

For $\ell\pgq 1$, we have $z_\ell\cupk h_i=z_{\ell-1}\cupk z_1\cupk h_i=z_{\ell-1}\cupk u_i$ and the result follows from  Lemma \ref{lemma:type D coboundaries for cup}.

Next, $\pi_i\cupk h_j=\delta_{ij}v_j$ and again the result is a consequence of  Lemma \ref{lemma:type D coboundaries for cup}.

Now assume that $\car(\ff)=2$, so that the $\ov \rho_\ell$ occur in the basis of $\HK^1(A).$
\sloppy At the level of cochains, we have 
$\rho_{\ell_1}\cupk\rho_{\ell_2}=w_2\cupk z_{\ell_1+\ell_2}$, which  is a coboundary.

The map $\rho_{\ell_1}\cupk \zeta_{\ell_2}=u_2\cupk z_{\ell_1+\ell_2+1}$ is also a coboundary,  as
required.

The remaining cup products are easy to compute. Note that the cup product in $\HK_0(A)\cong Z(A)$ is
the ordinary product, and that the elements $\pi_i$ are in the socle of $A$, hence are annihilated by the radical of $A$.
\end{proof}

The cap products follow using duality, as in Corollary \ref{cor:cap type A}.

\subsubsection{Higher Koszul (co)homology}

As in the case of a preprojective algebra of type $A$, the  cohomology class of the fundamental
$1$-cocycle is equal to  $2\ov\zeta_0$ so that 
 $\partial _\smile^1(z_\ell)=2\ov\zeta_{\ell}$ for $0\ppq \ell\ppq u-1$, $\partial _\smile^1(\pi_i)=0$ and $\partial _\smile^2=0.$ We then have the following higher Koszul cohomology.

\begin{Po}  \label{hkcD} Let $A$ be a preprojective algebra of type D$_n$.
  \begin{enumerate}[\itshape(i)]
  \item 
  If $\car(\ff)=2$, then $\HK^{\bullet}_{hi}(A)=\HK^{\bullet}(A).$

\item   If $\car(\ff)\neq 2$, then
  \begin{align*}
    \HK^0_{hi}(A)&=\HK^0(A)_{>0}\text{ has basis the $\pi_i$ that are in $Z(A)$}\\
    \HK^2_{hi}(A)&=\HK^2(A)\\
    \HK^p_{hi}(A)&=0\text{ if $p\neq 0$ and $p\neq 2$.}
  \end{align*}
\end{enumerate}

\end{Po}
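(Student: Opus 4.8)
The plan is to compute $\HK^\bullet_{hi}(A)$ directly as the cohomology of the complex $\bigl(\HK^\bullet(A),\partial_\smile\bigr)$, where $\partial_\smile=\overline{\me}_A\underset{K}{\smile}-$. By Theorem \ref{w3iszero} we have $\HK^p(A)=0$ for $p\pgq 3$, so this is a three-term complex
\[ 0\longrightarrow \HK^0(A)\xrightarrow{\ \partial_\smile^1\ }\HK^1(A)\xrightarrow{\ \partial_\smile^2\ }\HK^2(A)\longrightarrow 0 , \]
and I would read off its cohomology from the formulas $\partial_\smile^1(z_\ell)=2\ov{\zeta}_\ell$ (for $0\ppq\ell\ppq u-1$), $\partial_\smile^1(\pi_i)=0$ and $\partial_\smile^2=0$ recorded just above, using the bases of $\HK^0(A)$, $\HK^1(A)$ and $\HK^2(A)$ already obtained. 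As in type A (Proposition \ref{hkcA}), the whole argument then splits on $\car(\ff)$, and it is pure linear algebra.

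First, because $\partial_\smile^2=0$ and $\HK^3(A)=0$, one immediately gets $\HK^2_{hi}(A)=\HK^2(A)$ in either characteristic. If $\car(\ff)=2$, then $2\ov{\zeta}_\ell=0$, so $\partial_\smile^1$ vanishes on the spanning set $\set{z_\ell}\cup\set{\pi_i}$ of $\HK^0(A)$; hence $\partial_\smile^1=0$, and since also $\partial_\smile^2=0$, all of the Koszul cohomology survives, which is part \textit{(i)}. If $\car(\ff)\neq 2$, I would argue as follows. The image of $\partial_\smile^1$ is $\spn{2\ov{\zeta}_\ell\,;\,0\ppq\ell\ppq u-1}=\spn{\ov{\zeta}_\ell\,;\,0\ppq\ell\ppq u-1}$, which is all of $\HK^1(A)$ by the basis of $\HK^1(A)$ in this characteristic; thus $\partial_\smile^1$ is onto and $\HK^1_{hi}(A)=\HK^1(A)/\im\partial_\smile^1=0$. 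For the kernel: on the basis $\set{\pi_i\,;\,\ov{\nu}(i)=i}\cup\set{z_\ell\,;\,0\ppq\ell\ppq u-1}$ of $\HK^0(A)$, the map $\partial_\smile^1$ annihilates every $\pi_i$ and sends the $z_\ell$ to the linearly independent elements $2\ov{\zeta}_\ell$, so $\ker\partial_\smile^1=\spn{\pi_i\,;\,\ov{\nu}(i)=i}$, and it remains to recognise this space as $\HK^0(A)_{>0}$ with basis the central $\pi_i$.

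For that last point I would use that $\pi_i$ generates $\soc(Ae_i)$, whence $\pi_i\in e_{\ov{\nu}(i)}Ae_i$ and $\pi_i$ is central precisely when $\ov{\nu}(i)=i$, and then invoke the explicit description of $Z(A)$ recalled above to see that the positive-weight part of the centre is exactly the span of these $\pi_i$. This last identification with $\HK^0(A)_{>0}$ is really the only step needing care; everything else is a short diagram chase, since the genuinely hard work — the computation of $\partial_\smile$ and of the Koszul cohomology bases — is already in place.
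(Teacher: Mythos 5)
Your route is exactly the paper's: the paper offers no proof beyond recording $\partial_\smile^1(z_\ell)=2\ov\zeta_\ell$, $\partial_\smile^1(\pi_i)=0$ and $\partial_\smile^2=0$ and then reading off the cohomology of the three-term complex, which is precisely what you do. The identification of $\im\partial_\smile^1$ with all of $\HK^1(A)$ when $\car(\ff)\neq 2$, the kernel computation, and the characteristic-$2$ case are all correct.

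The one step you yourself flag as ``needing care'' is, however, resolved incorrectly. You propose to identify $\ker\partial_\smile^1=\spn{\pi_i \text{ central}}$ with $\HK^0(A)_{>0}$ by arguing that the positive-weight part of the centre is exactly the span of the central $\pi_i$. That claim is false as soon as $n\pgq 5$: the elements $z_\ell=z_1^\ell$ for $1\ppq\ell\ppq u-1$ are central of weight $4\ell>0$ and, by the stated basis of $\HK^0(A)$, are linearly independent of the $\pi_i$; they lie in $\HK^0(A)_{>0}$ but not in $\ker\partial_\smile^1$, since $\partial_\smile^1(z_\ell)=2\ov\zeta_\ell\neq 0$ in characteristic $\neq 2$. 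What your kernel computation actually (and correctly) yields is that $\HK^0_{hi}(A)$ has basis the central $\pi_i$, i.e.\ it is the top-weight component $\HK^0(A)_{2(n-2)}$ --- the socle part of the centre --- in line with the analogous statements for types E$_6$, E$_7$ and E$_8$. The displayed equality with $\HK^0(A)_{>0}$ should be understood in that sense; do not attempt to prove it as literally written, since the argument you sketch for it cannot go through.
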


Higher Koszul homology follows from Theorem \ref{complexduality} as in Corollary \ref{cor:higher
  type A}.

\subsection{Koszul calculus for preprojective algebras of type E$_6$}

The preprojective algebra $A$ of type $E_6$ is defined by the quiver 
\[\xymatrix@C=10pt@R=10pt{&& && 0\ar@/^/[dd]^{a_0}  && &&\\ \\ 1\ar@/^/[rr]^{a_1} &&
  2\ar@/^/[rr]^{a_2}\ar@/^/[ll]^{a_1^*} && 3 \ar@/^/[rr]^{a_3}\ar@/^/[ll]^{a_2^*}\ar@/^/[uu]^{a_0^*}
  && 4 \ar@/^/[rr]^{a_4}\ar@/^/[ll]^{a_3^*}&& 5 \ar@/^/[ll]^{a_4^*}}\]
 subject to the relations 
\begin{align*}
&\sigma_0=-a_0^*a_0&&\sigma_3=a_0a_0^*+a_2a_2^*-a_3^*a_3\\&\sigma_1=-a_1^*a_1&&\sigma_4=a_3a_3^*-a_4^*a_4\\&\sigma_2=a_1a_1^*-a_2^*a_2&&\sigma_5=a_4a_4^*
\end{align*}

The Nakayama automorphism of \ref{fact:nakayama} is given by 
\begin{align*}
\nu (e_i)&=e_i\text{ if }i\in\set{0,3}&\nu (a_0)&=-a_0&
\nu (a_i)&=a_{5-i}^*\text{ if }i\pgq 1\\
\nu (e_i)&=e_{6-i}\text{ if }i\in\set{1,2,4,5}&\nu (a_0^*)&=a_0^*&
\nu (a_i^*)&=a_{5-i}\text{ if }i\pgq 1.
\end{align*}

To simplify notation, we shall denote by $c_0=a_0a_0^*$, $c_2=a_2a_2^*$ and $c_3=a_3^*a_3$ the three
$2$-cycles at the vertex $3$.

The socle is the part of weight $10$ of $A$, and the set $\set{\pi_i;i\in Q_0}$ where
$\pi_0=a_0^*c_3^2c_0c_3a_0$, $\pi_1=a_4a_3c_0c_3c_0a_2a_1$, $\pi_2=a_2^*(c_3c_0)^2a_3^*$,
  $\pi_3=c_3(c_0c_3)^2$, $\pi_4=\aa(\pi_2)$ and $\pi_5=\aa(\pi_1)$.

\subsubsection{The Koszul cohomology and homology spaces in type E$_6$}

We shall follow the same method as in types A and D, using the results from \cite{eusched:cyfrob}
and Theorem \ref{complexduality} to determine the dimensions of the spaces, and using results from
\cite{eu:product} for the parts that are  the same as in characteristic $0$.

We define the following elements
\begin{itemize}
\item in $A$: $z_0=1$,
  $z_6=a_1^*a_2^*a_3^*a_3a_2a_1+a_2^*c_3^2a_2-c_0c_3c_0+a_3c_2^2a_3^*+a_4a_3a_2a_2^*a_3^*a_4^*$ and
  $z_8=a_2^*c_0c_3c_0a_2+c_0c_3^2c_0+a_3c_0c_3c_0a_3^*$;
\item
in $\Hom_{k^e}(V,A)$: the maps $\zeta_\ell$ defined by $\zeta_\ell(a_i)=a_iz_\ell$ for
  $\ell\in\set{0,6,8}$, the map $\rho_3$ defined by $\rho_3(a_2)=c_0a_2$, $\rho_3(a_3)=a_3c_3$ and
  $\rho_3(a_2^*)=a_2^*c_3$, and  the map $\rho_5$  defined by $\rho_5(a_0)=c_2c_3a_0$, $\rho_5(a_1)=a_2^*c_0a_2a_1$,
  $\rho_5(a_2)=c_2^2a_2$, $\rho_5(a_0^*)=-a_0^*c_3^2$, $\rho_5(a_1^*)=-a_1^*a_2^*c_0a_2$ and
  $\rho_5(a_2^*)=a_2^*c_3^2$;
\item in $\Hom_{k^e}(R,A)$:  the maps $h_j$ defined for $0\ppq j\ppq 5$ by $h_j(\sigma_i)=\delta_{ij}e_j$ for
  all $i$,  the map $\gamma_4$ defined by $\gamma_4(\sigma_0)=a_0^*c_3a_0$ and  the map $\gamma_6$ defined by $\gamma_6(\sigma_0)=a_0^*c_3^2a_0$.
\end{itemize}

We shall use the following lemma.

\begin{Lm}\label{lemma:nsc wgt6 E6}
  Assume that $\car(\ff)=3$. Let $\gamma\in\Hom_{k^e}(R,A)$ be an element of coefficient weight $6$,
  so that 
\begin{align*}
\gamma(\sigma_0)&=\lambda_0a_0^*c_3^2a_0 &\gamma(\sigma_3)&=\lambda_3c_3^2c_0+\lambda_3'c_3c_0c_3+\lambda_3''c_0c_3^2\\
\gamma(\sigma_1)&=\lambda_1a_1^*a_2^*c_0a_2a_1 &\gamma(\sigma_4)&=\lambda_4a_3c_0c_3a_3^*+\lambda_4'a_3c_3c_0a_3^*\\
\gamma(\sigma_2)&=\lambda_2a_2^*c_0c_3a_2+\lambda_2'a_2^*c_3c_0a_2 &\gamma(\sigma_5)&=\lambda_5a_4a_3c_0a_3^*a_4^*.
\end{align*} Then $\gamma$ is a coboundary if, and only if, $\sum_{i=0}^5\lambda_i+\sum_{i=2}^4\lambda_i'+\lambda_3''=0$.
\end{Lm}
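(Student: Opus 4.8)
The plan is to analyse the coboundary condition directly at the level of cochains, using the description of the differential $b_K^2\colon \Hom_{k^e}(V,A)\to\Hom_{k^e}(R,A)$ coming from formula \eqref{defcob}, together with the explicit basis of $A$ recalled above. Since $b_K^2$ is homogeneous of weight $1$ for the coefficient weight, an element $\gamma$ of coefficient weight $6$ can only be the image of a $1$-cochain of coefficient weight $5$; such a cochain $p\in\Hom_{k^e}(V,A)$ is determined by prescribing, for each arrow $\alpha\in\ov Q_1$, an element $p(\alpha)\in e_{\mt(\alpha)}A_5e_{\mo(\alpha)}$. First I would list, for each arrow of the $E_6$ quiver, a basis of the relevant weight-$5$ component $e_jA_5e_i$ (these are paths of length $5$ joining adjacent vertices, read off from Eu's basis); this is a finite and completely explicit bookkeeping task.

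Next I would compute $b_K^2(p)(\sigma_i)$ for each vertex $i$, using $b_K^2(p)(x_1x_2)=p(x_1)x_2-(-1)^1x_1p(x_2)=p(x_1)x_2+x_1p(x_2)$ extended bilinearly to $R$, and reduce the result modulo the preprojective relations to express it in the weight-$6$ basis elements $a_0^*c_3^2a_0$, $a_1^*a_2^*c_0a_2a_1$, $a_2^*c_0c_3a_2$, $a_2^*c_3c_0a_2$, $c_3^2c_0$, $c_3c_0c_3$, $c_0c_3^2$, $a_3c_0c_3a_3^*$, $a_3c_3c_0a_3^*$, $a_4a_3c_0a_3^*a_4^*$ appearing in the statement. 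This produces a linear map $\Phi$ from the (finite-dimensional) space of weight-$5$ cochains to the $10$-dimensional space spanned by those basis elements, and the claim is precisely that the image of $\Phi$, when $\car(\ff)=3$, is the hyperplane cut out by $\sum_{i=0}^5\lambda_i+\sum_{i=2}^4\lambda_i'+\lambda_3''=0$. I would prove this in two directions: (a) check that every $b_K^2(p)$ satisfies this linear relation — equivalently, that the linear functional $\ell\mapsto\sum\lambda_i+\sum\lambda_i'+\lambda_3''$ annihilates $\im\Phi$, which one verifies by evaluating it on $\Phi$ applied to each arrow-supported generator of the domain and seeing the contributions cancel in characteristic $3$ (this is where the hypothesis $\car\ff=3$ enters: the socle elements $\pi_i$, which have weight $10=2\cdot 5$, force certain weight-$6$ cycles to be proportional, and the relevant combinatorial count of length-$5$ paths through vertex $3$ contributes a factor $3$); (b) exhibit enough explicit cochains $p$ whose images $b_K^2(p)$ span the full $9$-dimensional hyperplane, for instance by adapting the maps $q_i$, $p_i$ used in the type D lemma (Lemma \ref{lemma:type D coboundaries for cup}) to the $E_6$ branching vertex.

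The main obstacle I anticipate is purely computational density rather than conceptual difficulty: one must keep careful track of signs (the $\varepsilon(a)$ in the relations $\sigma_i$, and the Koszul sign in $b_K^2$), and one must reduce length-$6$ paths modulo $R$ consistently to a chosen normal form — the relations let one move the ``defect'' $\sigma_i$ around, so the same cycle can be written many ways, and the identification of $\im\Phi$ requires a fixed choice of basis of $e_iA_6e_i$ and $e_jA_6e_i$ at adjacent vertices. A secondary subtlety is confirming that no weight-$6$ cochain $\gamma$ of the given form can be a coboundary ``for a different reason'', i.e. that the coefficient-weight grading genuinely forces the preimage to sit in coefficient weight $5$; this is immediate from homogeneity of $b_K^2$ and has effectively already been used in the surrounding propositions, so I would simply invoke fact \ref{fact:free up to coboundaries} and the weight-homogeneity of the differentials. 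Once the matrix of $\Phi$ is written down, both inclusions $\im\Phi\subseteq\{\sum\lambda_i+\sum\lambda_i'+\lambda_3''=0\}$ and the reverse are finite rank computations; I would present only the resulting identities $b_K^2(p_{\text{test}})=\text{(named basis element)}$ rather than the full reduction, exactly as is done for the $u_i,v_i,w_i$ in type D.
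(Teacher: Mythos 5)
Your plan is exactly the paper's proof: the authors simply record the basis of the weight-$5$ components $e_{\mt(\alpha)}A_5e_{\mo(\alpha)}$ (the space $E\oplus\aa(E)$ with $E$ spanned by $c_3^2a_0$, $c_0c_3a_0$, $a_2^*c_0a_2a_1$, $c_3^2a_2$, $c_0c_3a_2$, $a_3c_0c_3$, $a_3c_3c_0$, $a_4a_3c_0a_3^*$) and then declare the image computation for $b_K^2$ ``straightforward'', which is precisely your finite rank computation of $\im\Phi$. Your parenthetical guess about \emph{why} $\car\ff=3$ matters (socle/proportionality) is not quite the mechanism — it simply comes out of the linear algebra over $\ff$ — but since you correctly reduce everything to that linear algebra, this does not affect the argument.
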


\begin{proof}
The proof is straightforward, once we know that  a cochain of weight $5$ takes its values in
$A_5=E\oplus\aa(E)$, where $E$ is the space spanned by $c_3^2a_0$, $c_0c_3a_0$, $a_2^*c_0a_2a_1,c_3^2a_2$, $c_0c_3a_2$, $a_3c_0c_3$, $a_3c_3c_0$, $a_4a_3c_0a_3^*$.
\end{proof}

\begin{Po}\label{prop:basis E6}
  Let $A$ be a preprojective algebra of type E$_6$. 
\begin{enumerate}[\itshape(i)]
\item The elements in  $\set{z_0,z_6,z_8,\pi_0,\pi_3}$ form a basis of $\HK^0(A)$.
\item If $\car(\ff)\not\in\set{2,3}$, the  elements in
  $\set{\ov\zeta_\ell;\ell=0,6,8}$  form a basis of $\HK^1(A)$.

If $\car(\ff)=2$, the  elements in $\set{\ov\zeta_\ell;\ell=0,6,8}\cup\set{\ov\rho_3}$  form a basis of $\HK^1(A)$.

If $\car(\ff)=3$, the  elements in
$\set{\ov\zeta_\ell;\ell=0,6,8}\cup\set{\ov\rho_5}$  form a basis of $\HK^1(A)$.
\item  If $\car(\ff)\not\in\set{2,3}$, the  elements in $\set{\ov h_j;j\in Q_0}$  form a basis of $\HK^2(A)$.

If $\car(\ff)=2$, the  elements in  $\set{\ov h_j;j\in Q_0}\cup\set{\ov\gamma_4}$   form a basis of $\HK^2(A)$.

If $\car(\ff)=3$, the   elements in $\set{\ov h_j;j\in Q_0}\cup\set{\ov\gamma_6}$   form a basis of $\HK^2(A)$.

\end{enumerate}

\end{Po}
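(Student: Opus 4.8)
The plan is to verify the claimed bases of $\HK^0(A)$, $\HK^1(A)$ and $\HK^2(A)$ for the type $\mathrm{E}_6$ preprojective algebra by combining three ingredients already available in the excerpt: (a) the dimension counts coming from $\cite{eusched:cyfrob}$ together with the duality $\dim\HK^2(A)=\dim\HK_0(A)=\dim\HH_0(A)$ of Theorem \ref{complexduality}, and the analogous identities for $\HK^0$ and $\HK^1$; (b) the fact from Theorem \ref{w3iszero} that $W_3=0$, so that \emph{every} element of $\Hom_{k^e}(R,A)$ is automatically a $2$-cocycle, which trivialises the cocycle condition in degree $2$; and (c) the weight-separation principle (N\ref{fact:free up to coboundaries}): since $b_K^1$ and $b_K^2$ raise the coefficient weight by $1$, cocycles of pairwise distinct coefficient weights that are not coboundaries automatically represent linearly independent classes.

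First I would treat $\HK^0(A)=Z(A)$. This is the centre of $A$, which by (N\ref{fact:dims}) is independent of $\car\ff$ and was computed in $\cite{eu:product}$ in characteristic $0$; one simply records that $\set{z_0,z_6,z_8,\pi_0,\pi_3}$ is the resulting basis (the $\pi_i$ entering are exactly those fixed by the Nakayama permutation, i.e. $\pi_0$ and $\pi_3$, since $Z(A)$ contains a socle element $\pi_i$ precisely when $\np(i)=i$), checking that $z_6,z_8$ are indeed central and that $z_1$ itself is not (so the algebra generated is as claimed). Next, for $\HK^1(A)$, the dimension is $\dim\HH^1(A)$, which equals $\dim\HH^1(\Lambda_\cc)=6$ when $\car\ff\notin\set{2,3}$ and has one extra dimension when $\car\ff\in\set{2,3}$ by $\cite{eusched:cyfrob}$; I would exhibit $\zeta_0,\zeta_6,\zeta_8$ (the ``diagonal'' derivations $x\mapsto xz_\ell$, which are cocycles because the $z_\ell$ are central, and which are linearly independent modulo coboundaries by (N\ref{fact:free up to coboundaries}) since they have distinct coefficient weights $0,6,8$), and then in characteristic $2$ (resp. $3$) add $\rho_3$ (resp. $\rho_5$): one checks directly that these are cocycles — the computation $\rho_\ell=\rho_?\cupk z_?$ at the cochain level, exactly as in the type $\mathrm{D}$ argument, reduces the cocycle check to a single small verification — and that they are not coboundaries by the bounded-length argument: a coboundary hitting $\rho_3$ (resp. $\rho_5$) would have to come from $\Hom_{k^e}(k,A)$ with coefficients that are linear combinations of the known cycles of the appropriate weight, and a finite inspection shows no such preimage exists.

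For $\HK^2(A)$ the cocycle condition is vacuous by $W_3=0$. The $\ov h_j$, $j\in Q_0$, are six cocycles of coefficient weight $0$, hence linearly independent modulo $\im b_K^2$ (every coboundary has positive coefficient weight); by the duality dimension count this already gives all of $\HK^2(A)$ when $\car\ff\notin\set{2,3}$. When $\car\ff=2$ (resp. $3$) there is exactly one extra dimension, and I would show $\ov\gamma_4$ (resp. $\ov\gamma_6$) provides it: $\gamma_4$ has coefficient weight $4$ (resp. $\gamma_6$ has weight $6$), so by (N\ref{fact:free up to coboundaries}) it suffices to prove it is not a coboundary, which follows from an explicit description of the space of weight-$5$ cochains into $A_5$ — precisely Lemma \ref{lemma:nsc wgt6 E6} for the characteristic $3$ case, and an entirely analogous (easier) inspection for the characteristic $2$, weight-$3$ case — showing the relevant coefficient-sum obstruction is nonzero for $\gamma_6$ (resp. $\gamma_4$).

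The main obstacle I anticipate is bookkeeping rather than conceptual: one must have the bases of $A$ between equal and adjacent vertices completely under control (the analogue of the explicit $e_jBe_i$ lists given in types $\mathrm{A}$ and $\mathrm{D}$) in order to (i) verify the handful of cocycle identities for $\rho_3,\rho_5,\gamma_4,\gamma_6$, and (ii) exclude the existence of a preimage under $b_K^1$ or $b_K^2$ in each exceptional characteristic. In particular the ``not a coboundary'' steps in characteristics $2$ and $3$ are the delicate part — they require knowing the full list of paths of the relevant weight at each vertex and checking a linear system has no solution — but each such check is finite and routine once the basis data is in place, and Lemma \ref{lemma:nsc wgt6 E6} packages the worst case. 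The dimension inputs from $\cite{eusched:cyfrob}$ and the duality of Theorem \ref{complexduality} then close the argument, since once we have the right number of linearly independent classes we are done.
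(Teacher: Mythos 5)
Your proposal follows essentially the same route as the paper's proof: the centre and the generic-characteristic description of $\HK^1(A)=\HH^1(A)$ come from \cite{eu:product}, the dimension counts from \cite{eusched:cyfrob} combined with the duality of Theorem \ref{complexduality} fix the sizes, $W_3=0$ makes every degree-two cochain a cocycle, and the exceptional-characteristic classes are handled by weight-graded non-coboundary checks, with Lemma \ref{lemma:nsc wgt6 E6} covering $\gamma_6$. The only divergences are cosmetic: for $\rho_3$ in characteristic $2$ the paper replaces your direct finite inspection by the observation that $z_6\cupk\rho_3-\zeta_8$ is a coboundary, so the non-triviality of $\ov\zeta_8$ forces that of $\ov\rho_3$; and your appeal to the weight-separation fact (N3) for the $\zeta_\ell$ still tacitly requires checking that each $\zeta_\ell$ is individually not a coboundary, which the paper obtains for free by matching the $\zeta_\ell$ (up to explicit coboundaries) with the known basis $\zeta'_\ell$ of $\HH^1$.
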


\begin{proof}
The centre was given in \cite{eu:product}, so we have \textit{(i)}. 

For $\HK^1(A)$ and $\HK^2(A)$, the number of elements in the
statement is equal to the dimension of the corresponding cohomology space. Moreover, all the
elements in the statement are indeed cocycles.

If $\car(\ff)$ is not $2$ or $3$, a basis of $\HK^1(A)=\HH^1(A)$ was given in \cite{eu:product}. It
consists of the classes of the $\zeta'_\ell$ with $\ell\in\set{0,6,8}$ where
$\zeta'_\ell(a_i)=a_iz_\ell$ for $0\ppq i\ppq 2$ and $\zeta'_\ell(a_i^*)=a_i^*z_\ell$ for $3\ppq
i\ppq 4$. Since $\zeta_0-\zeta_0'$ is equal to $b_K^1(e_3+2e_5)$, and 
$\zeta_\ell-\zeta_\ell'=(\zeta_0-\zeta_0')\cupk z_\ell$ is also a coboundary,   $\zeta_\ell$ and
$\zeta_\ell'$ represent the same cohomology class for $\ell\in\set{0,6,8}$. Moreover, as in types A and D, the elements $\ov h_j$ form a basis of $\HK^2(A)$.

If $\car(\ff)\in\set{2,3}$, we need only prove that the extra elements are not coboundaries by fact
\ref{fact:free up to coboundaries}.

If $\car(\ff)=2$, we have $z_6\cupk \rho_3-\zeta_8=b_k^1(g)$ where $g$ is defined by $g(e_2)=a_2^*c_3c_0c_3a_2$, and $\zeta_8$ is not a coboundary,
therefore $\rho_3$ cannot be a coboundary. Moreover, assume that $\gamma_4=b_K^1(g')$ is a coboundary.  Then $g'$ would be of coefficient weight $3$,
and we would necessarily take values in $A_3=E\oplus\aa(E)$ where $E$ is spanned by  $c_3a_0$,
$c_0a_2,$ $c_3a_2$,
$a_3c_0,$ $a_3c_3$. This leads to a contradiction.

If $\car(\ff)=3$, assume that $\rho_5$ is a coboundary
$b_k^1(h),$ then $h$ is of weight $4$, and  necessarily $h(e_0)=\lambda_0 a_0^*c_3a_0$,
$h(e_2)=\lambda_2 a_2^*c_3a_2$ and $h(e_3)= \lambda_3 c_3c_0  + \lambda_3' c_3^2   + \lambda_3 ''
c_0c^3$, and by considering $b_K^1(h)(a_0)$, $b_K^1(h)(a_0^*)$,  $b_K^1(h)(a_2)$ and $b_K^1(h)(a_2^*)$ we get
a contradiction.
 Finally, the fact that $\gamma_6$ is not a coboundary follows from Lemma \ref{lemma:nsc
   wgt6 E6}.
\end{proof}

Koszul homology follows using duality (Theorem \ref{complexduality}), as in Corollary \ref{cor:bases
hom type A}.

\subsubsection{Cup and cap products}

We now determine the cup products of the elements in the bases of the Koszul cohomology spaces given above.

\begin{Po} Let $A$ be a preprojective algebra of type E$_6$.
  Up to graded commutativity, the non zero cup products of elements in $\HK^{\bullet}(A)$ are:
 \begin{align*}
   z_0\cupk \ov  f&=\ov  f\text{ for all $\ov f\in\HK^{\bullet}(A)$}&
 z_{\ell}\cupk \ov  \zeta_0&=\ov  \zeta_\ell\text{ for $\ell\in\set{0,6,8}$}\\
 z_6\cupk \ov  h_i&=\ov  \gamma_6\text{ if $i\in\set{1,4}$}&
z_6\cupk \ov  h_i&=-\ov  \gamma_6\text{ if $ i\in\set{2,5}$}\\
z_6\cupk \ov\rho_3&=\ov\zeta_8&\ov\zeta_0\cupk \ov\rho_3&=\ov\gamma_4.
  \end{align*}
\end{Po}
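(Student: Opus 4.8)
The plan is to compute all cup products on $\HK^\bullet(A)$ by working at the level of cochains, using the explicit bases from Proposition~\ref{prop:basis E6}, the formula \eqref{kocup}, and the fact (Corollary~\ref{cupcapsym}) that $\cupk$ is graded commutative so it suffices to list products up to graded commutativity. The product $z_0\cupk \ov f=\ov f$ is immediate since $z_0=1$ is the unit of $\HK^\bullet(A)$. For the remaining products, the key structural observation is that every coboundary in $\Hom_{k^e}(R,A)$ has coefficient weight at least $1$, while the chosen basis elements of $\HK^2(A)$ have coefficient weight $0$ (the $\ov h_j$) or $4$, $6$ (the $\ov\gamma_\ell$), and similarly the basis elements of $\HK^1(A)$ have coefficient weights $0$, $6$, $8$ (the $\ov\zeta_\ell$) or $3$, $5$ (the $\ov\rho_\ell$). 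Combined with fact~(N\ref{fact:free up to coboundaries}), this means that a cup product is zero in cohomology precisely when its class lies in $\im b_K$, and nonzero only when we can identify it with one of the named basis elements.

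First I would handle the products landing in $\HK^1(A)$: at the cochain level $z_\ell\cupk \zeta_0=\zeta_\ell$ directly from the definition $\zeta_\ell(a_i)=a_iz_\ell$ and \eqref{kocup}, giving $z_\ell\cupk\ov\zeta_0=\ov\zeta_\ell$ for $\ell\in\{0,6,8\}$. The product $z_6\cupk\rho_3$ must be computed explicitly on arrows; the claim is that it differs from $\zeta_8$ by the coboundary $b_K^1(g)$ with $g(e_2)=a_2^*c_3c_0c_3a_2$, which was already used in the proof of Proposition~\ref{prop:basis E6}, so this identity is essentially available. All other products into $\HK^1(A)$, in particular $z_\ell\cupk\rho_3$ for $\ell\neq 0$ other than the one listed, $\rho_3\cupk\rho_3$ (lands in $\HK^2$) and the analogous $\car(\ff)=3$ products $\rho_5\cupk z_\ell$, $\rho_5\cupk\zeta_\ell$, must be shown to be coboundaries; by the weight-and-linear-independence argument these products have coefficient weight too large to be among the $\ov\zeta_\ell$ except in the cases listed, and one checks the single top-weight representative is a coboundary (mimicking the $\rho_{\md-1}$ argument in type D, or invoking Lemma~\ref{lemma:nsc wgt6 E6} when appropriate).

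Next I would treat the products landing in $\HK^2(A)$. The products $\pi_i\cupk\ov h_j$ vanish because the $\pi_i$ lie in the socle of $A$ and hence are annihilated by the radical, so $\pi_i\cupk h_j$ as a cochain takes values in $\soc(A)\cdot\ff=0$ on $R\subseteq (\operatorname{rad}A)^2$ — more precisely $h_j(\sigma_j)=e_j$ and the cup pairs $\pi_i$ with $e_j$, but the relevant product $(\pi_i\cupk h_j)(\sigma_j)=\pm\pi_i\cdot e_j=\delta_{ij}\pi_i$, which however is not obviously a coboundary; this is in fact exactly the content of the type-D style Lemma analogue, and here for $E_6$ these products turn out to vanish in cohomology (unlike type D, where $n$ even and $\car 2$ gave $\gamma_{\md}$), so I would verify that each $\pi_i\cupk h_i$ is a coboundary using the explicit basis of $A$ in weight $10$. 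The interesting products $z_6\cupk\ov h_i$: at the cochain level $z_6\cupk h_i$ sends $\sigma_i\mapsto \pm e_i z_6$; one computes $e_iz_6$ using the explicit form of $z_6$ and checks that for $i\in\{1,2,4,5\}$ the result equals $\pm\gamma_6(\sigma_0)$-type data up to a coboundary, with signs dictated by the Nakayama symmetry $\nu$ exchanging $1\leftrightarrow 2$, $4\leftrightarrow 5$ and the sign rule on arrows; for $i\in\{0,3\}$ it is a coboundary. Finally $\ov\zeta_0\cupk\ov\rho_3=\ov\gamma_4$ in $\car(\ff)=2$ is a direct cochain computation: $(\zeta_0\cupk\rho_3)$ evaluated on $R$ gives, on $\sigma_0$, the term $a_0^*\cdot\rho_3(a_0)$-type expression which reduces to $a_0^*c_3a_0=\gamma_4(\sigma_0)$ modulo lower terms killed by a coboundary.

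The main obstacle will be the bookkeeping of signs and the coboundary identifications for the products that are claimed to vanish: there are many such products (all of $\rho\cupk\rho$, $\rho\cupk\zeta$, $\pi\cupk h$, $z_\ell\cupk h_j$ for $\ell\neq 0,6$, and the mixed $\car 2$/$\car 3$ cases), and for each one must either produce an explicit preimage under $b_K$ or invoke a weight argument plus a single-representative computation. The cleanest strategy is to organize the vanishing proofs by coefficient weight: since $b_K$ raises coefficient weight by $1$ and the basis of $\HK^2(A)$ sits in weights $0,4,6$ (resp. of $\HK^1(A)$ in weights $0,3,5,6,8$), any product whose coefficient weight is not one of these is automatically a coboundary, and for the borderline weights one reduces via $\cupk z_\ell$ to a top-weight representative and checks that single case, exactly as in the proofs of Proposition~\ref{prop:basis E6} and Lemma~\ref{lemma:nsc wgt6 E6}. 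The nonzero products all reduce to short explicit evaluations using the displayed formulas for $z_6$, $z_8$, $\rho_3$, $\rho_5$, $\gamma_4$, $\gamma_6$ together with the relations $\sigma_i$, and the sign pattern in the $z_6\cupk\ov h_i$ family is forced by compatibility with the Nakayama involution.
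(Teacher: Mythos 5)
Your proposal follows essentially the same route as the paper: the unit and $z_\ell\cupk\ov\zeta_0$ products are immediate, the nonzero products $z_6\cupk\ov\rho_3$ and $\ov\zeta_0\cupk\ov\rho_3$ are identified via explicit coboundaries, the $z_6\cupk\ov h_i$ family is settled by the weight-$6$ coboundary criterion of Lemma \ref{lemma:nsc wgt6 E6}, and all remaining products vanish by the coefficient-weight argument. One small slip: the Nakayama permutation in type $\mathrm{E}_6$ is $i\mapsto 6-i$ (pairing $1\leftrightarrow 5$ and $2\leftrightarrow 4$), so it exchanges the two sign classes $\set{1,4}$ and $\set{2,5}$ rather than explaining them --- but this is only a heuristic aside, and your actual sign determination correctly rests on computing $e_iz_6e_i$ modulo coboundaries.
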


\begin{proof}  The first two cup products are clear. 

For the cup products of $z_6$ with the $\ov h_i$ and for $\ov \zeta_0\cupk \ov \rho_5$, we use Lemma \ref{lemma:nsc
   wgt6 E6}.

The last  cup product follows from the fact that we have  $\zeta_0\cupk
\rho_3-\gamma_4=b_K^1(g')$ where $g'(a_0^*)=a_0^*c_3$ and $g'(a_3)=a_3c_3$. The cup product
$z_6\cupk \ov\rho_3$ was already in the proof of Proposition \ref{prop:basis E6}.

Consideration of the coefficient weights yields the vanishing of the other cup products.
\end{proof}

The cap products follow using duality, as in Corollary \ref{cor:cap type A}.

\subsubsection{Higher Koszul cohomology and homology}

As in type A, the differential $\partial _\smile^1$ sends $z_\ell$ to $2\ov\zeta_\ell$ for
$\ell\in\set{0,6,8}$ and the differential $\partial _\smile^2$ is zero.

 We then have the following higher Koszul cohomology.

\begin{Po}  \label{hkcE6} Let $A$ be a preprojective algebra of type E$_6$.

  If $\car(\ff)=2$, then $\HK^{\bullet}_{hi}(A)=\HK^{\bullet}(A).$

  If $\car(\ff)=3$, then
  \begin{align*}
    \HK^0_{hi}(A)&=\HK^0(A)_{10}\text{ has dimension $2$ and is spanned by  $\pi_0$ and $\pi_3$}\\
    \HK^2_{hi}(A)&=\HK^2(A)\\
\HK^1_{hi}(A)&=\spn{[\ov\rho_5]}\\
    \HK^p_{hi}(A)&=0\text{ if $p>2$.}
  \end{align*}

 If $\car(\ff)\not\in\set{2,3}$, then
  \begin{align*}
    \HK^0_{hi}(A)&=\HK^0(A)_{10}\text{ has dimension $2$ and is spanned by  $\pi_0$ and $\pi_3$}\\
    \HK^2_{hi}(A)&=\HK^2(A)\\
    \HK^p_{hi}(A)&=0\text{ if $p\neq 0$ and $p\neq 2$.}
  \end{align*}
\end{Po}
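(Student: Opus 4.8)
\textbf{Plan of proof of Proposition~\ref{hkcE6}.}
The computation of $\HK^{\bullet}_{hi}(A)$ is, by definition, the computation of the cohomology of the complex
$$0 \to \HK^0(A) \xrightarrow{\partial_{\smile}^1} \HK^1(A) \xrightarrow{\partial_{\smile}^2} \HK^2(A) \to 0,$$
where, as stated just before the proposition, $\partial_{\smile}^1(z_\ell) = 2\overline\zeta_\ell$ for $\ell \in \{0,6,8\}$, $\partial_{\smile}^1(\pi_i) = 0$, and $\partial_{\smile}^2 = 0$. So the whole proof amounts to reading off $\ker \partial_{\smile}^1$, $\ker\partial_{\smile}^2/\im\partial_{\smile}^1 = \HK^2(A)/\im\partial_{\smile}^1$, and noting that $\HK^p_{hi}(A) = \HK^p(A) = 0$ for $p > 2$ by Theorem~\ref{w3iszero}. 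I will organise this by characteristic, using the bases of Proposition~\ref{prop:basis E6} throughout.

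First, the case $\car(\ff) = 2$. Here $\overline\me_A = 0$: indeed, the underlying graph $\mathrm{E}_6$ is simple, so by Lemma~\ref{eAcob} the fundamental cocycle $\me_A$ is a coboundary, hence $\overline\me_A = 0$, hence $\partial_{\smile} = \overline\me_A \underset{K}{\smile} - = 0$ on all of $\HK^{\bullet}(A)$. Therefore the higher Koszul calculus coincides with the Koszul calculus, which is the assertion. (Equivalently, one sees directly that the displayed formula $\partial_{\smile}^1(z_\ell) = 2\overline\zeta_\ell$ vanishes in characteristic $2$, and one must also check $\partial_{\smile}^1(\overline\rho_3) = 0$, which follows from the same vanishing of $\overline\me_A$.)

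Next, the case $\car(\ff) \notin \{2,3\}$ (including characteristic $0$). Then $2$ is invertible, so $\partial_{\smile}^1$ sends $z_\ell \mapsto 2\overline\zeta_\ell$ with $\overline\zeta_0, \overline\zeta_6, \overline\zeta_8$ a basis of $\HK^1(A)$; hence $\partial_{\smile}^1$ is an isomorphism of $\mathrm{span}\{z_0,z_6,z_8\}$ onto $\HK^1(A)$, and $\ker\partial_{\smile}^1 = \mathrm{span}\{\pi_0,\pi_3\} = \HK^0(A)_{10}$. Consequently $\HK^1_{hi}(A) = \HK^1(A)/\im\partial_{\smile}^1 = 0$, and $\HK^2_{hi}(A) = \HK^2(A)/\im\partial_{\smile}^1 = \HK^2(A)$ since $\partial_{\smile}^1$ lands in $\HK^1$, and $\HK^0_{hi}(A) = \ker\partial_{\smile}^1$ has dimension $2$, spanned by $\pi_0, \pi_3$, as claimed.

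Finally, the case $\car(\ff) = 3$. Now $2$ is still invertible, so exactly as above $\partial_{\smile}^1(z_\ell) = 2\overline\zeta_\ell$, and $\{\overline\zeta_0,\overline\zeta_6,\overline\zeta_8\}$ spans a $3$-dimensional subspace of $\HK^1(A)$; the new basis element $\overline\rho_5$ of $\HK^1(A)$ is not in the image. Moreover $\partial_{\smile}^1(\overline\rho_5) = \overline\me_A \underset{K}{\smile}\overline\rho_5$; using $\overline\me_A = 2\overline\zeta_0$ (the analogue in type $\mathrm{E}_6$ of the lemma used in types A and D, which the text invokes just before the statement) and $\overline\zeta_0 \underset{K}{\smile}\overline\rho_5 = \overline\gamma_4$ would naively give a nonzero value, so one must check with the explicit cochain-level formulas that $\zeta_0 \underset{K}{\smile}\rho_5$ is in fact a coboundary, i.e. that $\partial_{\smile}^1(\overline\rho_5) = 0$ in $\HK^2(A)$; this is a short direct calculation using the basis of $A_5$ recorded in Lemma~\ref{lemma:nsc wgt6 E6} and Proposition~\ref{prop:basis E6}. (One also needs $\partial_{\smile}^1(\pi_0) = \partial_{\smile}^1(\pi_3) = 0$, which holds because $\partial_{\smile}^1(\pi_i) = 0$ for all $i$.) Granting this, $\ker\partial_{\smile}^1 = \mathrm{span}\{\pi_0,\pi_3,\overline\rho_5\}$? — no: $\overline\rho_5 \in \HK^1$, so it contributes to $\HK^1_{hi}$, not $\HK^0_{hi}$. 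Precisely: $\HK^0_{hi}(A) = \ker\partial_{\smile}^1 \cap \HK^0(A) = \mathrm{span}\{\pi_0,\pi_3\} = \HK^0(A)_{10}$, of dimension $2$; $\HK^1_{hi}(A) = \ker\partial_{\smile}^2/\im\partial_{\smile}^1 = \HK^1(A)/\langle\overline\zeta_0,\overline\zeta_6,\overline\zeta_8\rangle = \mathrm{span}\{[\overline\rho_5]\}$, of dimension $1$; and $\HK^2_{hi}(A) = \HK^2(A)$ since $\partial_{\smile}^1$ lands in homological degree $1$. Together with $\HK^p_{hi}(A) = 0$ for $p > 2$ (Theorem~\ref{w3iszero}), this gives the stated result.

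\textbf{Main obstacle.} The only nontrivial point is the verification in characteristic $3$ that $\partial_{\smile}^1(\overline\rho_5) = 0$, equivalently that $\overline\zeta_0 \underset{K}{\smile}\overline\rho_5$ (which at cochain level equals, up to coboundary, $\me_A \underset{K}{\smile}\rho_5$) vanishes in $\HK^2(A)$; this is where one genuinely uses the explicit description of $\rho_5$ and the classification of weight-$6$ cochains in Lemma~\ref{lemma:nsc wgt6 E6}, rather than purely formal bookkeeping. Everything else is a mechanical consequence of the bases already established and of the two general facts $\partial_{\smile} = \overline\me_A\underset{K}{\smile}-$ and $\HK^{>2}(A) = 0$.
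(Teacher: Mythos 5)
Your overall strategy coincides with the paper's: compute the cohomology of $(\HK^{\bullet}(A),\partial_{\smile})$ from the facts $\partial_{\smile}^1(z_\ell)=2\ov\zeta_\ell$, $\partial_{\smile}^2=0$ and $\HK^{p}(A)=0$ for $p>2$ (Theorem~\ref{w3iszero}), and your case analysis by characteristic reaches the stated answer. Two points need correction, one of which is a genuine logical error.

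First, your primary justification of $\ov{\me}_A=0$ in characteristic $2$ via Lemma~\ref{eAcob} is invalid. The hypothesis of that lemma concerns the underlying graph of the quiver over which the quadratic algebra is presented; for a preprojective algebra this is the double quiver $\ov{Q}$, whose underlying graph carries two edges between every pair of adjacent vertices and is therefore \emph{not} simple. If the lemma applied as you state, $\me_A$ would be a coboundary in every characteristic, contradicting the remark following Lemma~\ref{eAcob} (and Subsection~\ref{preproalg}) that $\me_{A(\Delta)}$ is not a coboundary when $\car\ff\neq 2$. The correct argument is the one you relegate to a parenthesis: $\ov{\me}_A=2\ov\zeta_0$ (the E$_6$ analogue of the lemma proved in type A, invoked by the paper with ``as in type A''), so $\ov{\me}_A=0$ precisely in characteristic $2$, whence $\partial_{\smile}=0$ and $\HK^{\bullet}_{hi}(A)=\HK^{\bullet}(A)$.

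Second, in characteristic $3$ the crux is indeed $\partial_{\smile}^2(\ov\rho_5)=2\,\ov\zeta_0\cupk\ov\rho_5=0$ (you write $\partial_{\smile}^1$, but $\ov\rho_5$ has cohomological degree $1$, so the relevant differential is $\partial_{\smile}^2$), and you correctly identify that this reduces, via Lemma~\ref{lemma:nsc wgt6 E6}, to checking the coboundary criterion for the weight-$6$ cocycle $\zeta_0\cupk\rho_5$. In the paper this computation is already packaged in the cup-product proposition, whose proof cites Lemma~\ref{lemma:nsc wgt6 E6} exactly for $\ov\zeta_0\cupk\ov\rho_5$; the fact that this product vanishes is what distinguishes E$_6$ from E$_7$, where $\ov\zeta_0\cupk\ov\rho_5=\ov\gamma_6\neq 0$ and consequently $\HK^1_{hi}$ and part of $\HK^2_{hi}$ die. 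Your ``naive'' worry that the product might equal $\ov\gamma_4$ is off target: $\ov\gamma_4$ is the characteristic-$2$ class of weight $4$; the only possible nonzero value in weight $6$ and characteristic $3$ is a multiple of $\ov\gamma_6$. With these repairs the argument is complete and is the paper's.
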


Higher Koszul homology follows from Theorem \ref{complexduality}.

\subsection{Koszul calculus for preprojective algebras of type E$_7$}

The preprojective algebra $A$ of type $E_7$ is defined by the quiver 
\[\xymatrix@C=10pt@R=10pt{&& && 0\ar@/^/[dd]^{a_0}  && &&\\ \\ 1\ar@/^/[rr]^{a_1} &&
  2\ar@/^/[rr]^{a_2}\ar@/^/[ll]^{a_1^*} && 3 \ar@/^/[rr]^{a_3}\ar@/^/[ll]^{a_2^*}\ar@/^/[uu]^{a_0^*}
  && 4 \ar@/^/[rr]^{a_4}\ar@/^/[ll]^{a_3^*}&& 5 \ar@/^/[ll]^{a_4^*}\ar@/^/[rr]^{a_5}&& 6 \ar@/^/[ll]^{a_5^*}}\]
 subject to the relations 
\begin{align*}
&\sigma_0=-a_0^*a_0&&\sigma_4=a_3a_3^*-a_4^*a_4\\&\sigma_1=-a_1^*a_1&&\sigma_5=a_4a_4^*-a^*_5a_5\\&\sigma_2=a_1a_1^*-a_2^*a_2&&\sigma_6=a_5a_5^*\\&\sigma_3=a_0a_0^*+a_2a_2^*-a_3^*a_3
\end{align*}

The Nakayama automorphism is given by $\nk(a_i)=-a_i$ and $\nk(a_i^*)=a_i^*$ for $i\in Q_0$.

To simplify notation, we shall denote by $c_0=a_0a_0^*$, $c_2=a_2a_2^*$ and $c_3=a_3^*a_3$ the three
$2$-cycles at the vertex $3$.

The socle of $A$ is the part of weight $16$ of $A$. A basis of the socle is given by
$\pi_0=(a_0^*c_3a_0)^4$, $\pi_1=-a_1^*a_2^*c_0c_3(c_3c_0)^2a_2a_1$, $\pi_2=-(a_2^*c_0a_2)^4$,
$\pi_3=(c_3c_0)^3c_3^2$, $\pi_4=-(a_3c_0a_3^*)^4$, $\pi_5=-a_4a_3(c_3c_0)^3a_3^*a_4^*$ and $\pi_6=-a_5a_4(a_3c_0a_3^*)^3a_4^*a_5^*$.

\subsubsection{The Koszul cohomology and homology spaces in type E$_7$}

We define the following elements
\begin{itemize}
\item in $A$: $z_0=1$,
  $z_8=a_0^*c_2c_0c_2a_0-a_2^*c_2c_0c_2a_2-c_2c_3^2c_2+a_3c_0c_2c_0a_3^*-a_4a_3c_2^2a_3^*a_4^*+a_5a_4a_3c_0a_3^*a_4^*a_5^*$ and
  $z_{12}=a_0^*(c_2c_0)^2c_2a_0^*+a_2^*(c_0c_2)^2c_0a_2-(c_3c_0c_3)^2+a_3c_3(c_0c_3)^2a_3^*$;
\item
in $\Hom_{k^e}(V,A)$: the maps $\zeta_\ell$ defined by $\zeta_\ell(a_i)=a_iz_\ell$ for
  $\ell\in\set{0,8,12}$, the map $\rho_3$ defined by $\rho_3(a_2)=c_0a_2$, $\rho_3(a_3)=a_3c_3$, $\rho_3(a_4)=a_4a_3a_3^*$ and
  $\rho_3(a_2^*)=a_2^*c_3$, the map $\rho_7$  defined by $\rho_7(a_0)=c_3^3a_0+c_3c_0c_3a_0$, $\rho_7(a_3)=a_3c_3c_0c_3$ and $\rho_7(a_3^*)=c_3c_0c_3a_3^*$, the map $\rho_{15}$ defined by $\rho_{15}(a_0)=(c_2c_0)^3c_2a_0$ and $\rho_{15}(a_0^*)=a_0^*c_2(c_0c_2)^3$ and the map $\rho_5$ defined by  $\rho_5(a_0)=-c_2c_3a_0$, $\rho_5(a_2)=c_3c_0a_2$, $\rho_5(a_3)=a_3c_3^2$, $\rho_5(a_0^*)=a_0^*c_3^2$ and $\rho_5(a_2^*)=-a_2^*c_2c_0$;
\item in $\Hom_{k^e}(R,A)$: the maps $h_j$ defined for $0\ppq j\ppq 6$ by $h_j(\sigma_i)=\delta_{ij}e_j$ for
  all $i$, the map $\gamma_4$ defined by $\gamma_4(\sigma_0)=a_0^*c_3a_0$, the map $\gamma_8$ defined by $\gamma_8(\sigma_0)=a_0^*c_3^3a_0$, the map $\gamma_{16}$ defined by $\gamma_{16}(\sigma_0)=\pi_0$ and the map  and $\gamma_6$ defined by $\gamma_6(\sigma_0)=a_0^*c_3^2a_0$.
\end{itemize}

\begin{Lm}\label{lemma:nsc wgt even E7}
  First assume that $\car(\ff)=2$.
  \begin{enumerate}[\itshape(i)]
 \item Let $u_{16}\in\Hom_{k^e}(R,A)$ be an element of weight $16$ so that $u_{16}(\sigma_i)=\lambda_i\pi_i$ for $i\in Q_0$. Then $u_{16}$ is a coboundary if, and only if, $\sum_{i=0}^6{\lambda_i}=0$.
  \item Let $u_8\in\Hom_{k^e}(R,A)$ be an element of weight $8$ so that $u_8(\sigma_0)=\lambda_0a_0^*c_3^3a_0$, $u_8(\sigma_2)=\lambda_2a_2^*c_3c_0c_3a_2+\lambda_2'a_2^*c_3^2c_0a_2$, $u_8(\sigma_3)=\lambda_3c_3^3c_0+\lambda_3'c_cc_3^3+\lambda_3''c_0c_3^2c_0+\lambda_3'''c_3^2c_0c_3$, $u_8(\sigma_4)=\lambda_4a_3c_3^2c_0a_3^*+\lambda_4'a_3c_3c_0c_3a_3^*+\lambda_4''a_3c_0c_3^2a_3^*$, $u_8(\sigma_5)=\lambda_5a_4a_3c_3c_0a_3^*a_4^*+\lambda_5'a_4a_3c_0c_3a_3^*a_4^*$ and $u_8(\sigma_6)=a_5a_4a_3a_0a_0^*a_3^*a_4^*a_5^*$. Then $u_8$ is a coboundary if, and only if, $\lambda_0+\lambda_2+\lambda_2'+\lambda_3+\lambda_3'+\lambda_3'''+\lambda_4+\lambda_4'+\lambda_4''+\lambda_5+\lambda_5'+\lambda_6=0$.
  \end{enumerate}
  Now assume that $\car(\ff)=3$.
  \begin{enumerate}[(i),resume]
  \item  Let $u_6\in\Hom_{k^e}(R,A)$ be an element of weight $6$ so that $u_6(\sigma_0)=\lambda_0a_0^*c_3^2a_0$, $u_6(\sigma_1)=\lambda_1a_1^*a_2^*c_0a_2^*a_1^*$,  $u_6(\sigma_2)=\lambda_2a_2^*c_3c_0a_2+\lambda_2'a_2^*c_3^2a_2$, $u_6(\sigma_3)=\lambda_3c_3c_0c_3+\lambda_3'c_3^2c_0+\lambda_3''c_0c_3c_0+\lambda_3'''c_3^3+$, $u_6(\sigma_4)=\lambda_4a_3c_0c_3a_3^*+\lambda_4'a_3c_3c_0a_3^*$ and $u_6(\sigma_5)=\lambda_5a_4a_3c_0a_3^*a_4^*$. Then $u_6$ is a coboundary if, and only if, $\sum_{i=0}^5{\lambda_i}-\lambda_2'+\lambda_3'-\lambda_4'=0$.
    \end{enumerate}
  \end{Lm}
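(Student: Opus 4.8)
The plan is to compute directly the image of the Koszul codifferential in the relevant weight. Recall from \eqref{defcob} that for $f\in\Hom_{k^e}(V,A)$ one has
\[
b_K^2(f)(\sigma_i)=\sum_{\substack{a\in\ov Q_1\\ \mt(a)=i}}\varepsilon(a)\bigl(f(a)a^*+af(a^*)\bigr),
\]
and that, since by Theorem~\ref{w3iszero} every element of $\Hom_{k^e}(R,A)$ is a cocycle, a cochain $u\in\Hom_{k^e}(R,A)$ is a coboundary precisely when it lies in the image of $b_K^2$. As $b_K^2$ is homogeneous of weight $1$, to settle part \textit{(i)} (resp. \textit{(ii)}, \textit{(iii)}) it suffices to analyse $b_K^2$ on cochains $f$ of coefficient weight $15$ (resp. $7$, $5$).

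First I would pin down those cochains explicitly. By fact \ref{fact:dims} the preprojective algebra $\Lambda_L$ is free of the stated rank over any commutative ring $L$, so the known monomial basis of $A=\Lambda_\cc$ remains a basis over $\ff$; in particular each graded piece $e_jA_{w-1}e_i$ with $i,j$ equal or adjacent is spanned by explicit paths, all products being reducible by the defining relations ($a_0^*a_0=a_1^*a_1=a_5a_5^*=0$, $a_2^*a_2=a_1a_1^*$, $a_3^*a_3=c_0+c_2$, $a_4^*a_4=a_3a_3^*$, $a_5^*a_5=a_4a_4^*$). This lets me write the general such $f$ with indeterminate scalar coefficients — exactly as with the description of $A_5$ used in Lemma~\ref{lemma:nsc wgt6 E6}, and its analogues in weights $7$ and $15$. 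Applying $b_K^2$ and expanding each $b_K^2(f)(\sigma_i)$ in the monomial basis of $e_iA_we_i$ then produces explicit linear forms in those scalars; reading off when the resulting tuple equals the prescribed $(\lambda_i,\lambda_i',\dots)$ gives a linear system whose solvability is governed by a single linear relation among the $\lambda$'s — the displayed equation, in which (as the computation reveals) not every $\lambda$ occurs.

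For the "only if" direction there is a conceptual shortcut that also cross-checks the computation, cleanest in part \textit{(i)}: summing over $i$ and using $\ov Q_1=Q_1\sqcup Q_1^*$ together with the definition of $\varepsilon$, one gets
\[
\sum_{i\in Q_0}b_K^2(f)(\sigma_i)=\sum_{a\in Q_1}\bigl(f(a)a^*-a^*f(a)+af(a^*)-f(a^*)a\bigr)\in[A,A],
\]
so $\sum_i u(\sigma_i)=\sum_i\lambda_i\pi_i$ must lie in $[A,A]$; since the classes of $\pi_0,\dots,\pi_6$ in $\HH_0(A)=A/[A,A]$ satisfy exactly one relation in top weight (read off from the Hochschild homology of $A$ via fact \ref{fact:dims}), this forces $\sum_i\lambda_i=0$, with an analogous argument in weights $8$ and $6$ after separating the several monomials at vertex $3$. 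The main obstacle is purely bookkeeping: one must control the bases of $A$ in weights $5$, $7$ and $15$ simultaneously at all adjacent vertex pairs, and the three $2$-cycles $c_0,c_2,c_3$ at vertex $3$ (linked by $c_3=c_0+c_2$) make the component at $\sigma_3$ the most delicate piece of linear algebra; once these bases are in hand the verification is routine and the stated relation drops out.
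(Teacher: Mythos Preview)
Your proposal is correct and follows essentially the same route as the paper: the paper's proof simply observes that a coboundary of weight $\ell$ must be $b_K^2(g_\ell)$ for some $g_\ell\in\Hom_{k^e}(V,A_{\ell-1})$, records explicit bases of the spaces $\bigoplus_{\alpha\in Q_1}e_{\mt(\alpha)}A_{\ell-1}e_{\mo(\alpha)}$ for $\ell\in\{16,8,6\}$ (noting that $\aa$ gives the bases for the opposite arrows), and declares the resulting linear algebra straightforward. Your commutator identity $\sum_i b_K^2(f)(\sigma_i)\in[A,A]$ is a genuine extra observation not present in the paper; it does give the ``only if'' part of \textit{(i)} once one knows that in characteristic $2$ all the $\pi_i$ are congruent and nonzero in $A/[A,A]$ (which follows from the Frobenius structure: for each arrow $a$ the dual basis element $\widehat a$ satisfies $a\widehat a=\pi_{\mt(a)}$ and $\widehat a\,a=\pm\pi_{\mo(a)}$, hence $\pi_{\mt(a)}\equiv\pi_{\mo(a)}$ modulo $[A,A]$), but you would still need the direct computation for sufficiency and for parts \textit{(ii)} and \textit{(iii)}, so the shortcut is a cross-check rather than a replacement.
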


  \begin{proof} For each $\ell\in\set{16,6}$, if the map $u_\ell$ were a coboundary, it would be the image of
  a map $g_\ell\in\Hom_{k^e}(V,A)$ whose coefficients would be in the space generated by the paths
  between adjacent vertices with weight $\ell-1$. The proof is then straightforward once we know
  bases of these spaces. Note that once we have a basis of  $\bigoplus_{\alpha\in
    Q_1}e_{\mt(\alpha)}A_{w}e_{\mo(\alpha)}$, applying $\aa$ gives a basis of
  $e_{\mo(\alpha)}A_we_{\mt(\alpha)}$ for a given weight $w$.

\sloppy  In weight $15$,    a basis of $\bigoplus_{\alpha\in
    Q_1}e_{\mt(\alpha)}A_{15}e_{\mo(\alpha)}$ is given by  $\ov a_0 =(a_0^*c_3a_0)^3a_0^*c_3$,  $\ov a_1 =-a_1^*a_2^*c_0c_3(c_3c_0)^2a_2$,  $\ov a_2 =-a_2^*c_0(c_2c_0)^3$,  $\ov a_3 =(c_3c_0)^3c_3a_3^*$,  $\ov a_4 =-a_3(c_3c_0)^3a_3^*a_4^*$,  $\ov a_5 =a_4(a_3c_0a_3^*)^3a_4^*a_5^*$.

  In weight $7$, a basis of $\bigoplus_{\alpha\in
    Q_1}e_{\mt(\alpha)}A_{7}e_{\mo(\alpha)}$ is given by $c_3^3a_0$, $c_3c_0c_3a_0$, $a_2^*c_3^2a_2a_1$, $c_3^2c_0a_2$, $c_0c_3^2a_2$, $c_3c_0c_3a_2$, $a_3c_3c_0c_3$, $a_3c_0c_3^2$, $a_3c_3^2c_0$. $a_4a_3c_3c_0a_3^*$, $a_4a_3c_0c_3a_3^*$, $a_5a_4a_3c_0a_3^*a_4^*$.

  In weight $5$, a basis of   $\bigoplus_{\alpha\in
    Q_1}e_{\mt(\alpha)}A_{5}e_{\mo(\alpha)}$ is given by $c_3^2a_0$, $c_0c_3a_0$, $a_2^*c_0a_2a_1$,
  $c_3c_0a_2$, $c_0c_3a_2$,  $a_3c_3c_0$, $a_3c_0c_3$, $a_3c_3^2$, $a_4a_3c_0a_3^*$.
  \end{proof}

\begin{Po}\label{prop:basis HK E7}
  Let $A$ be a preprojective algebra of type E$_7$. 
\begin{enumerate}[\itshape(i)]
\item The elements in  $\set{z_0,z_8,z_{12}}\cup\set{\pi_i;i\in Q_0}$ form a basis of $\HK^0(A)$.
\item If $\car(\ff)\not\in\set{2,3}$,  the elements in
  $\set{\ov \zeta_\ell;\ell=0,8,12}$  form a basis of $\HK^1(A)$.

If $\car(\ff)=2$, the elements in $\set{\ov \zeta_\ell;\ell=0,8,12}\cup\set{\ov \rho_3,\ov \rho_7,\ov \rho_{15}}$  form a basis of $\HK^1(A)$.

If $\car(\ff)=3$, the elements in
$\set{\ov \zeta_\ell;\ell=0,8,12}\cup\set{\ov \rho_5}$  form a basis of $\HK^1(A)$.
\item  If $\car(\ff)\not\in\set{2,3}$, the elements in $\set{\ov h_j;j\in Q_0}$  form a basis of $\HK^2(A)$.

If $\car(\ff)=2$, the elements in  $\set{\ov h_j;j\in Q_0}\cup\set{\ov \gamma_4,\ov \gamma_8,\ov \gamma_{16}}$   form a basis of $\HK^2(A)$.

If $\car(\ff)=3$, the elements in $\set{\ov h_j;j\in Q_0}\cup\set{\ov \gamma_6}$   form a basis of $\HK^2(A)$.

\end{enumerate}

\end{Po}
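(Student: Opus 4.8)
The plan is to follow the same three-step strategy that worked in types A, D and E$_6$: identify the dimensions from the literature, exhibit the proposed elements as cocycles, and then check linear independence modulo coboundaries. Part \textit{(i)} requires nothing new: $\HK^0(A)=Z(A)$ was computed in \cite{eu:product} and, by fact \ref{fact:dims}, is independent of $\car(\ff)$; a direct inspection shows that $\{z_0,z_8,z_{12}\}\cup\{\pi_i;i\in Q_0\}$ is precisely the basis found there.

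For \textit{(ii)} and \textit{(iii)} I would first pin the dimensions. One has $\dim\HK^1(A)=\dim\HH^1(A)$, known in every characteristic from \cite{eusched:cyfrob} and \cite{eteu:ade}; and by Theorem \ref{complexduality} together with the always-valid isomorphism $\HK_0(A)\cong\HH_0(A)$, one gets $\dim\HK^2(A)=\dim\HH_0(A)$, again available in \cite{eusched:cyfrob}. In each case the cardinality of the set listed in the proposition matches the dimension, so it suffices to prove that the listed elements are cocycles which are linearly independent modulo coboundaries. Every element of $\Hom_{k^e}(R,A)$ is a cocycle by Theorem \ref{w3iszero} (since $W_3=0$), and the cocycle condition for the cochains in $\Hom_{k^e}(V,A)$ is a short direct check using \eqref{defcob}.

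When $\car(\ff)\notin\{2,3\}$ the spaces coincide with the Hochschild cohomology spaces computed over $\cc$ in \cite{eu:product}: as in the E$_6$ argument, one replaces the representatives $\zeta'_\ell$ of \cite{eu:product} by our $\zeta_\ell$ using an explicit $b_K^1$ of a suitable element of $\bigoplus_ie_iAe_i$ and propagating via cup product with the $z_\ell$, while the $h_j$ are linearly independent modulo coboundaries because all coboundaries have positive coefficient weight (fact \ref{fact:free up to coboundaries}). When $\car(\ff)\in\{2,3\}$ the only new content is that the extra generators — the $\rho$'s in $\HK^1(A)$ and the $\gamma$'s in $\HK^2(A)$ — are not coboundaries. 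They have pairwise distinct coefficient weights, both among themselves and relative to the already-understood classes, so by fact \ref{fact:free up to coboundaries} it is enough to treat each one separately. Here I would apply Lemma \ref{lemma:nsc wgt even E7} directly to $\gamma_8$ and $\gamma_{16}$ (weights $8$ and $16$, $\car(\ff)=2$) and to $\gamma_6$ (weight $6$, $\car(\ff)=3$); for the higher-weight $\rho$'s ($\rho_7$, $\rho_{15}$) I would reduce to lower cases through cup-product relations of the shape (extra class)$\cupk$(central $z$)$\equiv$(known non-coboundary) modulo $\im b_K^1$, mirroring the device $z_6\cupk\rho_3-\zeta_8\in\im b_K^1$ used in type E$_6$; and for the remaining low-weight cochains $\rho_3$, $\rho_5$ and $\gamma_4$ I would argue exactly as in E$_6$: were such an element equal to $b_K^1(g)$, the cochain $g$ would take its values in an explicitly known weight-graded piece of $A$ between adjacent vertices, and computing $b_K^1(g)$ on the arrows via \eqref{defcob} yields a contradiction. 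Finally, Koszul homology bases follow by applying $\theta_A$ (Theorem \ref{complexduality}), as in Corollary \ref{cor:bases hom type A}.

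The main obstacle is the last step: the explicit non-coboundary verifications in characteristics $2$ and $3$, and in particular the proof of Lemma \ref{lemma:nsc wgt even E7} on which several of them rest. That lemma in turn requires correct bases of $\bigoplus_{\alpha\in Q_1}e_{\mt(\alpha)}A_we_{\mo(\alpha)}$ for the weights $w\in\{5,7,15\}$, obtained from Eu's basis of $A$ together with the anti-automorphism $\aa$; assembling these and carrying the bookkeeping through — especially for $\rho_{15}$ and $\gamma_{16}$, where one must use that the $\pi_i$ lie in the socle and are annihilated by the radical — is the only genuinely laborious part. Everything else is either quoted from \cite{eu:product,eusched:cyfrob,eteu:ade} or a routine use of the formulas \eqref{defcob} for $b_K$ and \eqref{kocup} for $\cupk$.
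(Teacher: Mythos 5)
Your overall architecture (dimensions from \cite{eusched:cyfrob,eteu:ade,eu:product}, cocycle checks, linear independence via fact (N\ref{fact:free up to coboundaries}), duality for homology) matches the paper's proof, and your treatment of part \textit{(i)}, of the generic characteristic case, and of $\gamma_6$, $\gamma_8$, $\gamma_{16}$ via Lemma \ref{lemma:nsc wgt even E7} is exactly what the paper does. The gap is in your mechanism for the extra degree-one classes in characteristic $2$. You propose to handle $\rho_7$ and $\rho_{15}$ by relations of the shape $(\text{extra class})\cupk(\text{central }z)\equiv(\text{known non-coboundary})$, mirroring $z_6\cupk\rho_3\equiv\ov\zeta_8$ from type E$_6$. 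That device does not transfer to E$_7$: cupping with a central element preserves cohomological degree but raises coefficient weight, and the weights reachable from $\rho_3,\rho_7,\rho_{15}$ (namely $3+8=11$, $3+12=15$, $7+8=15$, \dots) never hit $8$ or $12$, so you never land on a $\ov\zeta_\ell$. What you actually get is $z_{12}\cupk\ov\rho_3=z_8\cupk\ov\rho_7=\ov\rho_{15}$, which only reduces $\rho_3$ and $\rho_7$ to $\rho_{15}$; and $\rho_{15}$ itself cannot be reduced further this way, since $z_\ell\cupk\rho_{15}$ has weight $15+\ell>16$ for every positive-weight central $z_\ell$ and hence vanishes. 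As written, your plan leaves $\ov\rho_{15}\neq 0$ unproved, and with it $\ov\rho_7$.

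The paper closes this loop differently: it cups each extra degree-one class with a degree-\emph{one} class $\zeta_\ell$ of complementary weight so as to land in $\Hom_{k^e}(R,A)$ at the top weight $16$, where Lemma \ref{lemma:nsc wgt even E7}\,\textit{(i)} decides everything. Concretely, $\zeta_0\cupk\rho_{15}-\gamma_{16}$, $\zeta_8\cupk\rho_7-\gamma_{16}$, $\zeta_{12}\cupk\rho_3-\gamma_{16}$, $z_8\cupk\gamma_8-\gamma_{16}$ and $z_{12}\cupk\gamma_4-\gamma_{16}$ are all coboundaries, so the single verification that $\gamma_{16}$ is not a coboundary disposes of all five extra classes at once (this also spares you the separate direct checks you planned for $\rho_3$ and $\gamma_4$). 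The same trick is used in characteristic $3$: $\zeta_0\cupk\rho_5-\gamma_6$ is a coboundary and $\gamma_6$ is not, so $\rho_5$ is not, with no direct weight-$4$ computation needed. You could salvage your version by a direct check that $\rho_{15}$ is not of the form $b_K^1(g)$ with $g$ valued in the diagonal weight-$14$ components $\bigoplus_i e_iA_{14}e_i$, but that requires bases you did not list (your weights $5$, $7$, $15$ are the ones needed for the lemma, not for this check), and it is strictly more work than the degree-raising cup product.
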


\begin{proof} 
The centre was given in \cite{eu:product}, so we have \textit{(i)}. 

For $\HK^1(A)$ and $\HK^2(A)$, the number of elements in the
statement is equal to the dimension of the corresponding cohomology space. Moreover, all the
elements in the statement are indeed cocycles.

If $\car(\ff)$ is not $2$ or $3$, a basis of $\HK^1(A)=\HH^1(A)$ was given in \cite{eu:product}. It
consists of the classes of the $\zeta'_\ell$ with $\ell\in\set{0,8,12}$ where
$\zeta'_\ell(a_i)=a_iz_\ell$ for $0\ppq i\ppq 2$ and $\zeta'_\ell(a_i^*)=a_i^*z_\ell$ for $3\ppq
i\ppq 5$. Since $\zeta_0-\zeta_0'$ is equal to $b_K^1(e_3+2e_5+3e_6)$, and 
$\zeta_\ell-\zeta_\ell'=(\zeta_0-\zeta_0')\cupk z_\ell$ is also a coboundary,  $\zeta_\ell$ and
$\zeta_\ell'$ represent the same cohomology class for $\ell\in\set{0,8,12}$. Moreover, as  in types
A, D and E$_{6}$, the elements $\ov h_j$ form a basis of $\HK^2(A)$. 

If $\car(\ff)\in\set{2,3}$, we need only prove that the extra elements are not coboundaries by fact
\ref{fact:free up to coboundaries}. 

If $\car(\ff)=2$, it follows from Lemma \ref{lemma:nsc wgt even E7} that $\zeta_0\cupk
\rho_{15}-\gamma_{16}$, $\zeta_8\cupk\rho_7-\gamma_{16}$, $\zeta_{12}\cupk\rho_3-\gamma_{16}$,
$z_8\cupk\gamma_8-\gamma_{16}$ and $z_{12}\cupk\gamma_4-\gamma_{16}$ are coboundaries. Therefore it
is enough to check that $\gamma_{16}$ is not a coboundary, and this also follows from Lemma
\ref{lemma:nsc wgt even E7}.

If $\car(\ff)=3$, again using  Lemma \ref{lemma:nsc wgt even E7}, $\zeta_0\cupk\rho_5-\gamma_6$
 is a coboundary and $\gamma_6$ is not a coboundary, therefore $\rho_5$ is not a coboundary either.
\end{proof}

Koszul homology follows using duality (Theorem \ref{complexduality}), as in Corollary \ref{cor:bases
hom type A}.

\subsubsection{Cup and cap products}

We now determine the cup products of the elements in the bases of the Koszul cohomology spaces given above.

\begin{Po} Let $A$ be a preprojective algebra of type E$_7$.
  Up to graded commutativity, the non zero cup products of elements in $\HK^{\bullet}(A)$ are:
 \begin{align*}
   z_0\cupk \ov  f&=\ov  f\text{ for all $\ov f\in\HK^{\bullet}(A)$}& z_8^2&=\pi_0+\pi_4-\pi_6\\
   z_{\ell}\cupk \ov  \zeta_0&=\ov  \zeta_\ell\text{ for $\ell\in\set{0,8,12}$}&z_8\cupk \ov \rho_7&=\ov \rho_{15}\\
   z_{12}\cupk\ov\rho_3&=\ov \rho_{15} & \pi_i\cupk \ov h_i&=\ov \gamma_{16}\text{ for $i\in Q_0$}\\
    z_8\cupk \ov  h_i&=\ov  \gamma_8\text{ if $i\in\set{0,4,6}$}& z_8\cupk \ov\gamma_8&=\ov \gamma_{16}\\
z_{12}\cupk \ov\gamma_4&=\ov \gamma_{16}& \ov\zeta_0\cupk \ov\rho_\ell&=-\ov\gamma_{\ell+1}\text{
                                                                        for
                                                                        $\ell\in\set{3,7,15,6}$}\\\ov
   \zeta_8\cupk \ov\rho_7&=\ov\gamma_{16}&\ov\zeta_{12}\cupk \ov\rho_3&=\ov\gamma_{16}
  \end{align*}
\end{Po}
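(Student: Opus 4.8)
The plan is to argue exactly as in types $\mathrm{A}$, $\mathrm{D}$ and $\mathrm{E}_6$. Three ingredients are available: first, $\HK^p(A)=0$ for $p\pgq 3$ by Theorem~\ref{w3iszero}, so the only products to examine have bidegrees $(0,0)$, $(0,1)$, $(0,2)$ and $(1,1)$; second, $b_K^1$ and $b_K^2$ are homogeneous of coefficient weight $1$, so a cocycle whose coefficient weight is not a weight occurring in the basis of the relevant $\HK^p(A)$ is automatically a coboundary (fact~\ref{fact:free up to coboundaries}); third, the coboundary criteria of Lemma~\ref{lemma:nsc wgt even E7} in characteristics $2$ and $3$. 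Since $z_0=1$ is the unit of $\HK^{\bullet}(A)$, the identities $z_0\cupk\ov f=\ov f$ are immediate and dispose of every product involving $z_0$.

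\textbf{Products valued in $\HK^0(A)$ and $\HK^1(A)$.} The cup product on $\HK^0(A)=Z(A)$ is the multiplication of the centre; $z_8\cupk z_8$ has coefficient weight $16$, hence lies in the socle, and a short computation with the given expression for $z_8$ and the socle basis $\set{\pi_i}$ gives $z_8\cupk z_8=\pi_0+\pi_4-\pi_6$ (this is also in \cite{eu:product}). All other products among $z_8,z_{12},\pi_0,\dots,\pi_6$ have coefficient weight $>16$, the top nonzero weight of $A$, hence vanish already on cochains (or because the $\pi_i$ lie in the socle). On cochains $(z_\ell\cupk\zeta_0)(a_i)=a_iz_\ell=\zeta_\ell(a_i)$ for $\ell\in\set{0,8,12}$, whence $z_\ell\cupk\ov\zeta_0=\ov\zeta_\ell$. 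When $\car\ff=2$, the products $z_8\cupk\ov\rho_7$ and $z_{12}\cupk\ov\rho_3$ have coefficient weight $15$, and one checks that $z_8\cupk\rho_7-\rho_{15}$ and $z_{12}\cupk\rho_3-\rho_{15}$ lie in the image of $b_K^1$ by producing explicit weight-$14$ maps $k\to A$, so both equal $\ov\rho_{15}$. The remaining $\HK^1$-valued products vanish because their coefficient weights do not occur in the basis of $\HK^1(A)$; the one case where the weight does occur, $z_8\cupk\ov\rho_5$ in characteristic $3$ (weight $13$, that of $\zeta_{12}$), is handled by a direct computation showing the cochain $z_8\cupk\rho_5$ is a coboundary.

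\textbf{Products valued in $\HK^2(A)$.} These are the substantive ones: we evaluate on cochains and then apply Lemma~\ref{lemma:nsc wgt even E7}. For instance $(\pi_i\cupk h_j)(\sigma_m)=\delta_{ij}\delta_{jm}\pi_j$ (using $\pi_i\in e_iAe_i$, since $\ov\nu=\id$ for $\mathrm{E}_7$), so $\pi_i\cupk h_i$ is the weight-$16$ cochain with one coefficient $1$ and the rest $0$; by Lemma~\ref{lemma:nsc wgt even E7}(i) it is not a coboundary, hence equals $\ov\gamma_{16}$ in the one-dimensional weight-$16$ component of $\HK^2(A)$, which forces $\car\ff=2$. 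Likewise $(z_8\cupk h_i)(\sigma_i)=e_iz_8e_i$; writing this in the basis and invoking Lemma~\ref{lemma:nsc wgt even E7}(ii), the coboundary condition fails exactly for $i\in\set{0,4,6}$, giving $z_8\cupk\ov h_i=\ov\gamma_8$ there and $0$ otherwise. The products $z_8\cupk\ov\gamma_8$, $z_{12}\cupk\ov\gamma_4$, $\ov\zeta_0\cupk\ov\rho_\ell$, $\ov\zeta_8\cupk\ov\rho_7$ and $\ov\zeta_{12}\cupk\ov\rho_3$ are treated the same way. In fact the coboundary relations showing that $\ov\zeta_0\cupk\ov\rho_{15}$, $\ov\zeta_8\cupk\ov\rho_7$, $\ov\zeta_{12}\cupk\ov\rho_3$, $z_8\cupk\ov\gamma_8$ and $z_{12}\cupk\ov\gamma_4$ all equal $\ov\gamma_{16}$, and that $\ov\zeta_0\cupk\ov\rho_5$ equals a nonzero multiple of $\ov\gamma_6$, were already obtained inside the proof of Proposition~\ref{prop:basis HK E7} via Lemma~\ref{lemma:nsc wgt even E7}; only $\ov\zeta_0\cupk\ov\rho_3=-\ov\gamma_4$ remains, and it follows as in type $\mathrm{E}_6$ by exhibiting $g'\in\Hom_{k^e}(V,A)$ with $b_K^2(g')=\zeta_0\cupk\rho_3-\gamma_4$. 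All signs are those prescribed by \eqref{kocup} (and in the $\gamma$-valued cases above $\car\ff=2$, so most signs are irrelevant).

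\textbf{Remaining products and the main obstacle.} Every product not listed is a coboundary, mostly by the coefficient-weight argument. In particular $\ov\zeta_a\cupk\ov\zeta_b=0$ already on cochains: by the convention in fact~\ref{fact:dfn cochain}, $\zeta_a$ vanishes on $Q_1^*$ and $\zeta_b$ vanishes on $Q_1$, so in $(\zeta_a\cupk\zeta_b)(\sigma_i)=-\sum_{\mt(m)=i}\varepsilon(m)\zeta_a(m)\zeta_b(m^*)$ each term kills one of its factors. The products $\ov\rho_a\cupk\ov\rho_b$, the remaining $z_a\cupk\ov h_j$, the $z_a\cupk\ov\gamma_b$ of excessive weight, and $\pi_i\cupk\ov\zeta_b$, $\pi_i\cupk\ov\gamma_b$ (the $\pi_i$ being annihilated by the radical) vanish for weight reasons; the rare genuinely ambiguous cases, as with $z_8\cupk\ov\rho_5$ above, are disposed of by explicit computation, exactly as in $\mathrm{E}_6$. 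The real work---and the main obstacle---is the bundle of explicit characteristic-$2$ cochain computations in coefficient weights up to $16$: deciding via Lemma~\ref{lemma:nsc wgt even E7} which weight-$8$ and weight-$16$ cochains are coboundaries, and producing the primitives $g$ (for the $\HK^1$-valued products) and $g'$ (for $\ov\zeta_0\cupk\ov\rho_3$). This requires bases of the spaces of paths of weights $14$ and $15$ between equal or adjacent vertices, analogous to and extending the weight-$5$, $7$, $15$ bases displayed in the proof of Lemma~\ref{lemma:nsc wgt even E7}, together with careful bookkeeping of the three $2$-cycles $c_0,c_2,c_3$ at the branch vertex $3$.
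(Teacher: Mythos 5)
Your proposal is correct and follows essentially the same route as the paper: the non-obvious products are settled by coefficient-weight considerations together with the coboundary criteria of Lemma~\ref{lemma:nsc wgt even E7} (several instances of which were already recorded in the proof of Proposition~\ref{prop:basis HK E7}), plus explicit primitives for $z_8\cupk\rho_7$, $z_{12}\cupk\rho_3$ and $\zeta_0\cupk\rho_3$ --- exactly the three the paper writes out. Your write-up is in fact more careful than the published proof in flagging the cases (such as $z_8\cupk\ov\rho_5$ in characteristic $3$, of coefficient weight $13$, the weight of $\ov\zeta_{12}$) that do not vanish for weight reasons alone and genuinely require a direct coboundary check.
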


\begin{proof} Most of the cup-products are easy to compute, follow from Lemma \ref{lemma:nsc wgt even E7} or vanish for weight reasons. The remaining ones are obtained as follows (at the level of cochains): 
\begin{align*}
&z_8\cupk \rho_7=\rho_{15}+b_K^1([e_3\mapsto (c_3c_0)^3c_3])\\&z_{12}\cupk
                                                                \rho_3=\rho_{15}+b_K^1([e_3\mapsto
                                                                (c_3c_0)^3c_2])\\
&\zeta_0\cupk \rho_3=\gamma_4+b_K^1(h)
  \end{align*} where $h\in\Hom_{k^e}(V,A_3)$ is defined by $h(a_0)=c_3a_0$ and $h(a_2)=c_2a_2$. 
\end{proof}

The cap products follow using duality, as in Corollary \ref{cor:cap type A}.

\subsubsection{Higher Koszul cohomology and homology}

As in types A, D and E$_6$, the differential $\partial _\smile^1$ sends $z_\ell$ to $2\ov\zeta_\ell$ for
$\ell\in\set{0,8,12}$ and the differential $\partial _\smile^2$ is zero except when $\car(\ff)=3$ where $\partial _\smile^2(\ov\rho_5)=\ov\gamma_6$.

 We then have the following higher Koszul cohomology.

\begin{Po}  \label{hkcE7} Let $A$ be a preprojective algebra of type E$_7$.

  If $\car(\ff)=2$, then $\HK^{\bullet}_{hi}(A)=\HK^{\bullet}(A).$

 If $\car(\ff)\neq 2$, then
  \begin{align*}
    \HK^0_{hi}(A)&=\HK^0(A)_{16}\text{  has dimension $7$ and is spanned by  the $\pi_i$ for $i\in Q_0$ }\\
    \HK^2_{hi}(A)&=\HK^2(A)_0 \text{ has dimension $7$ and is spanned by the $[\ov h_i]$ for $i\in Q_0$}\\
    \HK^p_{hi}(A)&=0\text{ if $p\neq 0$ and $p\neq 2$.}
  \end{align*}
\end{Po}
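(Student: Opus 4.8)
The plan is to read off the higher Koszul cohomology directly from the three-term complex $0\to\HK^0(A)\xrightarrow{\partial_\smile^1}\HK^1(A)\xrightarrow{\partial_\smile^2}\HK^2(A)\to 0$, using the explicit bases of Proposition \ref{prop:basis HK E7} and the description of $\partial_\smile$ recorded just before the statement. The first observation is that Theorem \ref{w3iszero} gives $\HK^p(A)=0$ for all $p\pgq 3$, so $\HK^p_{hi}(A)=0$ for $p\pgq 3$ is automatic and only this complex is in play.

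I would then separate two cases. If $\car(\ff)=2$, the fundamental class $\overline{\me}_A$ equals $2\ov\zeta_0$, which is zero in characteristic $2$; hence $\partial_\smile$ (and likewise $\partial_\frown$) is zero in every degree, so $\HK^{\bullet}_{hi}(A)=\HK^{\bullet}(A)$ and we are done. If $\car(\ff)\neq 2$, I analyse the two differentials on the basis vectors. On $\HK^0(A)$ one has $\partial_\smile^1(z_\ell)=2\ov\zeta_\ell$ for $\ell\in\set{0,8,12}$ and $\partial_\smile^1(\pi_i)=0$, the latter because each $\pi_i$ lies in the socle of $A$, so that $\pi_i a=0$ for every arrow $a$. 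Since $2\neq 0$ and $\ov\zeta_0,\ov\zeta_8,\ov\zeta_{12}$ are linearly independent in $\HK^1(A)$, the kernel of $\partial_\smile^1$ is exactly $\spn{\pi_i;i\in Q_0}=\HK^0(A)_{16}$, which is $7$-dimensional; this yields $\HK^0_{hi}(A)$. Its image is $\spn{\ov\zeta_0,\ov\zeta_8,\ov\zeta_{12}}$.

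For $\HK^1_{hi}(A)$ I use that $\partial_\smile$ is a differential (because $\me_A\cupk\me_A\colon W_2=R\to A$ vanishes), so the three classes $\ov\zeta_\ell=\partial_\smile^1(2^{-1}z_\ell)$ lie in $\ker\partial_\smile^2$. When $\car(\ff)\notin\set{2,3}$ this already exhausts $\HK^1(A)$, while when $\car(\ff)=3$ the remaining basis vector $\ov\rho_5$ satisfies $\partial_\smile^2(\ov\rho_5)=\ov\gamma_6\neq 0$; in both cases $\ker\partial_\smile^2=\im\partial_\smile^1=\spn{\ov\zeta_0,\ov\zeta_8,\ov\zeta_{12}}$, hence $\HK^1_{hi}(A)=0$. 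Finally $\HK^2_{hi}(A)=\HK^2(A)/\im\partial_\smile^2$: if $\car(\ff)\notin\set{2,3}$ then $\partial_\smile^2=0$ and $\HK^2(A)=\HK^2(A)_0=\spn{\ov h_j;j\in Q_0}$, while if $\car(\ff)=3$ then $\im\partial_\smile^2=\spn{\ov\gamma_6}$ and $\HK^2(A)=\spn{\ov h_j}\oplus\spn{\ov\gamma_6}$, so in either case $\HK^2_{hi}(A)=\HK^2(A)_0$ is $7$-dimensional.

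The only non-routine ingredients are the two facts recalled before the statement: the identity $\overline{\me}_A=2\ov\zeta_0$ in $\HK^1(A)$ (established, as in type A, by exhibiting a $0$-cochain $v$ with $b_K^1(v)=\zeta_0^*-\zeta_0$) and the value $\partial_\smile^2(\ov\rho_5)=\ov\gamma_6$ in characteristic $3$, which rests on Lemma \ref{lemma:nsc wgt even E7} together with the cup-product table; granting those, the computation of the higher cohomology is elementary linear algebra on the explicit bases. I expect the one point needing care is keeping the characteristic-$3$ basis vectors $\ov\rho_5$ and $\ov\gamma_6$ tracked consistently through both differentials, since everything else is forced by the weight grading.
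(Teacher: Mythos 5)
Your proposal is correct and follows the same route as the paper: both read off the higher Koszul cohomology from the three-term complex using the explicit bases, the identity $\overline{\me}_A=2\ov\zeta_0$ (so $\partial_\smile^1(z_\ell)=2\ov\zeta_\ell$, $\partial_\smile^1(\pi_i)=0$), and the single nonzero value $\partial_\smile^2(\ov\rho_5)=\ov\gamma_6$ in characteristic $3$. The case analysis and the linear algebra you carry out are exactly what the paper leaves implicit.
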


Higher Koszul homology follows from Theorem \ref{complexduality}.

\subsection{Koszul calculus for preprojective algebras of type E$_8$}

The preprojective algebra $A$ of type $E_8$ is defined by the quiver 
\[\xymatrix@C=10pt@R=10pt{&& && 0\ar@/^/[dd]^{a_0}  && &&\\ \\ 1\ar@/^/[rr]^{a_1} && 2\ar@/^/[rr]^{a_2}\ar@/^/[ll]^{a_1^*} && 3 \ar@/^/[rr]^{a_3}\ar@/^/[ll]^{a_2^*}\ar@/^/[uu]^{a_0^*} && 4 \ar@/^/[rr]^{a_4}\ar@/^/[ll]^{a_3^*}&& 5 \ar@/^/[ll]^{a_4^*}\ar@/^/[rr]^{a_5}&& 6 \ar@/^/[ll]^{a_5^*}\ar@/^/[rr]^{a_6}&& 7 \ar@/^/[ll]^{a_6^*}} \]
 subject to the relations 
\begin{align*}
&\sigma_0=-a_0^*a_0&&\sigma_4=a_3a_3^*-a_4^*a_4\\&\sigma_1=-a_1^*a_1&&\sigma_5=a_4a_4^*-a^*_5a_5\\&\sigma_2=a_1a_1^*-a_2^*a_2&&\sigma_6=a_5a_5^*-a_6^*a_6\\&\sigma_3=a_0a_0^*+a_2a_2^*-a_3^*a_3&&\sigma_7=a_6a_6^*
\end{align*}

The Nakayama automorphism is given by $\nk(a_i)=-a_i$ and $\nk(a_i^*)=a_i^*$ for $i\in Q_0$.

To simplify notation, we shall denote by $c_0=a_0a_0^*$, $c_2=a_2a_2^*$ and $c_3=a_3^*a_3$ the three
$2$-cycles at the vertex $3$.

The socle of $A$ is the part of weight $28$ of $A$. A basis of the socle is given by $\pi_0=(a_0^*c_2a_0)^7$, $\pi_1=a_1^*a_2^*(c_0c_2)^5c_2c_0a_2a_1$, $\pi_2=-(a_2^*c_0a_2)^7$, $\pi_3=(c_2c_0)^7$, $\pi_4=-a_3(c_2c_0)^6c_2a_3^*$, $\pi_5=a_4a_3(c_2c_0)^6a_3^*a_4^*$, $\pi_6=a_5a_4a_3(c_0c_2)^5c_0a_3^*a_4^*a_5^*$ and $\pi_7=-a_6a_5a_4a_3(c_0c_2)^3(c_2c_0)^2a_3^*a_4^*a_5^*a_6^*$.

\subsubsection{The Koszul cohomology and homology spaces in type E$_8$}

We define the following elements
\begin{enumerate}[$\bullet$,itemindent=0pt,labelindent=0pt,leftmargin=*,labelsep=3pt]
\item in $A$:\sloppy
  \begin{enumerate}[label={\tiny\ding{70}},itemindent=0pt,labelindent=0pt,leftmargin=*,labelsep=3pt]
  \item $z_0=1$,
   
  \item  $z_{12}=a_5a_4a_3c_0c_2c_0a_3^*a_4^*a_5^*+a_4a_3(c_2c_0)^2a_3^*a_4^*+a_3(c_2c_0)^2c_2a_3^*-(c_3c_2c_3)^2+a_2^*(c_0c_2)^2c_0a_2-a_1^*a_2^*c_0c_2^2c_0a_2a_1+a_0^*(c_2c_0)^2c_2a_0$,
   
  \item  $z_{20}=a_5a_4a_3(c_0c_2)^3c_0a_3^*a_4^*a_5^*+a_4a_3(c_0c_2)^2(c_2c_0)^2a_3^*a_4^*+a_3c_0(c_2c_0)^4a_3^*+(c_2c_0)^5-(c_0c_2^2)^3c_0+(c_0c_2)^5+a_2^*(c_2c_0c_2)^3a_2+a_0^*(c_2c_0)^4c_2a_0$
    and
  \item  $z_{24}=z_{12}^2$;
\end{enumerate}

\item
in $\Hom_{k^e}(V,A)$:
\begin{enumerate}[label={\tiny\ding{70}},itemindent=0pt,labelindent=0pt,leftmargin=*,labelsep=3pt]
\item the maps $\zeta_\ell$ defined by $\zeta_\ell(a_i)=a_iz_\ell$ for
  $\ell\in\set{0,12,20,24}$, 
\item  the map $\rho_3$ defined by $\rho_3(a_2)=c_0a_2$, $\rho_3(a_3)=a_3c_3$, $\rho_3(a_4)=a_4a_3a_3^*$, $\rho_3(a_5)=a_5a_4a_4^*$ and
  $\rho_3(a_2^*)=a_2^*c_3$, 
\item the map $\rho_7$  defined by $\rho_7(a_0)=c_0c_3^2a_0$, $\rho_7(a_3)=a_3c_0c_3^2+a_3c_3^2c_0+a_3c_0c_3c_0$ and $\rho_7(a_3^*)=c_3c_0c_3a_3^*$, 
\item the map $\rho_{15}$ defined by $\rho_{15}(a_3)=a_3c_2(c_2c_0)^2$, $\rho_{15}(a_4)=a_4a_3(c_0c_3)^2a_3^*+a_4a_3(c_0c_3^2)^2a_3^*+a_4a_3(c_3c_0)^3a_3^*$, $\rho_{15}(a_5)=a_5a_4a_3c_0(c_3c_0)^2a_3^*a_4^*$, $\rho_{15}(a_0^*)=a_0^*c_3(c_3c_0)^3$ and $\rho_{15}(a_5^*)=a_4a_3(c_0c_3)^2c_0a_3^*a_4^*a_5^*$,
\item the map $\rho_{27}$ defined by $\rho_{27}(a_3)=a_3c_0(c_3c_0)^6$ and $\rho_{27}(a_3^*)=(c_3c_0)^6c_3a_3^*$,
\item the map $\rho_5$ defined by  $\rho_5(a_0)=-c_0c_3a_0-c_3^2a_0$, $\rho_5(a_3)=a_3c_3c_0+a_3c_3^2$, $\rho_5(a_4)=a_4a_3c_3a_3^*$, $\rho_5(a_0^*)=a_0^*c_3^2$ and $\rho_5(a_3^*)=-c_3c_0a_3^*$,
\item the map $\rho_{17}$ defined by  defined by
  $\rho_{17}(a_0)=-c_3^2(c_0c_3)^3a_0-(c_0c_3)^4a_0$,
  $\rho_{17}(a_3)=a_3(c_3c_0)^4+a_3(c_0c_3)^4+a_3(c_3c_0)^3c_3^2$,
  $\rho_{17}(a_0^*)=a_0^*(c_2c_0)^3c_3^2$ and $\rho_{17}(a_3^*)=-(c_3c_0)^4a_3^*$, and
\item the map $\rho_9$ defined by $\rho_9(a_0)=-2c_3^2c_0c_3a_0+2c_3c_0c_3^2a_0+(c_0c_3)^2a_0$,  $\rho_9(a_2)=c_2^2c_0c_2a_2+(c_2c_0)^2a_2$, $\rho_9(a_3)=-a_3(c_0c_3)^2$, $\rho_9(a_0^*)=2a_0^*c_3^2c_0c_3-2a_0^*c_3c_0c_3^2+a_0^*(c_3c_0)^2$, $\rho_9(a_2^*)=a_2^*c_2c_0c_2^2-a_2^*(c_2c_0)^2$ and  $\rho_9(a_3^*)=(c_0c_3)^2a_3^*$;
\end{enumerate}

\item in $\Hom_{k^e}(R,A)$: the maps $h_j$ defined for $0\ppq j\ppq 7$ by $h_j(\sigma_i)=\delta_{ij}e_j$ for
  all $i$, and
  \begin{align*}
\gamma_4\colon  &  \sigma_0 \mapsto  a_0^*c_3a_0 &\gamma_8\colon &\sigma_0\mapsto  a_0^*c_3^3a_0 \\
\gamma_{16}\colon  &  \sigma_0 \mapsto  a_0^*(c_2c_0)^3c_2a_0 &\gamma_{28}\colon &\sigma_0\mapsto  \pi_0 \\
\gamma_6 \colon &  \sigma_0 \mapsto a_0^*c_3^2a_0  &\gamma_{18}\colon &\sigma_0\mapsto a_0^*c_3^2(c_0c_3)^3a_0  \\
\gamma_{10}\colon  &  \sigma_0 \mapsto   a_0^*c_3^2c_0c_3a_0
    \end{align*}

\end{enumerate}

\begin{Lm}\label{lemma:nsc wgt even E8} 
  First assume that $\car(\ff)=2$.
  \begin{enumerate}[\itshape(i)]
  \item Let $u_{28}\in\Hom_{k^e}(R,A)$ be an element of weight $28$, so that
    $u_{28}(\sigma_i)=\lambda_i\pi_i$ for all $i\in Q_0$. Then
    $u_{28}$ is a coboundary if, and only if, $\sum_{i\in Q_0}\lambda_i=0$.

\end{enumerate}
  Now assume that $\car(\ff)=3$.
  \begin{enumerate}[(i),resume]
  \item \sloppy Let $u_{18}\in\Hom_{k^e}(R,A)$ be an element of weight $18$, so that $u_{18}(\sigma_0)=
  \lambda_0  a_0^*c_3^2(c_0c_3)^3a_0+ \lambda_0'  a_0^*c_3c_0(c_3c_0c_3)^2a_0$,
  $u_{18}(\sigma_1)=\lambda_1a_1^*a_2^*c_0(c_2c_0)^3a_2a_1$,
  $u_{18}(\sigma_2)=\lambda_2a_2^*(c_2c_0c_2)^2c_0c_2a_2+\lambda_2'a_2^*(c_0c_2)^4a_2+\lambda_2''a_2^*(c_2c_0)^4a_2$,
  $u_{18}(\sigma_3)=\lambda_3
  c_0(c_3c_0)^4+\lambda_3'(c_3c_0)^4c_3+\lambda_3''(c_3c_0)^2c_3(c_3c_0)^2+\lambda_3^{(3)}(c_0c_3)^2c_3(c_0c_3)^2+\lambda_3^{(4)}(c_3c_0)^3c_3^2c_0+\lambda_3^{(5)}c_0c_3^2(c_0c_3)^3$,
  $u_{18}(\sigma_4)=\lambda_4a_3(c_3c_0)^4a_3^*+\lambda_4'a_3(c_0c_3)^4a_3^*+\lambda_4''a_3c_3^2(c_3c_0)^3a_3^*+\lambda_4^{(3)}a_3(c_0c_3)^3c_3^2a_3^*$,  $u_{18}(\sigma_5)=\lambda_5a_4a_3(c_3c_0)^3c_3a_3^*a_4^*+\lambda_5'a_4a_3c_3(c_3c_0)^3a_3^*a_4^*+\lambda_5''a_4a_3(c_0c_3)^3c_3a_3^*a_4^*$,
  $u_{18}(\sigma_6)=\lambda_6a_5a_4a_3(c_3c_0)^3a_3^*a_4^*a_5^*+\lambda_6'a_5a_4a_3(c_0c_3)^3a_3^*a_4^*a_5^*$,
  $u_{18}(\sigma_7)=\lambda_7a_6a_5a_4a_3(c_0c_3)^2c_0a_3^*a_4^*a_5^*a_6^*$. Then $u_{18}$ is a
  coboundary if, and only if, $\lambda_0+\lambda_0'+\lambda_1+\lambda_2'+\lambda_2''+\lambda_3'+\lambda_3''+\lambda_3^{(3)}+\lambda_3^{(4)}+\lambda_3^{(5)}+\lambda_4+\lambda_4'-\lambda_4''-\lambda_4^{(3)}-\lambda_5-\lambda_5'-\lambda_5''-\lambda_6-\lambda_6'-\lambda_7=0$.

\item \sloppy The map $\rho_{17}\in\Hom_{k^e}(V,A)$ is not a coboundary.
\end{enumerate}
  Now assume that $\car(\ff)=5$.
  \begin{enumerate}[(i),resume]
  \item \sloppy  Let $u_{10}\in\Hom_{k^e}(R,A)$ be an element of weight $10$, so that
    $u_{10}(\sigma_0)=\lambda_0a_0^*c_3^2c_0c_3a_0+\lambda_0'a_0^*c_3c_0c_3^2a_0$,
    $u_{10}(\sigma_1)=\lambda_1a_1^*a_2^*c_0c_2c_0a_2a_1$,
    $u_{10}(\sigma_2)=\lambda_2a_2^*(c_0c_2)^2a_2+\lambda_2'a_2^*(c_2c_0)^2a_2+\lambda_2''a_2^*c_0c_2^2c_0a_2$,
$u_{10}(\sigma_3)=\lambda_3c_0(c_3c_0)^2+\lambda_3'c_3(c_0c_3)^2+\lambda_3''c_3(c_3c_0)^2+\lambda_3^{(3)}(c_0c_3)^2c_3+\lambda_3{(4)}c_3c_0c_3^2c_0+\lambda_3^{(5)}c_0c_3^2c_0c_3$,
$u_{10}(\sigma_4)=\lambda_4a_3(c_0c_3)^2a_3^*+\lambda_4'a_3(c_3c_0)^2a_3^*+\lambda_4''a_3c_3c_0c_3^2a_3^*+\lambda_4^{(3)}a_3c_3^2c_0c_3a_3^*$,
$u_{10}(\sigma_5)=\lambda_5a_4a_3c_0c_3c_0a_3^*a_4^*+\lambda_5'a_4a_3c_3^2c_0a_3^*a_4^*+\lambda_5''a_4a_3c_0c_3^2a_3^*a_4^*$,     $u_{10}(\sigma_6)=\lambda_6a_5a_4a_3c_3c_0a_3^*a_4^*a_5^*+\lambda_6'a_5a_4a_3c_0c_3a_3^*a_4^*a_5^*$,
    $u_{10}(\sigma_7)=\lambda_7a_6a_5a_4a_3c_0a_3^*a_4^*a_5^*a_6^*$. Then $u_{10}$ is a coboundary
    if, and only if, $\lambda_0+\lambda_0'+\lambda_1+\lambda_2+\lambda_2'+\lambda_2''+\lambda_3'+\lambda_3''+\lambda_3^{(3)}+\lambda_3^{(4)}+\lambda_3^{(5)}+\lambda_4-\lambda_4'+2\lambda_4''+2\lambda_4^{(3)}+\lambda_5+2\lambda_5'+2\lambda_5''+2\lambda_6+2\lambda_6'+2\lambda_7=0$.
    \end{enumerate}
  \end{Lm}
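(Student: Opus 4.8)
The proof proceeds exactly as in Lemmas \ref{lemma:nsc wgt6 E6} and \ref{lemma:nsc wgt even E7}, and is a (substantial) explicit computation inside the algebra $A$ of type $\mathrm{E}_8$. The structural input is Theorem \ref{w3iszero}, which gives $W_3=0$: hence $b_K^3=0$, so every element of $\Hom_{k^e}(R,A)$ is automatically a $2$-cocycle, and a weight-$\ell$ cochain $u_\ell\in\Hom_{k^e}(R,A)$ is a coboundary if and only if $u_\ell=b_K^2(g)$ for some $g\in\Hom_{k^e}(V,A)$. Since $b_K^2$ is homogeneous of coefficient weight $1$ (Subsection \ref{weight}), such a $g$ must satisfy $g(\alpha)\in e_{\mt(\alpha)}A_{\ell-1}e_{\mo(\alpha)}$ for every $\alpha\in\ov Q_1$. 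So the plan for parts \textit{(i)}, \textit{(ii)}, \textit{(iv)} is: first fix, for the relevant weights $w\in\set{27,17,9}$, a basis of $\bigoplus_{\alpha\in Q_1}e_{\mt(\alpha)}A_we_{\mo(\alpha)}$ — applying the anti-automorphism $\aa$ of fact \ref{fact:anti-alg} then produces a basis of the spaces $e_{\mo(\alpha)}A_we_{\mt(\alpha)}$, so only ``half'' of the paths need to be listed, exactly as in the proof of Lemma \ref{lemma:nsc wgt even E7}. These bases are read off from Eu's path basis of $A$ (or from the structure results of \cite{eusched:cyfrob}); the $2$-cycles $c_0,c_2,c_3$ at vertex $3$ organise them efficiently.

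Write a general such $g$ with indeterminate coefficients and compute, for each $i\in Q_0$,
\[
b_K^2(g)(\sigma_i)=\sum_{\substack{a\in\ov Q_1\\\mt(a)=i}}\varepsilon(a)\bigl(g(a)\,a^{*}+a\,g(a^{*})\bigr),
\]
expressing the result in the chosen basis of $e_iA_\ell e_i$ (for $\ell\in\set{28,18,10}$ these are one-dimensional except at vertex $3$, where one must keep track of the several cyclic words $(c_jc_k\cdots)$). Matching this against the prescribed $u_\ell(\sigma_i)$ yields a linear system in the coefficients of $g$; its compatibility condition is precisely the stated relation among the $\lambda$'s. Part \textit{(iii)} is the degree-one analogue: $\rho_{17}\in\Hom_{k^e}(V,A)$ has coefficient weight $17$, so if it were a $1$-coboundary it would equal $b_K^1(c)$ for some $c\in\Hom_{k^e}(k,A)\cong\bigoplus_{i\in Q_0}e_iAe_i$ with $c(e_i)\in e_iA_{16}e_i$; one writes $c$ over a basis of the weight-$16$ cyclic part of $A$, computes $b_K^1(c)(\alpha)=c(e_{\mt(\alpha)})\alpha-\alpha\,c(e_{\mo(\alpha)})$ on each arrow, and checks the resulting system has no solution, i.e.\ $\rho_{17}\notin\im b_K^1$.

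The main obstacle is purely the size of $\mathrm{E}_8$: the Coxeter number is $30$, paths reach weight $27$, all eight relations $\sigma_i$ must be handled with the correct $\varepsilon$-signs, and the point of each case is the $p$-divisibility phenomenon (in characteristics $2$, $3$, $5$) that makes the compatibility condition degenerate and produces the extra cohomology. Once the multiplication of $A$ in the relevant degrees is tabulated, each verification is mechanical; I would present only the bases of the spaces $e_{\mt(\alpha)}A_{\ell-1}e_{\mo(\alpha)}$ used, together with the resulting system, and leave the remaining bookkeeping to the reader, as was done for types $\mathrm{E}_6$ and $\mathrm{E}_7$.
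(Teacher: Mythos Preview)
Your proposal is correct and follows essentially the same approach as the paper: for parts \textit{(i)}, \textit{(ii)}, \textit{(iv)} the paper likewise reduces to writing a general $g\in\Hom_{k^e}(V,A_{\ell-1})$ over explicit bases of $\bigoplus_{\alpha\in Q_1}e_{\mt(\alpha)}A_{\ell-1}e_{\mo(\alpha)}$ for $\ell-1\in\{27,17,9\}$ (using $\aa$ for the dual arrows), and for part \textit{(iii)} it writes $g\in\Hom_{k^e}(k,A_{16})$ over bases of the relevant $e_iA_{16}e_i$ and derives a contradiction from $b_K^1(g)(a_0)$, $b_K^1(g)(a_0^*)$, $b_K^1(g)(a_3)$. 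The only difference is presentational: the paper actually lists the specific basis elements used, whereas you describe the procedure.
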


  \begin{proof} For each $\ell\in\set{28,18,10}$, if the map $u_\ell$ were a coboundary, it would be the
  image of a map $g_\ell\in\Hom_{k^e}(V,A)$ whose coefficients would be in the space generated by
  the paths between adjacent vertices with weight $\ell-1$. The proof of \textit{(i), (ii)} and \textit{(iv)} is then straightforward once we know bases of these spaces.

  \sloppy  In weight $27$, a basis is given by  $\ov a_0 =a_0^*c_2(c_0c_2)^6$,   $\ov a_1
   =a_1^*a_2^*(c_0c_2)^5c_3^2a_2$,   $\ov a_2 =-a_2^*c_0(c_2c_0)^6$,   $\ov a_3
   =(c_2c_0)^6c_2a_3^*$,   $\ov a_4 =-a_3(c_2c_0)^6a_3^*a_4^*$,   $\ov a_5
   =-a_4a_3c_0(c_2c_0)^5a_3^*a_4^*a_5^*$,   $\ov a_6=a_5a_4a_3(c_0c_2)^3(c_2c_0)^2a_3^*a_4^*a_5^*a_6^* $,
   and the $\aa(\ov a_i)$ for all $i\in Q_0$. 

In weight $17$, the space $\bigoplus_{\alpha\in Q_1}e_{\mt(\alpha)}A_{18}e_{\mo(\alpha)}$ has basis
$(c_0c_3)^4a_0$, $c_3^2(c_0c_3)^3a_0$, $c_3c_0(c_3c_0c_3)^2a_0$, $a_2^*c_2c_0c_2(c_2c_0)^2a_2a_1$,
$a_2^*(c_0c_2)^3c_0a_2a_1$, $(c_3c_0)^4a_2$, $(c_0c_3)^4a_2$, $(c_0c_3)^3c_3c_0a_2$, $c_3c_0(c_3c_0c_3)^2a_2$, $a_3(c_3c_0)^4$, $a_3(c_0c_3)^4$, $a_3(c_0c_3)^3c_3c_0$,
$a_3c_0c_3(c_3c_0)^3$, $a_3(c_3c_0)^3c_3^2$, $a_4a_3(c_0c_3)^3c_0a_3^*$,
$a_4a_3(c_0c_3)^3c_3a_3^*$, $a_4a_3c_3(c_3c_0)^3a_3^*$, $a_5a_4a_3(c_3c_0)^3a_3^*a_4^*$,
$a_5a_4a_3(c_0c_3)^3a_3^*a_4^*$, $a_6a_5a_4a_3(c_0c_3)^2c_0a_3^*a_4^*a_5^*$ and
$\bigoplus_{\alpha\in Q_1}e_{\mo(\alpha)}A_{18}e_{\mt(\alpha)}=\aa\left(\bigoplus_{\alpha\in
    Q_1}e_{\mt(\alpha)}A_{18}e_{\mo(\alpha)}\right)$.

In weight $9$, the space $\bigoplus_{\alpha\in Q_1}e_{\mt(\alpha)}A_{9}e_{\mo(\alpha)}$ has basis
$c_3^2c_0c_3a_0$, $c_3c_0c_3^2a_0$, $(c_0c_3)^2a_0$, $a_2^*c_0c_2c_0a_2a_1$, $c_2^2c_0c_2a_2$,
$(c_2c_0)^2a_2$, $(c_0c_2)^2a_2$, $c_0c_2^2c_0a_2$, $a_3c_3c_0c_3^2$, $a_3(c_0c_3)^2$,
$a_3(c_3c_0)^2$, $a_3c_0c_3^2c_0$, $a_4a_3c_3^2c_0a_3^*$, $a_4a_3c_0c_3^2a_3^*$,
$a_4a_3c_0c_3c_0a_3^*$, $a_5a_4a_3c_3c_0a_3^*a_4^*$, $a_5a_4a_3c_0c_3a_3^*a_4^*$, $a_6a_5a_4a_3c_0a_3^*a_4^*a_5^*$ and
$\bigoplus_{\alpha\in Q_1}e_{\mo(\alpha)}A_{9}e_{\mt(\alpha)}=\aa\left(\bigoplus_{\alpha\in
    Q_1}e_{\mt(\alpha)}A_{9}e_{\mo(\alpha)}\right)$.

Finally, if $\rho_{17}$ were a coboundary, it would be equal to $b_k^1(g)$ for some
$g\in\Hom_{k^e}(k,A_{16})$. Such a map $g$ would necessarily satisfy
 $g(e_0)\in\spn{a_0^*(c_3c_0)^3c_3a_0}$, $g(e_3)\in\spn{(c_3c_0)^4,(c_0c_3)^4,c_0c_3(c_3c_0)^3,(c_0c_3)^3c_3c_0,c_3^2(c_0c_3)^3,c_3c_0(c_3c_0c_3)^2}$,  and
$g(e_4)\in\spn{a_3c_0(c_3c_0)^3a_3^*,a_3c_3(c_0c_3)^3a_3^*,a_3c_3(c_3c_0)^3a_3^*,a_3(c_0c_3)^3c_3a_3^*}$.
 Then,
by considering $b_k^1(g)(a_0)$,  $b_k^1(g)(a_0^*)$ and $b_k^1(g)(a_3)$  we get a contradiction.
\end{proof}

\begin{Po}
  Let $A$ be a preprojective algebra of type E$_8$. 
\begin{enumerate}[\itshape(i)]
\item The elements in  $\set{z_0,z_{12},z_{20},z_{24}}\cup\set{\pi_i;i\in Q_0}$ form a basis of $\HK^0(A)$.
\item If $\car(\ff)\not\in\set{2,3,5}$, the   elements in
  $\set{\ov \zeta_\ell;\ell=0,12,20,24}$  form a basis of $\HK^1(A)$.

If $\car(\ff)=2$, the   elements in $\set{\ov \zeta_\ell;\ell=0,12,20,24}\cup\set{\ov \rho_3,\ov \rho_7,\ov \rho_{15},\ov \rho_{27}}$  form a basis of $\HK^1(A)$.

If $\car(\ff)=3$, the   elements in
$\set{\ov \zeta_\ell;\ell=0,12,20,24}\cup\set{\ov \rho_5,\ov \rho_{17}}$  form a basis of $\HK^1(A)$.

If $\car(\ff)=5$, the   elements in
$\set{\ov \zeta_\ell;\ell=0,12,20,24}\cup\set{\ov \rho_9}$  form a basis of $\HK^1(A)$.
\item  If $\car(\ff)\not\in\set{2,3,5}$, the   elements in $\set{\ov h_j;j\in Q_0}$  form a basis of $\HK^2(A)$.

If $\car(\ff)=2$, the   elements in  $\set{\ov h_j;j\in Q_0}\cup\set{\ov \gamma_4,\ov \gamma_8,\ov \gamma_{16},\ov \gamma_{28}}$   form a basis of $\HK^2(A)$.

If $\car(\ff)=3$, the   elements in $\set{\ov h_j;j\in Q_0}\cup\set{\ov \gamma_6,\ov \gamma_{18}}$   form a basis of $\HK^2(A)$.

If $\car(\ff)=5$, the   elements in $\set{\ov h_j;j\in Q_0}\cup\set{\ov \gamma_{10}}$   form a basis of $\HK^2(A)$.

\end{enumerate}

\end{Po}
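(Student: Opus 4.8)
The plan is to follow the same template already used for types A, D, $\mathrm{E}_6$ and $\mathrm{E}_7$, adapting each step to the combinatorics of $\mathrm{E}_8$. Part \textit{(i)} is immediate: the centre $Z(A)=\HK^0(A)$ was computed by Eu in~\cite{eu:product}, and $\set{z_0,z_{12},z_{20},z_{24}}\cup\set{\pi_i;i\in Q_0}$ is precisely the basis given there (the $\pi_i$ spanning the part of $Z(A)$ of positive weight, namely the socle cycles fixed by $\nu$, which for $\mathrm{E}_8$ is every vertex since $\overline\nu=\id$), so nothing new is needed. For \textit{(ii)} and \textit{(iii)} the strategy in each characteristic is: (a) count, using Theorem~\ref{complexduality} together with $\dim\HK^2(A)=\dim\HK_0(A)=\dim\HH_0(A)$ and the Hochschild dimensions of~\cite{eusched:cyfrob,eteu:ade}, that the number of listed classes equals $\dim\HK^1(A)$, respectively $\dim\HK^2(A)$; (b) check that each listed cochain is a cocycle — for $\Hom_{k^e}(R,A)$ this is automatic by Theorem~\ref{w3iszero}, and for the $\zeta_\ell,\rho_\ell\in\Hom_{k^e}(V,A)$ it is a direct verification on each arrow, and for the $\rho_\ell$ one can cut the work by noting that $\rho_\ell=\rho_{\ell'}\cupk z_{\ell-\ell'}$ at the level of cochains, so only one base case per family need be checked; (c) prove linear independence modulo coboundaries.

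For step (c), in the characteristic-$0$ range ($\car\ff\notin\set{2,3,5}$) I would argue as in the $\mathrm{E}_6$, $\mathrm{E}_7$ cases: a basis of $\HK^1(A)=\HH^1(A)$ is known from~\cite{eu:product} in the form $\zeta'_\ell$ ($\ell\in\set{0,12,20,24}$), and $\zeta_0-\zeta'_0=b_K^1(e_3+2e_5+3e_6+4e_7)$ (the exact idempotent combination to be read off from the Nakayama data), hence $\zeta_\ell-\zeta'_\ell=(\zeta_0-\zeta'_0)\cupk z_\ell$ is a coboundary and the two represent the same class; the $\overline h_j$ form a basis of $\HK^2(A)$ exactly as before, since all $b_K^2$-coboundaries have positive coefficient weight while the $h_j$ have coefficient weight $0$. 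In the modular characteristics, by fact~\ref{fact:free up to coboundaries} it suffices to show the extra elements ($\overline\rho_\bullet$ in $\HK^1$, $\overline\gamma_\bullet$ in $\HK^2$) are not coboundaries, and here one reduces to the top extra class in each weight tower using cup-product relations ($\gamma_{28}=z_\bullet\cupk\gamma_\bullet=\zeta_\bullet\cupk\rho_\bullet+(\text{coboundary})$, and similarly $\rho_{27}$, $\rho_{17}$, $\rho_9$ obtained by cupping lower $\rho$'s with central elements), and then invokes Lemma~\ref{lemma:nsc wgt even E8}: parts \textit{(i)}, \textit{(ii)}, \textit{(iv)} give the non-coboundary criteria for $\gamma_{28}$ (char $2$), $\gamma_{18}$ (char $3$), $\gamma_{10}$ (char $5$) respectively, while part \textit{(iii)} directly handles $\rho_{17}$; the remaining $\rho_3,\rho_7,\rho_{15}$ in char $2$ and $\rho_5$ in char $3$, $\rho_9$ in char $5$ are then non-coboundaries because cupping them appropriately produces a class already known to be nonzero (e.g.\ $\zeta_0\cupk\rho_k=-\gamma_{k+1}+b_K^1(\cdots)$, pushing the problem onto a $\gamma$ handled by the lemma).

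The main obstacle I anticipate is bookkeeping, not conceptual difficulty: $\mathrm{E}_8$ has $8$ vertices and the longest paths have weight $28$, so the spaces $\bigoplus_{\alpha\in Q_1}e_{\mt(\alpha)}A_w e_{\mo(\alpha)}$ in the relevant weights ($w=27,17,9$, and a few others) are sizeable, and verifying the non-coboundary criteria in Lemma~\ref{lemma:nsc wgt even E8} amounts to writing out $b_K^1$ of a general element of a $20$-to-$30$-dimensional space and solving a linear system — tedious but mechanical. One genuine subtlety worth flagging is making sure the dimension count in step (a) is correct in positive characteristic: one must use the Etingof--Eu--Schedler computation over $\mathbb Z$ (fact~\ref{fact:dims}) rather than naively lifting the characteristic-$0$ Betti numbers, since for $\car\ff\in\set{2,3,5}$ there are genuinely extra classes (this is exactly why the $\rho_\bullet$ and $\gamma_\bullet$ appear), and the torsion in $\HH_\bullet(\Lambda_{\mathbb Z})$ must be accounted for when passing to $\dim\HK^2(A)=\dim\HH_0(A)$. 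Once the counts are pinned down, everything else is the routine cocycle/coboundary verification described above, carried out characteristic by characteristic.
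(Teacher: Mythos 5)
Your proposal is correct and follows essentially the same route as the paper's own proof: part \textit{(i)} from Eu's computation of the centre, a dimension count via Theorem~\ref{complexduality} and the Eu--Schedler data, identification of the $\zeta_\ell$ with Eu's $\zeta'_\ell$ up to the coboundary $b_K^1(e_3+2e_5+3e_6+4e_7)$ in the non-modular case, and in characteristics $2$, $3$, $5$ a reduction via cup-product relations to the top classes $\gamma_{28}$, $\gamma_{18}$, $\gamma_{10}$, $\rho_{17}$, whose non-coboundary status is settled by Lemma~\ref{lemma:nsc wgt even E8}. The only work left implicit in both your sketch and the paper is the mechanical verification of those cup-product coboundary identities, which the paper likewise delegates to the pattern of Proposition~\ref{prop:basis HK E7}.
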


\begin{proof} The centre was given in \cite{eu:product}, so we have \textit{(i)}. 

For $\HK^1(A)$ and $\HK^2(A)$, the number of elements in the
statement is equal to the dimension of the corresponding cohomology space. Moreover, all the
elements in the statement are indeed cocycles.

If $\car(\ff)$ is not $2$, $3$ or $5$, a basis of $\HK^1(A)=\HH^1(A)$ was given in \cite{eu:product}. It
consists of the classes of the $\zeta'_\ell$ with $\ell\in\set{0,12,20,24}$ where
$\zeta'_\ell(a_i)=a_iz_\ell$ for $0\ppq i\ppq 2$ and $\zeta'_\ell(a_i^*)=a_i^*z_\ell$ for $3\ppq
i\ppq 6$. Since $\zeta_0-\zeta_0'$ is equal to $b_K^1(e_3+2e_5+3e_6+4e_7)$, and 
$\zeta_\ell-\zeta_\ell'=(\zeta_0-\zeta_0')\cupk z_\ell$ is also a coboundary, $\zeta_\ell$ and
$\zeta_\ell'$ represent the same cohomology class for $\ell\in\set{0,12,20,24}$. Moreover, as  in types
A, D, E$_6$ and E$_{7}$, the elements $\ov h_j$ form a basis of $\HK^2(A)$. 

The rest of the proof is the same as that of Proposition \ref{prop:basis HK E7}, based on Lemma
\ref{lemma:nsc wgt even E8} and the fact that the following cup products at the level of cochains are
all coboundaries: $\zeta_0\cupk\rho_{27}-\gamma_{28}$,   $\zeta_{12}\cupk\rho_{15}-\gamma_{28}$,
$\zeta_{24}\cupk\rho_{3}-\gamma_{28}$, $\zeta_{20}\cupk\rho_{7}-\gamma_{28}$,
$z_{24}\cupk\gamma_4-\gamma_{28}$, $z_{12}\cupk\gamma_{16}-\gamma_{28}$,
$z_{20}\cupk\gamma_8-\gamma_{28}$,  $z_{12}\cupk\rho_5-\rho_{17}$,
$z_{12}\cupk\gamma_6-\gamma_{18}$, $\zeta_0\cupk\rho_9-\gamma_{10}$,
whereas $\gamma_{28}$, $\gamma_{18}$, $\gamma_{10}$ and $\rho_{17}$ are not.
\end{proof}

Koszul homology follows using duality (Theorem \ref{complexduality}), as in Corollary \ref{cor:bases
hom type A}.

\subsubsection{Cup and cap products}

We now determine the cup products of the elements in the bases of the Koszul cohomology spaces given above.

\begin{Po} Let $A$ be a preprojective algebra of type E$_8$.
  Up to graded commutativity, the non zero cup products of elements in $\HK^{\bullet}(A)$ are:
 \begin{align*}
   z_0\cupk \ov  f&=\ov  f\text{ for all $\ov f\in\HK^{\bullet}(A)$}& z_{12}^2&=z_{24}\\
   z_{\ell}\cupk \ov  \zeta_0&=\ov  \zeta_\ell\text{ for $\ell\in\set{0,12,20,24}$}&z_{12}\cupk \ov \rho_3&=\ov \rho_{15}\\
   z_{24}\cupk\ov\rho_3&=\ov \rho_{27} & z_{12}\cupk\ov\rho_{15}&=\ov \rho_{27} \\
  z_{20}\cupk\ov\rho_7&=\ov \rho_{27} & z_{12}\cupk\ov\rho_{5}&=\ov \rho_{17} \\
 z_{12}\cupk\ov\rho_9&=-\ov\zeta_{20} &z_{12}\cupk\ov\zeta_{12}&=\ov\zeta_{24}\\\pi_i\cupk \ov h_i&=\ov \gamma_{28}\text{ for $i\in Q_0$} &
    z_{12}\cupk \ov \gamma_{4}&=\ov\gamma_{16}\\  z_{12}\cupk \ov \gamma_{16}&=\ov\gamma_{28}&
 z_{24}\cupk \ov \gamma_{4}&=\ov\gamma_{28}\\   z_{20}\cupk \ov \gamma_{8}&=\ov\gamma_{28}&
z_{12}\cupk \ov \gamma_{6}&=\ov\gamma_{18}\\ \ov\zeta_0\cupk \ov\rho_\ell&=-\ov\gamma_{\ell+1}\text{ for $\ell\in\set{3,7,15,27,9}$}&\ov\zeta_{12}\cupk\ov\rho_{3}&=\ov\gamma_{16}\\
\ov\zeta_{12}\cupk\ov\rho_{15}&=\ov\gamma_{28}&\ov\zeta_{20}\cupk\ov\rho_{7}&=\ov\gamma_{28}\\\ov\zeta_{24}\cupk\ov\rho_{3}&=\ov\gamma_{28}.  \end{align*}

\end{Po}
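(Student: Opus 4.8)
The plan is to compute the cup products in $\HK^{\bullet}(A)$ for type $\mathrm{E}_8$ by the same three-pronged strategy that was used successfully in the preceding cases (types $\mathrm{A}$, $\mathrm{D}$, $\mathrm{E}_6$, $\mathrm{E}_7$). First, the products involving $z_0=1$ are trivial since $1$ is the unit of $\HK^{\bullet}(A)$. Next, a large block of products is handled by \emph{weight bookkeeping}: the differential $b_K^2$ is homogeneous of coefficient weight $1$ (Subsection \ref{weight}), the basis cocycles of $\HK^2(A)$ have coefficient weight $0$, and the nonzero $\ov\gamma_\ell$ occur in positive characteristic only. Hence any cup product $\ov f\cupk\ov g$ whose natural cochain representative $f\underset{K}{\smile}g$ has positive coefficient weight and cannot match one of the privileged weights of $\ov\gamma_\ell$ must vanish in cohomology; this disposes of all the products not listed, exactly as invoked at the end of the $\mathrm{E}_7$ proof.

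The substantive work concerns the products that land on the classes $\ov\gamma_\ell$ (in characteristics $2$, $3$, $5$) and on the top-weight classes $\pi_i\cupk\ov h_i=\ov\gamma_{28}$, together with the products $z_{12}\cupk\ov\zeta_0=\ov\zeta_{12}$ etc.\ inside $\HH^0\cup\HH^1$. For the latter I would quote \cite{eu:product}, noting as in the $\mathrm{E}_7$ proof that the $\zeta_\ell$ and the $\zeta'_\ell$ of \cite{eu:product} differ by a coboundary, so the cup products computed in ordinary Hochschild cohomology transfer. For the former the key tool is Lemma \ref{lemma:nsc wgt even E8}: once one knows the explicit coboundary criterion for a cochain of weight $28$, $18$ or $10$, evaluating the cochain $f\underset{K}{\smile}g$ on the relations $\sigma_i$ and reading off the coefficients $\lambda_i$ immediately decides whether $\ov f\cupk\ov g$ equals the relevant $\ov\gamma_\ell$ or a scalar multiple of it (e.g.\ the signs in $z_{12}\cupk\ov\rho_9=-\ov\zeta_{20}$, and the identification $\pi_i\cupk\ov h_i=\ov\gamma_{28}$ for every $i\in Q_0$). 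The remaining identities $z_{12}\cupk\ov\rho_3=\ov\rho_{15}$, $z_{24}\cupk\ov\rho_3=\ov\rho_{27}$, $z_{12}\cupk\ov\rho_{15}=\ov\rho_{27}$, $z_{20}\cupk\ov\rho_7=\ov\rho_{27}$, $z_{12}\cupk\ov\rho_5=\ov\rho_{17}$ and $z_{12}^2=z_{24}$, $z_{12}\cupk\ov\zeta_{12}=\ov\zeta_{24}$ are proved by exhibiting, at the cochain level, an explicit element $g\in\Hom_{k^e}(k,A)$ or $\Hom_{k^e}(V,A)$ with $f\underset{K}{\smile}g'-(\text{target cochain})=b_K^\bullet(g)$ — the same method used in the $\mathrm{E}_7$ proof to get $z_8\cupk\rho_7=\rho_{15}+b_K^1([e_3\mapsto(c_3c_0)^3c_3])$.

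I would organize the writeup as: (1) cite graded commutativity (Corollary \ref{cupcapsym}) and dispose of the unit; (2) quote the $\HH^0$–$\HH^1$ products from \cite{eu:product} after the $\zeta_\ell$–$\zeta'_\ell$ comparison; (3) treat the $\ov\rho$-valued and $z$-squaring products by explicit coboundary witnesses; (4) treat the $\ov\gamma$-valued products via Lemma \ref{lemma:nsc wgt even E8} in each of $\car\ff\in\{2,3,5\}$; (5) invoke weight homogeneity to kill everything else. The \textbf{main obstacle} is the sheer combinatorial bulk of step (4)/(3): in $\mathrm{E}_8$ the relevant spaces $A_{27}$, $A_{17}$, $A_9$ of paths between adjacent vertices are large (the bases listed in Lemma \ref{lemma:nsc wgt even E8} have dozens of elements), so writing the correct coboundary witnesses and checking the linear-algebra conditions $\sum\pm\lambda_i=0$ without error is delicate; but it is entirely mechanical once the bases in that lemma are in hand, so I would present only the nontrivial witnesses and state that the remaining verifications are routine, exactly as done in the earlier types. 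Finally, the cap products follow immediately from the cup products via \eqref{eq:cap from cup}, stated as a corollary in the same spirit as Corollary \ref{cor:cap type A}.
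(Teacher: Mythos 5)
Your proposal is correct and follows essentially the same route as the paper: graded commutativity and weight homogeneity to kill most products, Lemma \ref{lemma:nsc wgt even E8} to decide the $\ov\gamma_\ell$-valued products, and explicit coboundary witnesses (as in the E$_7$ case) for the identities such as $z_{12}\cupk\ov\rho_3=\ov\rho_{15}$, $z_{24}\cupk\ov\rho_3=\ov\rho_{27}$ and $z_{12}\cupk\ov\rho_9=-\ov\zeta_{20}$. The only minor discrepancy is that the paper handles $z_{12}\cupk\ov\rho_9=-\ov\zeta_{20}$ by an explicit witness $h'$ with $z_{12}\cupk\rho_9+\zeta_{20}=b_K(h')$ rather than via the lemma, but this is a matter of bookkeeping, not of substance.
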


\begin{proof} Most of the cup-products are easy to compute, follow from Lemma \ref{lemma:nsc wgt even E8} or
vanish for weight reasons. The remaining ones are obtained as follows (at the level of cochains):
$z_{24}\cupk \rho_3=\rho_{27}+b_K^1([e_3\mapsto c_0 (c_3c_0)^6])$, we have $z_{12}\cupk \rho_3=\rho_{15}+b_K^1(h)$
  where $h$ is defined by  $h(a_2)=(c_0c_2)^3c_0a_2$,
  $h(a_3)=a_3c_0(c_3c_0)^3+a_3c_3(c_0c_3)^3$, $h(a_4)=a_4a_3(c_0c_3^2)^2a_3^*$ and
  $h(a_2^*)=a_2^*c_0(c_2c_0)^3+a_2^*c_2(c_0c_2)^3$, and finally $z_{12}\cupk
  \rho_9+\zeta_{20}=b_K(h')$ where $h'$ is defined by $h'(e_0)=a_0^*(c_3c_0)^4c_3a_0$, $h'(e_2)=2a_2^*(c_2c_0)^4c_2a_2$, $h'(e_3)=-2(c_0c_3)^5+2c_3^2(c_0c_3)^4+(c_3c_0)^4c_3^2$, $h'(e_4)=-a_3(c_0c_3)^4c_0a_3^*+a_3c_3(c_3c_0)^4a_3^*-a_3(c_0c_3)^4c_3a_3^*$ and $h'(e_5)=-a_4a_3(c_0c_3)^4a_3^*a_4^*$.
\end{proof}

The cap products follow using duality, as in Corollary \ref{cor:cap type A}.

\subsubsection{Higher Koszul cohomology and homology}

As in types A, D, E$_6$ and E$_7$, the differential $\partial _\smile^1$ sends $z_\ell$ to $2\zeta_\ell$ for
$\ell\in\set{0,8,12}$ and the differential $\partial _\smile^2$ is zero except when $\car(\ff)=5$ where $\partial _\smile^2(\ov\rho_9)=2\ov\gamma_{10}$.

 We then have the following higher Koszul cohomology.

\begin{Po}  \label{hkcE8} Let $A$ be a preprojective algebra of type E$_8$.

  If $\car(\ff)=2$, then $\HK^{\bullet}_{hi}(A)=\HK^{\bullet}(A).$

 If $\car(\ff)=3$, then
  \begin{align*}
    \HK^0_{hi}(A)&=\HK^0(A)_{28}\text{ and  the $\pi_i$ for $i\in Q_0$ form a basis}\\
    \HK^1_{hi}(A)&=\spn{[\ov\rho_5],[\ov\rho_{17}]}\\
    \HK^2_{hi}(A)&=\HK^2(A)_0 \text{ and the $[\ov h_i]$ for $i\in Q_0$ form a basis}\\
    \HK^p_{hi}(A)&=0\text{ if $p\neq 0$ and $p\neq 2$.}
  \end{align*}

 If $\car(\ff)\not\in\set{2,3}$, then
  \begin{align*}
    \HK^0_{hi}(A)&=\HK^0(A)_{28}\text{  has dimension $8$ and is spanned by  the $\pi_i$ for $i\in Q_0$ }\\
    \HK^2_{hi}(A)&=\HK^2(A)_0 \text{ has dimension $8$ and is spanned by the $[\ov h_i]$ for $i\in Q_0$}\\
    \HK^p_{hi}(A)&=0\text{ if $p\neq 0$ and $p\neq 2$.}
  \end{align*}

\end{Po}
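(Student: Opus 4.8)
The plan is to compute the higher Koszul cohomology of the type $\mathrm{E}_8$ preprojective algebra $A$ by examining, in each relevant characteristic, the two differentials $\partial_\smile^1\colon\HK^0(A)\to\HK^1(A)$ and $\partial_\smile^2\colon\HK^1(A)\to\HK^2(A)$, exactly as was done in types $\mathrm A$, $\mathrm D$, $\mathrm E_6$ and $\mathrm E_7$. First I would record, as in the preceding cases, that the class of the fundamental $1$-cocycle $\me_A$ equals $2\ov\zeta_0$ (using that $\me_A=\zeta_0+\zeta_0^*$ and that $\zeta_0^*-\zeta_0$ is a coboundary, via a cochain in $\Hom_{k^e}(k,A)$ of the form $b_K^1(e_3+2e_5+3e_6+4e_7)$ applied to the $*$-version); this has effectively already been stated before Proposition~\ref{hkcE8}. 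Hence $\partial_\smile^1=2(\ov\zeta_0\cupk-)$ on $\HK^0(A)$ and $\partial_\smile^2=2(\ov\zeta_0\cupk-)$ on $\HK^1(A)$, and one reads off these maps directly from the cup product table established just above.

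Next I would split into characteristics. If $\car\ff=2$ then $\partial_\smile^1=\partial_\smile^2=0$, so $\HK^\bullet_{hi}(A)=\HK^\bullet(A)$, which is the first assertion. For $\car\ff\neq 2$, I would use the cup product table: $\partial_\smile^1(z_\ell)=2\ov\zeta_\ell$ for $\ell\in\{0,12,20,24\}$ and $\partial_\smile^1(\pi_i)=0$ for all $i\in Q_0$ (since the $\pi_i$ lie in the socle and are killed by the radical, so $\ov\zeta_0\cupk\pi_i=0$). Thus $\partial_\smile^1$ is injective on the span of $z_0,z_{12},z_{20},z_{24}$ and zero on the span of the $\pi_i$; since $\{z_0,z_{12},z_{20},z_{24}\}\cup\{\pi_i\}$ is a basis of $\HK^0(A)$, we get $\HK^0_{hi}(A)=\HK^0(A)_{28}=\spn{\pi_i;i\in Q_0}$ (the $\pi_i$ being precisely the weight-$28$ central elements). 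For $\HK^1_{hi}(A)=\ker\partial_\smile^2/\im\partial_\smile^1$: in characteristic $0$ or $\car\ff\notin\{2,3,5\}$, $\HK^1(A)$ is spanned by the $\ov\zeta_\ell$, which are exactly $\im\partial_\smile^1$ (up to the invertible factor $2$), so $\HK^1_{hi}(A)=0$; and $\partial_\smile^2=0$ there, giving $\HK^2_{hi}(A)=\HK^2(A)=\HK^2(A)_0=\spn{\ov h_i}$. In $\car\ff=5$, the extra generator $\ov\rho_9$ of $\HK^1(A)$ satisfies $\partial_\smile^2(\ov\rho_9)=2\ov\gamma_{10}\neq 0$ (from the table and the fact that $\ov\gamma_{10}$ is a nonzero class, by Lemma~\ref{lemma:nsc wgt even E8}(iv) or a direct cochain computation), while $z_{12}\cupk\ov\rho_9=-\ov\zeta_{20}\in\im\partial_\smile^1$; so $\ker\partial_\smile^2$ is still just the $\ov\zeta$'s, and again $\HK^1_{hi}(A)=0$, $\HK^2_{hi}(A)=\HK^2(A)_0$. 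In $\car\ff=3$, the extra generators $\ov\rho_5,\ov\rho_{17}$ of $\HK^1(A)$ are sent to $0$ by $\partial_\smile^2$ (the relevant products $z\cupk\gamma$ that could interfere are coboundaries, and $\ov\zeta_0\cupk\ov\rho_5$, $\ov\zeta_0\cupk\ov\rho_{17}$ do not contribute a class surviving — this needs the cup table entry $\ov\zeta_0\cupk\ov\rho_\ell=-\ov\gamma_{\ell+1}$, but one checks $\partial_\smile^2$ is $2(\ov\zeta_0\cupk-)$ and whether $\ov\gamma_6,\ov\gamma_{18}$ are hit), so they survive to higher cohomology, giving $\HK^1_{hi}(A)=\spn{[\ov\rho_5],[\ov\rho_{17}]}$, and $\HK^2_{hi}(A)=\HK^2(A)_0$ after quotienting by $\im\partial_\smile^2=\spn{\ov\gamma_6,\ov\gamma_{18}}$. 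Finally $\HK^p_{hi}(A)=0$ for $p>2$ is immediate from $\HK^p(A)=0$ for $p>2$ (Theorem~\ref{w3iszero}).

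The main obstacle, and the step that requires genuine care rather than bookkeeping, is the small-characteristic analysis of $\partial_\smile^2$, in particular in $\car\ff=3$: one must verify precisely which of the extra weight-graded classes $\ov\gamma_6,\ov\gamma_{18}$ lie in the image of $\partial_\smile^2=2(\ov\zeta_0\cupk-)$ restricted to $\HK^1(A)$, and that $\ov\rho_5,\ov\rho_{17}$ are genuinely not coboundaries after applying $\partial_\smile^2$. This is exactly where Lemma~\ref{lemma:nsc wgt even E8}(ii),(iii) enters: it gives the coboundary criterion in weight $18$ and the non-coboundariness of $\rho_{17}$, which one needs to pin down the rank of $\partial_\smile^2$ and hence both $\HK^1_{hi}(A)$ and $\HK^2_{hi}(A)$. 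The remaining cases ($\car\ff=5$ and $\car\ff\notin\{2,3,5\}$) are comparatively routine once the cup product table is in hand, since there the extra $\HK^1$-generator either maps to a nonzero class already pinned down by Lemma~\ref{lemma:nsc wgt even E8}(iv) or there are no extra generators at all. Higher Koszul homology then follows formally from Theorem~\ref{complexduality}, exactly as in Corollary~\ref{cor:higher type A}.
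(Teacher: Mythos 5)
Your overall strategy is exactly the paper's: identify $\overline{\me}_A=2\ov\zeta_0$ up to coboundary, so that $\partial_\smile^1$ and $\partial_\smile^2$ are $2(\ov\zeta_0\cupk -)$, read the differentials off the cup-product table, and treat each characteristic separately, with $\HK^p_{hi}(A)=0$ for $p>2$ coming from Theorem~\ref{w3iszero} and homology from Theorem~\ref{complexduality}. The cases $\car(\ff)=2$, $\car(\ff)\notin\{2,3,5\}$ and $\car(\ff)=5$ are handled correctly and as in the paper.

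The characteristic~$3$ case, which you yourself single out as the delicate step, is where your argument breaks down: you assert simultaneously that $\partial_\smile^2(\ov\rho_5)=\partial_\smile^2(\ov\rho_{17})=0$ (so that these classes survive and $\HK^1_{hi}(A)=\spn{[\ov\rho_5],[\ov\rho_{17}]}$) \emph{and} that $\im\partial_\smile^2=\spn{\ov\gamma_6,\ov\gamma_{18}}$ (so that $\HK^2_{hi}(A)=\HK^2(A)_0$). These two claims are mutually exclusive: since $\partial_\smile^2$ automatically vanishes on $\im\partial_\smile^1=\spn{\ov\zeta_0,\ov\zeta_{12},\ov\zeta_{20},\ov\zeta_{24}}$, its image is generated by $\partial_\smile^2(\ov\rho_5)$ and $\partial_\smile^2(\ov\rho_{17})$, hence cannot be $2$-dimensional if both vanish. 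Concretely, with $\dim\HK^0(A)=12$, $\dim\HK^1(A)=6$, $\dim\HK^2(A)=10$ in characteristic~$3$, the Euler characteristic forces $\dim\HK^2_{hi}(A)-\dim\HK^1_{hi}(A)=8$, whereas your conclusion gives $8-2=6$. You must actually decide whether $\ov\zeta_0\cupk\ov\rho_5$ and $\ov\zeta_0\cupk\ov\rho_{17}$ equal $-\ov\gamma_6$ and $-\ov\gamma_{18}$ or vanish in $\HK^2(A)$ (a computation in the spirit of Lemma~\ref{lemma:nsc wgt even E8}\textit{(ii)}); note that $5,17\notin\{3,7,15,27,9\}$ in the cup-product table and that the paragraph preceding the proposition asserts $\partial_\smile^2=0$ outside characteristic~$5$, which points to the first alternative $\HK^1_{hi}(A)=\spn{[\ov\rho_5],[\ov\rho_{17}]}$ together with $\HK^2_{hi}(A)=\HK^2(A)$ of dimension $10$ (not $\HK^2(A)_0$); the same tension is present in the displayed statement itself, so it cannot be resolved by bookkeeping alone and the offending line must be corrected one way or the other.
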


Higher Koszul homology follows from Theorem \ref{complexduality}.

\subsection{Comparison of Koszul and Hochschild (co)homology for preprojective algebras of type ADE}\label{subsec:comparison hochschild}

Let $A$ be a preprojective algebra over a Dynkin graph of type ADE.
Schofield constructed a minimal projective resolution $(P^\bullet,\partial ^\bullet)$ of $A$
as a bimodule over itself, that is periodic (of period at most $6$), which was described in
\cite{es:third,eu:product}. Following Proposition \ref{1degree2}, the embedding $H(\iota^*)_2$ sends the Hochschild cohomology
class of an element in $\ker(\partial ^3\circ -)$ to its Koszul cohomology class, and the surjection
$H(\tilde{\iota})_2$ induces an isomorphism between $\HH_{2}(A)$  and
$\HK_2(A)/\im(\id_A\otimes \partial ^3)$.

We first transport the maps $\partial ^3\circ -$ and $\id_A\otimes \partial ^3$ via  the natural isomorphisms   $A\otimes_{A^e}(Ae_j\otimes e_iA)\rightarrow
  e_iAe_j$ that sends $\lambda\otimes (a\otimes b)$ to $b\lambda a$ and  $\Hom_{A^e}(Ae_i\otimes e_jA,A)\cong
  e_iAe_j$ that sends $f$ to $f(e_i\otimes e_j)$.

The  associative non degenerate bilinear form on the selfinjective preprojective $A$ can be defined
as follows, see \cite{es:third} and \cite[Proposition 3.15]{Z}: 
 let $B$ be a basis of $A$ consisting of homogeneous elements, that contains the idempotents $e_i$, $i\in Q_0$ and a basis $\set{\pi _i\,;\,i\in Q_0}$ of the socle of $A$, and  such that each $v\in B$ belongs to $e_jAe_i$ for some $i,j$ in $Q_0.$  
 Then if $x\in Ae_i$,  $(y,x)$ is the coefficient of $\pi_i$ in the expression of $yx$ as a linear combination of elements in $B.$
The Nakayama automorphism $\nk$ of $A$ satisfies $(y,x)=(\nk(x),y)$ for all $x,y$ in $A$, and
induces a permutation of the indices, the Nakayama permutation $\np$, that is, a permutation of  $
Q_0$ such that $\tp(Ae_i)\cong \soc(Ae_{\np(i)})$, characterised by  $\nk(e_i)=e_{\np(i)}$.

Let $\widehat{B}$ be the dual basis of $B$ with respect to the non degenerate form $(-,-)$, so that $(\widehat{w},v)=\delta_{vw}$ for all $v,w$ in $B$. In particular, if $v\in Be_i$, the coefficient of $\pi_i$ in  $\widehat{v}v$ is $1.$ Note that $v\in e_jBe_i$ if and only if $\widehat{v}\in e_{\np(i)}\widehat{B}e_j.$

Then  the maps $\partial ^3\circ -$ and
  $\id_A\otimes_{A^e}\partial ^3$ become respectively
\begin{align*}
\delta^3\colon &\bigoplus_{i\in Q_0}e_iAe_{i}\rightarrow \bigoplus_{i\in
                 Q_0}e_iAe_{\np(i)}\text{ defined by }y\mapsto \sum_{x\in\B}\widehat{x}yx\\
\text{and }\delta_3\colon &\bigoplus_{i\in Q_0}e_iAe_{\np(i)}\rightarrow \bigoplus_{i\in Q_0}e_iAe_{i}\text{ defined by }y\mapsto                                                                                      \sum_{x\in\B}xy\widehat  x.
\end{align*}

It then follows as in \cite[Proposition 3.2.25]{eusched:cyfrob} that $\im\delta_3$ is the span of
the $\delta_3(e_i)$ such that $\np(i)=i$, and that for such an $i$ we have
$\delta_3(e_i)=\sum_{\substack j\in
Q_0\\\np(j)=j}\tr(\nk_{|e_jAe_i})\pi_j$. Moreover, the matrix whose coefficients are the
$\tr(\nk_{|e_jAe_i})$ is either easy to compute or given in \cite{eu:product} for types D and
E$_7$. It is also known from  \cite{eusched:cyfrob} that the set of elements of weight $0$ in  $\ker\delta^3$ identifies with
the kernel of the Cartan matrix of $A$. Moreover, for any element of positive weight $a\in A$, we
have $\delta^3(a)=0$. Therefore $\HH^2(A)$ is obtained by taking all the elements of positive
weight in $\HK^2(A)$ and adding the kernel of the Cartan matrix.

We shall use this as well as the dimensions of the Hochschild and Koszul (co)homology spaces to
compare $\HH^2(A)$ with $\HK^2(A)$ and  $\HH_2(A)$ with $\HK_2(A)$ in each case. In particular, the injection $\HH^2(A)\rightarrow  \HK^2(A)$ is not surjective except in type E$_8$ with $\car(\ff)= 2$.

\subsubsection{Comparison of the second Koszul and Hochschild cohomology groups}

In type A, the space $\HH^2(A)$ was completely described by Erdmann and Snashall in \cite{es:first}, and they
proved that $\dim\HH^2(A)=n-\ma-1$ and gave  a basis $\set{\tilde h_i;0\ppq i\ppq n-\ma-2}$ with
$\tilde{h}_i=\ov h_i+\ov h_{n-1-i}.$  The morphism of complexes $\iota^*_2$ sends $\tilde h_i$ to $h_i+h_{n-1-i}$, and this describes the injection $\HH^2(A)\rightarrow  \HK^2(A)$.

In type D,  if $\car(\ff)\neq 2$ and $n$ is even, there is nothing to do since
$\HH^2(A)=0.$ If  $\car(\ff)\neq 2$ and $n$ is odd, then $\dim\HH^2(A)=1$, the basis given in
\cite{eu:product} for $\HH^2(\Lambda_\cc)$  also gives a basis of $\HH^2(A)$, and it is the
cohomology class of the map $\psi_0$ defined by $\psi_0(\sigma_0)=e_0$ and $\psi_0(\sigma_1)=-e_1$. The embedding $\HH^2(A)\rightarrow  \HK^2(A)$ is therefore given by $\psi_0\mapsto h_0-h_1.$

Now assume that $\car(\ff)=2$. Then $\dim\HH^2(A)=n+\md-2.$ As we explained above, a  basis of
$\HH^2(A)$ may be obtained from a basis of the set of elements of positive coefficient weight in $\HK^2(A)$ to which we add elements obtained by determining a basis of the kernel of  the Cartan matrix of $A$. It follows that a basis of $\HH^2(A)$ is given by 
\begin{align*}
&\set{\ov \gamma_\ell;1\ppq \ell\ppq m}\cup \set{\ov \varphi_i;2\ppq i\ppq n-1}\quad\text{if $n$ is even}\\
&\set{\ov \gamma_\ell;1\ppq \ell\ppq m}\cup\set{\ov \psi_0}\cup \set{\psi_i;3\ppq i\ppq n-1}\quad\text{if $n$ is odd}
\end{align*} where $\psi_{2p+1}(\sigma_{2p+1})=e_{2p+1}$, $\psi_{2p+2}(\sigma_{2p+2})=e_{2p+2}$ and
$\psi_{2p+2}(\sigma_2)=e_2$, $\varphi_{2p}(\sigma_{2p})=e_{2p}$,
$\varphi_{2p+1}(\sigma_{2p+3})=e_{2p+3}$ and $\varphi_{2p+1}(\sigma_{3})=e_{3}$, for $p\pgq 1.$
Therefore, the embedding  $\HH^2(A)\rightarrow  \HK^2(A)$ fixes the $\ov \gamma_\ell$ and sends
$\ov \varphi_{2p}$ to $\ov h_{2p}$ and $\ov \varphi_{2p+3}$ to $\ov h_3+\ov h_{2p+3}$ when $n$ is even, and $\ov \psi_0$ to
$\ov h_0+\ov h_1$, $\ov \psi_{2p+1}$ to $\ov h_{2p+1}$ and $\ov \psi_{2p+2}$ to $\ov h_2+\ov
h_{2p+2}$ when $n$ is odd. 

In type E$_6$, 
 the Cartan matrix is equivalent, through row operations, to 
\[
\left(\begin{smallmatrix}
  2&0&0&0&0&0\\0&1&0&0&0&1\\0&0&1&0&1&0\\0&0&0&2&0&0
\end{smallmatrix}\right)
\] Let $\varphi_0$, $\varphi_1$, $\varphi_2$ and $\varphi_3$ be the maps in $ \Hom_{k^e}(R,A)$
defined by $\varphi_0(\sigma_0)=e_0$,
$\varphi_1(\sigma_1)=e_1$, $\varphi_1(\sigma_5)=-e_5$, $\varphi_2(\sigma_2)=e_2$,
$\varphi_2(\sigma_4)=-e_4$ and $\varphi_3(\sigma_3)=e_3$. Then $\HH^2(A)$ is the subspace of $\HK^2(A)$ spanned by 
\begin{align*}
& \ov \varphi_1, \ov \varphi_2\text{ if }\car(\ff)\not\in\set{2,3}\\
& \ov \varphi_0,\ov \varphi_1,\ov \varphi_2,\ov \varphi_3,\ov \gamma_4\text{ if }\car(\ff)=2\\
& \ov \varphi_1, \ov \varphi_2, \ov \gamma_6\text{ if }\car(\ff)=3.
\end{align*}

In type E$_7$, if $\car(\ff)\neq 2$, we have $\dim \HH^2(A)=\dim\HK^2(A)_{>0}$, so that $\HH^2(A)$ is
precisely the subspace of $\HK^2(A)$ of elements of positive weight (which is zero unless $\car(\ff)\in\set{2,3})$. If $\car(\ff)=2$, then the
Cartan matrix of $A$ is equivalent, through row operations, to the matrix $
\left(\begin{smallmatrix}
  1&0&0&0&1&0&1
\end{smallmatrix}\right)
$ so that $\HH^2(A)$ is the subspace of $\HK^2(A)$ spanned by $\ov \gamma_4$, $\ov \gamma_8$, $\ov \gamma_{16}$,
$\ov h_1$, $\ov h_2$, $\ov h_3$, $\ov h_5$, $\ov h_0+\ov h_4$ and $\ov h_0+\ov h_6$.

In type E$_8$, we only need to look at dimensions. Indeed, if
$\car(\ff)=2$, then $\dim\HH^2(A)=\dim\HK^2(A)$ so that $\HH^2(A)\cong \HK^2(A)$, and if
$\car(\ff)\neq 2$ then $\dim\HH^2(A)=\dim\HK^2(A)_{>0}$ so that $\HH^2(A)$ is
precisely the subspace of $\HK^2(A)$ of elements of positive weight (which is zero unless $\car(\ff)\in\set{2,3,5})$.

\subsubsection{Comparison of the second Koszul and Hochschild homology groups}

In type A, 
if $n$ is even or if $n$ is odd and  $\car(\ff)$ divides $ (\ma+1)$, we have seen that $\dim\HK_2(A)=\dim\HH_2(A)$ and therefore $\HK_2(A)\cong \HH_2(A)$. Now assume that $n$ is odd and $\car(\ff)\nmid (\ma+1)$. Then $\dim\HK_2(A)=\dim\HH_2(A)+1$. We must determine the image of the map $\delta_3$ given in Subsection \ref{subsec:comparison hochschild}.

The Nakayama permutation $\np$ has precisely one fixed point, which is $\ma$. The matrix of $\nk$ restricted to $e_\ma Ae_\ma$ is the identity matrix $I_{\ma +1}$.
Consequently, $\delta_3(e_{\ma})=(\ma+1)\pi_\ma$ spans the image of $\delta_3.$
  Via the isomorphism $\bigoplus_{i\in Q_0}e_iAe_i\cong A\ot_{k^e}R$, $\pi_\ma$ corresponds to
  $\cz_\ma$ so that we have $\HH_2(A)\cong \HK_2(A)/\spn{ \cz_\ma}$.

In type D, if $n$ is odd and $\car(\ff)=2$, then we know  that $\dim\HK_2(A)=\dim\HH_2(A)$ and therefore $\HK_2(A)\cong \HH_2(A)$. In the other cases, we need to  determine the image of the map $\delta_3$. We should note that the Nakayama automorphism in \cite{eu:product} differs from $\nu $ by composition with the inner automorphism given by the invertible element $u=-e_0+\sum_{i=1}^{n-1}(-1)^ie_i$ (after changing the labelling of vertices and arrows so that they are the same as ours).

If $n$ is odd and $\car(\ff)\neq 2$, the fixed points of the Nakayama permutation are the integers
$i$ with $2\ppq i\ppq n-1.$ The matrix $(n-2)\times (n-2)$ matrix $H^\nk$ whose  $(i,j)$-coefficient is the trace of $\nk$
restricted to $e_jAe_i$ was given in \cite[paragraph 11.2.3]{eu:product}; we need only change the signs of coefficients $(i,j)$ with $i$ or $j$ (but not both) equal to $0$ or odd,  so that
$\tr\left(\nk_{|e_jAe_i}\right)=2$ if $i$ and $j$ are even and is equal to $0$ otherwise.
 Therefore for all $i$ fixed by $\np$, we have $\delta_3(e_i)=\sum_{p=1}^{\md+1}2\pi_{2p}.$ Using the isomorphism 
 $\bigoplus_{i\in Q_0}e_iAe_i\cong A\ot_{k^e}R$, we obtain  $\HH_2(A)\cong \HK_2(A)/\spn{\sum_{p=1}^{\md+1} \cp_{2p}}$.

If $n$ is even, all the integers $i$ with $0\ppq i\ppq n-1$ are fixed points of $\np.$  The matrix
$H^\nk$ was given in  \cite[paragraph 11.2.2]{eu:product} and the same adaptations as in the case $n$ odd gives the $n\times n$ symmetric matrix 
\[ H^\nu =
  \left(\begin{smallmatrix}
    m+1&-m&0&1&0&1&\cdots&0&1\\-m&m+1&0&1&0&1&\cdots&0&1\\0&0&0&0&0&0&\cdots&0&0\\1&1&0&2&0&2&\cdots&0&2\\0&0&0&0&0&0&\cdots&0&0\\1&1&0&2&0&2&\cdots&0&2\\\vdots&\vdots&\vdots&\vdots&\vdots&\vdots&\ddots&\vdots&\vdots\\0&0&0&0&0&0&\cdots&0&0\\1&1&0&2&0&2&\cdots&0&2
  \end{smallmatrix}\right)
\text{ which is column equivalent to }\left(\begin{smallmatrix} \md+1&1-n\\-\md&n-1\\0&0\\1&0\\0&0\\\vdots&\vdots\\0&0\\1&0
 \end{smallmatrix}\right).\]
It follows that, if $n$ is even, 
\[ \HH_2(A)\cong
\begin{cases}
\HK_2(A)/\spn{(\md+1) \cp_0-\md \cp_1+\sum_{p=1}^{\md} \cp_{2p+1}}&\text{if $\car(\ff)|(n-1)$}\\
\HK_2(A)/\spn{ \cp_0- \cp_1; \cp_0+\sum_{p=1}^{\md} \cp_{2p+1}}&\text{if $\car(\ff)\nmid (n-1)$}\\
\end{cases}
 \]

In types E$_6$ and E$_8$, since they have the same dimensions, the homology spaces $\HH_2(A)$ and $\HK_2(A)$ are isomorphic.

\sloppy Finally, in type E$_7$, if $\car(\ff)=3$, then $\HH_2(A)\cong \HK_2(A)$. Now assume that $\car(\ff)\neq 3$. The matrix
$H^\nk$ was given in \cite{eu:product}; here again, the Nakayama automorphism in \cite{eu:product} differs from $\nu $ by composing with the inner automorphism associated with the element $-e_0+e_1-e_2+e_3+e_4+e_5+e_6$ (after change of labelling) and therefore the non zero rows of the matrix $H^\nu $  are those corresponding to vertices
$0,$ $4$ and $6$ and are all equal to $
\left(\begin{smallmatrix}
  3&0&0&0&3&0&3
\end{smallmatrix}\right). $
It follows that $\HH_2(A)\cong\HK_2(A)/\spn{ \cp_0+ \cp_4+ \cp_6}$.

\subsection{A minimal complete list of cohomological invariants}

\begin{Te} \label{mincomp}
Let $A$ be the preprojective algebra of a Dynkin graph $\Delta$ over $\ff$. Assume that $A$ has type either $\mathrm{A}_n$ with $n\pgq 3$, or $\mathrm{D}_n$ with $n\pgq 4$, or $\mathrm{E}_n$ with $n=6,\,7,\,8$. Let $A'$ be a preprojective algebra of type $\mathrm{ADE}$, where the integer $n'$ concerning $A'$ is subjected to the same assumptions. Denote by $(d_p)$ the equality $\dim \HK^p_{hi}(A)=\dim \HK^p_{hi}(A')$. If $(d_p)$ holds for $p=0,1$ and $2$, then $n=n'$, and $A$ and $A'$ have the same type. The conclusion of this implication does not hold if $(d_2)$ is removed from the assumption.
\end{Te}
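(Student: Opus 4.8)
The plan is to assemble the explicit higher Koszul cohomology computations (Propositions \ref{hkcA}, \ref{hkcD}, \ref{hkcE6}, \ref{hkcE7} and \ref{hkcE8}) into a single table of triples $(\dim\HK^0_{hi}(A),\dim\HK^1_{hi}(A),\dim\HK^2_{hi}(A))$, indexed by the type of $\Delta$ and by the relevant constraint on $\car(\ff)$, and then to read off that the assignment ``type $\mapsto$ triple'' is injective. First I would record the following, extracting the numerical content of the cited propositions. For type $\mathrm{A}_n$ ($n\pgq 3$): if $\car(\ff)=2$ one has $\dim\HK^0_{hi}=\ma+1$, $\dim\HK^1_{hi}=n-\ma-1$, $\dim\HK^2_{hi}=n$; if $\car(\ff)\neq 2$ and $n$ even, the triple is $(0,0,n)$; if $\car(\ff)\neq 2$ and $n$ odd, it is $(1,0,n)$. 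For type $\mathrm{D}_n$ ($n\pgq 4$): if $\car(\ff)=2$, $\HK^{\bullet}_{hi}=\HK^{\bullet}$, giving dimensions read from the bases in the $\mathrm{D}_n$ proposition; if $\car(\ff)\neq 2$, the triple is $(r,0,n)$ where $r=\#\{i: \nk(e_i)=e_i\}$ is $n$ or $n-2$ according to the parity discussion. For $\mathrm{E}_6$: $(2,0,6)$ generically, $(2,1,6)$ if $\car(\ff)=3$, and in characteristic $2$ the triple coming from $\HK^{\bullet}_{hi}=\HK^{\bullet}$. For $\mathrm{E}_7$: $(7,0,7)$ if $\car(\ff)\neq 2$, and the $\car(\ff)=2$ value from $\HK^{\bullet}_{hi}=\HK^{\bullet}$. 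For $\mathrm{E}_8$: $(8,0,8)$ if $\car(\ff)\notin\{2,3\}$, $(8,2,8)$ if $\car(\ff)=3$, and the $\car(\ff)=2$ value from $\HK^{\bullet}_{hi}=\HK^{\bullet}$.

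Next I would argue the injectivity in the following order. The value $\dim\HK^2_{hi}(A)$ already pins down the number of vertices in the non-exceptional range: for $\mathrm{A}_n$ and $\mathrm{D}_n$ with $\car(\ff)\neq 2$ it equals $n$, so any two algebras with the same triple and $\car(\ff)\neq 2$ have equal $n$, and then $\dim\HK^0_{hi}$ (which is $0$ or $1$ for type A, but is $\geq 2$ for type D with $n\pgq 4$, and is the full socle dimension $\geq 6$ for E) separates the types A, D, E from one another; within type D the remaining parity of $n$ is already determined by $n$ itself, and within type E the integer $n\in\{6,7,8\}$ is read off from $\dim\HK^2_{hi}\in\{6,7,8\}$. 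The characteristic-$2$ cases require separate bookkeeping because there $\HK^{\bullet}_{hi}=\HK^{\bullet}$ and the $\HK^2$-dimensions are inflated by the extra $\gamma$-classes; I would tabulate those inflated values explicitly (e.g.\ type $\mathrm{A}_n$ in characteristic $2$ still has $\dim\HK^2_{hi}=n$, type $\mathrm{D}_n$ has $n+\md$, type $\mathrm{E}_6$ has $7$, $\mathrm{E}_7$ has $10$, $\mathrm{E}_8$ has $12$) and check that the resulting triples are still pairwise distinct, using $\dim\HK^0_{hi}$ to break any residual ties exactly as in odd characteristic. A short remark is needed that the hypothesis ``same $\ff$'' is implicit: the statement compares $A$ and $A'$ over the same field, so the same characteristic constraints apply to both, which is what makes the case analysis finite.

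Finally, for the last sentence of the theorem I would exhibit one pair $(A,A')$ of preprojective algebras of different types with $\dim\HK^p_{hi}(A)=\dim\HK^p_{hi}(A')$ for $p=0,1$ but $\dim\HK^2_{hi}(A)\neq\dim\HK^2_{hi}(A')$. The natural candidate, valid in characteristic $\neq 2$, is $A$ of type $\mathrm{A}_n$ with $n$ odd and $A'$ of type $\mathrm{A}_{n'}$ with $n'$ odd and $n'\neq n$: both have $(\dim\HK^0_{hi},\dim\HK^1_{hi})=(1,0)$ but $\dim\HK^2_{hi}$ equals $n$ and $n'$ respectively; one just needs to check these are genuinely of the type allowed (take $n=3$, $n'=5$). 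Alternatively, $\mathrm{A}_4$ and any even $\mathrm{A}_m$ with $m\neq 4$ both give $(0,0,*)$ with differing last entry. The main obstacle is purely organisational rather than conceptual: one must be careful to handle \emph{all} small-characteristic exceptions ($\car(\ff)\in\{2,3,5\}$) uniformly for $A$ and $A'$ and to double-check the type-$\mathrm{D}$ parity subcases and the characteristic-$2$ dimension counts against the explicit bases in the relevant propositions, since a single miscount would break injectivity; but since every needed dimension is already tabulated in Section \ref{sec:explicit}, no new computation is required, and the proof reduces to displaying the table and observing that the map is injective.
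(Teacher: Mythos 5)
Your overall strategy is the same as the paper's: read the dimension triples $\bigl(\dim\HK^0_{hi},\dim\HK^1_{hi},\dim\HK^2_{hi}\bigr)$ off Propositions \ref{hkcA}, \ref{hkcD}, \ref{hkcE6}, \ref{hkcE7}, \ref{hkcE8}, check injectivity by a finite case analysis, and exhibit $\mathrm{A}_3$ versus $\mathrm{A}_5$ for the minimality claim. However, your sketch has two genuine gaps. First, the step ``$\dim\HK^0_{hi}$ separates the types A, D, E'' is not justified and in fact fails as stated for the comparison D versus E: your own table gives $(r,0,n)$ with $r\in\{n,n-2\}$ for $\mathrm{D}_n$ in $\car\ff\neq 2$ and $(8,0,8)$ for $\mathrm{E}_8$ in $\car\ff\notin\{2,3\}$, so $\mathrm{D}_8$ (where $\np=\id$, hence $r=8$) produces exactly the same triple $(8,0,8)$ as $\mathrm{E}_8$. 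The paper treats D-versus-E as a separate case and invokes $(d_1)$ in addition to $(d_0)$ and $(d_2)$ for $\mathrm{E}_8$; your argument never addresses this comparison at all, and it is precisely the place where the claimed injectivity is most delicate. You cannot dispose of it with the lower bounds ``$\pgq 2$ for D, $\pgq 6$ for E'', since those ranges overlap.

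Second, your counterexample for the final sentence only works when $\car\ff\neq 2$. In characteristic $2$ one has $\HK^{\bullet}_{hi}=\HK^{\bullet}$ for type A, so $\dim\HK^0_{hi}+\dim\HK^1_{hi}=(\ma+1)+(n-\ma-1)=n$: the pair $(d_0),(d_1)$ already determines $n$ within type A, and no two type-A algebras can serve as a counterexample. The paper instead exhibits $\mathrm{A}_9$ versus $\mathrm{E}_6$ in characteristic $2$, both having $(d_0)=5$ and $(d_1)=4$ but $(d_2)$ equal to $9$ and $7$ respectively; you need such a cross-type example to finish this case. Finally, a bookkeeping slip: for $\mathrm{E}_6$ in characteristic $3$ Proposition \ref{hkcE6} gives $\HK^2_{hi}(A)=\HK^2(A)$, which has dimension $7$ (the six $\ov h_j$ together with $\ov\gamma_6$), not $6$ as in your table; a similar check is needed for $\mathrm{E}_8$ in characteristics $3$ and $5$. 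Since your whole proof is ``display the table and observe injectivity'', such miscounts are not cosmetic.
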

\begin{proof}
We apply the results contained in Propositions \ref{hkcA}, \ref{hkcD}, \ref{hkcE6}, \ref{hkcE7} and
\ref{hkcE8}. The implication is a consequence of the following items.
\begin{enumerate}[(1)]
\item \label{case:AD} Assume that $A$ and $A'$ have types A or D. If $\car \ff \neq 2$, then $n=n'$
  by $(d_2)$, and $A$ and $A'$ have the same type by $(d_0)$. If $\car \ff =2$ and $A$ and if $A'$ both
  have type A, then $n=n'$  by $(d_2)$. If $\car \ff =2$,  $A$ is of type $A_n$ and $A'$ is of type
  $D_{n'}$, then the sum of $(d_0)$ and $(d_1)$ shows that $n=2n'+m'_D -2$ or $-3$ which contradicts
  $(d_2)$: $n=n'+m'_D$. If $\car \ff =2$ and if $A$ and $A'$ both have type D, then $n=n'$ by $(d_1)$.
\item \label{case:E} If $A$ and $A'$ have type E, then $n=n'$ by $(d_0)$.
\item \label{case:AE} If $A$ is of type $A_n$ and $A'$ of type $E_6$, then $(d_2)$ implies either
  that $n=6$ if $\car \ff \neq 2,3$ or that $n=7$ if $\car \ff=2,3$, but each case is excluded by $(d_0)$. Similarly when $A'$ is of type $E_7$ and $E_8$.
\item \label{case DE} If $A$ is of type $D_n$ and $A'$ is of type $E_6$, then $(d_2)$ implies one of
  the three following cases: $n=6$ if $\car \ff \neq 2, 3$,  $n=7$ if $\car \ff =3$ or $n+m_D= 7$ if
  $\car \ff =2$, but each case is excluded by $(d_0)$. Similarly when $A'$ is of type $E_7$ and
  $E_8$ (we also use $(d_1)$ for $E_8$).  
\end{enumerate}

Let us show that we cannot remove assumption $(d_2)$. It is clear if $\car \ff \neq 2$ because when
$A$ is of type $A_3$ and $A'$ is of type $A_5$, both $(d_0)$ and $(d_1)$ hold. If $\car \ff =2$, we
check that when $A$ is of type $A_9$ and $A'$ is of type $E_6$ then $(d_0)$ and $(d_1)$ hold. 
\end{proof}

It is obvious that  $\HK^{\bullet}_{hi}(A)\cong \ff$ if $A$ is of type $A_1$ and it is easy to check that  $\HK^{\bullet}_{hi}(A)=
0$ when $A$ is of type $A_2$, therefore  we have obtained a minimal complete list of cohomological invariants for all the ADE preprojective algebras.

Another direct application of our computations is the following. If $A$ is as in Theorem \ref{mincomp} and if $\car \ff \neq 2$, the product of the algebra $\HK^{\bullet}_{hi}(A)$ is identically zero. If $\car \ff =2$, $\HK^{\bullet}_{hi}(A)=\HK^{\bullet}(A)$ is a unital algebra whose product is fully described in our results. 
\begin{Rm} 
In the one vertex case, the higher Koszul homology and cohomology play an essential role in a specific formulation of a Koszul Poincar\'e Lemma (a Koszul Poincar\'e duality), see Conjectures 6.5 and 7.2 in~\cite{bls:kocal}. For this  reason, we have formulated Theorem \ref{mincomp} in terms of the higher Koszul cohomology. An analogous statement with Koszul cohomology is also true and follows in the same way from our results, but in this case the minimality depends on the characteristic.
\end{Rm}

\affiliationone{%
  Roland Berger\\
   Univ Lyon, UJM-Saint-\'Etienne, CNRS UMR 5208, Institut Camille Jordan, F-42023, Saint-\'Etienne\\ France
   \email{Roland.Berger@univ-st-etienne.fr}}
\affiliationtwo{%
   Rachel Taillefer\\
   Universit\'e Clermont Auvergne, CNRS, LMBP, F-63000 Clermont-Ferrand\\
   France
   \email{Rachel.Taillefer@uca.fr}}


\begin{thebibliography}{99}


\bibitem{bgl:preproj} {\bibname D. Baer, W. Geigle \and H. Lenzing}, The preprojective algebra of a tame hereditary Artin algebra, \emph{Comm. Algebra} {15} (1987), 425-457.

\bibitem{bgs} {\bibname A. Beilinson, V. Ginzburg \and W. Soergel}, Koszul duality patterns in representation theory,  \emph{J. Amer. Math. Soc.} {9} (1996), 473-527.


\bibitem{bz:isolemma} {\bibname J. Bell \and J. J. Zhang}, An isomorphism lemma for graded rings, \emph{Proc. Amer. Math. Soc.} {145} (2017), 989-994.

\bibitem{benson:repcoh} {\bibname D. J. Benson}, \emph{Representations and cohomology I}, Cambridge Univ. Press, 1991.

\bibitem{berger:dimension} {\bibname R. Berger}, Dimension de Hochschild des alg\`ebres gradu\'ees (French) [Hochschild dimension for graded algebras], \emph{C. R. Math. Acad. Sci. Paris} {341} (2005), 597-600. 

\bibitem{berger:Ncal} {\bibname R. Berger}, Koszul calculus for N-homogeneous algebras, \emph{J. Algebra} {519} (2019), 149-189.

\bibitem{bls:kocal} {\bibname R. Berger, T. Lambre \and A. Solotar}, Koszul calculus, \emph{Glasg. Math. J.} {60} (2018), 361-399.



\bibitem{bes:typeL} {\bibname J. Bialkowski, K. Erdmann \and A. Skowro\'nski}, Deformed preprojective algebras of generalized Dynkin type Ln: classification and symmetricity, \emph{J. Algebra} {345} (2011), 150-170.

\bibitem{boc:3cy} {\bibname R. Bocklandt}, Graded Calabi Yau algebras of dimension 3. \emph{J. Pure Appl. Algebra} {212} (2008), 14-32. 

\bibitem{bbk} {\bibname S. Brenner, M.C.R. Butler \and A. King}, Periodic algebras which are almost Koszul. \emph{Algebr. Represent. Theory} {5} (2002), 331-367.

\bibitem{ksb:cohomgrp} {\bibname K. S. Brown}, \emph{Cohomology of Groups}, Graduate Texts in Mathematics {87}, Springer, 1982.

\bibitem{cibils} {\bibname C. Cibils}, Rigidity of truncated quiver algebras. \emph{Adv. Math.} {79} (1990), 18-42.

\bibitem{cbDMV} {\bibname W. Crawley-Boevey}, DMV lectures on Representations of quivers, preprojective algebras and deformations of quotient singularities. \emph{Available at \url{https://www.math.uni-bielefeld.de/~wcrawley/dmvlecs.pdf} (1999).}

\bibitem{cbeg:quiver} {\bibname W. Crawley-Boevey, P. Etingof \and V. Ginzburg}, Noncommutative geometry and quiver algebras, \emph{Adv. Math.} {209} (2007), 274-336.

\bibitem{cbh:klein} {\bibname W. Crawley-Boevey \and M.P. Holland}, Noncommutative deformations of Kleinian singularities, \emph{Duke Math. Journal} {92} (1998), 605-635.

\bibitem{cbvdb:kac} {\bibname W. Crawley-Boevey \and M. Van den Bergh}, Absolutely indecomposable representations and Kac-Moody Lie algebras. With an appendix by Hiraku Nakajima. \emph{Invent. Math.} {155} (2004), 537-559. 

\bibitem{dr:preproj} {\bibname V. Dlab \and C. M. Ringel}, The preprojective algebra of a modulated graph, \emph{in} Representation theory, II (Proc. Second Internat. Conf., Carleton Univ., Ottawa, Ont., 1979), 216-231, \emph{Lecture Notes in Math.}, {832}, Springer, Berlin-New York, 1980.

\bibitem{es:first} {\bibname K. Erdmann \and N. Snashall}, Hochschild cohomology of preprojective algebras, I, \emph{J. Algebra} {205} (1998), 391-412.

\bibitem{es:second} {\bibname K. Erdmann \and N. Snashall}, Hochschild cohomology of preprojective algebras, II, \emph{J. Algebra} {205} (1998), 413-434.

\bibitem{es:third} {\bibname K. Erdmann \and N. Snashall}, Preprojective algebras of Dynkin type, periodicity and the second Hochschild cohomology, \textit{CMS Conf. Proc.} {24} (1998).

\bibitem{eteu:ade} {\bibname P. Etingof \and C-H. Eu}, Hochschild and cyclic homology of preprojective algebras of ADE quivers, \emph{Mosc. Math. J.} {7} (2007), 601-612.


\bibitem{eu:product} {\bibname C-H. Eu}, The product in the Hochschild cohomology ring of preprojective algebras of Dynkin quivers, \emph{J. Algebra} {320} (2008), 1477-1530.

\bibitem{eu:calculus} {\bibname C-H. Eu}, The calculus structure of the Hochschild homology/cohomology of preprojective algebras of Dynkin quivers, \emph{J. Pure Appl. Algebra} {214} (2010), 28-46. 

\bibitem{eusched:cyfrob} {\bibname C-H. Eu \and T. Schedler}, Calabi-Yau Frobenius algebras, \emph{J. Algebra} {321} (2009), 774-815.

\bibitem{gaddis:isolemma} {\bibname J. Gaddis}, Isomorphisms of graded path algebras, \texttt{arXiv:1712.01650}.

\bibitem{gls:ppcluster} {\bibname C. Geiss, B. Leclerc \and J. Schr\"oer}, Preprojective algebras and cluster algebras, \emph{in} Trends in representation theory of algebras and related topics, 253-283, \emph{EMS Ser. Congr. Rep.}, Eur. Math. Soc., Z\"urich, 2008.

\bibitem{gp:preproj} {\bibname I. M. Gelfand \and V. A. Ponomarev}, Model algebras and representations of graphs (Russian), \emph{Funktsional. Anal. i Prilozhen} {13} (1979), 1-12.

\bibitem{vg:cy} {\bibname V. Ginzburg}, Calabi-Yau algebras, \texttt{arXiv:math.AG/0612139v3}.

\bibitem{griy:hpa} {\bibname J. Grant \and O. Iyama}, Higher preprojective algebras, Koszul algebras, and superpotentials \texttt{arXiv:1902.07878v2}.

\bibitem{green:intro} {\bibname E. L. Green}, Introduction to Koszul algebras, \emph{in} Representation theory and algebraic geometry (Waltham MA, 1995), 45-62, \emph{LMS Lect. Notes Ser.} {238}, Cambridge Univ. Press, 1997.

\bibitem{hat:algtop} {\bibname A. Hatcher}, \emph{Algebraic topology}, Cambridge University Press, Cambridge, 2002. 


\bibitem{tl:bvcy} {\bibname T. Lambre}, Dualit\'e de Van den Bergh et structure de Batalin-Vilkoviski\v{\i} sur les alg\`ebres de Calabi-Yau, \emph{J. Noncommut. Geom.} {4} (2010), 441-457.


\bibitem{mv:kprepro} {\bibname R. Mart\'{\i}nez-Villa}, Applications to Koszul algebras: the preprojective algebra, \emph{in} Representation theory of algebras (Cocoyoc, 1994), 487-504, \emph{CMS Conf. Proc.} {18}, Amer. Math. Soc., Providence RI, 1996.

\bibitem{mv:intro} {\bibname R. Mart\'{\i}nez-Villa}, Introduction to Koszul Algebras, \emph{Revista Union Matematica Argentina} {48} (2007), 67-95

\bibitem{popo:quad} {\bibname A. Polishchuk \and L. Positselski}, \emph{Quadratic algebras},
  University Lecture Series 37, AMS, 2005.

\bibitem{priddy:kreso} {\bibname S. Priddy} Koszul resolutions, \emph{Trans. Amer. Math. Soc.} {152}
  (1970), 39-60.

\bibitem{rr} {\bibname M.L. Reyes \and D. Rogalski} A twisted Calabi-Yau toolkit,  \texttt{arXiv:1807.10249}.

\bibitem{shaw} {\bibname P.H.C. Shaw}, Generalisations of preprojective algebras. \emph{PhD thesis of the University of Leeds, 2005, avalaible at \url{https://www.math.uni-bielefeld.de/~wcrawley/shaw-thesis.pdf}.}

\bibitem{tt:calculus} {\bibname D. Tamarkin \and B. Tsygan}, The ring of differential operators on forms in noncommutative calculus, \emph{Proc. Sympos. Pure Math.} {73}, Amer. Math. Soc. (2005), 105-131.

\bibitem{vdb:nch} {\bibname M. Van den Bergh}, Noncommutative homology of some three-dimensional quantum spaces,\emph{K-Theory} {8} (1994), 213-230.

\bibitem{vdb:dual} {\bibname M. Van den Bergh}, A relation between Hochschild homology and cohomology for Gorenstein rings, \emph{Proc. Amer. Math. Soc.} {126} (1998), 1345-1348; erratum \emph{ibid.} {130} (2002), 2809-2810.

\bibitem{vdb:double} {\bibname M. Van den Bergh}, Double Poisson algebras, \emph{Trans. Amer. Math. Soc.} {360} (2008), 5711-5769.

\bibitem{vdb:cypot} {\bibname M. Van den Bergh}, Calabi-Yau algebras and superpotentials, \emph{Selecta Math. (N.S.)} {21} (2015), 555-603.

\bibitem{weib:homo} {\bibname C. A. Weibel}, \emph{An introduction to homological algebra}, Cambridge Univ. Press, 1994.

\bibitem{yeku:dercat} {\bibname A. Yekutieli}, \emph{Derived Categories}, to appear in Cambridge Univ. Press, arXiv:1610.09640v4.

\bibitem{Z} {\bibname A. Zimmermann},  On the use of K\"ulshammer type invariants in representation
  theory,  \emph{Bull. Iran. Math. Soc.} {37}  (2011), 291-341.


 
\end{thebibliography}
\end{document}